\long\def\beginpgfgraphicnamed#1#2\endpgfgraphicnamed{\includegraphics{#1}}
\numberwithin{equation}{section}
\renewcommand\d{\partial}
\renewcommand\a{\alpha}
\renewcommand\b{\beta}
\renewcommand\o{\omega}
\newcommand\s{\sigma}
\renewcommand\t{\tau}
\newcommand\R{\mathbb R}\newcommand\N{\mathbb N}\newcommand\Z{\mathbb Z}
\newcommand\C{\mathbb C}
\newcommand\D{{\partial}}
\def\g{\gamma}
\def\G{\Gamma}
\def\t{\tau}
\def\k{\kappa}
\def\dsp{\displaystyle}
\def\l{\lambda}
\def\epsilon{\varepsilon}
\def\e{\varepsilon}
\def\S{\Sigma}
\def\SS{\mathbb S}
\def\p{\partial}
\def\tr{{\rm tr}\,}
\def\and{\quad\mbox{and}\quad}
\newcommand\br{\begin{rem}}
\newcommand\er{\end{rem}}
\newcommand\bp{\begin{pmatrix}}
\newcommand\ep{\end{pmatrix}}
\newcommand\be{\begin{equation}}
\newcommand\ee{\end{equation}}
\newcommand\ba{\begin{equation}\begin{aligned}}
\newcommand\ea{\end{aligned}\end{equation}}
\newcommand\cA{\mathcal A}
\newcommand{\Id}{{\rm Id }}
\newtheorem{defi}{Definition}[section]
\newtheorem{theo}[defi]{Theorem}
\newtheorem{prop}[defi]{Proposition}
\newtheorem{lem}[defi]{Lemma}
\newtheorem{cor}[defi]{Corollary}
\newtheorem{rem}[defi]{Remark}
\newtheorem{ass}[defi]{Assumption}
\newtheorem{hyp}[defi]{Hypothesis}
\def\op{{\rm op} }
\def\vs{\vskip5pt}
\numberwithin{equation}{section}
\def\vs{\vskip5pt}
\def\Id{\operatorname{Id}}
\def\mat22#1#2#3#4{\begin{pmatrix}#1&#2\\ #3&#4\end{pmatrix}}
\begin{document}
\title[The onset of instability in first-order systems]{The onset of instability in first-order systems}
\author{Nicolas Lerner}
\address{Institut de Math\'ematiques de Jussieu-Paris Rive Gauche UMR CNRS 7586, Universit\'e Pierre-et-Marie-Curie}
\email{nicolas.lerner@imj-prg.fr}
\author{Toan Nguyen} 
\address{Department of Mathematics,
Pennsylvania State University.
University Park
State College, Pennsylvania}
\email{nguyen@math.psu.edu}
\author{Benjamin Texier}
\address{Institut de Math\'ematiques de Jussieu-Paris Rive Gauche UMR CNRS 7586, Universit\'e Paris-Diderot}
\email{benjamin.texier@imj-prg.fr}
\thanks{T.N. was supported by the Fondation Sciences Math\'ematiques de Paris through a postdoctoral grant. B.T. thanks Yong Lu and Baptiste Morisse for their remarks on an earlier version of the manuscript. The authors thank the referees for detailed and useful remarks.}
\date{\today}
\begin{abstract} 
We study the Cauchy problem for first-order quasi-linear systems of partial differential equations. 
When the spectrum of the initial principal symbol
is not included in the real line,
i.e., when hyperbolicity is violated at initial time, then the Cauchy problem is strongly unstable, in the sense of Hadamard. This phenomenon, which extends the linear Lax-Mizohata theorem, was explained
by G. M\'etivier in [{\it Remarks on the well-posedness of the nonlinear Cauchy problem}, Contemp.~Math.~2005].
In the present paper, we are interested in the transition from hyperbolicity to non-hyperbolicity, that is  the limiting case where hyperbolicity holds at initial time, but is violated at positive times: under such an hypothesis,
we generalize a recent work by N. Lerner, Y. Morimoto and C.-J. Xu [{\it Instability of the Cauchy-Kovalevskaya solution for a class of non-linear systems}, American J.~Math.~2010] on  complex scalar systems,
as we prove that even a weak defect of hyperbolicity implies a strong Hadamard instability. Our examples include Burgers systems, Van der Waals gas dynamics, and Klein-Gordon-Zakharov systems. Our analysis relies on an approximation result for pseudo-differential flows, introduced by B.~Texier in [{\it Approximations of pseudo-differential flows}, Indiana Univ. Math. J. 2016].
\end{abstract}

\maketitle

\setcounter{tocdepth}{1}
\tableofcontents
\renewcommand{\refname}{References}

\section{Introduction}
We study well-posedness issues in Sobolev spaces for the Cauchy problem for first-order, quasi-linear systems of partial differential equations:
\begin{equation}\label{1st-system}
 \D_t u + \sum_{1 \leq j \leq d} A_j(t,x,u)\D_{x_j}u = F(t,x,u),%
 \end{equation}
where $t \geq 0,$ $x \in \R^d,$ $u(t,x) \in \R^N,$ %
the maps $A_j$ are smooth  from $\R_+ \times \R_x^d \times \R_u^N$ 
to the space of $N\times N$ real matrices and $F$ is smooth from $\R_+ \times \R_x^d \times \R^N_u$ into $\R^N.$

 We prove in this article a general ill-posedness result in Sobolev spaces for the Cauchy problem for \eqref{1st-system}, under an assumption of a weak defect of hyperbolicity that describes the transition from hyperbolicity to ellipticity. This extends recent results of G.~M\'etivier \cite{M2} and N.~Lerner, Y.~Morimoto and C.-J.~Xu \cite{LMX}. Here ``well-posedness" is understood in the sense of Hadamard \cite{Ha}, meaning existence and regularity of a flow; ``hyperbolicity", as discussed in Section \ref{sec:hist}, means reality of the spectrum of the principal symbol, and ``ellipticity" corresponds to existence of non-real eigenvalues for the principal symbol.
 
   We begin this introduction with a discussion of hyperbolicity and well-posedness (Section \ref{sec:hist}), then give three results: Theorem \ref{th:0} describes ill-posedness of elliptic initial-value problems, while Theorems \ref{th:1/2} and \ref{th:1} are ill-posedness results for systems undergoing a transition from hyperbolicity to ellipticity. These results are illustrated in a series of examples in Section \ref{sec:ex}. Our main assumption (Assumption \ref{ass:main}) and main result (Theorem \ref{th:main}) are stated in Sections \ref{sec:ass} and \ref{sec:th}.

\subsection{Hyperbolicity as a necessary condition for well-posedness} \label{sec:hist}
Lax-Mizo\-ha\-ta theorems, named after Peter Lax and Sigeru Mizohata,
  state that well-posed non-characteristic initial-value problems for first-order systems  are necessarily hyperbolic, meaning that all eigenvalues of the principal symbol are real. 
 
 P.~Lax's original result \cite{L2} is stated in a $C^\infty$ framework, for linear equations, i.e.~such that $A_j(t,x,u) \equiv A_j(t,x).$ Lax uses a relatively strong definition of well-posedness that includes continuous dependence not only in the data, but also in a source. This allows him in  particular to consider WKB approximate solutions; the proof of \cite{L2} shows that in the non-hyperbolic case, if the eigenvalues are separated, the $C^0$ norms of high-frequency WKB solutions grow faster than the $C^k$ norms of the datum and source, for any $k.$ The separation assumption ensures that the eigenvalues are smooth, implying smoothness for the coefficients of the WKB cascade of equations. 
  In the same $C^\infty$ framework for linear equations but without assuming spectral separation, S.~Mizohata \cite{Mi} proved that existence, uniqueness and continuous dependence on the data cannot hold in the non-hyperbolic case. 

 Later S.~Wakabayashi \cite{W} and K.~Yagdjian \cite{Y1,Y2} extended the analysis to the quasi-linear case, but it was only in 2005 that a precise description of the lack of regularity of the flow was given, by M\'etivier: Theorem 3.2 in \cite{M2} states that in the case that the $A_j$ are analytic, under the assumption that for some fixed vector $u^0 \in \R^N$ and some frequency $\xi^0 \in \R^d$ the principal symbol $\sum_{1 \leq j \leq d} A_j(u^0) \xi^0_j$ is not hyperbolic, some analytical data uniquely generate analytical solutions, but the corresponding flow for \eqref{1st-system} is not H\"older continuous from high Sobolev norms to $L^2,$ locally around a Cauchy-Kovalevskaya solution issued from the constant datum $u^0.$

 M\'etivier's result is a {\it long-time Cauchy-Kovalevskaya} result. Without loss of generality, assume indeed that $u^0 = 0.$ Then Theorem 3.2 in \cite{M2} states that data that are small in high norms may generate solutions that are instantaneously large in low norms. In this view, assume in \eqref{1st-system} the hyperbolic ansatz: $u(t,x) = \e v(t/\e,x/\e),$ where $\e > 0.$ Setting $F \equiv 0$ for simplicity, and $\t = t/\e,$ $y = x/\e,$  the equation in $v$ is 
 \begin{equation} \label{2} \d_\t v + \sum_{1 \leq j \leq d} A_j(\e \t, \e y, \e v) \d_{y_j} v = 0.
 \end{equation}

 If  all fluxes $A_j$ are analytic in their arguments, the Cauchy-Kovalevskaya theorem ensures the existence and uniqueness of a solution $v$ issued from an analytic datum $v(0,x),$ over a time interval $O(1)$ in the fast variable  $\t.$ What's more, by regularity of the coefficients of \eqref{2} with respect to $\e,$ the solution $v$ stays close, in analytical semi-norms, to the solution $w$ of the constant-coefficient system
 \begin{equation}\label{3}
  \d_\t w + \sum_{1 \leq j \leq d} A_j(0) \d_{y_j} w = 0
  \end{equation}
 over time intervals $O(1).$ By Assumption on $A_j(0),$ the Fourier transform $\hat w(\t,\xi^0)$ of $w$ in the spectral direction $\xi^0$ grows like $e^{\t C(\xi^0)},$ for some $C > 0.$ This implies a similar growth for  $v(t/\epsilon,\xi^0),$
  and in turn a growth in $\e e^{t C(\xi^0)/\e}$ for $\hat u(t,\xi^0),$ {\it but only on time intervals $O(\e),$} due to the initial rescaling in time. The content of M\'etivier's result is therefore to show that the solution $v$ to \eqref{2} exists, and the growth persists, over ``long" time intervals $O(|\log \e|),$ so that the exponential amplification is effective. 

\medskip

 In the scalar complex case, the results of N.~Lerner, Y.~Morimoto and C.-J.~Xu  \cite{LMX} extended the analysis of M\'etivier to the situation 
 where
 the symbol is initially hyperbolic, but hyperbolicity is instantaneously lost, in the sense that a characteristic root is real at $t = 0,$ but leaves the real line at  positive times. The main result of \cite{LMX} states that such a weak defect of hyperbolicity implies a strong form of ill-posedness; the analysis is based on representations of solutions by the method of characteristics, following \cite{M1}. This argument does not carry over to systems, even in the case of a diagonal principal symbol, 
 if the components of the solution are coupled through the lower-order term $F(u).$

\medskip

 Our goal in this article is to extend the instability results of {\cite{LMX}} on complex scalar equations  to the case of   quasi-linear first-order systems \eqref{1st-system}.
 In the process, we recover a version of the results of {\rm \cite{M2},} with a method of proof that does not rely on analyticity.

\subsection{On the local character of our assumptions and results} \label{sec:local} Our assumptions are local in nature. They bear on the germ, at a given point $(t_0,x_0,\xi_0) \in \R_+ \times \R^d \times \R^d,$ representing time, position, and frequency, of the principal symbol evaluated at a given reference solution. Under these local assumptions, we prove local instabilities, which extend the aforementioned Lax-Mizohata theorems, and which roughly say that there are no local solutions possessing a minimal smoothness with initial data taking values locally in an elliptic region. These local instabilities are independent of the global properties of the system \eqref{1st-system}. In particular, the system \eqref{1st-system} may have formal conserved quantities; see for instance the compressible Euler equations \eqref{vdw-ex} introduced in Section \ref{sec:ex}.

\subsection{Transition from hyperbolicity to ellipticity} \label{sec:weak}

 Our starting point is to assume that there exists a {local} {smooth} solution 
$\phi$ to \eqref{1st-system} with {a large} Sobolev regularity: %
 \begin{equation} \label{phi} %
 \phi \in C^\infty([0,T_0],  \,H^{s_1}(U)),
 \end{equation}
  for some $T_0 > 0,$ some open set $U \subset \R^d,$ and some Sobolev regularity index $s_1 = 1 + d/2 + s_2,$ where $s_2 > 0$ is large enough, depending on the parameters in our problem\footnote{We use regularity of $\phi$ in particular in the construction of the local solution operator; see Appendix \ref{sec:Duhamel}, specifically the proof of Lemma {\rm \ref{lem:duh-remainder}}, in which $q_0$ is the order of a Taylor expansion involving $\phi.$}. If the matrices $A_j$ and the source $F$ depend analytically on $(t,x,u),$ then we can choose $\phi$ to be a Cauchy-Kovalevskaya solution. However, we do not use analyticity in the rest of the paper. 
The linearized principal symbol at $\phi$ is
  \begin{equation}\label{symbolA} A(t,x, \xi)  := \sum_{1 \leq j \leq d} \xi_jA_j(t,x,\phi(t,x)).
  \end{equation}  
The upcoming ill-posedness results are based on readily verifiable conditions bearing on the jet at $t = 0$ of the  characteristic polynomial $P$ of the principal symbol:
\be \label{def:P} P(t,x,\xi,\l) := \det\big(\l \Id - A(t,x,\xi)\big).\ee
Most of these conditions are stable under perturbations of the principal symbol,
 and all can be expressed in terms of the fluxes $A_j$ and the initial datum $\phi(0).$ In particular, it is of key importance that the verification of these conditions does not require any knowledge of the behavior of the reference solution $\phi$ at positive times. 
 
Also, it should be mentioned that our hypotheses
 do not require the computation of eigenvalues and are expressed explicitly in terms of derivatives of $P$ given by \eqref{def:P}  at initial time.

\subsubsection{Hadamard instability} \label{sec:Hadamard}

If \eqref{1st-system} does possess a flow, how regular can we reasonably expect it to be? A good reference point is the regularity of the flow generated by a symmetric system. If for all $j$ and all $u,$ the matrices $A_j(u)$ are symmetric, then local-in-time solutions to the initial-value problem for \eqref{1st-system} exist and are unique in $H^s,$ for $s > 1 + d/2$ \cite{F,K0,L0}; moreover, given a ball $B_{H^s}(0,R) \subset H^s,$ there is an associated existence time $T > 0.$ The flow is Lipschitz $B_{H^s}(0,R) \cap H^{s+1} \to L^\infty([0,T],H^s),$ continuous $B_{H^s}(0,R) \to L^\infty([0,T],H^s),$ but not uniformly continuous $B_{H^s}(0,R) \to L^\infty([0,T],H^s)$ in general \cite{K0}. Micro-locally symmetrizable systems also enjoy these properties \cite{M3}.
 
 \medskip
 
 Accordingly, ill-posedness will be understood as follows:
\begin{defi} \label{def:insta} We will say that the initial-value problem for the system \eqref{1st-system} is ill-posed in the vicinity of the reference solution $\phi$ satisfying \eqref{phi}, if for some $x_0 \in U,$  given any parameters $m,\a,\delta > 0,$ $T$ such that 
\begin{equation} \label{param} \begin{aligned} &
 m \in\R, \quad \frac{1}{2} < \a \leq 1,\quad B(x_0,\delta) \subset U, \quad 0<T\le T_{0},\end{aligned}\end{equation}
where $U$ and $T_0$ are as in \eqref{phi}, there is no neighborhood $\mathcal U$ of $\phi(0)$ in $H^m(U),$ 
 such that,  for all $u(0) \in {\mathcal U},$ the system 
  \eqref{1st-system} has a solution $u \in L^\infty([0,T], W^{1,\infty}(B(x_0,\delta)))$ issued from $u(0)$ which satisfies  
 \begin{equation} \label{stab}
 \sup_{\substack {u_{0}\in \mathcal U\\0\le t\le T}}
 \frac{\| u(t) - \phi(t) \|_{W^{1,\infty}(B(x_0,\delta))}}
 {\| u_0 - \phi(0) \|^\a_{H^m(U)}}<+\infty.\end{equation}
\end{defi} 
Thus \eqref{1st-system} is ill-posed near the reference solution $\phi$ if either data arbitrarily close to $\phi(0)$ fail to generate trajectories, corresponding to absence of a solution, or if trajectories issued close to $\phi(0)$ deviate from $\phi,$ corresponding to absence of H\"older continuity for the solution operator. In the latter case, we note that:
 \begin{itemize}
 \item the deviation is relative to the initial closeness, so that $\phi$ is unstable in the sense of Hadamard, not in the sense of Lyapunov;
 \item the deviation is instantaneous: $T$ is arbitrarily small; it is localized: $\delta$ is arbitrarily small.
 \item The initial closeness is measured in a strong $H^m$ norm, where $m$ is arbitrarily large\footnote{That is, the only restriction on $m$ is the Sobolev regularity of $\phi:$ we need, in particular, $m \leq s_1$  for \eqref{stab} to make sense.}, while the deviation is measured in a weaker $W^{1,\infty}$ norm, defined as $|f|_{W^{1,\infty}} = |f|_{L^\infty} + |\nabla_x f|_{L^\infty}.$  
\end{itemize}

In our proofs of ill-posedness in the sense of Definition \ref{def:insta}, we will always {\it assume} existence of a solution issued from a small perturbation of $\phi(0),$ and proceed to disprove \eqref{stab}. 

Note that the flows of ill-posed problems in the sense of Definition \ref{def:insta} exhibit a lack of {\it H\"older} continuity. F. John introduced in \cite{J} a notion of ``well-behaved" problem, weaker than well-posedness. In well-behaved problems, Cauchy data generate unique solutions, and, in restriction to balls in the $W^{M,\infty}$ topology, for some integer $M,$ the flow is H\"older continuous in appropriate norms. The notions introduced in \cite{J} were developed in the article \cite{B} by H. Bahouri, who used sharp Carleman estimates.

The restriction to $\a > 1/2$ in Definition \ref{def:insta} is technical. Precisely, it comes from the fact that we prove ill-posedness by disproving \eqref{stab}, as indicated above. This gives weak bounds on the solution, which we use to bound the nonlinear terms. Consider nonlinear terms in \eqref{1st-system} which are controlled by $\ell_0$-homogeneous terms in $u,$ with $\ell_0 \geq 2,$ that is such that $\d_u A_j = O(u^{\ell_0-2})$ and $\d_u F = O(u^{\ell_0 -1}).$ These bounds hold if, for instance, $A_j(u) \d_{x_j} u = u^{\ell_0-1} \d_x u$ and $F(u) = u^{\ell_0},$ using scalar notation. Then, the proof of our general result (Theorem \ref{th:main}) shows ill-posedness with $\a > 1/\ell_0.$ (See indeed Lemma \ref{lem:source:new} and its proof, and note the constraint $2K' > K$ which appears at the end of the proof in Section \ref{sec:end0}.)

Finally, we point out that Definition \ref{def:insta} describes only the behavior of solutions which belong to $W^{1,\infty}.$ This excludes in particular shocks, which are expected to form in finite time for systems \eqref{1st-system}, even in the case of smooth data. Shocks with jump across elliptic zones could exhibit some stability properties.
\subsubsection{Initial ellipticity} \label{sec:init-ell}

Our first result states that the ellipticity condition
\be \label{cond:ell} 
 P(0,\o_0) = 0, \qquad \o_0 = (x_0,\xi_0,\l_0) \in U \times (\R^d \setminus \{ 0 \}) \times (\C \setminus \R),
 \ee
where $P$ is the characteristic polynomial defined in \eqref{def:P}, implies ill-posedness: 

\begin{theo} \label{th:0} Under the ellipticity condition \eqref{cond:ell}, the Cauchy problem for system \eqref{1st-system} is ill-posed in the vicinity of the reference solution $\phi,$ in the sense of Definition {\rm \ref{def:insta}}. 
\end{theo}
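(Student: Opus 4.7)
The plan is to argue by contradiction, following the long-time Cauchy--Kovalevskaya strategy of \cite{M2} with analyticity replaced by the pseudo-differential flow approximation cited in the abstract. Assume parameters $(m,\a,\delta,T)$ satisfying \eqref{param} and a neighborhood $\mathcal U$ of $\phi(0)$ in $H^m(U)$ which realize the H\"older bound \eqref{stab}. Pick an eigenvector $V\in\CC^N$ of $A(0,x_0,\xi_0)$ attached to $\l_0$, a bump $\chi\in C_c^\infty(\RR^d)$ supported in the unit ball, and constants $K\gg m$, $\b\in(0,1)$ to be tuned. For $\e\in(0,1)$ set
\[
 v_0^\e(x) := \e^K \chi\!\Bigl(\tfrac{x-x_0}{\e^\b}\Bigr)\,\re\!\bigl(V\,e^{i x\cdot \xi_0/\e}\bigr),\qquad u_0^\e := \phi(0) + v_0^\e.
\]
Standard wavepacket estimates give $\|u_0^\e - \phi(0)\|_{H^m(U)} \le C_0\,\e^{K - m - \b d/2}=o(1)$, so $u_0^\e\in\mathcal U$ for $\e$ small. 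By hypothesis, a solution $u^\e$ exists on $[0,T]$ and satisfies the upper bound $\|u^\e(t)-\phi(t)\|_{W^{1,\infty}(B(x_0,\delta))} \le C\,\e^{\a(K-m-\b d/2)}$.

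To contradict this, I produce a matching lower bound on $u^\e - \phi$. Since $\l_0\notin\RR$ is automatically isolated in the spectrum of the real matrix $A(0,x_0,\xi_0)$, it extends smoothly to an eigenvalue $\l(t,x,\xi)$ of $A(t,x,\xi)$ in a conic neighborhood of $(0,x_0,\xi_0)$, with $\im\l(0,x_0,\xi_0)>0$ up to complex conjugation. A semiclassical WKB construction for the linearization of \eqref{1st-system} around $\phi$ --- equivalently, a sharp G\aa rding estimate applied to the skew part of $-iA$ in the $\l$-block --- produces a linear solution $v^\e$ issued from $v_0^\e$ such that
\[
 \|v^\e(t)\|_{W^{1,\infty}(B(x_0,\delta))} \;\ge\; c\,\e^{K-1-\b d/2}\,e^{t\g_0/\e},\qquad 0\le t\le T_1,
\]
for some $c,\g_0,T_1 > 0$ independent of $\e$. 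The pseudo-differential flow approximation theorem, applied to the quasi-linear equation for $u^\e - \phi$ (whose principal symbol agrees with $A(t,x,\xi)$ up to terms vanishing with $u^\e-\phi$), then yields $\|u^\e(t) - \phi(t) - v^\e(t)\|_{W^{1,\infty}} \ll \|v^\e(t)\|_{W^{1,\infty}}$ on a time window of length $O(\e|\log\e|)$, of admissible constant increasing with $K$; hence the same lower bound propagates to $u^\e - \phi$.

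Setting $t_\e := M\e|\log\e|$ with $M\g_0 > (1-\a)K + \a m + 1$, for $\e$ small $t_\e\le T$, and combining the two bounds at $t=t_\e$ yields
\[
 \e^{K-1-\b d/2-M\g_0} \;\lesssim\; \e^{\a(K-m-\b d/2)},\qquad \e\to 0,
\]
which forces $K-1-\b d/2-M\g_0 \ge \a(K-m-\b d/2)$, i.e.\ $M\g_0 \le (1-\a)K + \a m - 1 - (1-\a)\b d/2$, contradicting the choice of $M$. The principal difficulty is the nonlinear comparison step: naive energy estimates propagate the proximity $u^\e-\phi\approx v^\e$ only over times $O(\e)$ in $t$, whereas the exponential amplification $e^{t\g_0/\e}$ can overwhelm the polynomial smallness $\e^{\a K}$ only on the longer logarithmic window $O(\e|\log\e|)$. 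Bridging this gap is exactly the role played by the approximation-of-flows machinery on which the paper relies.
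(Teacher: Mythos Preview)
Your overall architecture matches the paper's: perturb $\phi(0)$ by an $\e$-oscillating wavepacket of amplitude $\e^K$, assume the H\"older bound \eqref{stab}, and derive a contradiction by showing that the perturbation is amplified by a factor $e^{t\g_0/\e}$ over a logarithmic time window $t=O(\e|\log\e|)$. The paper carries this out in full generality as Theorem~\ref{th:main} and then, in Section~\ref{sec:proof0}, verifies Assumption~\ref{ass:main} with $\ell=0$, $h=1$ under \eqref{cond:ell}. So your proposal is essentially a recapitulation of the $\ell=0$ case of the general proof.

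There is, however, a genuine gap. You claim that ``$\l_0\notin\R$ is automatically isolated in the spectrum of the real matrix $A(0,x_0,\xi_0)$, [so] it extends smoothly to an eigenvalue $\l(t,x,\xi)$.'' This is false: reality of $A$ forces non-real eigenvalues to come in conjugate pairs, but says nothing about the algebraic multiplicity of $\l_0$. The ellipticity condition \eqref{cond:ell} does \emph{not} assume that $\l_0$ is simple, and a multiple $\l_0$ will typically not admit a smooth (or even differentiable) extension --- see Figure~1 in the paper, whose caption explicitly notes that $\l_0$ ``may correspond to coalescing points in the spectrum.'' Your WKB construction of $v^\e$, which rests on a smooth eigenvalue/eigenvector branch, therefore breaks down precisely in the cases the theorem is meant to cover. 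The paper circumvents this in Section~\ref{sec:proof0} by block-diagonalizing onto the full generalized eigenspace of $\l_0$ (of some algebraic multiplicity $m$), reducing to upper-triangular form at the base point, and applying a $\mu$-dependent diagonal rescaling $Q_\mu=\diag(1,\mu^{-1},\dots,\mu^{1-m})$ with $\mu=|\log\e|^{-1}$; this yields upper and lower bounds on the full symbolic flow $S_{(0)}$ without ever isolating a single smooth eigenvalue.

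A secondary point: you invoke ``a sharp G\aa rding estimate applied to the skew part of $-iA$'' as an alternative route to the lower bound. The paper remarks at the end of Section~\ref{sec:key} that G\aa rding-type estimates are \emph{not} sharp in the non-self-adjoint setting relevant here, and that this is precisely why the symbolic-flow approximation of Theorem~\ref{duh1} is needed. So this alternative would also require justification you have not supplied.
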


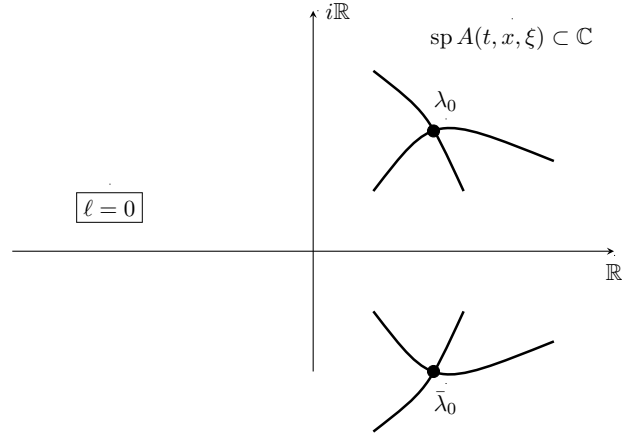
\begin{figure}
\scalebox{.8}{
\beginpgfgraphicnamed{figure-sp0}
 \begin{tikzpicture}
 
 \begin{scope}[>=stealth]
 \draw[line width=.5pt][->] (-5,0) -- (5,0);
 
 \draw[line width=.5pt][->] (0,-2) -- (0,4);

  \draw (5cm,-2pt) -- (5cm,-2pt) node[anchor=north] {${\mathbb R}$};
 
  \draw (3.3cm,110pt) -- (3.3cm,110pt) node[anchor=north]{$\mbox{sp}\,A(t,x,\xi) \subset {\mathbb C}$};
  \draw (2pt,4cm) -- (2pt,4cm) node[anchor=west] {$i {\mathbb R}$} ; 
  
  \draw (-3,1.5,1pt) -- (-3,1.5, 1pt) node[anchor=north] {$\boxed{\ell = 0}$} ; 
  
  \filldraw (2,2) circle (.1cm) ; 
  
  \draw (2.2,2.2) -- (2.2,2.2) node[anchor=south] {$\lambda_0$} ; 
  
   \filldraw (2,-2) circle (.1cm) ; 
  
  \draw (2.2,-2.2) -- (2.2,-2.2) node[anchor=north] {$\bar \lambda_0$} ;

 \draw[very thick] (1,1) .. controls (2,2.3) .. (4,1.5);

 \draw[very thick] (1,3) .. controls (1.9,2.3) .. (2.5,1);

 \draw[very thick] (1,-1) .. controls (2,-2.3) .. (4,-1.5);

 \draw[very thick] (1,-3) .. controls (1.9,-2.3) .. (2.5,-1);

  \end{scope}
  
 \end{tikzpicture}
 \endpgfgraphicnamed
 }
\caption{In Theorem \ref{th:0}, corresponding to $\ell = 0$ in Assumption \ref{ass:main}, the principal symbol at $(0,x_0,\xi_0)$ has non-real eigenvalues $\l_0, \bar \l_0.$ These may correspond to coalescing points in the spectrum, for $(t,x,\xi)$ near $(0,x_0,\xi_0).$}
\end{figure}

Theorem \ref{th:0} (proved in Section \ref{sec:proof0}) states that hyperbolicity is a necessary condition for the well-posedness of the initial-value problem \eqref{1st-system}, and partially recovers M\'etivier's result\footnote{Theorem 3.2 in \cite{M2} shows not only instability, but also existence and uniqueness, under assumption of analyticity for the fluxes, the source and the initial data.}.
An analogue to Theorem \ref{th:0} in the high-frequency regime is given in \cite{LT}, based on \cite{T3} just like our proof of Theorem \ref{th:0}; the main result of \cite{LT} precisely describes how resonances may induce local defects of hyperbolicity in strongly perturbed semi-linear hyperbolic systems, and thus destabilize WKB solutions.

\subsubsection{Non semi-simple defect of hyperbolicity} \label{sec:ell=1/2}

We now turn to situations in which the initial principal symbol is hyperbolic:%
\be \label{init:hyp}
 \mbox{$P(0,x,\xi,\l) = 0$ \,\, implies \,\, $\l \in \R,$ \,\, for all $(x,\xi) \in U \times (\R^d \setminus \{ 0 \}),$}
\ee
 and aim to describe situations in which some roots of $P$ are non-real for $t > 0.$ Let 
$$ 
 \Gamma := \big\{ \o = (x,\xi,\l) \in  U \times (\R^d \setminus \{0\}) \times \R, \quad P(0,\o) = 0 \big\},
$$ 
 By reality of the coefficients of $P,$ non-real roots occur in conjugate pairs. In particular, eigenvalues must coalesce at $t = 0$ if we are to observe non-real eigenvalues for $t > 0.$ 
 
 Let then $\o_0 \in \G,$ such that
  \begin{equation} \label{cond:coal}
   \d_\l P(0,\o_0) = 0, \quad \d_\l^2 P(0,\o_0) \neq 0.
   \end{equation}
The eigenvalue $\l_0$ of $A(0,x_0,\xi_0)$ thus has multiplicity exactly two. %
 Assume in addition that $\o_0$ satisfies condition
 \begin{equation} \label{cond:coal2} 
  (\d^2_\l P \d_t P)(0,\o_0)  > 0.
  \end{equation}
  
The eigenvalues are continuous\footnote{By continuity of $A$ and Rouch\'e's theorem; see \cite{K} or \cite{Tp}.}, implying that condition  \eqref{cond:coal2} is open, meaning that if it holds at $\o_0,$ then it holds at any nearby $\o$ in $\G.$ 
\begin{theo} \label{th:1/2} Assume that conditions \eqref{cond:coal}-\eqref{cond:coal2} hold for some $\o_0 \in \G,$ and that the other eigenvalues of $A(0,x_0,\xi_0)$ are simple. Then the Cauchy problem for system \eqref{1st-system} is ill-posed in the vicinity of the reference solution $\phi,$ in the sense of Definition {\rm \ref{def:insta}}.
\end{theo}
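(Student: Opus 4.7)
The plan is to derive Theorem \ref{th:1/2} from the general Theorem \ref{th:main} by verifying that hypotheses \eqref{cond:coal}--\eqref{cond:coal2} imply Assumption \ref{ass:main} with parameter $\ell = 1/2$. The core content is a local description of the two eigenvalues of $A$ that coalesce at $\lambda_0$, together with a smooth $2 \times 2$ block reduction that isolates them from the rest of the spectrum.

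First, I factor the characteristic polynomial by Weierstrass preparation near $\omega_0$: $P = Q \cdot R$, where $Q = (\lambda - a(t,x,\xi))^2 - b(t,x,\xi)$ is monic of degree $2$ in $\lambda$ and $R$ does not vanish near $\lambda_0$. Condition \eqref{cond:coal} translates to $a(0,x_0,\xi_0) = \lambda_0$ and $b(0,x_0,\xi_0) = 0$, and differentiating $P = Q R$ at $\omega_0$ yields
\[
\partial_\lambda^2 P(0,\omega_0) = 2\, R(0,\omega_0), \qquad \partial_t P(0,\omega_0) = -\partial_t b(0,x_0,\xi_0)\, R(0,\omega_0),
\]
so that \eqref{cond:coal2} is equivalent to $\partial_t b(0,x_0,\xi_0) < 0$. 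In particular, the fact that $\partial_t b(0) \neq 0$ rules out the semi-simple case, for which the discriminant would vanish to second order in $t$, in agreement with the title of \S\ref{sec:ell=1/2}. The eigenvalues of $A$ close to $\lambda_0$ are therefore
\[
\lambda_\pm(t,x,\xi) = a(t,x,\xi) \pm \sqrt{b(t,x,\xi)},
\]
and for $(x,\xi)$ near $(x_0,\xi_0)$ and $0 < t \ll 1$ they form a complex conjugate pair with $|\operatorname{Im} \lambda_\pm(t,x,\xi)| \sim t^{1/2}$, the defect-of-hyperbolicity rate corresponding to $\ell = 1/2$.

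Since the remaining eigenvalues of $A(0,x_0,\xi_0)$ are simple and separated from $\lambda_0$, the Riesz projector onto the two-dimensional invariant subspace attached to $\{\lambda_+, \lambda_-\}$ is a smooth matrix-valued function of $(t,x,\xi)$ on a conic neighborhood of $(x_0,\xi_0)$, for small $t$. A smooth choice of basis yields an invertible symbol $V(t,x,\xi)$ such that
\[
V^{-1} A\, V = \begin{pmatrix} A_\flat & 0 \\ 0 & D \end{pmatrix},
\]
where $A_\flat$ is a smooth $2 \times 2$ block with spectrum $\{\lambda_+, \lambda_-\}$ and $D$ a smooth block whose real simple eigenvalues are bounded away from $\lambda_0$. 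The block $A_\flat$ is real at $t = 0$ and has non-real spectrum of size $t^{1/2}$ for $t > 0$: precisely the structure needed to activate Assumption \ref{ass:main} with $\ell = 1/2$ at the point $(x_0,\xi_0)$ along the reference solution $\phi$. Theorem \ref{th:main} then delivers ill-posedness in the sense of Definition \ref{def:insta}.

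The algebraic computations above are elementary; I expect the main obstacle to lie in the symbolic bookkeeping — verifying that $V$ and the lower-order symbols produced by conjugation have the regularity and quantitative bounds demanded by Assumption \ref{ass:main}, and that the Sobolev regularity $\phi \in C^\infty([0,T_0], H^{s_1})$ from \eqref{phi}, with $s_1$ large enough, is enough once the coefficients $A_j$ are frozen along $\phi$.
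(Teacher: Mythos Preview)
Your approach starts correctly and mirrors the paper's strategy: the Weierstrass-type factorization $P = QR$ with $Q = (\lambda - a)^2 - b$ is equivalent to the paper's Lemma~\ref{lem:sp-1402}, and your smooth block diagonalization via the Riesz projector corresponds to Lemma~\ref{lem:1402-2}. The identification of $\partial_t b(0,x_0,\xi_0) < 0$ from \eqref{cond:coal2} is exactly right.

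However, there is a genuine gap. Assumption~\ref{ass:main} does not ask for a structural property of $A_\flat$; it asks for \emph{quantitative upper and lower bounds on the symbolic flow} $S$ of $i\varepsilon^{h-1} A_\star$, namely \eqref{ass:low} and \eqref{ass:up}, together with the block structure \eqref{block:2} and a transition function $t_\star$ as in \eqref{def:tstar}. You have not identified $t_\star$ (the time at which the eigenvalues coalesce for nearby $(x,\xi)$, which is generically \emph{not} zero away from $(x_0,\xi_0)$), nor have you reduced $A_\flat$ to the normal form $\left(\begin{smallmatrix} 0 & 1 \\ -(t-\tau_\star) e_0 & 0 \end{smallmatrix}\right)$ required by \eqref{block:2}, nor have you addressed the growth of the flow at all. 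The sentence ``precisely the structure needed to activate Assumption~\ref{ass:main}'' is where the actual work begins, not ends.

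In the paper this is the content of Sections~\ref{sec:flow1/2}--\ref{sec:verif-hyp}: one rescales to an equation whose leading part is the Airy system $Z' + \left(\begin{smallmatrix} 0 & 1 \\ t & 0 \end{smallmatrix}\right) Z = 0$, invokes the classical asymptotics of $\mathrm{Ai}(t)$ and $\mathrm{Ai}(jt)$ to obtain the growth function $e_{\mathrm{Ai}}(\tau;t) = \exp\bigl(\tfrac{2}{3}(t_+^{3/2} - \tau_+^{3/2})\bigr)$, and then proves the block-wise bounds \eqref{ass:up} and the lower bound \eqref{ass:low} by a perturbation argument. This Airy reduction is the heart of the $\ell = 1/2$ case and cannot be bypassed; the ``symbolic bookkeeping'' you anticipate is in fact secondary to it.
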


The conditions \eqref{cond:coal}-\eqref{cond:coal2} are relevant, and, as far as we know, new, also in the linear case.

Van der Waals systems and Klein-Gordon-Zakharov systems illustrate Theorem \ref{th:1/2}; see Sections \ref{sec:ex}, \ref{sec:VdW} and \ref{sec:KG}. %

The proof of Theorem \ref{th:1/2}, given in Section \ref{sec:proof1/2}, reveals that  under \eqref{cond:coal}-\eqref{cond:coal2}, the eigenvalues that coalesce at $t = 0$ branch out of the real axis. The branching time is typically not identically equal to $t = 0$ around $(x_0,\xi_0);$ for $(x,\xi)$ close to $(x_0,\xi_0),$ it is equal to $t_\star(x,\xi) \geq 0,$ with a smooth transition function $t_\star.$ At $(t_\star(x,\xi),x,\xi)$ the branching eigenvalues are not time-differentiable, in particular not semi-simple. Details are given in Section \ref{sec:branch}, in the proof of Theorem \ref{th:1/2}. Figure 3 pictures the typical shape of the transition function. The elliptic domain is $\{ t > t_\star \},$ and the hyperbolic domain is $\{ t < t_\star\}.$ %

Under the assumptions of Theorem \ref{th:1/2} and assuming analyticity of the coefficients, B.~Morisse \cite{Mo} proves existence and uniqueness in addition to instability in Gevrey spaces, further extending G. M\'etivi\-er's analysis \cite{M2}.

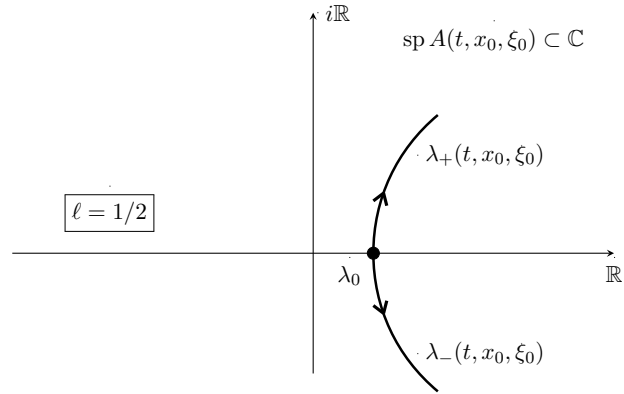
\begin{figure}
\scalebox{0.8}{
\beginpgfgraphicnamed{figure-sp12}
 \begin{tikzpicture}

 \begin{scope}[>=stealth]
 \draw[line width=.5pt][->] (-5,0) -- (5,0);
 
 \draw[line width=.5pt][->] (0,-2) -- (0,4);
 
  \draw (5cm,-2pt) -- (5cm,-2pt) node[anchor=north] {${\mathbb R}$};
 
  \draw (3cm,110pt) -- (3cm,110pt) node[anchor=north]{$\mbox{sp}\,A(t,x_0,\xi_0) \subset {\mathbb C}$};
  \draw (2pt,4cm) -- (2pt,4cm) node[anchor=west] {$i {\mathbb R}$} ; 
  
  \draw (-3,1.5,1pt) -- (-3,1.5, 1pt) node[anchor=north] {$\boxed{\ell = 1/2}$} ; 
  
  \filldraw (1,0) circle (.1cm) ; 
  
  \draw (.6cm,-2pt) -- (.6cm,-2pt) node[anchor=north] {$\lambda_0$} ; 
  
  \draw[very thick] (1cm,0) arc (0:-50:-30mm) ; 
  
  \draw (2.5,2.4,2pt) -- (2.5,2.4,2pt) node[anchor=west] {$\lambda_+(t,x_0,\xi_0)$} ; 
  
  \draw[very thick] (1cm,0) arc (0:50:-30mm) ;
  
  \draw[line width=1.5pt] (1.17,1) -- (1.223,.79) ; 
  
  \draw[line width=1.5pt] (1.17,1) -- (1.0,.87) ; 
  
  \draw (2.5,-.9,2pt) -- (2.5,-.9,2pt) node[anchor=west] {$\lambda_-(t,x_0,\xi_0)$} ; 
  
  \draw[line width=1.5pt] (1.17,-1) -- (1.223,-.79) ; 
  
  \draw[line width=1.5pt] (1.17,-1) -- (1.0,-.87) ; 
  
  \end{scope}
  
   \end{tikzpicture}
\endpgfgraphicnamed
} 
\caption{In Theorem \ref{th:1/2}, corresponding to $\ell = 1/2$ in Assumption \ref{ass:main}, a bifurcation occurs at $(0,x_0,\xi_0)$ in the spectrum of the principal symbol. The eigenvalues are not time differentiable. The arrows indicate the direction of time.}
\end{figure}

\begin{figure} \label{fig:domain:1/2}
\scalebox{.8}{ 
\beginpgfgraphicnamed{figure-tstar12}
\begin{tikzpicture}
 
 \put(60,30){${\rm hyperbolic}$}
  
  \put(15,60){${\rm elliptic}$}
 \draw[very thick] (0,0) parabola (3,4) ;
 
 \draw[very thick] (0,0) parabola (-3,4) ; 
 
 \put(90,100){$t_\star$} ;

 \begin{scope}[>=stealth]
 \draw[line width=.5pt][->] (-5,0) -- (5,0);
 
 \draw[line width=.5pt][->] (0,0) -- (0,5);
 
  \node[anchor=north] {$(x_0,\xi_0)$} ;
 
 \filldraw (0,0) circle (0.1cm) ;  
  \draw (5cm,1pt) -- (5cm,1pt) node[anchor=north] {$(x,\xi)$};
 
  \draw (1pt,5cm) -- (1pt,5cm) node[anchor=west] {$t$} ; 
  
  \draw (-3,1.5,1pt) -- (-3,1.5, 1pt) node[anchor=north] {$\boxed{\ell = 1/2}$} ; 
  \end{scope}

 \end{tikzpicture}
\endpgfgraphicnamed
}
\caption{In Theorem \ref{th:1/2}, the transition occurs at $t = t_\star(x,\xi) \geq 0,$ near $(x_0,\xi_0).$} %
\end{figure}
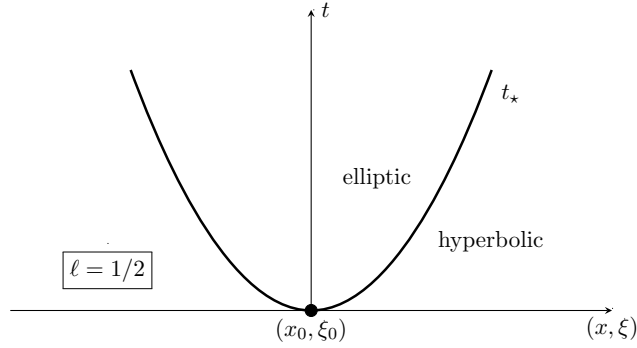

\subsubsection{Semi-simple defect of hyperbolicity} \label{sec:ell=1}

Time-differentiable defects of hyperbolicity of size two can be simply characterized in terms of $P:$ 
 \begin{prop} \label{prop:evalues} 
 Let $P(t,x,\xi,\lambda)$ be the characteristic polynomial \eqref{def:P} of the principal symbol $A(t,x,\xi)$ \eqref{symbolA}. 
We assume initial hyperbolicity \eqref{init:hyp}. Let $\o = (x,\xi,\l) \in \G.$ If $\d_\l P(0,\o) = 0 \neq \d_\l^2 P(0,\o),$ then for the branches $\l$ of eigenvalues of $A$ which coalesce at $(0,x,\xi),$ there holds
  \begin{equation} \label{non-R}
  \left. \begin{aligned} \mbox{$\l(\cdot,x,\xi)$ is differentiable at $t = 0$} \\ \Im m \, \d_t \l(0,x,\xi) \neq 0 \end{aligned}\right\} \iff \left\{\begin{aligned} \d_t P(0,\o) = 0, \\ (\d^2_{t\l} P(0,\o))^2 < (\d^2_t P \d_\l^2 P)(0,\o).\end{aligned}\right.
   \end{equation}
 \end{prop}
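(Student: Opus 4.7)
\emph{Proof plan.} I will factor $P$ locally in its last argument, around the double root at $\omega = (x,\xi,\lambda)$, and read off the two conditions as statements on the coefficients of the resulting monic quadratic. By hypothesis, $P(0,\omega)=\partial_\lambda P(0,\omega)=0\neq \partial_\lambda^2 P(0,\omega)$, so $\lambda$ is an exact double root of $P(0,x,\xi,\cdot)$. Using the contour integrals
\[
s_k(t,x,\xi) := \frac{1}{2\pi i} \oint_{\gamma} \mu^{k}\, \frac{\partial_\mu P(t,x,\xi,\mu)}{P(t,x,\xi,\mu)}\, d\mu, \qquad k=1,2,
\]
on a small circle $\gamma$ enclosing $\lambda$ and no other root of $P(0,x,\xi,\cdot)$---equivalently, Malgrange's smooth preparation---I may write, for $(t,x,\xi)$ near $(0,x,\xi)$,
\[
P(t,x,\xi,\mu) = Q(t,\mu)\, E(t,x,\xi,\mu), \qquad Q(t,\mu) = (\mu-\lambda)^{2} + a(t)(\mu-\lambda) + b(t),
\]
with $C^{\infty}$ coefficients $a,b$ satisfying $a(0)=b(0)=0$, and with $E$ smooth and nonvanishing at $(0,\omega)$.

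Next, I analyze the discriminant $\Delta(t):=a(t)^{2}/4 - b(t)$ of $Q(t,\cdot)$. The two branches of eigenvalues coalescing at $(0,x,\xi,\lambda)$ are
\[
\lambda_{\pm}(t) = \lambda - \tfrac{a(t)}{2} \pm \sqrt{\Delta(t)}.
\]
Since $a(0)=b(0)=0$, one has $\Delta(t) = -b'(0)\,t + O(t^{2})$. If $b'(0)\neq 0$, then $\sqrt{\Delta(t)}$ behaves like $|t|^{1/2}$, and neither branch is right-differentiable at $t=0$. If $b'(0) = 0$, then $\Delta(t) = c\, t^{2} + O(t^{3})$ with $c := a'(0)^{2}/4 - b''(0)/2$, so that
\[
\lambda_{\pm}(t) = \lambda + \bigl(-\tfrac{a'(0)}{2} \pm \sqrt{c}\bigr)\, t + O(t^{2})
\]
is right-differentiable at $0$ with derivative $-a'(0)/2 \pm \sqrt{c}$, whose imaginary part is nonzero if and only if $c < 0$.

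Finally, I translate the pair of conditions $b'(0)=0$ and $c<0$ into derivative conditions on $P$. Evaluating derivatives of $P = Q\,E$ at $(0,\omega)$, and using $Q(0,\lambda) = \partial_{\mu} Q(0,\lambda) = 0$, I obtain $\partial_{\lambda}^{2} P = 2E$ and $\partial_{t} P = b'(0)\, E$, so $b'(0) = 0 \iff \partial_{t} P(0,\omega) = 0$. Once this vanishing is enforced, the same computation gives $\partial_{t\lambda}^{2} P = a'(0)\, E$ and $\partial_{t}^{2} P = b''(0)\, E$ at $(0,\omega)$; since $E^{2} > 0$, the inequality $a'(0)^{2} < 2 b''(0)$ is equivalent to $(\partial_{t\lambda}^{2} P)^{2} < \partial_{t}^{2} P \cdot \partial_{\lambda}^{2} P$, which is the stated condition. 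The only delicate step is the smooth preparation producing smooth $a,b$; the residue formulas above handle this directly, and the remainder of the argument is a Taylor expansion of $\sqrt{\Delta(t)}$ together with algebraic bookkeeping.
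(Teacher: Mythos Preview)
Your proof is correct and follows the same overall strategy as the paper: isolate the monic quadratic factor of $P$ carrying the two coalescing roots, then read both conditions off the Taylor expansion of its discriminant. The difference lies in how the quadratic is obtained. The paper block-diagonalizes the matrix $A$ to extract a $2\times 2$ block $A_0$, whose characteristic polynomial $P_0(\lambda)=\lambda^2-\lambda\,\mathrm{tr}\,A_0+\det A_0$ is the quadratic, and then computes $\partial_t\Delta(0)$ and $\partial_t^2\Delta(0)$ directly from $\mathrm{tr}\,A_0$ and $\det A_0$; you instead use smooth Weierstrass preparation (via the Newton--sum contour integrals) to factor $P=Q\,E$ without invoking the matrix structure. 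Your route stays entirely at the level of the polynomial and, in fact, makes explicit a step the paper leaves implicit: translating the conditions on the quadratic's coefficients back to conditions on the derivatives of $P$ through the nonvanishing cofactor $E$ (resp.\ $P_1$ in the paper). One small point worth stating for completeness: the assertion ``$E^2>0$'' uses that $E$ is real at $(0,\omega)$, which follows because $P$ has real coefficients and $\lambda$ is real by initial hyperbolicity, so the power sums $s_k$ are real, hence $a,b$ are real, hence $Q$ is real and $E=P/Q$ is real.
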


\begin{proof}
 We assume $\l \in C^2.$ The proof in the general case is postponed to Appendix \ref{app:eigenvalue}. For $t$ in a neighborhood of $0,$ there holds $P(t,x,\xi,\l(t,x,\xi)) \equiv 0.$ Differentiating with respect to $t,$ we find
$$ 
  \d_t P(0,\o) + \d_t \l(0,x,\xi) \d_\l P(0,\o) = 0.
$$ 
Since $\lambda(0,x,\xi)$ is real-valued, by reality of $P,$ the derivatives $\d_t P$ and $\d_\l P$ are real. If we assume $\Im m \, \d_t \l(0,x,\xi) \neq 0,$ then there holds $\d_t P(\o) = \d_\l P(\o) = 0.$ Differentiating again with respect to $t,$ we find
 \begin{equation} \label{p3}
 \d^2_t P(0,\o) + 2 \d_t \l(0,x,\xi) \d_{t\l}^2 P(0,\o) + (\d_t \l(0,x,\xi))^2 \d^2_\l P(0,\o)  =0 .
 \end{equation}
 Equation \eqref{p3}, a second-order polynomial equation in $\d_t \l(0,x,\xi),$ has non-real roots if and only if the second condition in the right-hand side of \eqref{non-R} holds. 
 \end{proof}

We now examine the situation in which a double and semi-simple eigenvalue $\l_0$ belongs to a branch $\l$ of double and semi-simple eigenvalues at $t = 0,$ which all satisfy conditions \eqref{non-R}, that is:

\begin{hyp} \label{hyp} For some  $\o_0 = (x_0,\xi_0,\l_0) \in \G$ satisfying \eqref{cond:coal} and \eqref{non-R}, and such that $\l_0$ is a semi-simple eigenvalue of $A(0,x_0,\xi_0),$ for all $\o = (x,\xi,\l)$ in a neighborhood of $\o_0$ in $\G,$ there holds
 $$ \d_\l P(0,\o) = \d_t P(0,\o)  = 0, $$
 and $\l$ is a semi-simple eigenvalue of $A(0,x,\xi).$ 
\end{hyp}

Semi-simplicity of an eigenvalue means simpleness as a root of the minimal polynomial.

Condition $(\d^2_{t\l} P(\o))^2 < (\d^2_t P \d_\l^2 P)(\o)$ is open; in particular if it holds at $\o_0 \in \G,$ it holds at all  nearby $\o \in \G.$ Thus under Hypothesis \ref{hyp}, conditions \eqref{cond:coal} and \eqref{non-R} hold in a neighborhood of $\o_0$ in $\G.$  

 \begin{theo} \label{th:1} Assume that  Hypothesis {\rm \ref{hyp}} holds, and that the other eigenvalues of $A(0,x_0,\xi_0)$ are simple. Then the Cauchy problem for system \eqref{1st-system} is ill-posed in the vicinity of the reference solution $\phi,$ in the sense of Definition {\rm \ref{def:insta}}. 
 \end{theo}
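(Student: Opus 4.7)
The plan is to reduce Theorem \ref{th:1} to the paper's general instability result (Theorem \ref{th:main}) by verifying Assumption \ref{ass:main} in the semi-simple case $\ell=1$. The heart of the matter is to exhibit, on a conic neighborhood of $(x_0,\xi_0)$, a smooth $2\times 2$ block of the principal symbol that carries two smooth branches of eigenvalues $\l_\pm(t,x,\xi)$ leaving the real axis with non-zero speed at $t=0$, together with a decoupled complementary block whose spectrum stays uniformly away from $\l_0$.

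The first step is a smooth block decomposition. Because $\l_0$ is a semi-simple double eigenvalue of $A(0,x_0,\xi_0)$ and the remaining eigenvalues are simple, the Dunford-Riesz contour integral $\Pi(t,x,\xi) = \frac{1}{2\pi i}\oint_\gamma (z\,\Id - A(t,x,\xi))^{-1}\,dz$ around a small circle $\gamma$ enclosing $\l_0$ defines a smooth rank-two spectral projector on a full neighborhood of $(0,x_0,\xi_0)$. A smooth frame of $\Range\,\Pi$ then conjugates $A$ into a block-diagonal symbol whose ``bad'' block $B(t,x,\xi)$ is $2\times 2$ with eigenvalues exactly the coalescing branches $\l_\pm$. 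Hypothesis \ref{hyp} states that on a neighborhood of $\o_0$ in $\G$ the eigenvalue $\l(0,x,\xi)$ is semi-simple of multiplicity two, which forces $B(0,x,\xi)=\l(0,x,\xi)\Id_2$; Taylor expanding in $t$ yields
\[
 B(t,x,\xi) = \l(0,x,\xi)\Id_2 + t\,C(x,\xi) + t^2 R(t,x,\xi).
\]
Proposition \ref{prop:evalues}, applied to the trace-free part of $B$, reads as the condition that the discriminant of $C(x,\xi)$ is negative on the whole neighborhood, so $C(x,\xi)$ has complex-conjugate eigenvalues $\mu\pm i\nu$ with $\nu(x,\xi)\geq\nu_0>0$. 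The branches $\l_\pm(t,x,\xi)=\l(0,x,\xi)+t(\mu\pm i\nu)(x,\xi)+O(t^2)$ are therefore smooth, and their imaginary parts grow linearly with $t$ for small $t>0$.

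This smooth block structure, combined with simplicity (hence pointwise symmetrizability) of the remaining eigenvalues of $A(0,x_0,\xi_0)$, is exactly the content of Assumption \ref{ass:main} with $\ell=1$: a smooth ``unstable'' branch of the linearized principal symbol whose imaginary part grows linearly in $t$ at a rate proportional to $|\xi|$. Theorem \ref{th:main} then supplies wave-packet perturbations of $\phi(0)$ localized at frequency $n\xi_0$ with $H^m$ norm of order $n^{-M}$, whose linearized evolution is amplified by a factor of order $\exp(c\,n\,t^2)$. Taking $t_n\sim\sqrt{(\log n)/n}\to 0$ yields arbitrarily large polynomial-in-$n$ amplification that contradicts \eqref{stab} for any admissible $\a$.

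The main conceptual obstacle lies in Steps 1--2: translating ``semi-simple on $\G$ near $\o_0$'' into genuinely smooth $t$-differentiable branches $\l_\pm$. Without semi-simplicity on a whole neighborhood one would only obtain branches of H\"older exponent $1/2$ (a Puiseux rather than a Taylor expansion), which is the weaker $\ell=1/2$ regime of Theorem \ref{th:1/2} and yields only $\exp(c\,n\,t^{3/2})$-amplification. The remaining quantitative machinery---constructing the pseudo-differential flow that realizes the amplification, and dominating the quasi-linear nonlinearity on the short interval where exponential amplification is harvested---is provided by the approximation theory of \cite{T3} and the Duhamel estimates of Appendix \ref{sec:Duhamel}, all packaged into Theorem \ref{th:main}.
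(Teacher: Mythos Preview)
Your proposal is correct and follows essentially the same route as the paper's proof in Section~\ref{sec:proof1}: block-diagonalize via the spectral projector, use semi-simplicity on the whole branch to conclude that the $2\times2$ block equals $\mu(0,x,\xi)\,\Id_2$ at $t=0$, Taylor-expand in $t$ to obtain a linear-in-$t$ structure whose first-order coefficient has complex-conjugate eigenvalues (by Proposition~\ref{prop:evalues}), and then verify Assumption~\ref{ass:main} with $\ell=1$ so that Theorem~\ref{th:main} applies. The paper is somewhat more explicit about setting $\mu=\Re e\,\l_\pm$, passing to the bicharacteristic frame, and writing down the exponential bounds \eqref{bd:S:1}--\eqref{low:S:1} for the symbolic flow, but these are routine once the key cancellation $A_{(0)}(0,x,\xi)=\mu(0,x,\xi)\,\Id$ is in hand.
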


\begin{figure}
\scalebox{.8}{
\beginpgfgraphicnamed{figure-sp1}
 \begin{tikzpicture}
 
 \begin{scope}[>=stealth]
 \draw[line width=.5pt][->] (-5,0) -- (5,0);
 
 \draw[line width=.5pt][->] (0,-2) -- (0,4);
 
 \draw (5cm,-2pt) -- (5cm,-2pt) node[anchor=north] {${\mathbb R}$};
 
  \draw (3cm,110pt) -- (3cm,110pt) node[anchor=north]{$\mbox{sp}\,A(t,x,\xi) \subset {\mathbb C}$};
  \draw (2pt,4cm) -- (2pt,4cm) node[anchor=west] {$i {\mathbb R}$} ; 
  
  \draw (-3,1.5,1pt) -- (-3,1.5, 1pt) node[anchor=north] {$\boxed{\ell = 1}$} ; 
  
  \filldraw (1,0) circle (.1cm) ; 
  
  \draw (.6cm,-2pt) -- (.6cm,-2pt) node[anchor=north] {$\lambda_0$} ; 
  
  \draw[very thick,->] (1cm,0) -- (2cm, -1cm) ; 
  
  \draw[very thick] (2cm,-1cm) -- (3cm,-2cm) ;

  \draw (3.7,2.4,2pt) -- (3.7,2.4,2pt) node[anchor=west] {$\lambda_+(t,x,\xi)$} ; 
  
  \draw[very thick,->] (1cm,0) -- (2cm,1cm) ;
  
  \draw[very thick] (2cm,1cm) -- (3cm, 2cm) ; 
  
  \draw (3.7,-.9,2pt) -- (3.7,-.9,2pt) node[anchor=west] {$\lambda_-(t,x,\xi)$} ; 
  
  \end{scope}
  
 \end{tikzpicture}
\endpgfgraphicnamed
}
\caption{In Theorem \ref{th:1}, corresponding to $\ell = 1$ in Assumption \ref{ass:main}, a bifurcation occurs at $(0,x,\xi)$ in the spectrum of the principal symbol, for all $(x,\xi)$ near $(x_0,\xi_0).$ The eigenvalues are time-differentiable. The arrows indicate the direction of time.}
\end{figure}

\begin{figure} \label{fig:domain:1}
\scalebox{.8}{
\beginpgfgraphicnamed{figure-tstar1}
 \begin{tikzpicture}
 
 \begin{scope}[>=stealth]
 \draw[line width=.5pt][->] (-5,0) -- (5,0);
 
 \draw[line width=.5pt][->] (0,-2) -- (0,3.5);
 
 \draw[line width = 2pt]  (-4,0) -- (4,0); 
 
 \draw (2cm,1pt) -- (2cm,1pt) node[anchor=south] {$t_\star \equiv 0$} ; 
 
 \draw(-.7,1pt) -- (-.7,1pt) node[anchor=north] {$(x_0,\xi_0)$} ;
 
 \filldraw (0,0) circle (0.1cm) ;  
  \draw (5cm,1pt) -- (5cm,1pt) node[anchor=north] {$(x,\xi)$};
 
  \draw (1pt,3.5cm) -- (1pt,3.5cm) node[anchor=west] {$t$} ; 
  
  \draw (-3,1.5,1pt) -- (-3,1.5, 1pt) node[anchor=north] {$\boxed{\ell = 1}$} ; 
  \end{scope}
  
  \put(17,-33){${\rm hyperbolic}$}
  
  \put(23,33){${\rm elliptic}$}
 \end{tikzpicture}
\endpgfgraphicnamed
}
\caption{In Theorem \ref{th:1}, the transition occurs at $t = 0,$ uniformly near $(x_0,\xi_0).$}
\end{figure}
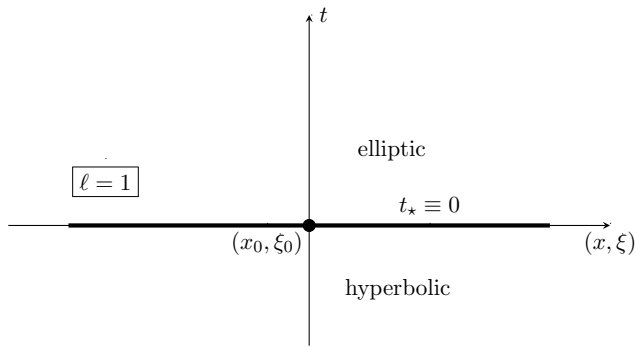

An analogue to Theorem \ref{th:1} in the high-frequency regime is the result of Y.~Lu \cite{Lu}, in which it is shown how {\it higher-order resonances}, not present in the data, may destabilize precise WKB solutions.  %
 Theorem \ref{th:1} is illustrated by the Burgers systems of Sections \ref{sec:ex} and \ref{sec:Burgers}. 

 \subsection{Remarks} \label{sec:extension} 

Taken together,  our results assert that, for principal symbols with eigenvalues of multiplicity at most two,
 if one of 
 \begin{itemize}
 \item[(a)] condition \eqref{cond:ell}, 
 \item[(b)] conditions \eqref{cond:coal}-\eqref{cond:coal2}, 
 \item[(c)] Hypothesis \ref{hyp}
 \end{itemize}
  holds, then ill-posedness ensues.
 
 \medskip
 
We note that condition \eqref{cond:coal2} is stable by perturbation, and that conditions \eqref{cond:coal}-\eqref{cond:coal2} are generically {\it necessary and sufficient} for occurence of non-real eigenvalues in symbols that are initially hyperbolic. Indeed: 
\begin{itemize}
\item non-real eigenvalues may occur only if the initial principal symbol has double eigenvalues, implying necessity of condition \eqref{cond:coal}, and 
\item as shown by the proof of Theorem \ref{th:1/2}, the opposite sign $(\d_\l^2 P \d_t P)(0,\o_0) < 0$ in condition \eqref{cond:coal2} implies {\it real} eigenvalues for small $t > 0.$
\end{itemize} 
Here {\it generically} means that the above discussion leaves out the degenerate case $\d_t P(0,\o_0) = 0.$ 

\medskip

We consider the case $\d_t P(0,\o_0) = 0$ in Theorem \ref{th:1}. Note however that there is a significant gap between (b) and (c), the assumptions of Theorems \ref{th:1/2} and \ref{th:1}. Indeed, while condition $\d_t P = 0$ in Hypothesis \ref{hyp} lies at the boundary of the case considered in Theorem \ref{th:1/2}, Hypothesis \ref{hyp} describes a situation which is quite degenerate, since we ask for the closed conditions $\d_\l P = 0,$ $\d_t P = 0$ (and also for semi-simplicity) to hold {\it on a whole branch of eigenvalues} near $\l_0.$ 

\medskip

 Non-semi-simple eigenvalues are typically not differentiable at the coalescing point, the canonical example being
\be \label{ex}
 \left(\begin{array}{cc} 0 & 1 \\ \pm t^\a & 0 \end{array}\right)
\ee
with $\a = 1.$ The proof of Theorem \ref{th:1/2} shows that the principal symbol at $(t,x_0,\xi_0)$ can be reduced to \eqref{ex}, with $\a = 1$ and a negative sign, implying non-real, and non-differentiable eigenvalues.

By constrast, semi-simple eigenvalues admit one-sided directional derivatives (see for instance Chapter 2 of T.~Kato's treatise \cite{K}, or \cite{SW,Tp}). In particular, there is some redundancy in our assumptions of semi-simplicity and condition \eqref{non-R}.  

\medskip

We finally observe that our analysis extends somewhat beyond the framework of Theorems \ref{th:0}, \ref{th:1/2} and \ref{th:1}. Consider for instance in one space dimension a smooth principal symbol of form 
\be \label{ex:not}
 \xi \left(\begin{array}{cc} 0 & 1 \\ x^2 t - t^2 + t^3 a(x) & 0 \end{array}\right), \quad \mbox{with eigenvalues $\l_\pm = \pm \xi \big( x^2 t - t^2 + t^3 a(x) \big)^{1/2}$},
\ee 
with $a \in \R,$ so that the eigenvalues are time-differentiable only at $x = 0:$ conditions \eqref{non-R} hold only at $x = 0.$ Semi-simplicity does not hold at $(t,x) = (0,0).$ Condition \eqref{cond:coal2} does not hold at $(t,x,\l) = (0,0,0).$ However, by the implicit function theorem, eigenvalues cross at $(s(x), x)$ for a smooth $s$ with $s(x) = x^2 + O(x^3).$ By inspection, condition \eqref{cond:coal2} holds at $(s(x),x).$ Since $x$ is arbitrarily small, Theorem \ref{th:1/2} applies, yielding instability.

\subsection{Examples} \label{sec:ex}

{\it Burgers systems.} Our first
 example is the family of Burgers-type systems in one space dimension
\begin{equation} \label{burgers}
\D_t \begin{pmatrix} u_1\\u_2\end{pmatrix} + \begin{pmatrix} u_1 & - b(u)^2 u_2\\u_2&u_1\end{pmatrix}\D_x\begin{pmatrix} u_1\\u_2\end{pmatrix} = F(u),
\end{equation}
in which $b > 0$ and $F = (F_1, F_2) \in \R^2$ are smooth. When $b(u)$ is not constant, the instability result for scalar equations in \cite{LMX} does not directly apply. We show in particular that if $F_2$ is not initially zero, then Theorem \ref{th:1} yields ill-posedness for the Cauchy problem for \eqref{burgers}.  Under the same condition $F_2(t = 0) \not\equiv 0,$ Theorem \ref{th:1} also applies to two-dimensional systems 
$$ \d_t u + \left(\begin{array}{cc} u_1 \d_{x_1} & - b(u)^2 u_2 (\d_{x_1} + \d_{x_2}) \\ u_2 (\d_{x_1} + \d_{x_2}) & u_1 \d_{x_1} \end{array}\right) u = F(u).$$
Details are given in Section \ref{sec:Burgers} and \ref{sec:burgers:2}.  

\medskip

{\it Van der Waals gas dynamics.} Our results also apply to the one-dimensional, isentropic Euler equations in Lagrangian coordinates
  \begin{equation} \label{vdw-ex}
   \left\{ \begin{aligned} \d_t u_1 + \d_x u_2  & = 0, \\ \d_t u_2 + \d_x p(u_1) &  = 0, \end{aligned}\right.
   \end{equation}
 with a Van der Waals equations of state, for which there holds $p'(u_1) \leq 0,$ for some $u_1 \in \R.$
We prove that if $(\phi_1,\phi_2)$ is a smooth solution such that, for some $x_0 \in \R,$ there holds 
$$ {\rm (i)}\, p'(\phi_1(x_0,0)) < 0, \quad{ \mbox{or} } \quad {\rm (ii)}\,p'(\phi_1(0,x_0)) = 0, \,\,p''(\phi_1(0,x_0)) \d_x \phi_2(0,x_0) > 0,$$ then the initial-value problem for \eqref{vdw-ex} is ill-posed in any neighborhood of $\phi.$ Condition (i) is an ellipticity assumption (under which Theorem \ref{th:0} applies), and condition (ii) is an open condition on the boundary of the domain of hyperbolicity (under which Theorem \ref{th:1/2} applies). System \eqref{vdw-ex} possesses the formal conserved quantity $\dsp{\int_{\R} (|v(t,x)|^2 + 2 P(u(t,x))) \, dx,}$ where $P' = p.$ As briefly discussed in Section \ref{sec:local}, the instabilities we put in evidence are local and do not preclude nor are contradicted by global stability properties of the system, such as formal conservation laws.   This example is developed in Section \ref{sec:VdW}.

\medskip

{\it Klein-Gordon-Zakharov systems.}
Our last class of examples is given by the following one-dimensional Klein-Gordon systems coupled to wave equations with Zakharov-type nonlinearities:
\begin{equation} \label{kgo2} \left\{ \begin{aligned} \d_t \left(\begin{array}{c} u \\ v \end{array}\right) + \d_x \left(\begin{array}{c} v \\ u \end{array}\right) +  \left(\begin{array}{cc} \a & 0 \\ 0 & 0 \end{array}\right) \d_x \left(\begin{array}{c} n \\ m \end{array}\right) & = & (n + 1) \left(\begin{array}{c} v \\ - u \end{array}\right), \\ \d_t \left(\begin{array}{c} n \\ m \end{array}\right) +  c \d_x \left(\begin{array}{c} m \\  n \end{array}\right)  +  \left(\begin{array}{cc} \a & 0 \\ 0 & 0 \end{array}\right) \d_x \left(\begin{array}{c} u \\ v \end{array}\right) & = &  \d_x  \left(\begin{array}{c} 0 \\ u^2 + v^2 \end{array}\right).\end{aligned}\right.\end{equation}
 The linear differential operator in $(u,v)$ in the subsystem in $(u,v)$ is a Klein-Gordon operator, with critical frequency scaled to 1. The linear differential operator in $(n,m)$ in the subsystem in $(n,m)$ is a wave operator, with acoustic velocity $c.$ The source in $\d_x(u^2 + v^2)$ is similar to the nonlinearity in the Zakharov equation \cite{SS,T2}. Systems of the form \eqref{kgo2}, with $\a = 0,$ are used to describe laser-matter interactions; in the high-frequency limit, they can be formally derived from the Maxwell-Euler equations \cite{CEGT}. We consider the case $|c| < 1,$ corresponding to the physical situation of an acoustic velocity being smaller than the characteristic Klein-Gordon frequencies.

  It was shown in \cite{CEGT} that, for $\a = 0,$ system \eqref{kgo2} is conjugated via a non-linear change of variables to a semi-linear system, implying in particular well-posedness in $H^s(\R),$ for $s > 1/2.$

Here we show that if $\a \neq 0$ and $\phi = (u,v,n,m)$ is a smooth solution such that
$$ u(0,x_0) = 0, \quad v(0,x_0) = - \frac{c}{2\a}, \quad \a c \d_x u(0,x_0) > 0, \quad \mbox{for some $x_0 \in \R$},$$
then Theorem \ref{th:1/2} applies and the Cauchy problem for \eqref{kgo2} is ill-posed in the vicinity of $\phi.$ This situations is analogous to the Turing instability, in which $0$ is a stable equilibrium for both ordinary differential equations $X' = A X$ and $X ' = BX,$ but not for $X' = (A + B) X.$

 We come back to this example in detail in Section \ref{sec:KG}.

\section{Main assumption and result} \label{sec:main}

 Theorems \ref{th:0}, \ref{th:1/2} and \ref{th:1} can all be cast in the same framework, which we now present. 

\subsection{Bounds for the symbolic flow of the principal symbol} \label{sec:ass}

\subsubsection{Degeneracy index and associated parameters} \label{sec:ass:param}

Let $\ell \in \{0, 1/2, 1\}$\footnote{Theorem \ref{th:0} corresponds to the case $\ell = 0,$ while Theorem \ref{th:1/2} corresponds to the case $\ell = 1/2,$ and Theorem \ref{th:1} to $\ell = 1.$}. Associated with $\ell,$ define
 $$ h = \frac{1}{1 + \ell} = \left\{\begin{aligned} 1, & \quad \mbox{if $\ell = 0,$} \\ 2/3, & \quad \mbox{if $\ell = 1/2,$} \\ 1/2, & \quad \mbox{if $\ell = 1,$}\end{aligned}\right. \quad \mbox{and} \quad \zeta = \left\{\begin{aligned} 0, & \quad \mbox{if $\ell \in \{0,1 \},$} \\ 1/3, & \quad \mbox{if $\ell = 1/2.$}\end{aligned}\right.$$
Parameters $h$ and $\zeta$ define our time, space and frequency scales.

\subsubsection{The time transition function and the elliptic domain} Introduce a time transition function $t_\star$ such that
 
 \smallskip
 
 \noindent $\bullet$ if $\ell = 0$ or $\ell = 1,$ then $t_\star \equiv 0,$ 
 
 \smallskip
 
 \noindent $\bullet$ if $\ell = 1/2,$ then $t_\star$ depends smoothly on $(x,\xi)$ and singularly on $\e,$ and is slowly varying in $x,$ in the sense that there holds for some smooth function $\theta_\star:$  
 \be \label{def:tstar} t_\star(\e,x,\xi) = \e^{- h} \theta_\star(\e^{1-h} x, \xi), \quad \mbox{with} \quad  \theta_\star \geq 0, \,\,\, \theta_\star(0,\xi_0) = 0, \,\,\, \nabla_{x,\xi} \theta_\star(0,\xi_0) = 0, \,\, \mbox{if $\ell = 1/2.$}
\ee
Define then the elliptic domain\footnote{The elliptic domains corresponding to Theorems \ref{th:1/2} and \ref{th:1} are pictured on Figures 3 and 5.} by \be \label{def:mathcalD} {\mathcal D} := \big\{ (\t;t,x,\xi), \quad t_\star(x,\xi) \leq \t \leq t \leq T(\e),  \quad |x| \leq \delta, \quad |\xi - \xi_0| \leq \delta \e^{\zeta} \big\},\ee
for some $\xi_0 \in \R^d \setminus \{ 0 \},$ some $\delta > 0,$ with
\begin{equation} \label{max-time}
  T(\e)^{\ell+1} =  T_\star |\log\e|, \quad T_\star > 0.
    \end{equation}

 \subsubsection{The rescaled and advected principal symbol} \label{ass:a}

We consider a reference solution $\phi$ satisfying \eqref{phi}, and the associated principal symbol \eqref{symbolA}. The rescaled and advected principal symbol is\footnote{In the elliptic case, corresponding to Theorem \ref{th:0}, we have $\ell = 0,$ $h = 1,$ $Q \equiv \Id,$ $\mu \equiv 0,$ so that $(x_\star,\xi_\star) \equiv (x,\xi),$ and then $A_\star$ is simply $A_\star(\e,t,x,\xi) = A(\e t,x,\xi).$}
\be \label{def:Astar}
 A_\star(\e,t,x,\xi) := %
  \Big( Q (A - \mu) Q^{-1} \Big)\big(\e^h t, \, x_0 + \e^{1-h} x_\star(\e^h t,x,\xi), \, \xi_\star(\e^{h} t, x, \xi) \, \big),
\ee
for some $x_0 \in \R^d,$ with $0 < h \leq 1$ as in \eqref{max-time}, where, in domain ${\mathcal D}:$  

\smallskip

\noindent $\bullet$ the symbol $\mu = \mu(t,x,\xi)$ is real and smooth, and $Q(t,x,\xi) \in \C^{N \times N}$ is smooth and pointwise invertible,
\smallskip

\noindent $\bullet$ the bicharacteristics $(x_\star,\xi_\star)$ solve
  \begin{equation} \label{bichar}
   \d_t \left(\begin{array}{c} x_\star \\ \xi_\star \end{array}\right) = \left(\begin{array}{c} - \d_\xi \mu \\ \e^{1-h} \d_x \mu \end{array}\right)\big(t, x_0 + \e^{1-h} x_\star, \xi_\star\big),  \quad \left(\begin{array}{c} x_\star \\ \xi_\star \end{array}\right)(0,x,\xi) = \left(\begin{array}{c} x \\ \xi \end{array}\right).
   \ee%
We assume that the symbol $A_\star$ is block diagonal, and for the blocks $A_{\star j}$ of $A_\star$ consider either the bound 
 \be \label{block:1} \e^{h-1} |\d_x^\a \d_\xi^\b A_{\star j}| \leq C_{\a\b} < \infty, \quad \mbox{for some $C_{\a\b} > 0,$ in ${\mathcal D},$ uniformly in $\e,$}\ee
 or the block structure 
 \be \label{block:2} \begin{aligned} \e^{h-1} A_{\star j} & = \left(\begin{array}{cc} 0 & \e^{h-1} A_{\star j12} \\ \e^{1-h} A_{\star j21} & 0 \end{array}\right), \\ & \mbox{with $|\d_x^\a \d_\xi^\b A_{\star j 12}| + |\d_x^\a \d_\xi^\b A_{\star j21}| \leq C_{\a\b}$ for some $C_{\a\b} > 0,$ in ${\mathcal D},$ uniformly in $\e.$} \end{aligned}
\ee

 If $\ell = 0,$ then $h = 1.$ As a consequence, in the block diagonalization of $A_\star$ all blocks satisfy \eqref{block:1}, by the assumed smoothness of the components of $A_\star.$ 
 
 If $\ell = 1/2,$ we assume that some block of $A_\star$ satisfies \eqref{block:2}, and the other blocks of $A_\star$ satisfy \eqref{block:1}.
 
 If $\ell = 1,$ we assume that all blocks of $A_\star$ satisfy \eqref{block:1}. 

\subsubsection{Symbolic flow and growth functions}
The symbolic flow $S = S(\t;\e, t,x,\xi)$ of $A_\star$ is defined as the solution to the family of linear ordinary differential equations
 \begin{equation} \label{tildefund}
  \d_t S  + i \e^{h-1} {A}_\star(\e,t,x,\xi) S  = 0,
  \qquad S(\t;\t)  \equiv \Id.
  \end{equation} 
In the above Section \ref{ass:a}, we assumed that $A_\star$ is block diagonal. Accordingly, the solution $S$ to \eqref{tildefund} is block diagonal, with blocks $S_{(1)},S_{(2)},\dots$ 

Let $\g^\pm(x,\xi)$ be two continuous functions defined on $\{ |x| \leq \delta, \,\ |\xi - \xi_0| \leq \delta \e^{\zeta}\},$ such that $\g^-(0,\xi_0) = \g^+(0,\xi_0),$ and
\begin{equation} \label{def:bf-e}
 {\bf e}_{\g^\pm}(\t;t,x,\xi) := \exp\left( \g^\pm(x,\xi)\big( (t - t_\star(x,\xi))_+^{\ell + 1} - (\t - t_\star(x,\xi))_+^{\ell + 1} \big) \right),
\ee
where $t_+ := \max(t,0)$ and the time transition function $t_\star$ is defined in \eqref{def:tstar}.
We understand ${\bf e}_{\g^\pm}$ as {\it growth functions}\footnote{In the elliptic case, we have $\ell = 0,$ $t_\star \equiv 0,$ so that the growth functions are simply ${\bf e}_{\g^\pm} = e^{\g^\pm (t - \t)}.$}, measuring how fast the solution $S$ to \eqref{tildefund} is growing, as seen in Assumption \ref{ass:main} below. The associated $\g^\pm$ are {\it rates of growth}. 

\subsubsection{Bounds} We postulate bounds for $S$ in the elliptic domain ${\mathcal D}$ in terms of the growth functions ${\bf e}_{\g^\pm}:$

\begin{ass} \label{ass:main} For some $(x_0,\xi_0) \in U \times (\R^d \setminus \{0 \}),$ some $\ell,$ $Q,\mu,\g^\pm,$ and $t_\star$ as above, any $T_\star > 0,$ for some $\delta > 0,$ for symbol $A_\star$ satisfying the structural assumptions of Section {\rm \ref{ass:a}}, for some $\e_0 > 0,$ there holds for all $0 < \e < \e_0,$ for the solution $S$ to \eqref{tildefund}:%
 \begin{itemize}
 \item the lower bound, for some smooth family of unitary vectors $\vec e(x) \in \C^N,$ for $|x| < \delta:$ 
  \begin{equation} \label{ass:low}
  \e^{-\zeta} {\bf e}_{\g^-}(0;T(\e),x,\xi_0) \lesssim  \big| \, S(0;T(\e),x,\xi_0) \vec e(x)\,\big|\,,
  \ee
 \item  %
  the upper bound
   for the $j$th diagonal block $S_j$ of $S,$  for $(\t,t,x,\xi) \in {\mathcal D}:$ %
  \begin{equation} \label{ass:up}
   | S_{(j)}(\t;t,x,\xi)| \lesssim \left(\begin{array}{cc} 1 & \e^{-\zeta} \\ \e^\zeta & 1 \end{array}\right) {\bf e}_{\g^+}(\t;t,x,\xi).
   \ee
 \end{itemize}
\end{ass}

In \eqref{ass:low}, notation $a \lesssim b,$ where $a$ and $b$ are functions of $(\e,\t,t,x,\xi),$ is used to mean existence of a {\it uniform} bound
 \be \label{notation:lesssim}
  a(\e,\t,t,x,\xi) \leq C |\ln \e|^{C'} b(\e,\t,t,x,\xi),
 \ee
 where $C > 0$ and $C' > 0$ do not depend on $(\e,\t,t,x,\xi).$ This means in particular that powers of $|\ln \e|$ play the role of constants in our analysis. They are indeed destined to be absorbed by arbitrarily small powers of $\e.$

In \eqref{ass:up}, we use notation $\lesssim$ for matrices. Here we mean {\it block-wise} inequalities ``modulo constants", in the sense of \eqref{notation:lesssim}. That is, in \eqref{ass:up} we assume that the $j$-th diagonal block of $S$ itself has a block structure, with the top left block being bounded {\it entry-wise} by ${\bf e}_{\g^+},$ the top right block being bounded {\it entry-wise} by $\e^{-\zeta} {\bf e}_{\g^+},$ etc. 

We further comment on Assumption \ref{ass:main} in Section \ref{guide:ass}.   

\subsection{Hadamard instability} \label{sec:th}

The non-linear
  information
   contained in
  Assumption
  \ref{ass:main} on the symbolic flow of the principal symbol \eqref{symbolA} transposes into an instability result for the quasi-linear system \eqref{1st-system}: 

\begin{theo} \label{th:main} Under Assumption {\rm \ref{ass:main}}, system \eqref{1st-system} is ill-posed in the vicinity of the reference solution $\phi,$ in the sense of Definition {\rm \ref{def:insta}}.
 \end{theo}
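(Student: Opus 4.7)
I argue by contradiction. Fix parameters $(m,\alpha,\delta,T)$ satisfying \eqref{param} and suppose there is a neighborhood $\mathcal U$ of $\phi(0)$ in $H^m(U)$ such that every datum in $\mathcal U$ generates a solution of \eqref{1st-system} in $L^\infty([0,T], W^{1,\infty}(B(x_0,\delta)))$ satisfying the H\"older bound \eqref{stab}. The aim is to build a family $u_\varepsilon(0)=\phi(0)+v_\varepsilon(0)\in\mathcal U$ with $\|v_\varepsilon(0)\|_{H^m}\to 0$ for which $\|u_\varepsilon(t_\varepsilon)-\phi(t_\varepsilon)\|_{W^{1,\infty}(B(x_0,\delta))}$ grows faster than $\|v_\varepsilon(0)\|_{H^m}^\alpha$ at some time $t_\varepsilon\in(0,T]$. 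The perturbation is taken as a semiclassical wave packet
\[
 v_\varepsilon(t,x)=\varepsilon^k\, V\!\bigl(\varepsilon^{-h}t,\,\varepsilon^{-(1-h)}(x-x_0)\bigr)\,e^{i(x-x_0)\cdot\xi_0/\varepsilon^{1-h}},
\]
with $V(0,y)$ polarized along the vector $\vec e(x_0)$ from \eqref{ass:low}, transverse-frequency cutoff of width $\varepsilon^\zeta$, and $k$ chosen so that $\|v_\varepsilon(0)\|_{H^m}\sim \varepsilon^{k+d(1-h)/2-m(1-h)-d\zeta/2}\to 0$. In the rescaled variables $(\tau,y)=(\varepsilon^{-h}t,\varepsilon^{-(1-h)}(x-x_0))$, the linearized evolution at $\phi$ is precisely driven by the symbol $A_\star$ of \eqref{def:Astar} over rescaled times $\tau\in[0,T(\varepsilon)]$; the corresponding physical time $t_\varepsilon=\varepsilon^h T(\varepsilon)=(\varepsilon T_\star|\log\varepsilon|)^h$ tends to $0$, hence lies in $(0,T]$ for $\varepsilon$ small.

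\textbf{Linear amplification and nonlinear closure.} The pseudo-differential flow approximation of \cite{T3}, applied in the anisotropic $(h,\zeta)$-scales of Section \ref{sec:ass:param} to the linearization of \eqref{1st-system} at $\phi$, identifies the exact linear propagator with the quantization of the symbolic flow $S$ solving \eqref{tildefund}, modulo errors of size $\varepsilon^N$ for any prescribed $N$. The lower bound \eqref{ass:low}, evaluated on the polarized envelope, then produces at time $t_\varepsilon$ a $W^{1,\infty}$-amplification $\gtrsim \varepsilon^{k-\zeta-\gamma^-(x_0,\xi_0)\,T_\star}$ of the linearized flow applied to $v_\varepsilon(0)$. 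For the full nonlinear perturbation, write Duhamel's formula $v_\varepsilon(t)=\mathbf S(t)v_\varepsilon(0)+\int_0^t\mathbf S(t;s)\mathcal N\bigl(v_\varepsilon(s)\bigr)\,ds$, where $\mathbf S$ denotes the linearized propagator and $\mathcal N$ is at least $\ell_0$-homogeneous in $v_\varepsilon$ by the assumption $\partial_u A_j=O(u^{\ell_0-2})$, $\partial_u F=O(u^{\ell_0-1})$. Tentatively assuming \eqref{stab} gives $\|v_\varepsilon(s)\|_{W^{1,\infty}}\lesssim \|v_\varepsilon(0)\|_{H^m}^\alpha$, so the nonlinear source is of size $\|v_\varepsilon(0)\|_{H^m}^{\alpha\ell_0}$ times a factor bounded via the upper estimate \eqref{ass:up}, which controls $\mathbf S(t;s)$ by $\varepsilon^{-\zeta}{\bf e}_{\gamma^+}\lesssim \varepsilon^{-\zeta-\gamma^+(x_0,\xi_0)\,T_\star}$ throughout the elliptic domain $\mathcal D$. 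The Duhamel remainder is therefore strictly dominated by the linear amplification precisely when $\alpha\ell_0>1$, consistent with $\alpha>1/2$ and $\ell_0\ge 2$.

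\textbf{Conclusion and main obstacle.} Once the Duhamel remainder has been absorbed, selecting $T_\star$ large enough that $\gamma^-(x_0,\xi_0)\,T_\star$ exceeds the relevant combination of $\alpha$, $k$, $m$, $h$, $\zeta$ makes the linear amplification $\varepsilon^{-\gamma^- T_\star}$ outpace any fixed positive power of $\|v_\varepsilon(0)\|_{H^m}^\alpha$, contradicting \eqref{stab}. The main obstacle is this nonlinear Duhamel bootstrap itself: it must be closed using only the very weak a priori control furnished by the inequality \eqref{stab} we intend to refute, and this forces a delicate balance between the two amplification rates $\gamma^\pm$ of Assumption \ref{ass:main}, the $\varepsilon^\zeta$ losses encoded in \eqref{ass:up}, the block structure \eqref{block:1}--\eqref{block:2} of $A_\star$, the homogeneity $\ell_0$, and the H\"older exponent $\alpha$. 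A subsidiary technical difficulty is to justify the pseudo-differential approximation and to transfer the matricial lower bound \eqref{ass:low} on $S\vec e$ into a pointwise $W^{1,\infty}$ lower bound on the quantized propagator applied to the wave packet; both steps rely crucially on \cite{T3} and on the anisotropic semiclassical calculus adapted to the scales introduced in Section \ref{sec:ass:param}.
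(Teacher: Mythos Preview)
Your overall strategy coincides with the paper's: a wave-packet datum, reduction to a linearized evolution governed by $A_\star$, Duhamel representation with the approximate solution operator from \cite{T3}, and contradiction of the assumed H\"older estimate \eqref{stab} via the lower bound \eqref{ass:low}. However, your wave packet carries the wrong oscillation scale. With $e^{i(x-x_0)\cdot\xi_0/\e^{1-h}}$, the packet sits at $\eta\sim\xi_0$ in the Fourier variable dual to $y=\e^{-(1-h)}(x-x_0)$; but the $\e^h$-semiclassical calculus \eqref{semicl:0} evaluates the symbol at $\e^h\eta\sim\e^h\xi_0\to 0$, not at $\xi_0$. Since $A$ is one-homogeneous in $\xi$, the principal symbol vanishes on your packet and there is no amplification to harness. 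The correct frequency, as in \eqref{def:psi}--\eqref{phie} and the proof of Lemma \ref{lem:cor:inti}, is $e^{i(x-x_0)\cdot\xi_0/\e}$ (equivalently $e^{iy\cdot\xi_0/\e^h}$): this places the packet at semiclassical frequency $\xi_0$, where Assumption \ref{ass:main} bites. This also corrects your $H^m$ count to $\|v_\e(0)\|_{H^m}\lesssim\e^{K-m+(1-h)d/2}$; the $\e^\zeta$ cutoff lives in the truncation symbol $\chi$ of \eqref{def:chi}, not in the datum.

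A second point: the approximation of \cite{T3} does not deliver errors of size $\e^N$ for free. Theorem \ref{duh1} gives an exact representation \eqref{buff0.01} with bounded remainders $R_1,R_2$, but only once Assumption \ref{ass:BS} has been verified, i.e., once one has bounds on $\partial_x^\a\partial_\xi^\beta S_q$ growing no faster than $\e^{-\zeta(1+|\beta|+q)}{\bf e}_{\g^+}$. This is the substance of Section \ref{sec:bd-sol-op}: products such as $\e^{h-1}S\,\partial_x^\a\partial_\xi^\b A_\star\,S$ would naively carry a catastrophic $\e^{h-1}$ prefactor when $\ell=1/2$, and it is precisely the block structure \eqref{block:1}--\eqref{block:2} together with the cancellation \eqref{cancel0} that tames them (Lemmas \ref{lem:s00}--\ref{lem:crude}). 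You acknowledge this as a ``subsidiary technical difficulty,'' but it is the analytic core. Also note that the paper's lower bound is proved in $L^2(B(0,\delta))$ (Corollary \ref{lem:low}), not in $W^{1,\infty}$: the contradiction comes from the $L^2$ growth of $\op_\e(\chi^\flat)v(T(\e))$ against the $W^{1,\infty}$ a priori bound \eqref{bound:dotu}, after the preparation chain $\dot u\to u^\flat\to u_\star\to v$ of Sections \ref{sec:locproj}--\ref{sec:trunc}, which removes the real transport $\mu$, conjugates by $Q$, and localizes to the elliptic domain $\mathcal D$ so that Theorem \ref{duh1} applies at all.
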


 Theorem \ref{th:main} states that either there exists no solution map, or the solution map fails to enjoy any H\"older-type continuity estimates. The proof of Theorem \ref{th:main} is given in Section \ref{sec:proof}. Key ideas in the proof are sketched in Section \ref{sec:key}.

 Theorems \ref{th:0}, \ref{th:1/2} and \ref{th:1} all follow from Theorem \ref{th:main}.

\subsection{Comments on Assumption {\rm \ref{ass:main}}} \label{guide:ass} 

Our main assumption is flexible enough to cover the three different situations described in Theorems \ref{th:0}, \ref{th:1/2} and \ref{th:1}. Before further commenting on its ingredients in Section \ref{sec:ingr} and its verification in Section \ref{sec:verif}, we point out two key features:

\smallskip

$\bullet$ Assumption \ref{ass:main} is {\it nonlinear}. It bears on the whole system \eqref{1st-system}, not just the principal symbol. For instance, instability occurs for the Burgers systems of Section \ref{sec:ex} under a condition bearing on the nonlinear term $F.$ 

\smallskip

$\bullet$ Assumption \ref{ass:main} is {\it finite-dimensional}, in the sense that it postulates bounds for solutions to ordinary differential equations in a finite-dimensional setting. These are turned into bounds for the solutions to the partial differential equations via Theorem \ref{duh1}. An informal discussion of the role of Theorem \ref{duh1} is given in Section \ref{sec:key}. 

\subsubsection{On the ingredients of Assumption {\rm \ref{ass:main}}} \label{sec:ingr}

${}^{}$

\smallskip

$\bullet$ Our localization constraints in $(x,\xi) \in \R^{2d}$ respect the uncertainty principle. Indeed, we localize spatially in a box of size $\sim \e^{1 - h}.$ We localize in frequency in a box of size $\sim \e^{\zeta}$ around $\xi_0$ but then in the proof use highly-oscillating data and an $\e^h$-semi-classical quantization, so that it is $\e^h \xi$ which belongs in a box of size $\e^{\zeta}$ around $\xi_0,$ meaning a frequency localization in a box of size $\e^{\zeta - h}.$ If $\ell = 0$ or $\ell = 1,$ then $\zeta = 0.$ The area of the $(x,\xi)$-box is then $\e^{2d(1 - 2h)} \geq 1,$ since $h = 1$ or $h = 1/2.$ If $\ell = 1/2,$ then $h = 2/3$ and $\zeta = 1/3.$ The area of the $(x,\xi)$-box is $\e^{2d(1 - h + \zeta - h)}= 1.$

\smallskip

$\bullet$ The index $\ell$ measures the degeneracy of the defect of hyperbolicity. We have $\ell = 0$ in the case of an initial ellipticity (Theorem \ref{th:0}), $\ell = 1/2$ in the case of a non semi-simple defect of hyperbolicity (Theorem \ref{th:1/2}) and $\ell = 1$ in the case of a semi-simple defect of hyperbolicity (Theorem \ref{th:1}). The instability is recorded in time $O(\e |\ln \e|)^{1/(1 + \ell)}$ for initial frequencies $O(1/\e).$ In particular, the higher the degree of degeneracy, the longer we need to wait in order to record the instability. 

\smallskip

$\bullet$ In the case $\ell > 0,$ eigenvalues of the principal symbol are initially real (hyperbolicity). Instability occurs as (typically) a pair of eigenvalues branch out of the real axis at $t = 0.$ The matrix $Q$ should be understood as a change of basis, which includes a projection onto the space of bifurcating eigenvalues. The scalar $\mu$ corresponds to the real part of the bifurcating eigenvalues. Assumption \ref{ass:main} is formulated for the principal symbol evaluated along the bicharacteristics of $\mu.$

\smallskip

$\bullet$ In the non semi-simple case, the defect of hyperbolicity is typically {\it not} uniform in $(x,\xi).$ That is, if eigenvalues branch out of the real axis at initial time at the distinguished point $(x_0,\xi_0),$ then the branching will typically occur for ulterior times $t_\star(x,\xi) > 0$ for $(x,\xi)$ close to $(x_0,\xi_0).$ This is clearly seen in Lemma \ref{lem:sp-1402}, under the assumptions of Theorem \ref{th:1/2}, and pictured on Figure 3. 

\smallskip

$\bullet$ The parameter $\g^+$ corresponds to an upper rate of growth. In the elliptic case, $\g^+$ is equal to the largest imaginary part in the initial spectrum, as seen in Section \ref{sec:proof0}. In the case of a smooth defect of hyperbolicity, $\g^+ = \Im m \, \d_t \l(0,x,\xi),$ where $\l$ is a bifurcating eigenvalue, as seen in Section \ref{sec:proof1}.   

\smallskip

$\bullet$ In the case $\ell = 1/2,$ the block structure \eqref{block:2} derives from a reduction of the principal symbol to normal form; see Sections \ref{sec:branch} and \ref{sec:prep} in the proof of Theorem \ref{th:1/2}. 

\smallskip

$\bullet$ In the case $\ell = 1,$ the block structure \eqref{block:1} reveals a cancellation, seen on \eqref{cancel:1} in the proof of Theorem \ref{th:1}.

\smallskip

$\bullet$ The smoothly varying direction $\vec e\,(x)$ along which the lower bound \eqref{ass:low} holds is not necessarily an eigenvector of $A_\star;$ see the discussion in Section \ref{sec:on:transition} and Lemma \ref{lem:low:1/2}. 
 
\subsubsection{On verification of Assumption {\rm \ref{ass:main}}} \label{sec:verif}
 
 We give in Theorems \ref{th:0}, \ref{th:1/2} and \ref{th:1} sufficient conditions, expressed in terms of the spectrum of $A$ and the jet of the characteristic polynomial of $A$ at $t = 0$ for Assumption \ref{ass:main} to hold. These sufficient conditions are satisfied in particular by Burgers, Van der Waals, and Klein-Gordon-Zakharov systems (Section \ref{sec:examples}). These conditions bear only on the coefficients of the system (the differential operator  and the source) and $\phi(0),$ the initial datum of the reference solution. In particular, we may in practice verify these conditions without of course having any knowledge of $\phi(t)$ for $t > 0.$

\subsubsection{On spectral conditions describing the transition from hyperbolicity to ellipticity} \label{sec:on:transition}
 
 Conditions \eqref{cond:ell}, \eqref{cond:coal}-\eqref{cond:coal2} and \eqref{non-R} are all expressed in terms of the characteristic polynomial of $A.$ Their generalizations in the form of conditions \eqref{ass:low} and \eqref{ass:up} are expressed in terms of the symbolic flow of $A.$ Our point here is to explain why conditions bearing on the {\it spectrum} of $A$ do not seem to be appropriate. The discussion below also highlights three difficulties in the analysis of the case $\ell = 1/2:$ the lack of smoothness of the eigenvalues of the principal symbol, the lack of uniformity of the transition time (in the sense that the function $t_\star$ does depend on $(x,\xi)$), and the lack of smoothness of the eigenvectors. 
 
  A simple way to express the fact that an eigenvalue $\l$ of $A$ branches out of the real axis at $t = 0$ is 
 \be \label{cond:branch} \l(0,x,\xi) \in \R, \quad \mbox{for all $(x,\xi)$ near $(x_0,\xi_0),$ \qquad with} \quad \Im m \, 
\d_t \l(0,x_0,\xi_0) > 0.\ee
But then by reality of the coefficients of $A,$ eigenvalue branch out of $\R$ in pairs, so that $(0,x_0,\xi_0)$ is a branching point in the spectrum, and typically eigenvalues are {\it not} differentiable at a branching point, so that \eqref{cond:branch} is not nearly general enough. For instance, the eigenvalues of the principal symbol for the one-dimensional compressible Euler equations \begin{equation} \label{vdw}
   \left\{\begin{aligned} \d_t u + \d_x v & = 0, \\ \d_t v + \d_x p(u) & = 0,
   \end{aligned}\right.
   \end{equation}
 are 
 $$\l_\pm(t,x,\xi) = \pm \xi \big( p'(u(t,x)) \big)^{1/2}.$$
  For a Van der Waals equation of state, for which there holds
 $ p'(u) \leq 0,$ for some $u \in \R,$ a transition from hyperbolicity to ellipticity occurs for data $u(0,\cdot)$ satisfying
 \be \label{prep:vdw} p'(u(0,x_0)) = 0, \qquad \d_t \big( p'(u(0,x_0)\big)_{|t = 0} = - p''(u(0,x_0)) \d_x v(0,x_0) < 0.\ee
The associated eigenvalues are $O(t^{1/2}),$ in particular not time-differentiable at $t = 0,$ so that condition \eqref{cond:branch} is not appropriate. 

A way around this difficulty is to consider the integral growth condition 
 \begin{equation} \label{cond:branch2}
  \int_0^{t} \Im m \, \l(\t,x, \xi) \,d\t =  %
   \g({t},x,\xi)  %
  t^{\ell+1}, \qquad \g(0,x_0,\xi_0) > 0,
  \end{equation}
 for some $\ell \geq 0$ and some rate function {$\g$ that is continuous in $(t,x,\xi)$ at $(0,x_0,\xi_0),$ and some local solution $\l$ of $P = 0.$ Condition \eqref{cond:branch2} may be verified with the Puiseux expansions of the eigenvalues at $t = 0,$ such as in the Van der Waals example (for details in Puiseux expansions, see for instance chapter 2 in \cite{K}, or Proposition 4.2 in \cite{Tp}). There are, however, at least two serious problems with \eqref{cond:branch2}.
 
 The first is that in \eqref{cond:branch2}, it is assumed that the loss of hyperbolicity occurs at $t = 0$ over a whole neighborhood of $(x_0,\xi_0),$ which is typically {\it not} the case. Consider in this view the preparation condition \eqref{prep:vdw} for the datum. From the second condition in \eqref{prep:vdw}, we find by application of the implicit function theorem that in the vicinity of $(0,x_0)$ the set $\{ p'(u) = 0 \}$ is the graph of a smooth map $x \to t_*(x).$ 
The transition curve $x \to t_*(x),$ defined locally in a neighborhood of $x_0,$ parameterizes the loss of hyperbolicity: for $t < t_*(x),$ there holds $p'(u) > 0,$ implying $\Im m  \, \l_\pm \equiv 0,$ while for $t > t_*(x)$ there holds $p'(u) < 0,$ implying $\Im m \, \l_\pm \neq 0.$ On the curve $t = t_*(x),$ there holds $p'(u) \equiv 0,$ implying that the eigenvalues coalesce: $\l_- = \l_+.$ This means in particular that for $(x,\xi)$ close to, and different from, $(x_0,\xi_0)$ we should not expect the loss of hyperbolicity to be instantaneous as in \eqref{cond:branch2}, but rather to happen at time $t_*(x,\xi),$ and condition \ref{cond:branch2} should be replaced by
 \begin{equation} \label{cond:branch3}
   \int_{t_*(x,\xi)}^{t} \Im m \, \l(\t,x, \xi) \,d\t =  %
   \g({t},x,\xi)  %
  (t - t_*(x,\xi))^{\ell+1}, \qquad \g(0,x_0,\xi_0) > 0,
  \end{equation}
 for some smooth time transition function $t_* \geq 0,$ with $t_*(x_0,\xi_0) = 0.$

The second issue with \eqref{cond:branch2}, still present in \eqref{cond:branch3}, is that while failure of time-differenta\-bi\-lity of the eigenvalues is accounted for in \eqref{cond:branch2}, the associated lack of regularity of eigenvectors is not. For instance, in the Van der Waals system \eqref{vdw}, the eigenvectors of the principal symbol are $e_\pm := (1, \pm (p'(u))^{1/2}).$ 
  In particular, under condition \eqref{prep:vdw}, 
  the eigenvectors $e_\pm$ are not time-differentiable at $t = 0.$ It is then not clear how to convert conditions \eqref{cond:branch2} and \eqref{cond:branch3} into growth estimates for the corresponding system of partial differential equations. Indeed, for instance in the simpler case of ordinary differential equations, spectral estimates such as \eqref{cond:branch2} or \eqref{cond:branch3} are typically converted into growth estimates for the solutions via projections onto spectral subspaces, an operation that requires smooth projections.  

 We conclude this discussion by sketching a way around the issue of the lack of regularity of eigenvectors. Going back to the Van der Waals example, consider the ordinary differential equations
 \begin{equation} \label{flow0} \d_t S + i \xi \left(\begin{array}{cc} 0 & 1 \\ p'(u) & 0 \end{array}\right) S = 0, \qquad S(\t;\t) \equiv \Id,
 \end{equation}
 parameterized by $(x,\xi).$ 
 Under condition \eqref{prep:vdw}, there holds 
  $$p'(u(t,x)) = - \a(x) t + O(t^2),$$
  for $(t,x)$ close to $(0,x_0).$ Restricting for simplicity to the case $p'(u(t,x)) = - t,$ we find that the entries $(y,z)$ of a column of $S$ satisfy the system of ordinary differential equations
  $$  y' + i \xi z = 0, \qquad z' - it\xi y  = 0,$$
  implying that $y$ satisfies the Airy equation
 \begin{equation} \label{airy00} y'' = t \xi^2 y,
 \end{equation}
  for which sharp lower and upper bounds are known.
   
This motivates consideration, in Section \ref{sec:ass}, of the symbolic flow associated with the principal symbol $A.$ An important issue is then the conversion of growth conditions for the symbolic flow into estimates for the solutions to the system of partial differential equations. This is achieved via Theorem \ref{duh1}.

\subsection{{On the proof of Theorem {\rm \ref{th:main}}}} \label{sec:key}

 We give here an informal account of key points in the proof of Theorem \ref{th:main}. The proof is in three parts: (1) preparation steps which transform the equation into the prepared equation \eqref{eq:v}-\eqref{def:g}, (2) the use of a Duhamel representation formula, (3) lower and upper bounds. 
 
 \medskip
 
 (1) We introduce a spatial scale $h$ and write perturbations equations about the reference solution $\phi.$ We then block diagonalize the principal symbol (this is $Q$ from the Assumption \ref{ass:main}), localize in space around the distinguished point $x_0,$ factor out the real part of the branching eigenvalues (this is symbol $\mu$ from Assumption \ref{ass:main}) and change to a reference frame defined by the bicharacteristics of $\mu.$ Finally we operate a stiff localization in the elliptic domain ${\mathcal D}$ given by Assumption \ref{ass:main}, and rescale time. The key point in these preparation steps is to carefully account for the {\it linear errors} in the principal symbol, which take the form of commutators. The resulting principal symbol is a perturbation of the symbol $A_\star(\e,t,x,\xi)$ defined in Assumption \ref{ass:main}. 
 
 \medskip

(2) Assumption \ref{ass:main} provides bounds for the flow of $A_\star.$  As pointed out in Section \ref{guide:ass}, these bounds bear on solutions to ordinary differential equations in finite dimension, in particular they are, at least theoretically, easier to verify than bounds bearing on spectra of differential operators. We use Theorem \ref{duh1}, drawn from \cite{T3} and proved in Appendix \ref{sec:Duhamel}, in order to convert these bounds into estimates for a solution to \eqref{1st-system}.
 
 Consider a pseudo-differential Cauchy problem\footnote{Notations and results pertaining to pseudo-differential calculus are recalled in Appendix \ref{sec:symbols}.}
 \be \label{eq:B} \d_t u + \op_\e({\mathcal A}) u = g, \qquad u(0) = u_0 \in L^2,\ee
 where ${\mathcal A}$ is a symbol of order zero. Above, $\op_\e({\mathcal A})$ denotes the $\e^h$-semi-classical quantization of symbol ${\mathcal A},$ as defined in \eqref{semicl:0}. Associated with the above Cauchy problem in infinite dimensions, consider the Cauchy problem is finite dimensions
 \be \label{flow:B} \d_t S + {\mathcal A} S = 0, \qquad S(\t;\t) = \Id.\ee
 Theorem \ref{duh1} asserts that if $S(\t;t)$ and its $(x,\xi)$-derivatives grow in time like $\exp(\g t^{1 + \ell}),$ with a rate $\g > 0$ and a degeneracy index $\ell \geq 0,$ then $\op_\e(S)$ furnishes a good approximation to a solution operator for $\d_t + \op_\e({\mathcal A}),$ in time $O(|\ln \e|)^{1/(1 + \ell)}.$ That is, the solution of \eqref{eq:B} is given by 
  \begin{equation} \label{duh-rem} u(t) \simeq \op_\e(S(0;t)) u_0 + \int_0^t \op_\e(S(\t;t)) g(\t) \, d\t.
  \end{equation}
  
\medskip

(3) The preparation steps (see (1), above) reduced our problem to a system of form \eqref{eq:B}. Via representation \eqref{duh-rem}, upper and lower bounds for the solution $u$ to \eqref{eq:B} are easily derived from the bounds of Assumption \ref{ass:main}, and from {\it postulated} bound for the source $g.$ In our proof, the source $g$ comprises in particular nonlinear errors. Since we have no way of bounding solutions to \eqref{1st-system} near $\phi$ (the impossibility of controlling the growth of solutions with respect to the initial data being precisely what we endeavor to prove), we {\it assume} a priori bounds for the solution. The compared growth of $\op_\e(S) u_0$ and the Duhamel term from \eqref{duh-rem} eventually provide a contradiction. Note that the a priori bound (see \eqref{new-big-bound} in Section \ref{sec:apb}) is particularly weak, since we allow for arbitrarily large losses of derivatives.  

\smallskip

We finally note that G\r{a}rding's inequality (see for instance Theorem 1.1.26 in \cite{Le}) asserts that nonnegativity of symbol ${\mathcal A}$ implies semi-positivity of operator $\op_\e({\mathcal A}).$ This is the classical tool for converting bounds for symbols into estimates for the associated equations. It is shown in \cite{T3} how estimates derived from G\r{a}rding's inequality fail to be sharp in the non-self-adjoint case, as opposed to bounds based on Theorem \ref{duh1}.

\section{Proof of Theorem \ref{th:main}} \label{sec:proof}

As discussed in Section \ref{sec:key}, the proof decomposes into three parts: 
\begin{itemize}
\item[(1)] {\it Preparation} steps which transform the original equation \eqref{1st-system} into the prepared equation \eqref{eq:v}-\eqref{def:g}. This step covers Sections \ref{sec:init} to \ref{sec:trunc}. 
\item[(2)] The use of a {\it Duhamel representation formula} in which the solution to the prepared equation is expressed as the sum of the ``free" solution (defined as the action of an approximate solution operator on the datum) and a remainder -- Sections \ref{integral} and \ref{sec:bd-sol-op}; 
\item[(3)] {\it lower bounds} for the free solution, and {\it upper bounds} for the remainder conclude the proof in a third step -- Sections \ref{sec:bd-source} to \ref{sec:end0}. 
\end{itemize} %

\subsection{Initial perturbation} \label{sec:init}

The goal is to prove ill-posedness, in the sense of Definition \ref{def:insta}. Parameters $m,\a,\delta,T$ are given, as in \eqref{param}, and we endeavor to disprove \eqref{stab}. Define 
 \be \label{def:psi}
 \varphi_0(\e,x) := \Re e \, \Big( \op_\e(Q_\e(0)^{-1}) \Big( e^{i (\cdot) \cdot \xi_0/\e^{h}} \theta \vec e \, \Big)(x) \, \Big), \quad \e > 0, \quad h = \frac{1}{1 + \ell}, \,\,\, \ell \geq 0,
\ee  
where $(x_0,\xi_0)$ is the distinguished point in the cotangent space given in Assumption \ref{ass:main}, and 

\smallskip

$\bullet$ $\op_\e(\cdot)$ denotes a pseudo-differential operator in $\e^{h}$-semi-classical quantization: 
 \be \label{semicl:0} \op_\e(a) v := (2\pi)^{-d}\int_{\R^d} e^{i x \cdot \xi} a(t,x,\e^{h} \xi) \hat v(\xi) \, d\xi\,;\ee

$\bullet$ $Q_\e(0) = Q(0, x_0 + \e^{1-h} x, \xi),$ with $Q$ as in Assumption \ref{ass:main};

\smallskip

$\bullet$ the vector $\vec e$ is as in Assumption \ref{ass:main};

\smallskip

$\bullet$ the spatial cut-off $\theta \in C^\infty_c(\R^d)$ has support included in $B(0,\delta),$ and is such that $\theta \equiv 1$ in $B(0,1/2).$%

\medskip

 Consider the following family of data, indexed by $\e > 0:$  %
 \begin{equation} \label{phie}
  u^\e(0,x) = \phi(0,x) + \e^{K} \varphi_0\left(\frac{x - x_0}{\e^{1-h}}\right)
  \ee
  where $\phi(0,x)$ is the datum for the background solution $\phi$ \eqref{phi}, and $K$ will be chosen large enough so that $u^\e(0)$ is a small perturbation of $\phi(0)$ in $H^m$ norm.

\medskip

Theorem \ref{th:main} is a consequence of the following result for the family of initial-value problems \eqref{1st-system}-\eqref{phie}, indexed by $\e:$
 \begin{theo}\label{theo-hadamardinstab}
 {Given the parameters defined in \eqref{param}, given a local solution $\phi$ of \eqref{1st-system} satisfying \eqref{phi} with $s_1$ large enough, under Assumption
 {\rm \ref{ass:main}},  %
 if $K$ is large enough:}
  \begin{itemize}
 \item either for all $T$ and $\delta$ with $0 < T \leq T_0,$ $B(x_0,\delta) \subset U$ there is no $\e_0 > 0$ such that for all $0 < \e < \e_0,$ the initial-value problem for \eqref{1st-system} with the initial datum \eqref{phie} has a solution in $L^\infty([0,T], W^{1,\infty}(B(x_0,\delta)),$
 
 \smallskip
 
 \item or for some $T$ and $\delta$ with $0 < T \leq T_0,$ $B(x_0,\delta) \subset U,$ some $\e_0  >0,$ all $0 < \e < \e_0,$ the initial-value problem for the system \eqref{1st-system} with the initial datum \eqref{phie} has a solution $u^\e$ in $L^\infty([0,T], W^{1,\infty}(B(x_0,\delta)),$ and there holds 
\begin{equation}\label{instab-est}\sup_{\begin{smallmatrix} 0 < \e < \e_0 \\ 0 \leq t \leq \e^{h} T(\e) \end{smallmatrix}} \frac{\|u^\epsilon(t)-\phi(t)\|_{W^{1,\infty}(B(x_0,\e^{1-h} \delta))}}{\| u^\epsilon(0) - \phi(0)\|^\a_{H^m(U)}} = +\infty\end{equation}
 {where $T(\e)$ is defined in \eqref{max-time}, so that, in particular, $\e^{h} T(\e) \to 0$ as $\e \to 0.$} \end{itemize}
 \end{theo}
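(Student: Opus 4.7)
The plan is to argue by contradiction: assume the first alternative of Theorem \ref{theo-hadamardinstab} fails, so that a solution $u^\e \in L^\infty([0,T],W^{1,\infty}(B(x_0,\delta)))$ exists, and show that \eqref{instab-est} must hold. Write $v^\e := u^\e - \phi$, which satisfies a quasi-linear system with principal symbol $A(t,x,\xi)$ (as defined in \eqref{symbolA}), a source collecting nonlinear remainders in $v^\e$, and initial datum $\e^K \varphi_0((\cdot - x_0)/\e^{1-h})$. I would then rescale $x \mapsto x_0 + \e^{1-h} x$ and rescale time by $t \mapsto \e^h t$ to bring the problem to a semiclassical form with small parameter $\e^h$. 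In these variables the data is $\e^K \Re e \op_\e(Q^{-1}_\e(0))(e^{ix\cdot \xi_0/\e^h}\theta\vec e)$, which is an oscillating wavepacket localized in frequency near $\xi_0$ in the $\e^h$-semiclassical sense, and the maximal time reached is $T(\e) = (T_\star|\log\e|)^{1/(1+\ell)}$.

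\textbf{Preparation of the principal symbol.} Next I would perform the transformations that align the equation with Assumption \ref{ass:main}: conjugate by $\op_\e(Q_\e)$ to block-diagonalize the principal symbol (producing a commutator remainder of order $\e^h$ by semiclassical symbolic calculus), factor out $\op_\e(\mu)$ (either by conjugating with an oscillatory phase or by changing reference frame along the bicharacteristics \eqref{bichar} of $\mu$), and finally apply a sharp pseudo-differential frequency cut-off onto the elliptic domain ${\mathcal D}$ of \eqref{def:mathcalD}. Each step leaves a lower-order remainder in the symbol (commutators, transport errors, truncation errors supported outside ${\mathcal D}$), and these remainders must be tracked with care since we lose some control along the way. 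The outcome is a prepared system of the form
\begin{equation}
\d_t v + \op_\e({\mathcal A}) v = g, \qquad v(0) = \e^K \varphi_0,\nonumber
\end{equation}
with ${\mathcal A}$ close to $\e^{h-1} A_\star$, and $g$ collecting the preparation errors together with the nonlinear terms of the original equation.

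\textbf{Duhamel representation and bounds.} With this setup I would invoke the pseudo-differential flow approximation Theorem \ref{duh1} (from \cite{T3}). Assumption \ref{ass:main} gives polynomial control of $(\p_x,\p_\xi)$-derivatives of the symbolic flow $S$ solving \eqref{tildefund} up to time $T(\e)$, which is exactly the input needed to assert that $\op_\e(S(0;t))$ is an approximate solution operator. This yields the Duhamel representation
\begin{equation}
v(t) \simeq \op_\e(S(0;t))\,v(0) + \int_0^t \op_\e(S(\t;t))\,g(\t)\,d\t.\nonumber
\end{equation}
The lower bound \eqref{ass:low} applied at $t = T(\e)$ produces a contribution of size $\e^K \cdot {\bf e}_{\g^-}(0;T(\e),x,\xi_0) \gtrsim \e^{K - \g^- T_\star}$ in $W^{1,\infty}$ on the rescaled ball; choosing $T_\star$ large enough, this exceeds any fixed polynomial power of $\e$.

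\textbf{A priori bound, source control, and closing the argument.} The main difficulty, and the place where one must be careful, lies in bounding the Duhamel remainder. Since we cannot a priori control $v$ in strong norms, I would \emph{postulate} the Hadamard-type bound \eqref{stab} on $\|v\|_{W^{1,\infty}}$ (if it failed, \eqref{instab-est} would be immediate). This gives $\|v(t)\|_{W^{1,\infty}} \lesssim \e^{\a K}$, and hence, for nonlinearities of order $\ell_0 \geq 2$, the source satisfies $\|g(t)\| \lesssim \e^{K'}$ with $K' \sim \a \ell_0 K$ (up to Sobolev embedding losses from the rescaling and the number of derivatives absorbed by $\op_\e$). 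Combined with the upper bound \eqref{ass:up} for $S$ on ${\mathcal D}$, the Duhamel integral is bounded by $\e^{K'-\g^+ T_\star}$ (times polynomial factors). The contradiction is obtained by choosing $T_\star$ and $K$ so that the lower bound on $\op_\e(S(0;T(\e)))v(0)$ dominates the Duhamel remainder; this requires $2K' > K$ (i.e.\ $\a > 1/\ell_0$, which is guaranteed since $\a > 1/2$), plus $T_\star$ large compared to the loss exponents and the logarithmic losses indicated by the $\lesssim$ convention in \eqref{notation:lesssim}. The hardest technical step will be the careful bookkeeping of commutator and truncation errors in the preparation stage, ensuring that after aggregation they fall strictly below the lower bound given by \eqref{ass:low}; in particular, the non-self-adjoint character of ${\mathcal A}$ forces the use of Theorem \ref{duh1} rather than a G\aa rding-type estimate, which as noted in Section \ref{sec:key} would not be sharp enough.
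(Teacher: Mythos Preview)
Your proposal is correct and follows the same architecture as the paper's proof (Sections \ref{sec:avatars}--\ref{sec:end0}): assume the a priori bound \eqref{new-big-bound}, prepare the equation via conjugation by $\op_\e(Q_\e)$, advection along the bicharacteristics of $\mu$ (Egorov), and stiff truncation onto $\mathcal D$; represent the solution via Theorem \ref{duh1}; then compare the growth of the free term against the Duhamel remainder using \eqref{ass:low}--\eqref{ass:up} and the condition $2K' > K$.

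One point to correct: Assumption \ref{ass:main} does \emph{not} provide $(x,\xi)$-derivative bounds on the symbolic flow $S$; it gives only the pointwise, block-wise bounds \eqref{ass:low}--\eqref{ass:up}. The derivative bounds needed to invoke Theorem \ref{duh1} (i.e.\ to verify Assumption \ref{ass:BS}) must be \emph{derived} from \eqref{ass:up} together with the block structure hypotheses \eqref{block:1}--\eqref{block:2}, via product representation formulas for $\d_x^\a\d_\xi^\b S$ and the cancellation \eqref{cancel0}. This is the content of Section \ref{sec:bd-sol-op} and is the longest technical stretch of the proof; you should not regard it as given. Also, the truncation commutators you place in $g$ are rendered small not because they are ``supported outside $\mathcal D$'' but through the nested cut-offs $\chi^\flat \prec \tilde\chi \prec \chi$ (Lemma \ref{lem:source:psi}), which you should make explicit.
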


\begin{lem} \label{lem:cor:inti} 
Theorem {\rm \ref{theo-hadamardinstab}} implies Theorem {\rm \ref{th:main}.}
\end{lem}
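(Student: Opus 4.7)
The plan is to argue by contraposition. Suppose Theorem~\ref{th:main} fails, so that there exist parameters $m,\alpha,\delta,T$ as in \eqref{param} and a neighborhood $\mathcal{U}$ of $\phi(0)$ in $H^m(U)$ such that every $u_0\in\mathcal{U}$ generates a solution $u\in L^\infty([0,T],W^{1,\infty}(B(x_0,\delta)))$ of \eqref{1st-system} with the supremum in \eqref{stab} finite. The task is then to show that, for $K$ large enough, this rules out \emph{both} alternatives of Theorem~\ref{theo-hadamardinstab} applied to the perturbed data \eqref{phie}, which is a contradiction.

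The first step will be an $H^m$ estimate for the perturbation $\e^K\varphi_0\big((x-x_0)/\e^{1-h}\big)$. The function $\varphi_0(\e,\cdot)$ defined in \eqref{def:psi} is obtained by applying the order-zero $\e^h$-quantized operator $\op_\e(Q_\e(0)^{-1})$ to the compactly supported wave packet $e^{i\,(\cdot)\cdot \xi_0/\e^h}\theta\vec e$, and therefore has $L^2$ norm bounded uniformly in $\e$ with frequency content concentrated near $\xi_0/\e^h$. The rescaling $y=(x-x_0)/\e^{1-h}$ converts this into a function of $x$ of frequency $\sim 1/\e$, supported in $B(x_0,\delta\e^{1-h})$. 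A straightforward change of variables together with the standard wave-packet count (each $\partial_x$ produces a factor $\e^{h-1}\cdot\e^{-h}=\e^{-1}$, and the $L^2$ rescaling yields a factor $\e^{(1-h)d/2}$) gives
$$ \|u^\e(0)-\phi(0)\|_{H^m(U)}\;\lesssim\;\e^{K-m+(1-h)d/2}, $$
which tends to $0$ as $\e\to 0^+$ provided $K>m-(1-h)d/2$. With $K$ chosen in this range, $u^\e(0)\in\mathcal{U}$ for all sufficiently small $\e$, so by the working hypothesis the solution $u^\e$ exists on $[0,T]$ in $W^{1,\infty}(B(x_0,\delta))$. This excludes the first alternative in Theorem~\ref{theo-hadamardinstab}.

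Next I compare the two measures of deviation. Since $0<h\le 1$, one has $B(x_0,\e^{1-h}\delta)\subseteq B(x_0,\delta)$ for $0<\e\le 1$, and from \eqref{max-time}, $\e^h T(\e)=\e^h(T_\star|\log\e|)^{h}\to 0$, so $\e^h T(\e)\le T$ once $\e$ is small enough. Consequently, for every $t\in[0,\e^h T(\e)]$,
$$ \|u^\e(t)-\phi(t)\|_{W^{1,\infty}(B(x_0,\e^{1-h}\delta))}\;\le\;\|u^\e(t)-\phi(t)\|_{W^{1,\infty}(B(x_0,\delta))}, $$
so the ratio in \eqref{instab-est} is majorized by the ratio appearing in \eqref{stab} evaluated at $u_0=u^\e(0)\in\mathcal{U}$. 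The latter is finite by the working hypothesis, uniformly over $\mathcal{U}$, hence the supremum in \eqref{instab-est} is finite, contradicting the second alternative of Theorem~\ref{theo-hadamardinstab}. Both alternatives being excluded, the initial hypothesis fails and Theorem~\ref{th:main} holds.

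The argument is essentially bookkeeping; the only mild subtlety, and the one place where care is needed, is the triple scaling in the perturbation (amplitude $\e^K$, spatial window $\e^{1-h}$, oscillation at frequency $1/\e^h$ in $y$ hence $1/\e$ in $x$), which must be tracked to arrive at the exponent $K-m+(1-h)d/2$. The genuine mathematical difficulty of course lies in Theorem~\ref{theo-hadamardinstab} itself, not in this reduction.
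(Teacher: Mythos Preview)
Your proof is correct and takes essentially the same approach as the paper's: both establish the key $H^m$ estimate $\|u^\e(0)-\phi(0)\|_{H^m(U)}\lesssim \e^{K-m+(1-h)d/2}$ from the wave-packet structure of $\varphi_0$, and then observe that each alternative of Theorem~\ref{theo-hadamardinstab} rules out the stability bound \eqref{stab}. The only difference is cosmetic---the paper argues directly while you phrase the reduction as a contraposition and spell out the containments $B(x_0,\e^{1-h}\delta)\subset B(x_0,\delta)$ and $\e^h T(\e)\le T$.
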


\begin{proof} There holds 
$$ \varphi_0\left(\frac{x - x_0}{\e^{1-h}}\right) = \Re e \, e^{i (x - x_0) \xi_0/\e} \tilde \varphi\left(\frac{x - x_0}{\e^{1-h}}\right) \,,$$
where $\tilde \varphi := \op_\e(Q(0,\cdot,\xi_0 + \cdot)) (\theta \vec e\,),$ hence 
 $$ \left\| \varphi_0 \left(\frac{x - x_0}{\e^{1-h}}\right) \right\|_{H^m(U)} \lesssim \e^{-m + (1-h) d/2}.$$
 Let $K>m - (1-h) d/2.$ Then,  
 \begin{equation} \label{lim} %
  \|u^\e(0,\cdot) -\phi(0,\cdot)\|_{H^m(U)}
  \lesssim \e^{K - m + (1-h) d/2} \underset{\e \to 0}{\longrightarrow} 0.
 \end{equation}
Thus given a neighborhood $\mathcal U$ of $\phi(0)$ in $H^m(U),$ and if $\e$ is small enough, then $u^\e(0)$ lies in ${\mathcal U}.$

 If for some $\e$ small enough, the initial-value problem \eqref{1st-system}-\eqref{phie} does not have a solution, then this means ill-posedness in the sense of Definition \ref{def:insta}. If there is a solution for any small $\e,$ then \eqref{instab-est} disproves \eqref{stab}, since the sequence ${\e^{h} T(\e)}$ converges to 0, and again this means ill-posedness in the sense of Definition \ref{def:insta}. 
\end{proof}

\subsection{The posited solution and its avatars}  \label{sec:avatars}

We assume that for some $0 < T \leq T_0,$ some $0 < \delta$ with $B(x_0,\delta) \in U,$ some $\e_0 > 0,$ all $0 < \e < \e_0,$ the Cauchy problem for \eqref{1st-system} with the initial datum \eqref{phie}%
has a unique solution
 \begin{equation} \label{delta0} u^\e \in L^\infty([0,T], W^{1,\infty}(B(x_0,\delta))).
 \end{equation}
 Our goal is then to prove \eqref{instab-est}. For future reference, we list here the successive avatars of the solution that we will use in this proof: 
 $$ \left\{ \begin{array}{r|l|l|l} \dot u & \mbox{perturbation} & \dot u := (u^\e - \phi)(t, x_0 + \e^{1-h} x) & \eqref{def:dot-u} \\ u^\flat & \mbox{spatial localization and projection} & u^\flat := \op_\e(Q_\e) (\theta \dot u) & \eqref{def:uflat} \\ u_\star & \mbox{convection} & u_\star := M^\star(0;t) u^\flat & \eqref{def:tildeu} \\ v & \mbox{stiff truncation and rescaling in time} & v := \big( \op_\e(\chi) u_\star \big)(\e^h t), & \eqref{def:vpsi} 
  \end{array}\right.$$

\subsection{Amplitude of the perturbation, limiting observation time and observation radius} \label{sec:param}

The parameter $K$ measures the size of the initial perturbation \eqref{phie}. We choose $K$ to be large enough: 
\be \label{cond:K}
(2\a - 1) K > 2 \a m + (1 - \a) (1-h) d,
\ee
where $m$ measures the loss of Sobolev regularity and $\a$ the loss of H\"older continuity of the flow (as seen on target estimate \eqref{instab-est}), and $h = 1/(1 + \ell).$%

The parameter $T_\star,$ defined in \eqref{max-time}, measures the final observation time in rescaled time frame. In the original time frame, the final observation time is $\big( \e T_\star |\ln \e|\big)^{1/(1+\ell)}.$ We choose $T_\star$ to be large enough:   
\be \label{cond:T1}
 \g^-(0,\xi_0) T_\star > K,
\ee 
depending on $K$ and the lower rate of growth $\g^-$ introduced in \eqref{def:bf-e}.

The parameter $\delta$ measures the radius of the observation ball $B(0,\delta)$ where the analysis takes place. (The radius is $\e^{1-h} \delta$ in the original spatial frame, and just $\delta$ in the rescaled spatial frame associated with $\dot u;$ see Section \ref{sec:avatars} above and \eqref{def:dot-u}.) If Theorem \ref{theo-hadamardinstab} holds for a given value of $\delta,$ then it holds for any smaller radius. In particular, we may assume that the given value of $\delta$ is so small that the bounds of Assumption \ref{ass:main} hold on $|x| + |\xi - \xi_0| \leq \delta.$ In the final steps of our analysis (Sections \ref{sec:free} and \ref{sec:end0}), we will further choose $\delta$ to be small enough, depending on the growth functions $\g^\pm$ introduced in Assumption \ref{ass:main} and $T_\star$ (see condition \eqref{cond:delta0} and the proof of Corollary \ref{lem:low}).

\subsection{The perturbation equations} \label{sec:pert}
  Our analysis is local in $t,x,\xi,$ with $0 \leq t \leq \e^h T(\e) ,$ $|x - x_0| \leq \e^{1-h} \delta$ and $|\xi - \xi_0| \leq \delta,$ where $T(\e)$ is defined in \eqref{max-time}, and $T_\star$ and $\delta$ are defined in Section \ref{sec:param}. 
  
 The perturbation variable $\dot u$ is defined in a rescaled spatial frame by
  \begin{equation} \label{def:dot-u} \dot u(\e,t,x) := \big( u^\e - \phi \big)(t, x_0 + \e^{1-h} x), \quad \mbox{ with $h = 1/(1+\ell).$}
  \end{equation}
The equation in $\dot u$ is 
\begin{equation}\label{eq:dotu} \D_t \dot u + \e^{-1} A\big(t, \, x_0 + \e^{1-h} x, \, \e^{h}\d_x\big) \dot u + \dot B(\e,t,x) \dot u =  \dot F. \end{equation}
 where $A$ is the $1$-homogeneous principal symbol \eqref{symbolA}, $\dot B$ is order zero: 
\begin{equation}\label{def:dotB} 
 \dot B(\e,t,x) \dot u := \sum_j (\d_u A_j(t,x_0 + \e^{1-h} x,\phi_\e)\, \dot u \,) \D_{x_j}\phi_\e - \d_u F(t, x_0 + \e^{1-h} x, \phi_\e) \dot u,\end{equation}
 with notation $\phi_\e := \phi(t, x_0 + \e^{1-h} x).$ In \eqref{eq:dotu}, the source $\dot F$ comprises nonlinear terms: 
\be \label{def:dotF} \dot F  = G_0(\e, t,x, \dot u) \cdot (\dot u, \dot u)+ \sum_{1 \leq j \leq d} G_{1j}(\e, t,x, \dot u) \cdot (\dot u, \d_{x_j} \dot u),\ee
  where $(u,v) \to G_0(\e,t,x,\dot u) \cdot (u,v)$ and $(u,v) \to G_{1j}(\e,t,x,\dot u) \cdot (u,v)$ are bilinear, defined as
 $$ \begin{aligned} G_0(\e,t,x,\dot u) & := - \int_0^1 (1 - \t) \big( \sum_{1 \leq j \leq d} \d_u^2 A_j(\phi_\e + \t \dot u) \d_{x_j} \phi_\e - \d_u^2 F(\phi_\e + \t \dot u) \big) \, d\t, \\ 
  G_{1j}(\e,t,x,\dot u) & := - \int_0^1 \d_u A_j(\phi_\e + \t \dot u) \, d\t.\end{aligned}$$
We omitted above the arguments $(t, x_0 + \e^{1-h} x)$ of $\d^k_u A_j$ and $\d_u^2 F.$ In this proof, a perturbative analysis around $\phi$ at $(x_0, \xi_0),$ we will handle $\dot F$ as a small source, and $\dot B$  as a small perturbation of the principal symbol.

 \subsection{A priori bound} \label{sec:apb}

The goal is to prove the instability estimate \eqref{instab-est}.  We work by contradiction, as we assume that
there exists $C > 0,$ such that for all $t \in [0, \e^h T(\e)],$ there holds
$$ %
   \| u^\e(t) - \phi(t) \|_{W^{1,\infty}(B(x_0,\e^{1-h} \delta))} \leq C \| u^\e(0) - \phi(0) \|_{H^m(U)}^\a,    $$ %
   uniformly in $(\e,t),$ for $0 \leq t \leq \e^{h} T(\e) = \big( \e T_\star |\ln \e|\big)^{1/(1 + \ell)}.$ By choice of the initial datum \eqref{phie}, this implies (see the proof of Lemma \ref{lem:cor:inti})
   \begin{equation} \label{new-big-bound}
   \| u^\e(t) - \phi(t) \|_{W^{1,\infty}(B(x_0, \e^{1-h} \delta))} \leq C \e^{\a(K - m + (1-h) d/2)},
   \ee
with a possibly different constant $C > 0,$ for all $t \leq  \e^h T(\e).$ By definition of $\dot u,$ this implies 
\be \label{bound:dotu}
 \| \dot u(t) \|_{W^{1,\infty}(B(0,\delta))} \leq C \e^{K'}, \qquad \mbox{for $t \leq \e^h T(\e),$}
\ee
with notation
\be \label{def:K'} K':= \a (K - m) - (1 - \a)(1-h) d/2.
\ee
By condition \eqref{cond:K}, there holds $K' > K/2.$ 

\subsection{Uniform remainders} \label{sec:unif}

 The linear propagator in \eqref{eq:dotu} will undergo many transformations in this proof, through linear changes of variables corresponding to projections, localizations, conjugations, and so on. Every change of variable produces error terms. We will henceforth denote ${\bf R}_k,$ for $k \in \Z,$ any bounded family ${\bf R}_k(\e,t)$ in $S^{k},$ in the sense that 
  \be \label{R0} \sup_{\begin{smallmatrix} 0 < \e < \e_0 \\ 0 \leq t \leq \e^h T(\e) \end{smallmatrix}} \| {\bf R}_k(\e,t) \|_{k, r} < \infty,\ee
  for $r$ large enough, with notation $\|\cdot\|_{k,r}$ for symbols introduced in \eqref{notation:norm:eps} in Appendix \ref{sec:symbols}. In the case $k = 0,$ we say that a symbol belongs to ${\bf R}_0$ if either \eqref{R0} holds or 
  \be \label{R0'} \sup_{\begin{smallmatrix} 0 < \e < \e_0 \\ 0 \leq t \leq \e^h T(\e) \end{smallmatrix}} \sum_{|\a| \leq d + 1} \sup_{\xi \in \R^d} \big| \d_x^\a {\bf R}_k(\e,t) \big|_{L^1(\R^d_x)} < \infty.\ee
 By Proposition \ref{prop:action}, the corresponding operators $\op_\e({\bf R}_k)$ are bounded $H^k \to L^2:$ 
  \be \label{def:unif}
  \| {\bf R}_k w \|_{L^2} \lesssim \| w \|_{\e,k}, \qquad \| w \|_{\e,s} :=  \big\| (1 + | \e^{h} \xi|^2)^{s/2} \hat w(\xi) \big\|_{L^2(\R^d_\xi)},
 \ee
 uniformly in $0 < \e < \e_0$ and $0 \leq t \leq \e^h T(\e).$ We use above notation $\lesssim$ introduced in \eqref{notation:lesssim}. %

\subsection{Spatial localization and projection} \label{sec:locproj} The matrix-valued symbol $Q(x,\xi),$ introduced in Assumption \ref{ass:main}, is smooth, locally defined and invertible around $(x_0,\xi_0).$ As explained in Appendix \ref{sec:extension:symbols} we may extend smoothly $Q$ into a globally defined symbol of order zero, which is globally invertible, with $Q^{-1} \in S^0.$ We identify $Q$ with its extension in the following, and let %
\be \label{def:uflat}
 u^\flat(t,x) := \op_\e(Q_\e) (\theta \dot u ),
\ee 
 corresponding to a spatial localization followed by a micro-local change of basis. In \eqref{def:uflat}, the function $\theta = \theta(x)$ is the spatial truncation introduced in Section \ref{sec:init}, and we use notation
\be \label{def:Qe} Q_\e(t,x,\xi) := Q\big(t, \, x_0 + \e^{1-h} x, \, \xi \big).\ee
Here $\op_\e(\cdot)$ denotes a pseudo-differential operator in $\e^{h}$-semiclassical quantization, as in \eqref{semicl:0}.
Classical results on pseudo-differential calculus are gathered in Appendix \ref{sec:symbols}.  In particular, $\op_\e(Q_\e)$ maps $L^2$ to $L^2,$ uniformly in $\e,$ so that
\be \label{ap:flat}
 \| u^\flat \|_{L^2} \lesssim \|\theta \dot u\|_{L^2} \lesssim \| \dot u \|_{L^2(B(0,\delta))}.
\ee
We now deduce from the equation \eqref{eq:dotu} in $\dot u$ an equation in $u^\flat,$ via the change of unknown \eqref{def:uflat}. Here we note that the leading, first-order term in \eqref{eq:dotu} is 
$$ A\big(t, \, x_0 + \e^{1-h} x, \, \e^{h}\d_x\big) = \op_\e(i A_\e), \qquad A_\e := A(t, x_0 + \e^{1-h} x, \xi),$$
Thus the equation in $u^\flat$ is 
$$ \begin{aligned} \d_t u^\flat & 
 + \e^{-1} \op_\e(Q_\e) \op_\e(i A_\e) (\theta \dot u) + \op_\e(Q_\e) \big( \theta \dot B \dot u \big) - \op_\e\big( (\d_t Q)_\e \big) (\theta \dot u) \\ & = \op_\e(Q_\e)(\theta \dot F)  - \sum_{1 \leq j \leq d} \op_\e(Q_\e) \big( A_j(\phi_\e) \dot u \d_{x_j} \theta \big).%
 \end{aligned}
 $$
At this point the goal is to express the terms in $\theta \dot u$ above in the form of terms in $u^\flat,$ modulo small errors -- that is, to approximately invert \eqref{def:uflat}. This is done as follows.

 By composition of pseudo-differential operators with slow $x$-dependence (Proposition \ref{prop:composition:slow:x}), there holds
\be \label{invert:flat} \Id = \op_\e(Q_\e^{-1}) \op_\e(Q_\e) + \e \op_\e({\bf R}_{-1}),\ee
where ${\bf R}_{-1}$ is a uniform remainder in the sense of Section \ref{sec:unif}. 
With \eqref{invert:flat} we may thus express $\theta \dot u$ in terms of $u^\flat:$
\be \label{dotu:uflat} \theta \dot u = \op_\e(Q_\e^{-1}) u^\flat + \e \op_\e({\bf R}_{-1}) (\theta \dot u).\ee
Using inductively \eqref{invert:flat}, and composition of pseudo-differential operators (Proposition \ref{prop:composition}), we obtain
\be \label{dotu:uflat2}
 \theta \dot u = \op_\e({\bf R}_0) u^\flat + \e^n \op_\e({\bf R}_0) (\theta \dot u),
\ee
for $n$ as large as allowed by the regularity of $\phi.$ 

By \eqref{dotu:uflat}, the first-order term in the above equation in $u^\flat$ is
$$ \op_\e(Q_\e) \op_\e(i A_\e) (\theta \dot u) = \op_\e(Q_\e) \op_\e(i A_\e) \op_\e(Q^{-1}_\e) u^\flat + \e \op_\e(Q_\e) \op_\e(i A_\e) \op_\e({\bf R}_{-1}) (\theta \dot u),$$
implying, with Proposition \ref{prop:composition:slow:x},
$$
\op_\e(Q_\e) \op_\e(i A_\e) (\theta \dot u) 
= \op_\e(i Q_\e A_\e Q^{-1}_\e) u^\flat + \e \op_\e({\bf R}_0) (\theta \dot u).$$
Besides, with \eqref{dotu:uflat2}, we may write 
$$ \e^h \big( \op_\e(Q_\e) \big( \theta \dot B \dot u \big) - \op_\e\big( (\d_t Q)_\e \big) (\theta \dot u) \big) = \e^h \op_\e(B^\flat) u^\flat + \e^n \op_\e({\bf R}_0) (\theta \dot u),$$
where $B^\flat \in {\bf R}_0.$ From the above, the equation in $u^\flat$ appears as
\be \label{eq:uflat} 
\d_t u^\flat + \e^{-1} \op_\e(i Q_\e A_\e Q_\e^{-1}) u^\flat + \op_\e(B^\flat) u^\flat = F^\flat,
\ee
where 
\be \label{def:Fflat}
 B^\flat \in {\bf R}_0, \quad F^\flat := \op_\e(Q_\e)(\theta \dot F)  -  \sum_{1 \leq j \leq d} \op_\e(Q_\e) \big( A_j(\phi_\e) \dot u \d_{x_j} \theta \big) + \e^n \op_\e({\bf R}_0) (\theta \dot u).
 \ee

\subsection{Advected coordinates} \label{sec:ad}

   Let $M$ be the flow of $\op_\e(i \mu_\e)$ (or rather, as argued at the beginning of Section \ref{sec:locproj}, of $\op_\e(\tilde \mu_\e),$ where $\tilde \mu$ is a globally defined symbol extending $\mu$) 
   in the sense that %
  \begin{equation} \label{eq:M}
   \d_t M = \op_\e(i \mu_\e) M, \qquad M(\t;\t) \equiv \Id,
   \end{equation}
 where the symbol $\mu$ is introduced in Assumption \ref{ass:main}, and $\mu_\e$ is defined from $\mu$ by rescaling space as in \eqref{def:Qe}. Let $M^\star$ be the associated backwards flow, defined by 
 \be \label{eq:tildeM}
  \d_\t M^\star = - M^\star \op_\e(i \mu_\e), \qquad M^\star(\t;\t) \equiv \Id. \ee
By hyperbolicity (reality and regularity of $\mu,$ and Proposition \ref{prop:action}), both $M$ and $M^\star$ map $L^2$ to $L^2,$ uniformly in $\e,t,$ for $t \leq \e^h T(\e).$ Egorov's lemma (see for instance Theorem 4.7.8 in \cite{Le}, or Theorem 8.1 in \cite{Tay}) states that
 \be \label{R:M}
   M^\star M = \Id + \e \op_\e({\bf R}_{-1}), \qquad M M^\star = \Id + \e \op_\e({\bf R}_{-1}),
   \ee
where we recall that ${\bf R}_{-1}$ is a generic notation for bounded symbols of order $-1$ (see Section \ref{sec:unif}); in other words the equalities in \eqref{R:M} really mean that both $M^\star M - \Id$ and $M M^\star - \Id$ belongs to the class of operators of the form $\e \op_\e({\bf R}_{-1}).$ By Egorov's lemma, given $a \in S^{m},$ there also holds 
 \be \label{egorov}
  M^\star \op_\e(a_\e) M  = \op_\e(a_{\e \star}) + \e \op_\e({\bf R}_{m-1}),
 \ee
 where $a_{\e\star}$ denotes the symbol $a_\e$ evaluated along the bicharacteristic flow, in the following sense: for any symbol $b$\footnote{Except for symbol $A,$ as seen on definition on $A_\star$ in \eqref{def:Astar}.} we denote
 \be \label{def:astar}
 b_\star(t,x,\xi) := b(t, x_\star(t,x,\xi), \xi_\star(t,x,\xi)),
 \ee
  where the $(x_\star,\xi_\star)$ is the bicharacteristic flow of $\mu_\e.$   
What's more, the remainder ${\bf R}_{m-1}$ above has expansion
\be \label{R:M:bis}
 \op_\e({\bf R}_{m-1}) = \op_\e(a_{\star1}) + \e \op_\e(a_{\star2}) + \dots + \e^{n} \op_\e(a_{\star n}) + \e^{n+1} \op_\e({\bf R}_{m-n-1}),
\ee
for $n$ as large as allowed by the regularity of $\mu, a$ and $\phi,$ where $a_{\star i} \in S^{m-i}$ has support included in the support of $a_\star.$ Identities \eqref{egorov} and \eqref{R:M:bis} also hold if $M$ and $M^\star$ are interchanged, with backwards bicharacteristics replacing forward bicharacteristics.  By \eqref{R:M}, there holds
 $$ M^\star \op_\e({\bf R}_0) = M^\star \op_\e({\bf R}_0) (M M^\star + \e \op_\e({\bf R}_{-1})),$$
 implying, by \eqref{egorov}:
  $$ M^\star \op_\e({\bf R}_0) = \op_\e({\bf R}_{0\star}) M^\star + \e \big( M^\star \op_\e({\bf R}_{-1}) + \op_\e({\bf R}_{-1}) M^\star),$$ 
and reasoning inductively we arrive at
\be \label{com:M}
 M^\star \op_\e({\bf R}_0)  = \op_\e({\bf R}_0) M^\star + \e^n \op_\e({\bf R}_0),
 \ee
 where $n$ is as large as allowed by the regularity of $\phi.$ The advected variable is defined as 
 \begin{equation} \label{def:tildeu} 
 u_\star := M^\star(0;t) u^\flat.
 \end{equation}
There holds
$$ \d_t u_\star = M^\star\big( \d_t - \op_\e(i \mu_\e)\big) u^\flat,$$
and $\d_t u^\flat$ is given by equation \eqref{eq:uflat} in $u^\flat.$ 
Using \eqref{R:M}, we find 
$$ \begin{aligned} M^\star \op_\e(i Q_\e (A_\e - \mu_\e) Q_\e^{-1}) u^\flat & = \op_\e\big((i Q_\e (A_\e - \mu_\e) Q_\e^{-1})_\star\big) u_\star \\ & + \e \op_\e({\bf R}_0) u_\star - \e M^\star \op_\e(Q_\e A_\e Q_\e^{-1}) \op_\e({\bf R}_{-1}) u^\flat,\end{aligned}$$
with notation \eqref{def:astar}. 
 Thus, with \eqref{com:M}, 
 \be \label{for:star1}
 \begin{aligned} M^\star \op_\e(i Q_\e (A_\e - \mu_\e) Q_\e^{-1}) u^\flat & = \op_\e\big((i Q_\e (A_\e - \mu_\e) Q_\e^{-1})_\star\big) u_\star \\ & + \e \op_\e({\bf R}_0) u_\star + \e^n \op_\e({\bf R}_0) u^\flat.
\end{aligned} \ee
Besides, in view of \eqref{com:M}, the order-zero term $B^\flat$ in \eqref{eq:uflat} contributes to the equation in $u_\star$ the terms
 $$ M^\star(0;t) \op_\e(B^\flat) u^\flat = \op_\e(B_\star) u_\star + \e^n \op_\e({\bf R}_0) u^\flat, \qquad B_\star \in {\bf R}_0.$$
The equation in $u_\star$ thus appears as
\be \label{eq:ustar} 
 \d_t u_\star + \e^{-1} \op_\e\big((i Q_\e (A_\e - \mu_\e) Q_\e^{-1})_\star\big) u_\star + \op_\e(B_\star) u_\star =  F_\star ,
\ee
where 
\be \label{def:tildeF} 
 B_\star \in {\bf R}_0, \qquad F_\star := M^\star F^\flat + \e^n \op_\e({\bf R}_0) u^\flat.
\ee
The symbol $(Q_\e (A_\e - \mu_\e) Q_\e^{-1})_\star$ is symbol $Q_\e (A_\e - \mu_\e) Q_\e^{-1}$ evaluated along the bicharacteristics of $\mu_\e,$ as defined in \eqref{def:astar}.

\subsection{Frequency, space, and time truncation functions} \label{sec:trunc0}

We introduce frequency cut-offs $\chi_0^\flat, \tilde \chi_0, \chi_0,$ spatial cut-offs $\theta_0^\flat, \tilde \theta_0, \theta_0,$ and temporal cut-offs $\psi_0^\flat, \tilde \psi_0,$ $\psi_0.$ All are smooth and take values in $[0,1].$ Given two cut-offs $\psi_1$ and $\psi_2,$
\be \label{def:prec}
 \psi_1 \prec \psi_2 \quad \mbox{means} \quad (1 - \psi_2) \psi_1 \equiv 0. 
\ee
Equivalently, $\psi_1 \prec \psi_2$ when $\psi_2 \equiv 1$ on the support of $\psi_1.$

The supports of the frequency cut-offs $\chi_0^\flat, \tilde \chi_0, \chi_0$ are all assumed to be included in the ball $\{ |\xi| \leq \delta \}.$ All three are identically equal to 1 on a neighborhood of $\xi = 0.$ There holds $\chi_0^\flat \prec \tilde \chi_0 \prec \chi_0.$ 

The supports of the spatial cut-offs $\theta_0^\flat, \tilde \theta_0, \theta_0$ are all assumed to be included in $\{ |x| \leq \delta \}.$  All three are identically equal to 1 on a neighborhood of $x = 0.$ There holds $\theta_0^\flat \prec \tilde \theta_0 \prec \theta_0.$ We assume in addition $\theta_0 \prec \theta,$ where $\theta$ is the spatial cut-off of Section \ref{sec:locproj}. 

The temporal cut-offs $\psi_0^\flat, \tilde \psi_0,$ $\psi_0$ are nondecreasing, supported in $\{ t \geq - \delta\},$ and identically equal to one in a neighborhood of $\{t \geq 0\}.$ In particular, $\tilde \psi_0 \equiv 1$ on $\{ t \geq - \delta/3\}.$ There holds $\psi_0^\flat \prec \tilde \psi_0 \prec \psi_0.$ The truncation $\tilde \psi_0$ is pictured on Figure 6. 
 
 \begin{figure}\label{fig:psi}
\scalebox{.8}{
\beginpgfgraphicnamed{figure-psi}
 \begin{tikzpicture}

 \begin{scope}[>=stealth]
 
 \draw[line width=.5pt][->] (-3,-1) -- (3,-1);
 
 \draw[line width=.5pt][->] (2.5,-1.5) -- (2.5,3);
 
  \draw (2.5,3) -- (2.5,3) node[anchor=west] {$\tilde \psi_0$} ; 
  
  \draw (2.7,-1) -- (2.7,-1) node[anchor=north] {$0$} ;
   
  \filldraw (1.4,2) circle (.1cm) ; 
  
  \filldraw (-1.5,-1) circle (.1cm) ; 
  
   \draw (2.7,1.9) -- (2.7,1.9) node[anchor=north] {$1$} ;
   
   \draw (-1.2,-1.1) -- (-1.2,-1.1) node[anchor=north] {$-\delta$} ; 
   
   \draw (1.4,-1.1) -- (1.4,-1.1) node[anchor=north] {$-\delta/3$} ; 
  
 \draw[very thick] (0,.5) .. controls (1,2) .. (2,2) ; 
 
 \draw[very thick] (2,2) -- (3,2) ; 
 
 \draw[very thick] (-2,-1) .. controls (-1,-1) .. (0,.5) ; 
 
 \draw[very thick] (-2,-1) -- (-3,-1) ; 
 
 \filldraw (1.4,-1) circle (.1cm) ; 
 
 \draw[dashed] (1.4,-1) -- (1.4,2) ; 

  \end{scope}
  
 \end{tikzpicture}
\endpgfgraphicnamed
} 
\caption{The truncation function $\tilde \psi_0.$} 
\end{figure}
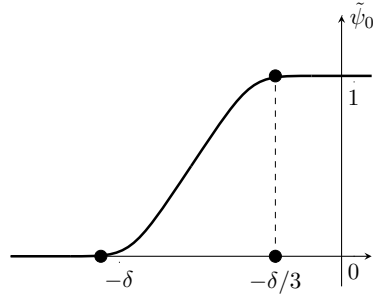

Associated with these cut-offs, define 
\be \label{def:chi}
 \chi(\e,t,x,\xi) := \chi_0\left(\frac{\xi - \xi_0}{\e^{\zeta}}\right) \theta_0(x) \psi_0\big(t - t_\star(\e,x,\xi)\big),
\ee
and define similarly $\chi^\flat$ and $\tilde \chi$ in terms of $\chi_0^\flat, \theta_0^\flat, \psi_0^\flat$ and $\tilde \chi_0, \tilde \theta_0, \tilde \psi_0$ respectively. In \eqref{def:chi}, the transition function $t_\star$ is defined in \eqref{def:tstar}, which we reproduce here:
$$ t_\star(\e,x,\xi) = \e^{-h} \theta_\star(\e^{1-h} x, \xi), \quad \mbox{with} \,\,\,\theta_\star \geq 0, \,\,\, \theta_\star(0,\xi_0) = 0, \,\,\, \nabla_{x,\xi} \theta_\star(0,\xi_0) = 0.$$
Recall that the value of $\zeta$ is fixed in Assumption \ref{ass:main}, depending on $\ell:$ there holds $\zeta = 0$ if $\ell = 0$ or $\ell = 1,$ and $\zeta =1/3$ if $\ell = 1/2.$ 

\begin{lem} \label{lem:chi} The support of $\chi$ is a neighborhood of the elliptic domain ${\mathcal D}$ \eqref{def:mathcalD}, and there holds
 $$ |\d_x^\a \d_\xi^\b \chi| \lesssim \e^{-|\b| \zeta},$$
where $\zeta$ is introduced in Section {\rm \ref{sec:ass:param}}, such that, in particular, $\zeta < h.$ The same holds of course for the other truncations $\chi^\flat$ and $\tilde \chi,$ which satisfy $\chi^\flat \prec \tilde \chi \prec \chi.$ 
\end{lem}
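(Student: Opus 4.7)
The strategy is to handle the three factors of $\chi$ separately and then combine via the Leibniz rule. When $\ell\in\{0,1\}$ we have $t_\star\equiv 0$ and $\zeta=0$, so the bounds become trivial; the only substantive case is $\ell=1/2$, where $h=2/3$ and $\zeta=1/3$. These values satisfy the two arithmetic identities that will drive the whole computation:
\begin{equation}
h=2\zeta,\qquad 1-h=\zeta.
\end{equation}
In the two trivial cases $\zeta=0<h\in\{1,1/2\}$, and in the case $\ell=1/2$ we have $\zeta=1/3<2/3=h$; this covers the claim $\zeta<h$.

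\textbf{Support.} The factor $\chi_0((\xi-\xi_0)/\e^\zeta)$ is supported in $\{|\xi-\xi_0|\le\delta\e^\zeta\}$, the factor $\theta_0(x)$ in $\{|x|\le\delta\}$, and $\psi_0(t-t_\star)$ in $\{t-t_\star\ge-\delta\}$, which contains a neighborhood of $\{t\ge t_\star\}$. Intersecting with $\{t\le T(\e)\}$ (on which our analysis takes place), the support of $\chi$ is contained in a slight enlargement of the projection of $\mathcal D$ onto $(t,x,\xi)$-space, and contains this projection itself; this is what is meant by being ``a neighborhood of $\mathcal D$''. The ordering $\chi^\flat\prec\tilde\chi\prec\chi$ is inherited immediately from the ordering $\chi_0^\flat\prec\tilde\chi_0\prec\chi_0$, $\theta_0^\flat\prec\tilde\theta_0\prec\theta_0$, $\psi_0^\flat\prec\tilde\psi_0\prec\psi_0$ and the Leibniz-type identity for the defining product.

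\textbf{Derivative bounds --- key step.} The nontrivial point is the estimate
\begin{equation}\label{plan:tstar}
|\d_x^\a\d_\xi^\b t_\star(\e,x,\xi)|\lesssim \e^{-|\b|\zeta}\qquad\text{on }\{|x|\le\delta,\ |\xi-\xi_0|\le\delta\e^\zeta\}.
\end{equation}
Write $t_\star=\e^{-h}\theta_\star(\e^{1-h}x,\xi)$ and differentiate:
\begin{equation}
\d_x^\a\d_\xi^\b t_\star=\e^{|\a|(1-h)-h}\,(\d_1^\a\d_2^\b\theta_\star)(\e^{1-h}x,\xi).
\end{equation}
On the localization set we have $\e^{1-h}|x|\le\delta\e^{1-h}=\delta\e^\zeta$ and $|\xi-\xi_0|\le\delta\e^\zeta$, so the argument of $\theta_\star$ lies in an $O(\e^\zeta)$-neighborhood of $(0,\xi_0)$. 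Because $\theta_\star(0,\xi_0)=0$ and $\nabla_{x,\xi}\theta_\star(0,\xi_0)=0$, a Taylor expansion gives $|\theta_\star|=O(\e^{2\zeta})$, $|\nabla\theta_\star|=O(\e^\zeta)$, and $|\d^k\theta_\star|=O(1)$ for $k\ge 2$. Combining with the prefactor $\e^{|\a|(1-h)-h}=\e^{|\a|\zeta-h}$ and using $h=2\zeta$, one checks case by case $(|\a|+|\b|\le 1,\ =1,\ \ge 2)$ that each term is at worst $O(\e^{-|\b|\zeta})$, which proves \eqref{plan:tstar}.

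\textbf{Conclusion.} A direct computation yields $|\d_x^\a\d_\xi^\b\chi_0((\xi-\xi_0)/\e^\zeta)|\lesssim \e^{-|\b|\zeta}$ and $|\d_x^\a\theta_0|\lesssim 1$. For $\psi_0(t-t_\star)$, Fa\`a di Bruno expresses $\d_x^\a\d_\xi^\b\psi_0(t-t_\star)$ as a sum of terms $\psi_0^{(k)}(t-t_\star)\prod_i\d_x^{\a_i}\d_\xi^{\b_i} t_\star$ with $\sum\a_i=\a$, $\sum\b_i=\b$ and each $(\a_i,\b_i)\neq (0,0)$; by \eqref{plan:tstar} each such term is $O(\prod_i\e^{-|\b_i|\zeta})=O(\e^{-|\b|\zeta})$. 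The Leibniz rule applied to the product $\chi=\chi_0(\cdot)\theta_0\,\psi_0(t-t_\star)$ then delivers the desired bound $|\d_x^\a\d_\xi^\b\chi|\lesssim\e^{-|\b|\zeta}$. The same argument applies verbatim to $\chi^\flat$ and $\tilde\chi$, only the underlying profiles $\chi_0^\flat,\tilde\chi_0,\theta_0^\flat,\tilde\theta_0,\psi_0^\flat,\tilde\psi_0$ being changed.

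The main obstacle is purely arithmetic bookkeeping in \eqref{plan:tstar}: one must exploit simultaneously both vanishing conditions on $\theta_\star$ at $(0,\xi_0)$ together with the exact balance $h=2\zeta$, $1-h=\zeta$ of the $\ell=1/2$ scaling, in order to absorb the potentially dangerous prefactor $\e^{-h}$ into $\e^{-|\b|\zeta}$ uniformly in $\a$ and $\b$.
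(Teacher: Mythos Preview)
Your proof is correct and follows essentially the same approach as the paper: Fa\`a di Bruno combined with the arithmetic identities $h=2\zeta$, $1-h=\zeta$ and the second-order vanishing of $\theta_\star$ at $(0,\xi_0)$. The paper applies Fa\`a di Bruno directly to $\psi_0(t-t_\star)$ in terms of $\e^{-h}\d^{\b_j}\theta_\star$ and checks partition by partition, whereas you first isolate the clean intermediate bound $|\d_x^\a\d_\xi^\b t_\star|\lesssim\e^{-|\b|\zeta}$; this is only a minor reorganization, not a different route.
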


\begin{proof} For $(t,x,\xi)$ to belong to the support of $\chi,$ there needs to hold simultaneously $|\xi - \xi_0| \leq \e^{\zeta},$ $|x| \leq \delta,$ and $t_\star(\e,x,\xi) - \delta \leq t.$ This defines a neighborhood of ${\mathcal D}$ (precisely, of the projection of ${\mathcal D}$ onto the $(t,x,\xi)$ domain), as defined in \eqref{def:mathcalD}.

We may now assume $t_\star$ to be not identically zero, otherwise $\chi$ is not stiff. Then (see Section \ref{sec:ass:param}) $\ell = 1/2,$ $h = 2/3,$ $\zeta = 1/3.$ By the Fa\'a di Bruno formula
 $$ \d_\xi^\b \Big\{ \psi_0(t - t_\star(\e,x,\xi)) \Big\} =  \sum_{\begin{smallmatrix} 1 \leq k \leq |\b| \\ \b_1 + \dots + \b_k = \b \end{smallmatrix} } C_{(\b_k)} \psi_0^{(k)}(t - t_\star)  \prod_{1 \leq j \leq k} (\e^{-h} \d_\xi^{\b_j} \theta_\star) ,$$
where $C_{(\b_k)}$ are positive constants. We note that $\d_\xi^{\a} \theta_\star(\e,x,\xi) = O(\e^{\zeta})$ if $|\a| = 1$ and $\xi$ belongs to the support of $\chi_0,$ by assumption on $\theta_\star,$ while $\d_\xi^\a \theta_\star = O(1)$ if $|\a| \geq 2.$

Consider the case of a decomposition of $\b$ in a sum of $\b_j$ of length one. Based on the above formula and the bound on $\nabla_\xi \theta_\star,$ the corresponding bound is $\e^{-|\b|(h - \zeta)}.$ 

If $\b$ is decomposed into $\b_1 + \b_2  +\dots + \b_k,$ with $|\b_1| = 2$ and $|\b_j| = 1$ for $j \geq 2,$ then $k = |\b| - 1.$ The corresponding bound is $\e^{- h + (|\b| - 2)(h - \zeta)} \leq \e^{-|\b|(h - \zeta)}$ as soon as $\zeta \leq h/2,$ which holds true. It is now easy to verify that the decomposition of $|\b|$ into sums of multi-indices of length one corresponds to the worst possible loss in powers of $\e.$ 

We turn to $x$-derivatives of $\psi_0(t - t_\star).$ By assumption, $\theta_\star$ is a function of $\e^{2(1-h)} (x,x)$ and $\e^{1-h} (x,\xi).$ Thus $x$-derivatives bring in either powers of $\e^{-h + 2(1-h)} = 1,$ since $h = 2/3,$ or powers of $\e^{-h + 1- h + \zeta} = 1,$ since $\zeta = h/2 = 1/3.$ 

Thus $|\d_x^\a \d_\xi^\b \psi_0(t - t_\star)| \lesssim \e^{- |\b|( h - \zeta)},$ if $\ell = 1/2.$ Considering finally the full truncation function $\chi,$ we observe that the term in $\chi_0$ contributes the exact same loss per $\xi$-derivative, and the spatial truncation $\theta_0$ contributes no loss. \end{proof}

\begin{cor} \label{cor:chi} The operator $\op_\e(\chi)$ maps $L^2(\R^d)$ to $L^2(\R^d),$ uniformly in $\e,$ and so do $\op_\e(\chi^\flat)$ and $\op_\e(\tilde \chi).$ 
\end{cor}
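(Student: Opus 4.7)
\textbf{Proof proposal for Corollary \ref{cor:chi}.}

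The plan is to reduce the $L^2$-boundedness to the estimate of Lemma \ref{lem:chi} by exploiting the $\e^h$-semiclassical rescaling built into $\op_\e(\cdot)$, and then invoke Proposition \ref{prop:action}. The crucial observation is that a $\xi$-derivative acting on the actual integrand of the quantization \eqref{semicl:0} produces, by the chain rule, a factor $\e^h$ which is strictly stronger than the loss $\e^{-\zeta}$ recorded in Lemma \ref{lem:chi}. Thus the penalty incurred by the stiffness of the truncation is absorbed by the semiclassical parameter.

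First I would introduce the effective symbol $b(\e, t, x, \xi) := \chi(\e, t, x, \e^{h}\xi)$, which is precisely the symbol of $\op_\e(\chi)$ in the standard (unrescaled) Kohn--Nirenberg quantization. Using Lemma \ref{lem:chi}, I would check that for every pair of multi-indices $(\a, \b)$,
\begin{equation*}
|\d_x^\a \d_\xi^\b b(\e, t, x, \xi)| = \e^{h|\b|} \, |(\d_x^\a \d_\xi^\b \chi)(\e, t, x, \e^{h}\xi)| \lesssim \e^{h|\b|} \cdot \e^{-|\b|\zeta} = \e^{|\b|(h - \zeta)},
\end{equation*}
and since $\zeta < h$ (part of the statement of Lemma \ref{lem:chi}), each such derivative is uniformly bounded in $\e$, in fact $\leq C_{\a\b}$ independent of $(\e, t)$ in the range $0 < \e < \e_0$, $0 \leq t \leq \e^{h} T(\e)$.

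Consequently, the semiclassical seminorms $\|\chi\|_{0, r}$ of Appendix \ref{sec:symbols} are uniformly bounded, and Proposition \ref{prop:action} applied with $k = 0$ yields that $\op_\e(\chi)$ is bounded $L^2(\R^d) \to L^2(\R^d)$ with a norm independent of $\e$ (up to the usual $|\log\e|^{C'}$ factors swallowed by the $\lesssim$ notation of \eqref{notation:lesssim}). The identical argument, with the same derivative bounds, applies verbatim to $\chi^\flat$ and $\tilde \chi$, since these truncations are built from the same scalings and their derivatives obey exactly the estimates of Lemma \ref{lem:chi}. There is no genuine obstacle here: the entire content of the corollary is that Lemma \ref{lem:chi} has been formulated with the \emph{right} balance between stiffness ($\zeta$) and the semiclassical scale ($h$), so the routine Calder\'on--Vaillancourt-type statement of Proposition \ref{prop:action} suffices.
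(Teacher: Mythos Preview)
Your core idea---that the $\e^h$ rescaling built into $\op_\e$ compensates for the $\e^{-\zeta}$ stiffness of $\chi$ in $\xi$---is sound, and the computation $|\d_x^\a \d_\xi^\b b| \lesssim \e^{|\b|(h-\zeta)}$ for $b(x,\xi) = \chi(x,\e^h\xi)$ is correct. However, the final step is misphrased. What you have established are $S^0_{0,0}$-type bounds for $b$, \emph{not} a bound on $\|\chi\|_{0,r}$ as defined in \eqref{notation:norm:eps}. That seminorm involves $\langle\xi\rangle^{|\b|}|\d_x^\a\d_\xi^\b\chi|$, and since $\langle\xi\rangle\sim\langle\xi_0\rangle$ on the support while $|\d_\xi^\b\chi|\lesssim\e^{-|\b|\zeta}$ by Lemma~\ref{lem:chi}, one has $\|\chi\|_{0,r}\sim\e^{-r\zeta}$, which is \emph{not} uniformly bounded when $\zeta>0$ (the case $\ell=1/2$). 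So invoking bound \eqref{bd:action} of Proposition~\ref{prop:action}, which controls $\|\op_\e(\chi)\|_{L^2\to L^2}$ by $\|\chi\|_{0,C(d)}$, does not give a uniform estimate. Your route \emph{can} be completed if you invoke Calder\'on--Vaillancourt for $S^0_{0,0}$ symbols directly on $\op_1(b)$, but that is a different statement from Proposition~\ref{prop:action} as written.

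The paper takes a shorter path that avoids $\xi$-derivatives altogether: since $\chi$ is compactly supported in $x$, it appeals to the \emph{second} bound \eqref{bd:action:H} in Proposition~\ref{prop:action}, which requires only $\sup_{\xi}|\d_x^\a\chi(\cdot,\xi)|_{L^1(\R^d_x)}$ for $|\a|\leq d+1$. By Lemma~\ref{lem:chi} with $\b=0$ the spatial derivatives $|\d_x^\a\chi|$ are uniformly bounded, and compact $x$-support makes the $L^1_x$ norms finite and $\e$-independent. This delivers the uniform $L^2$ bound without ever touching the stiff frequency variable.
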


\begin{proof} Since $\chi$ is compactly supported in $x,$ we may use pointwise bounds for $\d_x^\a \chi$ and bound \eqref{bd:action:H} in Proposition \ref{prop:action}. The result then follows from Lemma \ref{lem:chi}.
\end{proof}

\subsection{Localization in the elliptic zone and rescaling in time} \label{sec:trunc}

We define 
\be \label{def:vpsi}
 v := \op_\e(\tilde \chi(t)) \big( u_\star(\e^h t) \big),
\ee
meaning that we first rescale time in $u_\star$ and then apply $\op_\e(\tilde \chi)$ evaluated at $t,$ where $\tilde \chi$ is defined just below \eqref{def:chi}. We now derive an equation in $v,$ based on equation \eqref{eq:ustar} in $u_\star.$ 

Consider first the leading, first-order term in \eqref{eq:ustar}. When evaluated at $\e^h t,$ its symbol is precisely $A_\star$ \eqref{def:Astar} the rescaled and advected symbol for which Assumption \ref{ass:main} holds:
$$ \big((i Q_\e (A_\e - \mu_\e) Q_\e^{-1})_\star\big)(\e^h t) = A_\star(\e,t,x,\xi).$$  
Thus
$$ \op_\e(\tilde \chi) \Big( \op_\e( \big((i Q_\e (A_\e - \mu_\e) Q_\e^{-1})_\star\big) u_\star \, \Big)(\e^h t) = \op_\e(\tilde \chi) \op_\e(i A_\star) (u_{\star}(\e^h t)).$$
Similarly, denoting $B := B_\star(\e^h t),$ where $B_\star$ is the order-zero correction to the leading symbol which appears in equation \eqref{eq:ustar}, there holds
 $$ \op_\e(\tilde \chi) \big( \op_\e(B_\star) u_\star \, \big)(\e^h t) = \op_\e(\tilde \chi)\op_\e(B) (u_\star(\e^h t)).$$
We now introduce a commutator:
$$ \op_\e(\tilde \chi) \op_\e(i A_\star + \e B) (u_\star(\e^h t)) = \op_\e(i A_\star + \e B) v + \Gamma (u_\star(\e^h t)),$$
where
\be \label{def:tildeG} \tilde \G := [\op_\e(\tilde \chi), \op_\e(i A_\star + \e B)].\ee
By definition of $v,$ 
$$ \d_t v = \op_\e(\d_t \tilde \chi) (u_\star(\e^h t)) + \e^h \op_\e(\tilde \chi)(( \d_t u_\star) (\e^h t)).$$
Together with equation \eqref{eq:ustar} and the above, this implies 
$$ \d_t v + \e^{h-1} \op_\e(i A_\star + \e B) v = \op_\e(\d_t \tilde \chi) (u_\star (\e^h t)) - \e^{h-1} \tilde \Gamma (u_\star(\e^h t)) + \e^h \op_\e(\tilde \chi) F_\star.$$
We will handle the right-hand side as a remainder. The following Lemma shows that we may introduce a truncation function in the above principal symbol.

\begin{lem} \label{lem:psi:remainder} For any bounded family $P(\e,t) \in S^1,$ there holds, for $\chi$ defined in \eqref{def:chi} and $v$ defined in \eqref{def:vpsi}:
\be \label{to:prove:psi}  \op_\e(P) v = \op_\e(\chi P) v + [\op_\e(P), \op_\e(\chi)] v + \e^n \big(\op_\e({\bf R}_0) u_\star\big)(\e^h t),
\ee 
where ${\bf R}_0$ is a uniform remainder in the sense of Section {\rm \ref{sec:unif}},
\end{lem}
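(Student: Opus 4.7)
My plan is to insert a commutator and to exploit the nested support relation $\tilde\chi \prec \chi$ so that the difference between $\op_\e(P) v$ and the two terms on the right-hand side of \eqref{to:prove:psi} reduces to a smoothing operator acting on $u_\star(\e^h t)$.

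First I would use the elementary identity
\[ \op_\e(P) = [\op_\e(P), \op_\e(\chi)] + \op_\e(\chi)\op_\e(P) + \op_\e(P)\bigl(\Id - \op_\e(\chi)\bigr), \]
applied to $v$, which reduces the task to proving that both
\[ E_1 v := \bigl(\op_\e(\chi)\op_\e(P) - \op_\e(\chi P)\bigr) v \quad \textrm{and} \quad E_2 v := \op_\e(P)\bigl(\Id - \op_\e(\chi)\bigr) v \]
are of the form $\e^n \op_\e({\bf R}_0) u_\star(\e^h t)$.

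For $E_2$, I would rewrite $v = \op_\e(\tilde \chi) u_\star(\e^h t)$ and expand $\op_\e(\chi)\op_\e(\tilde \chi)$ via the semiclassical composition formula (Proposition \ref{prop:composition}). Since $\tilde \chi \prec \chi$, we have $\chi\tilde\chi = \tilde\chi$ and, more importantly, $\d_\xi^\alpha \chi$ vanishes on an open neighborhood of $\supp \tilde\chi$ for every $\alpha$; hence every symbol in the asymptotic expansion of $\chi \# \tilde\chi - \tilde\chi$ vanishes pointwise. Consequently $(\Id - \op_\e(\chi))\op_\e(\tilde\chi) = \e^N \op_\e(\rho_N)$ for any prescribed $N$, with $\rho_N$ a uniform remainder, and composing on the left with $\op_\e(P) \in S^1$ yields the claimed form provided $N$ is chosen large enough.

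For $E_1$, the semiclassical composition formula applied to $\op_\e(\chi)\op_\e(P)$ produces $\op_\e(\chi P)$ plus a series of operators whose symbols are linear combinations of $\d_\xi^\alpha \chi \cdot \d_x^\alpha P$, weighted by $\e^{|\alpha| h}$. By Lemma \ref{lem:chi}, each $\xi$-derivative of $\chi$ costs at most a factor $\e^{-\zeta}$, so the $k$th correction has semiclassical size at most $\e^{k(h-\zeta)}$, a genuine gain since $\zeta < h$ (Section \ref{sec:ass:param}). Pushing the expansion to any order $N \geq n/(h-\zeta)$ and invoking once more the disjoint supports of $\d_\xi^\alpha \chi$ and $\tilde \chi$ when composing these remainders against $v$ on the right delivers the asserted $\e^n$ gain.

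The main obstacle is combining these two effects: the stiff $\xi$-dependence of $\chi$, with its $\e^{-\zeta}$ derivative losses, and the order-$1$ growth of $P$ in $\xi$. The cleanest way around is to systematically compose the correction symbols back against $\op_\e(\tilde\chi)$ on the right; since $\d_\xi^\alpha \chi$ and $\tilde\chi$ have disjoint supports, this second composition is $\e^\infty$-smoothing and bypasses any delicate counting of powers of $\e^{h-\zeta}$ altogether.
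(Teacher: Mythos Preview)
Your argument is correct and rests on the same mechanism as the paper's proof: the identity $\op_\e(\tilde\chi)=\op_\e(\chi)\op_\e(\tilde\chi)+\e^{n}\op_\e({\bf R}_0)$, established via Proposition~\ref{prop:composition}, the nesting $\tilde\chi\prec\chi$, and the gap $h>\zeta$ --- this is exactly your $E_2$ step. The paper then finishes in one line by writing $\op_\e(P)\op_\e(\chi)=\op_\e(\chi P)+[\op_\e(P),\op_\e(\chi)]$, so your separate $E_1$ analysis (composing the correction symbols back against $\op_\e(\tilde\chi)$ and using that $\d_\xi^\alpha\chi$ vanishes on $\supp\tilde\chi$) is an explicit version of what the paper leaves implicit in that last identity.
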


\begin{proof} By definition of $\tilde \chi$ and $\chi,$ and Proposition \ref{prop:composition}, there holds 
$$  \op_\e(\tilde \chi) = \op_\e(\chi \tilde \chi) = \op_\e(\chi) \op_\e(\tilde \chi) + \e^{n'h } \op_\e(R_{n'}(\chi, \tilde \chi)),$$
and the remainder satisfies
 $$ \| \op_\e(R_{n'}(\chi, \tilde \chi)) \|_{L^2 \to \| \cdot \|_{\e,-n'}} \lesssim \| \d_\xi^{n'} \chi \|_{0,C(d)} \| \d_x^{n'} \tilde \chi \|_{0,C(d)}.$$
 We use here norms $\| \cdot \|_{m,r}$ for pseudo-differential symbols of order $m,$ as defined in \eqref{notation:norm:eps}.
By Lemma \ref{lem:chi}, there holds 
 $$ \| \d_\xi^{n'} \chi \|_{0,C(d)} \lesssim \e^{- (n' + C(d))\zeta} \quad \and  \| \d_x^{n'} \tilde \chi \|_{0,C(d)} \leq \e^{-C(d) \zeta}.$$
Since $\zeta < h,$ there holds $\e^{n' h - (n' + 2 C(d)) (h - \zeta)} \leq \e^{n},$ for any $n,$ if $n'$ is chosen large enough. Thus 
 \be \label{chi:id} \op_\e(\tilde \chi) = \op_\e(\chi) \op_\e(\tilde \chi) + \e^{n} \op_\e({\bf R}_{0}),\ee
where ${\bf R}_0$ is bounded for $t \leq T(\e).$ This implies 
$$ \op_\e(P) v = \op_\e(P) \op_\e(\tilde \chi) u_\star(\e^h t) = \op_\e(P) \op_\e(\chi) v + \e^n \op_\e({\bf R}_{0}) (u_\star(\e^h t)).$$
Now 
$$ \op_\e(P) \op_\e(\chi) = \op_\e(\chi P) + [\op_\e(P), \op_\e(\chi)],$$
and \eqref{to:prove:psi} is proved, with a symbol ${\bf R}_0$ which is a uniform remainder in the sense of Section \ref{sec:unif}, meaning that we rescale in time the remainder which appears in \eqref{chi:id}.  
\end{proof}

Applying Lemma \ref{lem:psi:remainder} to $P = iA_\star + \e B,$ we derive the final form of the equation satisfied by $v:$ 
  \begin{equation} \label{eq:v} 
   \d_t v +  \e^{h-1} \op_\e(\chi(i A_\star + \e B)) v = \e^{h} g,
  \end{equation}
  where the source term $g$ is defined in terms of the remainder $F_\star$ from \eqref{eq:ustar}-\eqref{def:tildeF}:
\be \label{def:g}
g =  \e^{-1} (\Gamma v - \tilde \Gamma (u_\star(\e^h t))) + \e^{-h} \op_\e(\d_t \tilde \chi) (u_\star (\e^h t))  +  \op_\e(\tilde \chi) (F_\star(\e^h t)) + \e^n \big(\op_\e({\bf R}_0) u_\star\big)(\e^h t),
\ee
with $\G$ is defined just like $\tilde \G$ \eqref{def:tildeG}, but with $\chi$ in place of $\tilde \chi.$ 

The derivation of \eqref{eq:v}-\eqref{def:g} ends the first step of the proof of Theorem \ref{th:main}. Our goal is now to show a growth in time for the solution to \eqref{eq:v} over the  interval $[0, T(\e)],$ where $T(\e) = \big(T_\star |\ln \e|\big)^{1/(1 + \ell)}.$ In this view, we will first derive an integral representation formula for $v.$ 

\subsection{An integral representation formula}\label{integral}
 At this point we use the theory developped in Appendix \ref{sec:Duhamel}. Theorem \ref{duh1} gives the integral representation formula for the solution $v$ to \eqref{eq:v} issued from $v(0):$  
 \begin{equation}\label{duhamel}
 \begin{aligned}
 v & = \op_\e(\S(0;t)) ) v(0) + \e^{h} \int_0^t \op_\e(\S(\t;t)) (\Id + \e \op_\e({\bf R}_0)) \big(g(\t) + \e{\bf R}_0 v(0) \big)  d\t,\end{aligned}\end{equation}
 where ${\bf R}_0$ are uniform remainders, as defined in Section \ref{sec:unif},
 and the approximate solution operator $\op_\e(\S(s;t))$ is defined 
 by
 \begin{equation} \label{repS}
  \S = \sum_{0 \leq q \leq q_0} \e^{hq} S_{q}, %
   \end{equation}
where $q_0$ is large enough\footnote{Depending on $\zeta,$ the final observation time $T_\star$ \eqref{max-time}, and the growth function $\g;$ see Appendix \ref{sec:Duhamel} and in particular the proof of Lemma \ref{lem:duh-remainder}.}. In \eqref{repS}, the leading term $S_{0}$ is defined for $0 \leq \t \leq t \leq T(\e)$ and all $(x,\xi) \in \R^{d} \times \R^d$ by
  \begin{equation} \label{eq:flow} \d_t S_{0} + \e^{h-1} \chi (i A_\star + \e B)(t,x,\xi) S_{0} = 0, \qquad S_{0}(\t;\t)  = \mbox{Id},\end{equation}
and the $S_{q},$ for $q \geq 1,$ are correctors,
 defined inductively as the solutions of 
 \be \label{Sq} \begin{aligned} 
  \d_t S_q & + \e^{h-1} \chi (i A_\star + \e B)(t,x,\xi)  S_q  + \e^{h-1} \sum_{\begin{smallmatrix} q_1 + q_2 = q \\ 0 < q_1 \end{smallmatrix}} (\chi (i A_\star + \e B)) \sharp_{q_1} S_{q_2}, \quad S_q(\t;\t) \equiv 0. \end{aligned}
  \ee
 with notation $\s_1 \sharp_n \s_2 := (- i)^n (n!)^{-1} \sum_{|\a| = n} \d_\xi^\a \s_1 \d_x^\a \s_2.$ From \eqref{eq:flow} and \eqref{Sq} we deduce the representation, for $q \geq 1,$ 
 \begin{equation} \label{Sq:rep} S_{q}(\t;t) = - \e^{h-1} \int_{\t}^t S_{0}(\t';t) \sum_{\begin{smallmatrix} q_1 + q_2 = q \\ 0 < q_1 \end{smallmatrix}} (\chi(i A_\star + \e B))(\t',x,\xi) \sharp_{q_1} S_{q_2}(\t;\t')\, d\t'.
 \end{equation}
In order to be able to exploit representation \eqref{duhamel}, we need to check that Assumption \ref{ass:BS}, under which Theorem \ref{duh1} holds, is satisfied. This is the object of the forthcoming Section. 

\subsection{Bound on the solution operator} \label{sec:bd-sol-op}

Recall that $S,$ the symbolic flow of $i \e^{h-1} A_\star,$ is defined in \eqref{tildefund}, and is assumed to satisfy the bounds of Assumption \ref{ass:main}. The upper bound \eqref{ass:up} in Assumption \ref{ass:main} is assumed to hold for $S$ in domain ${\mathcal D}$ defined in \eqref{def:mathcalD}: 
$$ {\mathcal D} := \big\{ (\t;t,x,\xi), \quad t_\star(x,\xi) \leq \t \leq t \leq T(\e),  \quad |x| \leq \delta, \quad |\xi - \xi_0| \leq \delta \e^\zeta\big\},$$
where the transition time $t_\star$ is defined in \eqref{def:tstar} and the final observation time $T(\e)$ is defined in \eqref{max-time}. 

The goal in this Section is to show that the symbolic flow $S_0,$ which is defined as the solution of \eqref{eq:flow}, and the correctors $S_q$ defined as the solutions to \eqref{Sq}, satisfy the bounds of Assumption \ref{ass:BS}. This will allow to use the representation Theorem \ref{duh1}, and will justify representation \ref{duhamel}. This will also give a bound for the norm of the approximate solution operator $\op_\e(\Sigma)$ defined in \eqref{repS}.

We are looking for bounds for the correctors $S_q,$ and their derivatives. Consider representation \eqref{Sq:rep}. Disregarding $(x,\xi)$-derivatives, we see on \eqref{Sq:rep} that $S_q$ appears as a time integral of a product $S_0 (\chi (i A_\star + \e B)) S_{q_2},$ with $q_2 < q.$ We may recursively use \eqref{Sq:rep} at rank $q_2,$ and by induction $S_q(\t;t)$ appears as a time-integral of form 
 \be \label{rep:simple} \begin{aligned} \e^{q(h-1)} \int_{\t \leq \t_1 \leq \dots \leq \t_q \leq t} S_0(\t_1;t) (\chi(i A_\star & + \e B))(\t_1)  S_0(\t_2;\t_1) (\chi(i A_\star + \e B))(\t_2) \cdots \\ &  \cdots S_0(\t_{n}; \t_{n-1}) (\chi (i A_\star + \e B))(\t_n) S_0(\t;\t_n) d\t_1\dots d\t_n, \end{aligned}\ee
 in which there are $q$ occurences of $i A_\star + \e B$ and $q+1$ occurences of $S_0.$ Note again that in \eqref{rep:simple} we overlook $(x,\xi)$-derivatives. From there, it appears that we need to 
 \begin{itemize}
 \item derive bounds on $S_0$ and its $(x,\xi)$-derivatives; these will be deduced from the bounds of $S$ postulated in Assumption \ref{ass:main};
 \item derive bounds for products of $S_0$ with $\chi( i A_\star +  \e B);$ here the block structure assumption \ref{block:1}-\eqref{block:2} from Assumption \ref{ass:main} will come in. 
 \end{itemize}

\subsubsection{Product bounds for \texorpdfstring{$S$}{S}} \label{sec:S:first} In a first step, we prove bounds for products of symbols involving the symbolic flow $S$ \eqref{tildefund} and the rescaled and advected principal symbol $A_\star$ \eqref{def:Astar}. We denote for $\a,\b \in \N^{2d}$ and $0 \leq \t \leq t:$ 
\be \label{def:Sab} S_{\a,\b}(\t;t) := \e^{h-1} S(\t;t) \d_x^{\a} \d_\xi^\b A_\star(\t),\ee
and products, for $(\a_i,\b_i) \in \N^{2d}$ and $0 \leq \t \leq \t_n \leq \t_{n-1} \leq \t_1 \leq t:$ 
\be \label{def:bfS} {\bf S}_n(\t,\t_1,\dots,\t_n;t) := S_{\a_1,\b_1}(\t_1;t) S_{\a_2,\b_2}(\t_2;\t_1) \cdots S_{\a_n,\b_n}(\t_n; \t_{n-1}) S(\t;\t_n).\ee

\begin{lem} \label{lem:s00} Under Assumption {\rm \ref{ass:main}}, there holds
 $$ | {\bf S}_n(\t,\t_1,\dots,\t_n;t,x,\xi) | \lesssim \left(\begin{array}{cc} 1 & \e^{-\zeta} \\ \e^{\zeta} & 1 \end{array}\right) {\bf e}_{\g^+}(\t;t,x,\xi),$$
 for all $n \geq 1,$ all $\a_i,\b_i\in \N^{2d},$ all $0 \leq \t \leq \t_n \leq \dots \leq \t_1 \leq t$ with $(\t;t,x,\xi) \in {\mathcal D},$ uniformly in $\e.$ By $\lesssim$ we mean here entry-wise inequalities modulo constants, for each block of ${\bf S}_n,$ as described below \eqref{notation:lesssim}. 
\end{lem}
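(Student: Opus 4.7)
The plan is to reduce the estimate to a block-by-block entrywise computation, telescope the exponential factors ${\bf e}_{\g^+}$ produced by \eqref{ass:up}, and then verify that the ``ridge'' matricial structure $\bigl(\begin{smallmatrix} 1 & \e^{-\zeta} \\ \e^\zeta & 1 \end{smallmatrix}\bigr)$ postulated for $S_{(j)}$ is stable under multiplication by $\e^{h-1}\d_x^\a\d_\xi^\b A_{\star j}$. Since $A_\star$ is block-diagonal, its flow $S$ is too, as is every factor $S_{\a,\b}(\t_i;\t_{i-1})$, so the product ${\bf S}_n$ decomposes into independent blocks indexed by $j$ and it suffices to bound each ${\bf S}_n^{(j)}$. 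The exponential factors provided by \eqref{ass:up} at each of the $n+1$ occurrences of $S$ telescope via
\begin{equation*}
{\bf e}_{\g^+}(\t_1;t)\,{\bf e}_{\g^+}(\t_2;\t_1)\cdots{\bf e}_{\g^+}(\t_n;\t_{n-1})\,{\bf e}_{\g^+}(\t;\t_n) = {\bf e}_{\g^+}(\t;t),
\end{equation*}
as follows directly from \eqref{def:bf-e} and the telescoping of $(t-t_\star)_+^{\ell+1}$ along the intermediate times $\t_i$. What remains is to control the matricial part uniformly in $n$ and in the multi-indices $(\a_i,\b_i)$.

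If the block $j$ satisfies \eqref{block:1}, then each factor $\e^{h-1}\d_x^{\a_i}\d_\xi^{\b_i} A_{\star j}$ has all entries bounded by $C_{\a_i\b_i}$, and correspondingly $S_{(j)}$ itself obeys a plain ${\bf e}_{\g^+}$ bound without off-diagonal amplification (either $\zeta=0$ when $\ell\in\{0,1\}$, or the block carries no bifurcating sub-structure when $\ell=1/2$, so that the off-diagonal blocks in the ridge matrix are empty). Iterated matrix multiplication of matrices with uniformly bounded entries then gives the claim immediately.

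The substantial case is \eqref{block:2}, which occurs only for $\ell=1/2$. Then $h-1=-\zeta$ and $1-h=\zeta$, so $\e^{h-1}\d_x^\a\d_\xi^\b A_{\star j}$ has the \emph{antidiagonal} block form $\bigl(\begin{smallmatrix}0 & O(\e^{-\zeta})\\ O(\e^\zeta) & 0\end{smallmatrix}\bigr)$. The algebraic heart of the argument is that, at the level of envelope bounds computed block-wise,
\begin{equation*}
\begin{pmatrix}1 & \e^{-\zeta}\\ \e^\zeta & 1\end{pmatrix}\begin{pmatrix}0 & \e^{-\zeta}\\ \e^\zeta & 0\end{pmatrix} = \begin{pmatrix}1 & \e^{-\zeta}\\ \e^\zeta & 1\end{pmatrix} \and \begin{pmatrix}1 & \e^{-\zeta}\\ \e^\zeta & 1\end{pmatrix}\begin{pmatrix}1 & \e^{-\zeta}\\ \e^\zeta & 1\end{pmatrix} = \begin{pmatrix}2 & 2\e^{-\zeta}\\ 2\e^\zeta & 2\end{pmatrix}\lesssim \begin{pmatrix}1 & \e^{-\zeta}\\ \e^\zeta & 1\end{pmatrix}.
\end{equation*}
Hence each factor $S_{\a_i,\b_i}(\t_i;\t_{i-1})=[\e^{h-1}S(\t_i;\t_{i-1})]\,\d_x^{\a_i}\d_\xi^{\b_i}A_\star(\t_i)$ is already ridge-bounded (first identity), and the $(n{+}1)$-fold product ${\bf S}_n^{(j)}$ remains ridge-bounded by iterating the second identity. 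Together with the telescoped exponential, this yields the claim.

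The main point to be careful about is precisely this preservation of the ridge structure: a naive product of ridge-type envelopes would accumulate a stray $\e^{-\zeta}$ factor in the $(1,2)$ block at every step and destroy the bound as $n$ grows. The cancellation is possible only because of the specific antidiagonal form of $\e^{h-1}A_{\star j}$ in \eqref{block:2}, which itself stems from the normal-form reduction of the principal symbol at the non semi-simple coalescing point underlying Assumption \ref{ass:main} (cf.\ Sections \ref{sec:branch} and \ref{sec:prep}).
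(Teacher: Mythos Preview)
Your argument is correct and matches the paper's proof essentially line for line: both reduce to blocks, telescope the ${\bf e}_{\g^+}$ factors via \eqref{mult:e}, and exploit the stability of the ridge pattern $\bigl(\begin{smallmatrix}1 & \e^{-\zeta}\\ \e^\zeta & 1\end{smallmatrix}\bigr)$ under iterated multiplication. The only cosmetic difference is that you separate the ridge--antidiagonal product from the ridge--ridge product, whereas the paper simply bounds the antidiagonal $\e^{h-1}\d A_{\star j}$ entrywise by the ridge matrix and invokes the single identity \eqref{cancel0}; both routes yield the same cancellation.
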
 
\begin{proof} If all blocks of $A_\star$ satisfy \eqref{block:1}, then $\zeta = 0,$ and the stated bound simply follows from the multiplicative nature of the growth function, namely identity
\be \label{mult:e} {\bf e}_{\g^+}(\t_1;t) {\bf e}_{\g^+}(\t;\t_1) = {\bf e}_{\g^+}(\t;t).\ee
 Suppose then that a block $A_{\star j}$ satisfies \eqref{block:2}, and consider the associated component $S_j$ of the symbolic flow \eqref{tildefund}. Bound \eqref{ass:up} from Assumption \ref{ass:main} and the cancellation observed in product
 \be \label{cancel0} \left(\begin{array}{cc} 1 & \e^{h-1} \\ \e^{1-h} & 1 \end{array}\right) \left(\begin{array}{cc} 1 & \e^{h-1} \\ \e^{1-h} & 1 \end{array}\right) = 2 \left(\begin{array}{cc} 1 & \e^{h-1} \\ \e^{1-h} & 1 \end{array}\right),\ee
imply, omitting the index $j,$ 
 \be \label{cancel} |S(\t_1;t) \d_x^\a \d_\xi A_\star(\t_1) S(\t;\t_1)| \lesssim \left(\begin{array}{cc} 1 & \e^{h-1} \\ \e^{1-h} & 1 \end{array}\right) {\bf e}_{\g^+}(\t;t).\ee
  The result follows from \eqref{mult:e}-\eqref{cancel0}-\eqref{cancel} by a straightforward induction.
\end{proof}

\subsubsection{Product bounds for \texorpdfstring{$\d_x^\a S$}{dxa}}
Next we show that spatial derivatives of $S,$ and products involving $\d_x^\a S,$ satisfy the upper bound \eqref{ass:up} from Assumption \ref{ass:main}. We denote for $\a,$ $\b,$ $\b'\in \N^{d},$ $0 \leq \t \leq t:$ 
\be \label{def:tildeSab} \tilde S_{\a,\b}(\t;t) := \e^{h-1} \d_x^{\a} S(\t;t) \d_x^{\b} \d_\xi^{\b'} A_\star(\t),\ee
and products, for $\a_i \in \N^d,$ $\b_i \in \N^{2d}$ and $0 \leq \t \leq \t_n \leq \t_{n-1} \leq \t_1 \leq t:$ 
\be \label{def:tildebfS}
 \tilde {\bf S}_n(\t,\t_1,\dots,\t_n;t) := \tilde S_{\a_1,\b_1}(\t_1;t) \tilde S_{\a_2,\b_2}(\t_2;\t_1) \cdots \tilde S_{\a_n,\b_n}(\t_n; \t_{n-1}) \d_x^\a S(\t;\t_n).
\ee

\begin{lem} \label{lem:s0} Under Assumption {\rm \ref{ass:main}}, there holds the bound in domain ${\mathcal D}:$  %
 $$ |\d_x^\a S| + |\tilde {\bf S}_n| \lesssim \left(\begin{array}{cc} 1 & \e^{-\zeta} \\ \e^{\zeta} & 1 \end{array}\right) {\bf e}_{\g^+},$$
for each block of the block-diagonal matrices $S$ and ${\bf S}_n.$ The precise meaning of $\lesssim$ is described below \eqref{notation:lesssim}. 
\end{lem}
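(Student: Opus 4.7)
The plan is to prove the bound on $\d_x^\a S$ by induction on $|\a|$, and then to derive the bound on $\tilde{\bf S}_n$ by the same multiplicative argument as in the proof of Lemma \ref{lem:s00}.

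The base case $|\a| = 0$ is exactly the upper bound \eqref{ass:up} from Assumption \ref{ass:main}. For the inductive step, I would differentiate the flow equation \eqref{tildefund} $\a$ times in $x$ via Leibniz, and use the initial condition $\d_x^\a S(\t;\t) = 0$ for $|\a| \geq 1$ together with Duhamel's formula to write
\begin{equation}
\d_x^\a S(\t;t) = -i \int_\t^t S(s;t) \sum_{0 < \a' \leq \a} \binom{\a}{\a'} \big[\e^{h-1} \d_x^{\a'} A_\star(s)\big]\, \d_x^{\a-\a'} S(\t;s)\, ds.
\end{equation}
Each integrand has the same three-factor shape $S \cdot \e^{h-1} \d_x^{\a'} A_\star \cdot S$ treated in the proof of Lemma \ref{lem:s00}, except that the right-most $S(\t;s)$ is replaced by $\d_x^{\a - \a'} S(\t;s)$, which, since $|\a - \a'| < |\a|$, already satisfies the block-wise bound by the inductive hypothesis. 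Since $x$-derivatives preserve both block structures \eqref{block:1}--\eqref{block:2} (the diagonal zero in \eqref{block:2} is structural, so it persists under $\d_x$), together with the uniform bounds on the nonzero entries, the block cancellation \eqref{cancel0} and the multiplicative identity ${\bf e}_{\g^+}(s;t)\,{\bf e}_{\g^+}(\t;s) = {\bf e}_{\g^+}(\t;t)$ of \eqref{mult:e} jointly yield
\begin{equation}
\big| S(s;t)\, [\e^{h-1} \d_x^{\a'} A_\star(s)]\, \d_x^{\a-\a'} S(\t;s) \big| \lesssim \left(\begin{array}{cc} 1 & \e^{-\zeta} \\ \e^{\zeta} & 1 \end{array}\right) {\bf e}_{\g^+}(\t;t).
\end{equation}
The $s$-integral contributes at most a factor $T(\e) \lesssim |\ln \e|^{1/(1+\ell)}$, absorbed by the convention \eqref{notation:lesssim} for $\lesssim$, which closes the induction. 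The bound on $\tilde{\bf S}_n$ then follows by the same telescoping argument as in Lemma \ref{lem:s00}: each factor $\d_x^{\a_i} S(\t_i;\t_{i-1})$ obeys the block-wise bound just proved, and the $x,\xi$-derivatives of $A_\star$ still satisfy \eqref{block:1}--\eqref{block:2}, so the recursive application of \eqref{cancel0}--\eqref{cancel} collapses the full product to the stated bound.

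The main subtle point is the preservation of the structural diagonal zero in \eqref{block:2} under differentiation; without it, the three-fold product $M N M$ underlying \eqref{cancel0} would not close back onto $M$ and each recursive step would generate a parasitic $\e^{-2\zeta}$ loss, eventually destroying the bound. A secondary concern is the accumulation of powers of $|\ln\e|$ through the $s$-integration at each induction level; since $|\a|$ is finite and fixed, the total logarithmic factor remains absorbable by the convention on $\lesssim$.
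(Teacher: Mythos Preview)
Your proof is correct and follows essentially the same approach as the paper. The paper fully unfolds the Duhamel recursion to write $\d_x^\a S$ as a sum of iterated time integrals of the products ${\bf S}_n$ from \eqref{def:bfS} (which involve only undifferentiated $S$), and then invokes Lemma~\ref{lem:s00} directly; you instead carry the inductive hypothesis on $\d_x^{\a-\a'}S$ and redo the block cancellation \eqref{cancel0} at each step, which is merely a different bookkeeping of the same mechanism.
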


Above, and often below, the time and space-frequency arguments are omitted. In particular, the ``interior" temporal arguments of $\tilde {\bf S}_n,$ namely $\t_n, \dots, \t_1,$ are omitted. It is implicit that the $\t_i$ are constrained only by $\t \leq \t_n \leq \t_{n-1} \leq \dots \leq \t_1 \leq t,$ and that the indices $\a_i, \b_i$ and $n \geq 1$ are arbitrary.

\begin{proof} We first prove by induction on $|\a|$ that $\d_x^\a S$ enjoys the representation
 \be \label{rep:Sa} \d_x^\a S = \sum_{n \leq |\a|} \int {\bf S}_n,\ee
 where ${\bf S}_n$ is defined in \eqref{def:bfS}. By \eqref{rep:Sa}, we mean precisely
 \be \label{rep:Saa} \d_x^\a S(\t;t) = \sum_{1 \leq n \leq |\a|} C_n \int_{\t \leq \t_1 \leq \dots \leq \t_n \leq t} {\bf S}_n(\t,\t_1,\dots,\t_n;t) \, d\t_1 \dots d\t_n,\ee
 with constants $C_n$ independent of $(\e,t,x,\xi).$ In the following, whenever products, such as ${\bf S}_n,$ are integrated in time, the integration variables are the ``interior" variables, as described just above this proof. In order to prove \eqref{rep:Sa} for $|\a| = 1,$ we apply $\d_x^\a$ to the equation \eqref{tildefund} in $S:$ 
 $$ \d_t \d_x^\a S + i \e^{h-1} A_\star \d_x^\a S  = - i \e^{h-1} (\d_x^\a A_\star) S.$$
This implies the representation
 $$ \d_x^\a S(\t;t) = - i \e^{h-1} \int_{\t}^t S(\t';t) \d_x^\a A_\star(\t') S(\t;\t') d\t', \quad |\a| = 1,$$
 which takes the form \eqref{rep:Sa}. For greater values of $|\a|,$ we have similarly
 $$ \d_x^\a S = - i \e^{h-1} \sum_{\begin{smallmatrix} \a_1 + \a_2 = \a \\ 0 < |\a_1| \end{smallmatrix}} \int_\t^t S \d_x^{\a_1} A_\star \d_x^{\a_2} S,$$
 and the induction step is straightforward. From \eqref{rep:Sa} the bound on $\d_x^\a S$ follows by the bound on ${\bf S}_n$ in Lemma \ref{lem:s00} and the multiplicative nature of ${\bf e}_{\g^+}.$ Time integrals only contribute powers of $|\ln \e|,$ which are invisible in $\lesssim$ estimates. Finally, from \eqref{rep:Sa} we deduce (using the same notational convention as in \eqref{rep:Sa})
 \be \label{tildebfS:bfS} \tilde {\bf S}_n = \sum_{n' \leq |\a_1| + \dots + |\a_n| + n + |\a|} \int {\bf S}_{n'},\ee
and Lemma \ref{lem:s00} applies again. %
  \end{proof}

\subsubsection{Bounds for the symbolic flow \texorpdfstring{$S_0$}{S}} 
We bound here $S_0,$ the solution to \eqref{eq:flow}. While $S$ is the flow of $i \e^{h-1} A_\star,$ the symbol $S_0$ is the flow of $\e^{h-1} \chi (i A_\star + \e B),$ where $\chi$ is a stiff truncation and $B$ is a bounded symbol of order zero. We prove here that the upper bound \eqref{ass:up} in Assumption \ref{ass:main} is stable under the perturbations induced by $\chi$ and $B,$ in the sense that $S_0$ and its spatial derivatives satisfy the same upper bound as $S.$ 

In a first step, we consider the solution $S_\chi$ to
\be \label{def:Spsi}
  \d_t S_\chi + \e^{h-1} \chi A_\star S_\chi = 0, \qquad S_\chi(\t;\t) \equiv \Id.
\ee
Associated with $S_\chi,$ define products ${\bf S}_{\chi,n}$ involving $\d_x^\a S_\chi$ just like $\tilde {\bf S}_n$ was defined as a product involving $\d_x^\a S,$ but with $\chi A_\star$ in place of $A_\star,$ explicitly:
\be \label{def:tildeSpsiab} S_{\chi,\a,\b}(\t;t) := \e^{h-1} \d_x^{\a_1} S_\chi(\t;t) \d_x^{\a_2} \d_\xi^{\b} (\chi A_\star(\t)), \quad \a = (\a_1, \a_2) \in \N^{2d}, \, \b \in \N^d,\ee
and products, for $\a_i \in \N^{2d},$ $\b_i \in \N^{d}$ and $0 \leq \t \leq \t_n \leq \t_{n-1} \leq \t_1 \leq t:$ 
\be \label{def:tildebfSpsi}
 {\bf S}_{\chi,n}(\t,\t_1,\dots,\t_n;t) := S_{\chi,\a_1,\b_1}(\t_1;t) S_{\chi,\a_2,\b_2}(\t_2;\t_1) \cdots S_{\chi,\a_n,\b_n}(\t_n; \t_{n-1}) \d_x^\a S_\chi(\t;\t_n).
\ee 

\begin{cor} \label{lem:Spsi} The solution $S_\chi$ to \eqref{def:Spsi} enjoys the bounds  
 \be \label{bd0:S0} 
 |\d_x^\a S_\chi(\t;t,x,\xi)| \lesssim 
   \left(\begin{array}{cc} 1 & \e^{-\zeta} \\ \e^{\zeta} & 1 \end{array}\right)
  {\bf e}_{\g^+}(\t;t,x,\xi),
 \ee
 and
 \be \label{bd:tildeSpsin}
 |{\bf S}_{\chi,n}| \lesssim \e^{- |\b| \zeta} \left(\begin{array}{cc} 1 & \e^{-\zeta} \\ \e^{\zeta} & 1 \end{array}\right)
  {\bf e}_{\g^+}(\t;t,x,\xi),
 \ee
 for any $0 \leq \t \leq t \leq T(\e),$ any $(x,\xi).$
\end{cor}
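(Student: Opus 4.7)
The strategy closely parallels the proofs of Lemmas~\ref{lem:s00} and~\ref{lem:s0}. The crucial observation is that the truncation $\chi$ is a scalar that multiplies $A_\star$, preserving both the block-diagonal structure of $A_\star$ and the block types \eqref{block:1}--\eqref{block:2} of its blocks. In particular the cancellation \eqref{cancel0}--\eqref{cancel} on which Lemma~\ref{lem:s00} rests still applies, verbatim, to products built from $\chi A_\star$. The plan is therefore: (i) prove the base bound on $S_\chi$ itself, (ii) derive the representation of $\d_x^\a S_\chi$ and the products ${\bf S}_{\chi,n}$ as iterated time-integrals, (iii) bound those integrals block-wise using (i), Leibniz, and Lemma~\ref{lem:chi}.

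\emph{Step (i): base bound.} To prove \eqref{bd0:S0} when $|\a|=0$, I would write $S_\chi$ as a perturbation of $S$. Decomposing $\chi = 1 - (1-\chi)$ in \eqref{def:Spsi} and integrating against $S$ yields, via Duhamel, a representation of $S_\chi-S$ as a time-integral whose integrand involves $(1-\chi)A_\star$ and $S_\chi$. A bootstrap on the block-wise majorants of $|S_\chi|$ then closes using the assumed upper bound \eqref{ass:up} for $S$, the multiplicativity ${\bf e}_{\g^+}(s;t)\,{\bf e}_{\g^+}(\tau;s)={\bf e}_{\g^+}(\tau;t)$, and the cancellation from \eqref{cancel0}. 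Outside the support of $\chi$, $S_\chi\equiv\Id$, so the bound is trivial.

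\emph{Steps (ii)--(iii): derivatives and products.} Apply $\d_x^\a$ to \eqref{def:Spsi} and iterate Duhamel, exactly as in the derivation of \eqref{rep:Sa}, to represent $\d_x^\a S_\chi(\tau;t)$ as a sum, over $n\le |\a|$, of nested time-integrals of products alternating $S_\chi$-factors and $\d_x^{\a_i}(\chi A_\star)$-factors. Expand each $\d_x^{\a_i}(\chi A_\star) = \sum\d_x^{\a_{i1}}\chi\cdot\d_x^{\a_{i2}}A_\star$ by Leibniz; Lemma~\ref{lem:chi} gives $|\d_x^\gamma\chi|\lesssim 1$, so $x$-derivatives of $\chi$ cost no power of $\e$. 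Each integrand is bounded by combining Step~(i) for the $S_\chi$-factors with a Lemma~\ref{lem:s00}-type product estimate on the $\d_x^\gamma A_\star$-factors, which still enjoys the block cancellation because $\chi$ is scalar. Time integrations contribute only powers of $|\ln\e|$, absorbed by $\lesssim$. For the product bound \eqref{bd:tildeSpsin}, the additional $\xi$-derivatives on $\chi$ yield $|\d_\xi^\b\chi|\lesssim \e^{-|\b|\zeta}$ by Lemma~\ref{lem:chi}, accounting precisely for the prefactor $\e^{-|\b|\zeta}$.

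\emph{Main obstacle.} The delicate step is the base case (i). A direct Gronwall or Dyson-series estimate for $|S_\chi|$ controls only the matrix norm and need not match the sharp block-wise bound $\bigl(\begin{smallmatrix}1&\e^{-\zeta}\\ \e^\zeta&1\end{smallmatrix}\bigr)\,{\bf e}_{\g^+}$, because the off-diagonal entry $\e^\zeta$ is smaller than $1$ and therefore requires genuinely block-wise control rather than a uniform matrix-norm bound. Closing the bootstrap requires propagating the block structure through the Duhamel correction, using that the perturbation $1-\chi$ is scalar and vanishes on the bulk $\{\chi\equiv 1\}$, which localizes the correction to the transition region where one can afford to lose constants absorbed in $\lesssim$.
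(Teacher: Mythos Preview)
Your strategy is correct and coincides with the paper's. The paper sharpens your Step~(i) in two ways worth knowing. First, instead of a global bootstrap it performs a five-case analysis on the position of $(\tau,t)$ relative to $t_\star-\delta$ and $t_\star$: where $\chi\equiv 0$ one has $S_\chi=\Id$, where $\chi\equiv 1$ one has $S_\chi=S$, and the genuine work is confined to the transition strip of width~$\delta$, where $t-\tau\le\delta$ so Gronwall costs only a bounded factor. Second, to resolve exactly the block-wise obstacle you flag, the paper introduces the ``underline'' rescaling $\underline M:=\bigl(\begin{smallmatrix}m_{11}&\e^{\zeta}m_{12}\\ \e^{-\zeta}m_{21}&m_{22}\end{smallmatrix}\bigr)$, which satisfies $\underline{M_1M_2}=\underline{M_1}\,\underline{M_2}$; this converts the block-wise majorant into a scalar one amenable to Gronwall, using that $\e^{h-1}\underline{A_\star}$ is uniformly bounded by \eqref{block:1}--\eqref{block:2}. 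Your Steps~(ii)--(iii) match the paper's treatment of \eqref{bd:tildeSpsin} essentially verbatim.
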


Just like the bounds of Lemma \ref{lem:s0}, the bounds of Lemma \ref{lem:Spsi} are understood entry-wise, {\it for each block} of the block-diagonal matrices $S_\chi$ and ${\bf S}_{\chi,n}.$

 While the bound of Assumption \ref{ass:main} was stated over domain ${\mathcal D},$ the bound of Corollary \ref{lem:s1} holds for any $(x,\xi),$ for $0 \leq \t \leq t \leq T(\e),$ with $T(\e)$ defined in \eqref{max-time}. This comes from the truncation $\chi$ in \eqref{def:Spsi}.

\begin{proof} %
There are five cases:

\smallskip

$\bullet$ \noindent If $\t \leq t \leq t_\star - \delta,$ then $\chi \equiv 0$ on $[\t,t]$ (see the definition of $\chi$ in \eqref{def:chi}), implying $S_\chi = \Id.$ 

\smallskip

$\bullet$ \noindent If $\t \leq t_\star - \delta \leq t,$ then by property of the flow and the previous case 
 $$ S_\chi(\t;t) = S_\chi(\t;t_\star) S_\chi(t_\star;t) = S_\chi(t_\star;t),$$
 and we are reduced to the case $t_\star \leq \t \leq t \leq T(\e).$

\smallskip

$\bullet$ \noindent If $t_\star \leq \t \leq t \leq T(\e),$ then $\chi \equiv 1$ on $[\t;t],$ and $S_\chi = S$ by uniqueness. 
 
\smallskip

$\bullet$ \noindent If $t_\star - \delta \leq \t \leq t \leq t_\star :$ comparing the equation in $S$ \eqref{tildefund} with the equation in $S_\chi$ \eqref{def:Spsi}, we find the representation
 $$ S_\chi(\t;t) = S(\t;t) - \e^{h-1} \int_\t^t S(\t';t) (1 - \chi)(\t') A_\star(\t') S_\chi(\t;\t') \, d\t',$$
 and, applying $\d_x^\a$ to both sides above,
 \be \label{rep:dS0} \d_x^\a S_\chi(\t;t) = \d_x^\a S(\t;t) - \e^{h-1} \sum \int_\t^t \d_x^{\a_1} S(\t';t) \d_x^{\a_2}\big((1 - \chi)(\t') A_\star(\t')\big) \d_x^{\a_3} S_\chi(\t;\t') \, d\t',\ee
where the sum is over all $\a_1 + \a_2 + \a_3 = \a.$ Since we are only interested in upper bounds, we omitted multinomial constants $C_{\a_i} > 0$ in \eqref{rep:dS0}. We factor out the expected growth by letting 
$S_\chi^\flat := {\bf e}_{\g^+}^{-1} S_\chi,$ $S^\flat := {\bf e}_{\g^+}^{-1} S.$ Then, by property \eqref{mult:e} of the growth function, $\d_x^\a S_{\chi}^\flat$ solves, omitting the summation sign, 
 \be \label{rep:tildeS0flat} \d_x^\a S_\chi^\flat(\t;t) = \d_x^\a S^\flat(\t;t) - \e^{h-1} \int_\t^t \d_x^{\a_1} S^\flat(\t';t) \d_x^{\a_2}\big((1 - \chi)(\t') A_\star(\t')\big)  \d_x^{\a_3} S_\chi^\flat(\t;\t') \, d\t'.\ee
Now given a matrix $M = \left(\begin{array}{cc} m_{11} & m_{12} \\ m_{21} & m_{22} \end{array}\right),$ we let $\underline M := \left(\begin{array}{cc} m_{11} & \e^{\zeta} m_{12} \\ \e^{-\zeta} m_{21} & m_{22} \end{array}\right).$ There holds identity, analogous to \eqref{cancel0}:
 \be \label{cancel:underline} \underline{M_1 M_2} = \underline M_1 \underline M_2.
 \ee In particular, if $\underline M_1$ and $\underline M_2$ are bounded in $\e,$ then $\underline{M_1 M_2}$ is bounded in $\e.$ Thus from \eqref{rep:tildeS0flat}-\eqref{cancel:underline} we deduce 
$$ \d_x^\a \underline{S}_\chi^\flat(\t;t) = \d_x^\a \underline{S}^\flat -\int_\t^t \d_x^{\a_1} \underline S^\flat(\t';t) \d_x^{\a_2} \big( (1 - \chi)(\t') (\e^{h-1} \underline A_\star(\t'))\big) \d_x^{\a_3} {\underline S}_\chi^\flat(\t;\t') \, d\t'.$$
The key is now that $\e^{h-1} \d_x^\a ((1 - \chi)\underline A_\star)9$ is uniformly bounded in $\e.$ Indeed, by Lemma \ref{lem:chi}, spatial derivatives of the truncation $\chi$ are uniformly bounded. By the block conditions \eqref{block:1}-\eqref{block:2} and definition of $\underline A_\star$ just above \eqref{cancel:underline}, the matrix $\e^{h-1} \underline A_\star$ is uniformly bounded in $\e.$ Besides, $\underline S^\flat$ is uniformly bounded, by Lemma \ref{lem:s0}. Thus we obtain the bound
$$ |\d_x^\a {\underline S}_\chi^\flat(\t;t)| \leq C\big(1 + \int_\t^t |\d_x^{\a_3} {\underline S}_\chi^\flat(\t;\t')| \, d\t'\big),$$
with $|\a_3| \leq |\a|,$ for some $C > 0,$ implying by Gronwall and a straightforward induction 
$$ |\d_x^\a {\underline S}_\chi^\flat(\t;t)| \leq C e^{C (t - \t)}, \quad t_\star - \delta \leq \t \leq t \leq t_\star,$$
which is good enough since $t - \t \leq \delta.$ Going back to $S_\chi,$ we find bound \eqref{bd0:S0}.

\smallskip

$\bullet$ \noindent The same arguments apply in the remaining case $t_\star - \delta \leq \t \leq t_\star \leq t.$ 

\smallskip

At this point \eqref{bd0:S0} is proved and we turn to \eqref{bd:tildeSpsin}. First consider products involving no spatial derivatives of $S_\chi,$ which we denote ${\bf S}_{\chi,n},$ for consistency with notation \eqref{def:bfS}. Here we note that the proof of Lemma \ref{lem:s00} uses only the upper bound \eqref{ass:up} for $S,$ via cancellation \eqref{cancel0}. We may thus repeat the proof of Lemma \ref{lem:s00} and derive a bound for ${\bf S}_{\chi,n}.$ The only difference is that, while $A_\star$ is uniformly bounded in $\e,$ the truncation function $\chi$ is stiff in $\xi,$ as seen on \eqref{def:chi} and definition of $t_\star$ in \eqref{def:tstar}, and reflected on Lemma \eqref{lem:chi}. This gives bound \eqref{bd:tildeSpsin} with ${\bf S}_{\chi,n}$ in place of $\tilde {\bf S}_{\chi,n}.$ 

Finally, spatial derivatives are well behaved, in the sense that they are bounded without loss in $\e.$ Thus from the bound in ${\bf S}_{\chi,n}$ we derive bound \eqref{bd:tildeSpsin} exactly as in the proof of Lemma \ref{lem:s0}, via a representation formula identical to \eqref{tildebfS:bfS}, with $\tilde {\bf S}_{\chi,n}$ and ${\bf S}_{\chi,n}$ in place of $\tilde {\bf S}_n$ and ${\bf S}_n.$ 
\end{proof}

\begin{cor} \label{lem:s1} The solution $S_0$ to \eqref{eq:flow} enjoys the bound  
 \be \label{bd:S0} 
 |\d_x^\a S_0(\t;t,x,\xi)| \lesssim 
 \e^{-\zeta} {\bf e}_{\g^+}(\t;t,x,\xi),
 \ee
 for any $0 \leq \t \leq t \leq T(\e),$ any $(x,\xi).$
\end{cor}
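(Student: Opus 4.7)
The plan is to view $S_0$ as a bounded perturbation of $S_\chi$ and to pass from the bound of Corollary~\ref{lem:Spsi} to the bound on $S_0$ by a Duhamel/Gronwall argument. Comparing \eqref{eq:flow} with \eqref{def:Spsi}, the difference is the additional order-zero term $\varepsilon^h \chi B$, where $B$ is bounded uniformly in $\varepsilon$. Duhamel therefore yields
\begin{equation}\label{eq:duh-Szero}
S_0(\tau;t) = S_\chi(\tau;t) - \varepsilon^{h} \int_\tau^t S_\chi(\tau';t)\, (\chi B)(\tau')\, S_0(\tau;\tau')\, d\tau'.
\end{equation}
Factoring out the growth function via $S_0^\flat := {\bf e}_{\gamma^+}^{-1} S_0$ and $S_\chi^\flat := {\bf e}_{\gamma^+}^{-1} S_\chi$, the multiplicativity \eqref{mult:e} converts \eqref{eq:duh-Szero} into the same relation for $S_0^\flat$ and $S_\chi^\flat$.

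The first step is to bound $|S_0^\flat|$ entry-wise. By Corollary \ref{lem:Spsi}, an entry-wise read of the block bound gives $|S_\chi^\flat(\tau';t)| \lesssim \varepsilon^{-\zeta}$ uniformly in $0 \le \tau' \le t \le T(\varepsilon)$. Using this in \eqref{eq:duh-Szero} and the uniform boundedness of $B$ and $\chi$, I obtain
\begin{equation}\label{eq:gron1}
|S_0^\flat(\tau;t)| \lesssim \varepsilon^{-\zeta} + \varepsilon^{h-\zeta}\int_\tau^t |S_0^\flat(\tau;\tau')|\, d\tau'.
\end{equation}
Since $\zeta < h$ (Lemma \ref{lem:chi}) and $t - \tau \le T(\varepsilon) = (T_\star |\ln\varepsilon|)^{1/(1+\ell)}$, the quantity $\varepsilon^{h-\zeta}(t-\tau)$ is $o(1)$ as $\varepsilon \to 0$. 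Gronwall applied to \eqref{eq:gron1} then yields $|S_0^\flat(\tau;t)| \lesssim \varepsilon^{-\zeta}$, which is the required bound for $|S_0|$.

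The second step handles spatial derivatives by induction on $|\alpha|$. Differentiating \eqref{eq:duh-Szero} produces
\begin{equation}
\partial_x^\alpha S_0(\tau;t) = \partial_x^\alpha S_\chi(\tau;t) - \varepsilon^{h} \sum_{\alpha_1+\alpha_2+\alpha_3 = \alpha} c_{\alpha_1\alpha_2\alpha_3}\int_\tau^t \partial_x^{\alpha_1} S_\chi(\tau';t)\, \partial_x^{\alpha_2}(\chi B)(\tau')\, \partial_x^{\alpha_3} S_0(\tau;\tau')\, d\tau'.
\end{equation}
The leading term and all contributions with $|\alpha_3| < |\alpha|$ are controlled by $\varepsilon^{-\zeta} {\bf e}_{\gamma^+}$, using Corollary \ref{lem:Spsi} for $\partial_x^{\alpha_1} S_\chi$ (read entry-wise) and the induction hypothesis for $\partial_x^{\alpha_3} S_0$. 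The only self-referential contribution is $\alpha_3 = \alpha$, which gives rise exactly to the same Gronwall inequality as \eqref{eq:gron1}, with the same conclusion. Noting that $\partial_x^{\alpha_2}(\chi B)$ is uniformly bounded (by Lemma \ref{lem:chi} for $\partial_x^{\alpha_2}\chi$, and because $B \in \mathbf{R}_0$ has slow spatial dependence on the rescaled frame), the induction closes and produces the bound \eqref{bd:S0}.

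The main potential obstacle is the magnitude of the perturbation $\varepsilon^h \chi B$ over the long time interval $[0,T(\varepsilon)]$ with $T(\varepsilon) \sim |\ln \varepsilon|^{1/(1+\ell)}$: the Gronwall factor is $\exp(C\varepsilon^{h-\zeta} T(\varepsilon))$, and this is harmless \emph{precisely} because $h > \zeta$, so that the small prefactor $\varepsilon^{h-\zeta}$ defeats the logarithmic length of the interval. All other ingredients (block structure, cancellation \eqref{cancel0}) are already absorbed in the Corollary \ref{lem:Spsi} bound and need not be revisited.
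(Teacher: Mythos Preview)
Your proposal is correct and follows essentially the same approach as the paper's proof: the Duhamel representation \eqref{eq:duh-Szero} comparing $S_0$ to $S_\chi$, factoring out ${\bf e}_{\gamma^+}$, invoking Corollary~\ref{lem:Spsi} to bound $|\d_x^\alpha S_\chi^\flat| \lesssim \e^{-\zeta}$, and closing by Gronwall and induction on $|\alpha|$ using $h>\zeta$ and the logarithmic size of $T(\e)$. The paper presents the general $\alpha$ case directly rather than treating $\alpha=0$ as a separate base step, but the content is the same.
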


\begin{proof} From equations \eqref{def:Spsi} in $\tilde S_\chi$ and \eqref{eq:flow} in $S_0,$ we deduce the representation 
\be \label{for:low:0} \d_x^\a S_0(\t;t) = \d_x^\a S_\chi(\t;t) + \e^h \int_\t^t \d_x^{\a_1} S_\chi(\t';t) \d_x^{\a_2} (\chi B(\t')) \d_x^{\a_3} S_0(\t;\t') \, d\t',\ee
with an implicit summation in $\a_1 + \a_2 + \a_3 = \a,$ and implicit multinomial constants $C_{\a_i} > 0.$ 
We factor out the expected growth before applying Gronwall's Lemma, as we let $S_0^\flat := {\bf e}_{\g^+}^{-1} S_0$ and $S_\chi^\flat := {\bf e}_{\g^+}^{-1} S_\chi.$ From Corollary \ref{lem:Spsi}, we know that $|\d_x^\a S_\chi| \lesssim \e^{-\zeta}.$ Besides, from Lemma \ref{lem:chi} and the fact that $B \in {\bf R}_0,$ the symbol $\d_x^{\a_2} (\chi B)$ is uniformly bounded. Thus from \eqref{for:low:0}, we deduce 
$$ |\d_x^\a S_0^\flat(\t;t)| \lesssim \e^{-\zeta} + \e^{h - \zeta} \int_\t^t |\d_x^{\a_3} S_0^\flat(\t;\t')| \, d\t', \quad |\a_3| \leq |\a|.$$
We may now conclude by Gronwall's Lemma and a straightforward induction, since $T(\e)$ grows at most logarithmically and $h - \zeta > 0.$%
\end{proof}

\subsubsection{Product bounds for \texorpdfstring{$S_0$}{dx}} \label{sec:product:bounds:S0} 

The next step is to prove product bounds for $\d_x^\a S_0,$ as in Lemma \ref{lem:s0}. 
In this view, we let for $\a = (\a_1,\a_2) \in \N^{2d},$ $\b \in \N^{d},$ $0 \leq \t \leq t:$ 
\be \label{def:tildeS0ab} S_{0,\a,\b}(\t;t) := \e^{h-1} \d_x^{\a_1} S_0(\t;t) \d_x^{\a_2} \d_\xi^{\b} (\chi A_\star(\t)),\ee
and products, for $\a_i \in \N^{2d},$ $\b_i \in \N^{d}$ and $0 \leq \t \leq \t_n \leq \t_{n-1} \leq \t_1 \leq t:$ 
\be \label{def:tildebfS0}
 {\bf S}_{0,n}(\t,\t_1,\dots,\t_n;t) := S_{0,\a_1,\b_1}(\t_1;t) S_{0,\a_2,\b_2}(\t_2;\t_1) \cdots S_{0,\a_n,\b_n}(\t_n; \t_{n-1}) \d_x^\a S_0(\t;\t_n).
 \ee

\begin{cor} \label{lem:s2} There holds the bound
\be \label{bd:tildebfS0} |{\bf S}_{0,n}| \lesssim \e^{-\zeta(1 +  |\b|)} {\bf e}_{\g^+},\ee
for any $0 \leq \t \leq \t_n \leq \t_{n-1} \leq \dots \leq \t_1 \leq t \leq T(\e),$ any $(x,\xi),$ any $\a_i, \b_i,$ any $n \geq 1,$ with $|\b| = \sum_i |\b_i|.$ 
\end{cor}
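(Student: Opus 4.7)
\medskip

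The strategy is perturbative: express $S_0$ in terms of $S_\chi$ via the Duhamel identity already derived in the proof of Corollary \ref{lem:s1}, namely
\begin{equation*}
\partial_x^\alpha S_0(\tau;t) = \partial_x^\alpha S_\chi(\tau;t) + \epsilon^h \!\!\!\!\!\sum_{\alpha_1+\alpha_2+\alpha_3 = \alpha}\!\! C_{\alpha_i}\!\int_\tau^t \partial_x^{\alpha_1} S_\chi(\tau';t)\,\partial_x^{\alpha_2}(\chi B)(\tau')\,\partial_x^{\alpha_3}S_0(\tau;\tau')\,d\tau',
\end{equation*}
and then reduce the desired bound on $\mathbf{S}_{0,n}$ to the already-established block-matrix bound \eqref{bd:tildeSpsin} for $\mathbf{S}_{\chi,n}$.

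Concretely, I would iterate the identity above $q_0$ times in each factor $\partial_x^\alpha S_0$ appearing in $\mathbf{S}_{0,n}$. This writes $\mathbf{S}_{0,n}$ as a finite sum of time-ordered, time-integrated products whose building blocks are $\partial_x^\bullet S_\chi$ factors, $\partial_x^\bullet \partial_\xi^\bullet (\chi A_\star)$ factors (inherited from the very definition \eqref{def:tildebfS0}), and additional $\partial_x^\bullet (\chi B)$ insertions (produced by the iteration), plus a remainder of order $\epsilon^{q_0 h}$ still involving one $\partial_x^\bullet S_0$ factor. Since $T(\epsilon)$ grows only polylogarithmically in $\epsilon$ and $h > \zeta \geq 0$, for $q_0$ taken large enough the remainder is negligible in the $\lesssim$ sense of \eqref{notation:lesssim}. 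Each main term has precisely the shape of a product $\mathbf{S}_{\chi,n'}$ from \eqref{def:tildebfSpsi} with $n' \geq n$, up to a few interpolated bounded $\chi B$ factors; since $B \in \mathbf{R}_0$ and $\chi$ obeys the derivative bound of Lemma \ref{lem:chi}, every $\chi B$ insertion behaves symbol-wise no worse than a $\chi A_\star$ insertion for the purposes of counting $\xi$-derivative losses.

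Applying \eqref{bd:tildeSpsin} to each main term yields a matrix bound $\lesssim \epsilon^{-|\beta|\zeta}\bigl(\begin{smallmatrix} 1 & \epsilon^{-\zeta}\\ \epsilon^\zeta & 1\end{smallmatrix}\bigr)\mathbf{e}_{\gamma^+}$, whose largest entry is $\epsilon^{-(1+|\beta|)\zeta}\mathbf{e}_{\gamma^+}$, which is exactly the right-hand side of \eqref{bd:tildebfS0}. The main technical point, and the place where one must be careful, is the bookkeeping of the block structure through the iterated Duhamel expansion: the cancellation \eqref{cancel0} that underlies the sharp block-matrix bound \eqref{bd:tildeSpsin} relies crucially on the structure \eqref{block:1}--\eqref{block:2} of $A_\star$, which the symbol $B$ does not share. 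Consequently the off-diagonal cancellation may be lost at each $\chi B$ insertion, costing at worst one uniform factor $\epsilon^{-\zeta}$ per chain; this is precisely why Corollary \ref{lem:s2} is stated as a scalar bound with the extra factor $\epsilon^{-\zeta}$ relative to the per-derivative loss $\epsilon^{-|\beta|\zeta}$, rather than as an entry-wise block bound as in Corollary \ref{lem:Spsi}.
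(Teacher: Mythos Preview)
Your proposal is correct and follows essentially the same route as the paper. Both arguments iterate the Duhamel identity \eqref{for:low:0} expressing $S_0$ in terms of $S_\chi$ (the paper records this as representation \eqref{rep:S0}), then decompose $\mathbf{S}_{0,n}$ into main terms governed by the block-matrix bound \eqref{bd:tildeSpsin} of Corollary~\ref{lem:Spsi} plus remainders carrying a prefactor $\varepsilon^{m(h-\zeta)}$, absorbed by taking the iteration depth $m$ large since $h>\zeta$. The paper spells this out for $n=1$ as nine explicit cross-terms (labeled I--VI etc.) and then indicates the induction; your sketch is more compressed but identical in substance.

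One minor point of attribution: the extra factor $\varepsilon^{-\zeta}$ in \eqref{bd:tildebfS0} is not caused by $\chi B$ insertions breaking the block cancellation. Each $\varepsilon^h\chi B$ insertion, once sandwiched between two $S_\chi$ factors, actually \emph{restores} the block pattern $\bigl(\begin{smallmatrix}1&\varepsilon^{-\zeta}\\ \varepsilon^\zeta&1\end{smallmatrix}\bigr)$ at the cost of one $\varepsilon^{-\zeta}$, so the net contribution is $\varepsilon^{h-\zeta}>0$ per insertion --- a gain, not a loss. The extra $\varepsilon^{-\zeta}$ in the scalar statement simply comes from taking the supremum over entries of the block matrix in \eqref{bd:tildeSpsin}. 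This misattribution does not affect the correctness of your argument.
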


\begin{proof} 
First observe that, in the case $n = 1,$ if we were to directly use Corollary \ref{lem:s1}, we would find the upper bound 
$$ \e^{h-1} |\d_x^{\a_1} S_0(\t';t) \d_x^{\a_2} A_\star(\t') \d_x^{\a_3} S_0(\t;\t')|  \lesssim \e^{h-1 - 2 \zeta} {\bf e}_{\g^+},$$ which is not good enough since $h - 1 - \zeta < 0$ if $\zeta = 1 - h.$ Hence a need for more than the bound on $\d_x^\a S_0$ from Corollary \ref{lem:s1}. 

\smallskip

Second, note that the loss in \eqref{bd:tildebfS0} is coming from $\xi$-derivatives applied to $\chi,$ exactly as in the proof of Corollary \ref{lem:Spsi}. 

\smallskip

We are going to use the representations
\be \label{rep:S0}
 \d_x^\a S_0 = \d_x^\a S_\chi + \sum_{1 \leq k \leq m-1} \int {\bf S}_{\chi,k}^B + \int {\bf S}^B_{\chi,0,m}, \quad |\a| \geq 0,
\ee
for any $m \in \N,$ with notation
\be \label{notation:SBn}
 {\bf S}^B_{\chi,k} := \e^{k h} (\d_x^{\a_1} S_\chi \d_x^{\a'_1} (\chi B)) \cdots (\d_x^{\a_k} S_\chi \d_x^{\a'_k} (\chi B)) \d_x^{\a_{k+1}}S_\chi,
 \ee
 and
\be \label{notation:SB0n}
 {\bf S}^B_{\chi,0,k} := \e^{k h} (\d_x^{\a_1} S_\chi \d_x^{\a'_1} (\chi B)) \cdots (\d_x^{\a_k} S_\chi \d_x^{\a'_k} (\chi B)) \d_x^{\a_{k+1}} S_0.
\ee
In \eqref{rep:S0} we use compact notation as in \eqref{rep:Sa}. In particular, time arguments are implicit and form ``chains" in the sense that in products $(S_\chi \d_x^{\a'_1} \chi B)(S_\chi \d_x^{\a'_2} \chi B),$ the time arguments of the first term are $(\t;\t_1)$ and those of the second term are $(\t_1;\t_2).$ The integrals in \eqref{rep:S0} are time integrals bearing on the ``interior" variables of ${\bf S}^B_k$ and ${\bf S}^B_{0k};$ see the explicit definition of ${\bf S}$ in \eqref{def:bfS} and see how compact notation in \eqref{rep:Sa} is expanded into explicit notation in \eqref{rep:Saa}.

By Assumption \ref{ass:main} and Corollary \ref{lem:s1}, there holds 
\be \label{bd:SB}
 |{\bf S}^B_{\chi,k}| + |{\bf S}^B_{\chi,0,k}| \lesssim \e^{k (h - \zeta)} \e^{-\zeta} {\bf e}_{\g^+}.
\ee 

Representation \eqref{rep:S0} is proved by using recursively \eqref{for:low:0} in itself $n-1$ times. 

\smallskip

From \eqref{rep:S0}, we find ${\bf S}_{0,1} = \e^{h-1}\d_x^{\a_1} S_0 \d_x^{\a_2} \d_\xi^\b (\chi A_\star) \d^{\a_3} S_0$ to be a sum of 9 terms:

\smallskip

\noindent $\bullet$ The term $\e^{h-1} \d_x^{\a_1} S_\chi \d_x^{\a_2} \d_\xi^\b (\chi A_\star) \d_x^{\a_3} S_\chi = {\bf S}_{\chi,1}$ is bounded with Corollary \ref{lem:Spsi}. 

\smallskip

\noindent $\bullet$ For terms ${\rm II} = \e^{h-1} {\bf S}^B_{\chi,k} \d_x^{\a_2} \d_\xi^\b (\chi A_\star) \d_x^{\a_3} S_\chi,$ we use Corollary \ref{lem:Spsi} for the rightmost product, of form $\e^{h-1} \d_x^{\a'_{k+1}} S_\chi \d_x^{\a_2} \d_\xi^\b (\chi A_\star) \d^{\a_3}_x S_\chi,$ and then bound separately the remaining $k$ products of form $\e^h \d_x^{\a'_i} S \d_x^{\a''_i} (\chi B).$ This gives $|{\rm II}| \lesssim \e^{k (h- \zeta)} \e^{-\zeta(1 + |\b|)} {\bf e}_{\g^+},$ and we use $\zeta < h.$ 

\smallskip 

\noindent $\bullet$ Term ${\rm III} = \e^{h-1} {\bf S}^B_{\chi,0,m} \d_x^{\a_2} \d_\xi^\b (\chi A_\star) \d_x^{\a_3} S_\chi$ may not be handled by the same argument as ${\rm II},$ since the rightmost product here has form $\e^{h-1} \d_x^{\a'_{m+1}} S_0 \d_x^{\a_2} \d_\xi^\b (\chi A_\star) \d_x^{\a_3} S_\chi.$ Here we use \eqref{bd:SB} with $k = m$ large, the bound $| \d_x^{\a_2} \d_\xi^\b (\chi A_\star)| \lesssim \e^{-|\b| \zeta},$ and the bound of Corollary \ref{lem:Spsi} for $|\d_x^{\a_3} S_\chi|.$ There occurs a loss of $\e^{-\zeta},$ but this is compensated by an appropriate choice of $m.$ Precisely, we find $|{\rm III}| \lesssim \e^{m (h - \zeta) - 2 \zeta - |\b| \zeta} {\bf e}_{\g^+}$ and then choose $m$ large enough, depending on $h,$ so that $m(h - \zeta) - \zeta \geq 0.$ 

\smallskip

\noindent $\bullet$ Term ${\rm IV} = \e^{h-1} \d_x^{\a_1} S_\chi \d_x^{\a_2} \d_\xi^\b (\chi A_\star) {\bf S}^B_{\chi,k}$ is symmetric to ${\rm II}.$ Here we isolate the leftmost product $\e^{h-1} \d_x^{\a_1} S_\chi \d_x^{\a_2} \d_\xi^\b (\chi A_\star) \d_x^{\a'_1} S_\chi.$ 

\smallskip

\noindent $\bullet$ In term ${\rm V} = \e^{h-1} {\bf S}^B_{\chi,k} \d_x^{\a_2} \d_\xi^\b (\chi A_\star) {\bf S}^B_{\chi,k'},$ we use the cancellation of Corollary \ref{lem:Spsi} for the term in the middle $\e^{h-1} \d_x^{\a_{k+1}} S_\chi \d_x^{\a_2} \d_\xi^\b (\chi A_\star) \d_x^{\a''_1} S_\chi.$ The remaining terms contain $k + k'$ occurences of $S,$ and there is a prefactor $ \e^{h (k + k')}.$ Thus we obtain the bound $|{\rm V}| \lesssim \e^{(k + k')(h -\zeta)} \e^{-\zeta(1 + |\b|)} {\bf e}_{\g^+}.$ 

\smallskip

\noindent $\bullet$ The remaining terms all involve at least $m$ factors, hence, by \eqref{bd:SB}, have an $\e^{m (h - \zeta)}$ prefactor. We handle these terms just like ${\rm III}$ above. 

\smallskip

From the above, we conclude that there holds 
$$ |{\bf S}_{0,1}| \lesssim \e^{-\zeta(1 + |\b|)} {\bf e}_{\g^+},$$
where $|\b|$ is the number of $\xi$-derivatives in $\tilde {\bf S}_{0,1}.$ 

The general case $n \geq 2$ is handled in exactly the same way. Via representations \eqref{rep:S0}, products involving $\d_x^\a S_0,$ as the statement of the Corollary, are expanded into sums of products involving $\d_x^{\a'} S_\chi,$ and remainders which involve products with a large number of $\e^h B$ terms, and $S_0$ terms. These remainders are handled as term ${\rm III}$ above, using $h < \zeta,$ hence $\e^{m ( h - \zeta)}$ as small as needed for $m$ large. We are then left with products involving only $\chi A_\star,$ $\chi B$ and spatial derivatives of $S_\chi.$ For these, we use Corollary \ref{lem:s0} as we did above in the treatment of terms {\rm I} and {\rm II} (using the case $n \geq 2$ in Corollary \ref{lem:Spsi}, while we used only $n = 1$ in the above treatment of terms {\rm I} and {\rm II}).   %
\end{proof}

The final bound in these preparation steps involves products of $S_0$ with $\chi(i A_\star + \e B),$ as already met in \eqref{rep:simple}. We let for $\a = (\a_1,\a_2) \in \N^{2d},$ $\b \in \N^{d},$ $0 \leq \t \leq t:$ 
$$ S^B_{0,\a,\b}(\t;t) := \e^{h-1} \d_x^{\a_1} S_0(\t;t) \d_x^{\a_2} \d_\xi^{\b} (\chi (i A_\star  + \e B)(\t)),$$
and products, for $\a_i \in \N^{2d},$ $\b_i \in \N^{d}$ and $0 \leq \t \leq \t_n \leq \t_{n-1} \leq \t_1 \leq t:$ 
\be \label{def:bfS0B}
 {\bf S}^B_{0,n}(\t,\t_1,\dots,\t_n;t) := S^B_{0,\a_1,\b_1}(\t_1;t) S_{0,\a_2,\b_2}(\t_2;\t_1) \cdots S^B_{0,\a_n,\b_n}(\t_n; \t_{n-1}) \d_x^\a S_0(\t;\t_n).
 \ee

\begin{cor} \label{cor:f} There holds the bound
 $$ |{\bf S}^B_{0,n}| \lesssim \e^{-\zeta(1 + |\b|)} {\bf e}_{\g^+},$$
 for any $0 \leq \t \leq \t_n \leq \t_{n-1} \leq \dots \leq \t_1 \leq t \leq T(\e),$ any $(x,\xi),$ any $\a_i, \b_i,$ any $n \geq 1,$ with $|\b| = \sum_i |\b_i|.$ 
\end{cor}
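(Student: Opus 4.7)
The strategy is to reduce Corollary \ref{cor:f} to Corollary \ref{lem:s2} by treating each occurrence of the perturbation $\e B$ as strictly subdominant to the corresponding occurrence of $iA_\star$. I would first decompose $\chi(iA_\star + \e B) = i\chi A_\star + \e\chi B$ at each of the $n$ product positions and, after distributing and applying Leibniz's rule to $\d_x^{\a_{i,2}}\d_\xi^{\b_i}$, write ${\bf S}^B_{0,n}$ as a sum of $2^n$ products indexed by subsets $J \subseteq \{1,\dots,n\}$ recording the positions at which the $\e\chi B$ summand was selected. When $J = \emptyset$, the resulting term is exactly $i^n\,{\bf S}_{0,n}$ and is controlled directly by Corollary \ref{lem:s2}.

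For $J \neq \emptyset$, each position $i \in J$ contributes a factor of the form $\e^h\,\d_x^{\a_{i,1}} S_0(\t_i;\t_{i-1})\,\d_x^{\a_{i,2}}\d_\xi^{\b_i}(\chi B)$, where the $\e$ coming from $\e B$ has combined with the $\e^{h-1}$ prefactor. Since $B \in {\bf R}_0$, the Leibniz rule together with Lemma \ref{lem:chi} yields $|\d_x^{\a_{i,2}}\d_\xi^{\b_i}(\chi B)| \lesssim \e^{-|\b_i|\zeta}$, and Corollary \ref{lem:s1} gives $|\d_x^{\a_{i,1}} S_0(\t_i;\t_{i-1})| \lesssim \e^{-\zeta}\,{\bf e}_{\g^+}(\t_i;\t_{i-1})$ entrywise, so each $\e B$-factor is bounded entrywise by $\e^{h-\zeta(1+|\b_i|)}\,{\bf e}_{\g^+}(\t_i;\t_{i-1})$. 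The remaining $A_\star$-factors at positions $i \in J^c$, together with the chained $S_0$'s, form a subproduct that can be bounded by the same block-cancellation argument as in Corollary \ref{lem:s2}, contributing $\e^{-\zeta(1+|\b_{J^c}|)}\,{\bf e}_{\g^+}$ along the chain.

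Collecting these bounds and using the multiplicative identity \eqref{mult:e} for ${\bf e}_{\g^+}$ along the time chain $\t \leq \t_n \leq \dots \leq \t_1 \leq t$, the $J$-contribution is bounded entrywise by
\begin{equation*}
\e^{|J|(h-\zeta) - \zeta(1+|\b|)}\,{\bf e}_{\g^+}(\t;t),
\end{equation*}
where $|\b| = |\b_J| + |\b_{J^c}|$. The decisive quantitative fact, valid by the definitions of $\zeta$ and $h$ in Section \ref{sec:ass:param}, is $\zeta < h$ in all three regimes $\ell \in \{0,1/2,1\}$, so that $\e^{|J|(h-\zeta)} \leq 1$ (and is in fact strictly small when $\ell = 1/2$). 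Summing over the $2^n$ subsets $J$ then yields the announced entrywise bound $|{\bf S}^B_{0,n}| \lesssim \e^{-\zeta(1+|\b|)}\,{\bf e}_{\g^+}$.

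The main obstacle will be justifying that the block-cancellation behind Corollary \ref{lem:s2} survives the insertion of $\e B$-factors into the product. Whereas $\e^{h-1}A_\star$ has the antidiagonal block structure of \eqref{block:2}, which in conjunction with the block structure of $S_0$ produces the cancellation \eqref{cancel0} and prevents the loss from compounding with the number of $A_\star$-factors, the symbol $\e B$ carries no preferred block structure and so a priori could spoil the estimate at a neighboring $A_\star$-factor by an extra power of $\e^{-\zeta}$. The $\e^h$ prefactor attached to each $\e B$-factor compensates for the worst-case loss this can cause, precisely because $\e^{h-\zeta} \leq 1$; this is the mechanism by which the $\e B$-perturbation remains under control, and it is what makes the reduction to Corollary \ref{lem:s2} possible.
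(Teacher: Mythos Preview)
Your approach is correct and is essentially the paper's proof: both expand $\chi(iA_\star + \e B)$ at each position, invoke Corollary~\ref{lem:s2} for the $A_\star$-subproducts, bound each $B$-contribution using $|\e^h\d_x^\a\d_\xi^\b(\chi B)|\lesssim \e^{h-|\b|\zeta}$, and conclude via $\zeta<h$. The only cosmetic difference is the re-association: the paper attaches each $S_0$ to the \emph{preceding} $A_\star$-run, so that the product decomposes as genuine ${\bf S}_{0,n'}$ chains alternating with \emph{naked} factors $\e^h\d_x^\a\d_\xi^\b(\chi B)$, to which Corollary~\ref{lem:s2} applies verbatim; you instead keep $S_0^{(i)}$ attached to $B^{(i)}$ and speak of a single ``remaining subproduct''. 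When $J$ is interspersed in $\{1,\dots,n\}$ your remaining factors do not form one contiguous chain, so the sentence claiming they contribute a single $\e^{-\zeta(1+|\b_{J^c}|)}$ is imprecise as written --- it really yields $|J|+1$ pieces, each carrying its own $\e^{-\zeta}$. The total power of $\e$ is nonetheless exactly $\e^{|J|(h-\zeta)-\zeta(1+|\b|)}$ either way (your extra $|J|$ copies of $\e^{-\zeta}$ sit in the $B$-factors, the paper's sit in the additional chains), and your final paragraph shows you understand precisely why the block cancellation survives.
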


\begin{proof} Developing, we find ${\bf S}^B_{0,n}$ to be a product of terms of form ${\bf S}_{0,n'}$ \eqref{def:tildebfS0} with terms of form $\e^h \d_x^\a \d_\xi^\b \chi B.$ By Corollary \ref{lem:s2}, there holds $|{\bf S}_{0,n}| \lesssim \e^{-\zeta(1 + |\b|)} {\bf e}_{\g^+},$ where $|\b|$ is the total number of $\xi$ derivatives that appear in ${\bf S}_{0,n}.$ Besides, $|\e^h \d_x^\a \d_\xi^\b (\chi B)| \lesssim \e^{h - |\b|\zeta}.$ The result then follows by $\zeta < h.$  
\end{proof}

\subsubsection{Bounds on the correctors $S_q$} \label{sec:S:last} We are now ready to prove bounds on spatial derivatives of the correctors $S_q$ introduced in \eqref{Sq}: 

\begin{cor} \label{lem:new:Sq}
There holds the bounds for $0 \leq \t \leq t \leq T(\e),$ for any $(x,\xi):$ 
$$
  | \d_x^\a S_q | \lesssim \e^{- \zeta(1 + q)} {\bf e}_{\g^+}, \quad \mbox{for $0 \leq q \leq q_0.$}
  $$ 
\end{cor}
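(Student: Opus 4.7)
The strategy is to fully expand the defining Duhamel recursion \eqref{Sq:rep} into an iterated time integral whose integrand is a finite sum of products matching the structure of ${\bf S}^B_{0,k}$ from Corollary~\ref{cor:f}, then invoke that corollary directly.

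First, I would iterate \eqref{Sq:rep}: its right-hand side contains $S_{q_2}$ with $q_2 = q - q_1 < q,$ and substituting \eqref{Sq:rep} for $S_{q_2}$ in turn, and repeating, one reaches $S_0$ throughout after at most $q$ steps, since each iteration strips off $q_1 \geq 1$. Expanding the definition $\sigma_1 \sharp_n \sigma_2 = (-i)^n (n!)^{-1} \sum_{|\b|=n} \d_\xi^\b \sigma_1 \, \d_x^\b \sigma_2,$ the $\xi$-derivatives land exclusively on the $\chi(i A_\star + \e B)$ factors, while the $x$-derivatives fall on the subsequent $S_{q_{2j}}$ factor, which at the next iteration step Leibnizes into all its building blocks.

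Next, applying $\d_x^\a$ externally and distributing via Leibniz, I obtain that $\d_x^\a S_q$ is a finite sum, with combinatorial constants depending only on $q$ and $\a$, of iterated time integrals of the form
\[
 \int_{\t \leq \t_k \leq \cdots \leq \t_1 \leq t} {\bf S}^B_{0,k}(\t, \t_1, \ldots, \t_k; t, x, \xi) \, d\t_1 \cdots d\t_k,
\]
with $k \leq q,$ where the multi-indices $\b_i$ on the $\chi(i A_\star + \e B)$ factors (each $\d_\xi^{\b_i}$ arising from one $\sharp_{|\b_i|}$ in the chain) satisfy $|\b_1| + \cdots + |\b_k| = q$. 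The various $x$-derivatives redistributed by Leibniz---including $\a$ itself and the internal $\d_x^{\b_j}$ produced by $\sharp$---are harmless, since the bound of Corollary~\ref{cor:f} is uniform in the individual multi-indices $\a_i$.

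Finally, Corollary~\ref{cor:f} applied with $n = k$ and $|\b| = |\b_1| + \cdots + |\b_k| = q$ yields
\[
 |{\bf S}^B_{0,k}(\t, \t_1, \ldots, \t_k; t, x, \xi)| \lesssim \e^{-\zeta(1+q)} {\bf e}_{\g^+}(\t;t,x,\xi),
\]
and the $k$-fold time integration over a simplex contained in $[0,T(\e)]^k$ contributes at most a fixed power of $|\ln\e|$, since $k \leq q \leq q_0,$ which is absorbed in $\lesssim$ per the convention \eqref{notation:lesssim}. Summing the finitely many expansion terms yields the claim. The only delicate point is the bookkeeping: the exponent $-\zeta(1+q)$ matches precisely because the total number of $\xi$-derivatives accumulated through the iterated $\sharp$-operations equals exactly $q$, a consequence of the constraint $q_1 \geq 1$ in \eqref{Sq:rep} and the termination of the recursion at $q_2 = 0$.
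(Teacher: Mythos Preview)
Your proposal is correct and follows essentially the same approach as the paper: iterate the Duhamel recursion \eqref{Sq:rep} until only $S_0$ factors remain, identify the resulting integrands as products of the form ${\bf S}^B_{0,k}$ with a total of exactly $q$ frequency derivatives, and apply Corollary~\ref{cor:f}. Your bookkeeping observation that the total $\xi$-derivative count equals $q$ because each recursion step strips off $q_1 \geq 1$ is precisely the key point the paper uses.
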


\begin{proof} For $q = 0,$ Corollary \ref{lem:s1} gives the desired bound. For $q =1,$ the corrector $S_1$ admits representation \eqref{Sq:rep}, so that $\d_x^\a S_1$ appears as a sum of terms
$$ \e^{h-1} \int_{\t}^t \d_x^{\a_1} S_{0}(\t';t) \d_x^{\a_2} \d_{\xi}^\k (\chi(i A_\star + \e B))(\t') \d_x^{\a_3 + \k} S_{0}(\t;\t')\, d\t',$$
where $|\k| = 1,$ and $\a_1 + \a_2 + \a_3 = \a.$ Corollary \ref{lem:s2} applies and gives the desired bound. Consider now representation \ref{Sq:rep} for $S_q,$ any $q \geq 2.$ In this representation, $\d_x^\a S_q$ appears as a sum of terms 
\be \label{forSq} \e^{h-1} \int_{\t}^t \d_x^{\a_1} S_{0}(\t';t) \d_x^{\a_2} \d_{\xi}^\k (\chi(i A_\star + \e B))(\t') \d_x^{\a_3 + \k} S_{q'}(\t;\t')\, d\t',\ee
where $|\k| + q' = q,$ $|\k| > 0,$ and $\a_1 + \a_2 + \a_3 = \a.$ We may recursively use representation \eqref{Sq:rep} in \eqref{forSq}, so that $\d_x^\a S_q$ appears as a time integral of terms ${\bf S}^B_{0,n}$ \eqref{def:bfS0B}, with exactly $q$ derivatives bearing on the $\xi$ variables. The result then follows from Corollary \ref{cor:f}.  
\end{proof} 

\subsubsection{Bounds on the approximate solution operator $\op_\e(\Sigma)$} 

We arrive at a bound for the action of the approximate solution operator $\op_\e(\Sigma)$ defined in \eqref{repS}: 
\begin{cor} \label{cor:op:norm} There holds the bound, for $0 \leq \t \leq t \leq T(\e):$ 
 $$ \big\| \op_\e(\Sigma(\t;t)) w \big\|_{L^2(B(x_0,\delta))} \lesssim \e^{-\zeta} \exp\big( {\bm \g}^+ (t^{1 + \ell} - \t^{1 + \ell}) \big) \| w \|_{L^2(\R^d)},$$
 where $\dsp{ {\bm \g}^+ := \max_{\begin{smallmatrix} |x| \leq \delta \\ |\xi - \xi_0| \leq \delta \end{smallmatrix}} \g(x,\xi).}$
\end{cor}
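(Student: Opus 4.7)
The plan is to decompose $\Sigma(\t;t) = \sum_{0 \leq q \leq q_0} \e^{hq} S_q(\t;t)$ via \eqref{repS} and bound $\|\op_\e(\e^{hq} S_q) w\|_{L^2}$ for each $q$ separately; since $q_0$ is finite and fixed, the resulting finite sum is harmless. For each $q$, I would invoke the $L^2 \to L^2$ action result of Proposition \ref{prop:action}, as used to derive \eqref{def:unif}-\eqref{R0'}, which controls the operator norm of $\op_\e(a)$ by finitely many $L^1_x L^\infty_\xi$ norms of $x$-derivatives of $a$. The symbol $S_q$ for $q \geq 1$, and the non-identity part $S_0 - \Id$ for $q = 0$, are supported in the $(x,\xi)$-support of $\chi$, hence in $\{|x| \leq \delta\}$; this permits replacing the $L^1_x$ norm by the $L^\infty_x$ norm up to a constant depending only on $\delta$. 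Corollary \ref{lem:new:Sq} then furnishes $|\d_x^\a S_q| \lesssim \e^{-\zeta(1+q)} {\bf e}_{\g^+}$ for sufficiently many multi-indices $\a$.

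The combined $\e$-prefactor per $q$ is $\e^{hq} \cdot \e^{-\zeta(1+q)} = \e^{-\zeta} \e^{q(h-\zeta)} \leq \e^{-\zeta}$, using $\zeta < h$ (Lemma \ref{lem:chi}). It remains to convert the pointwise growth function ${\bf e}_{\g^+}(\t;t,x,\xi)$ from \eqref{def:bf-e} into the uniform bound $\exp(\bm\g^+ (t^{1+\ell} - \t^{1+\ell}))$. The amplitude inequality $\g^+(x,\xi) \leq \bm\g^+$ is immediate from the definition of $\bm\g^+$ as a maximum over $\{|x|\leq\delta,\ |\xi-\xi_0|\leq\delta\}$. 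To absorb the shift by $t_\star \geq 0$ inside the exponent, I would use that $s \mapsto s^{1+\ell}$ is convex on $\R_+$ (as $\ell \geq 0$, the derivative $(1+\ell) s^\ell$ is nondecreasing): this yields $(t - t_\star)_+^{1+\ell} - (\t - t_\star)_+^{1+\ell} \leq t^{1+\ell} - \t^{1+\ell}$ for $0 \leq \t \leq t$, which I would verify by splitting into the three sub-cases $t_\star \leq \t$, $\t < t_\star \leq t$, and $t < t_\star$; the middle case relies on superadditivity $(a+b)^{1+\ell} \geq a^{1+\ell} + b^{1+\ell}$ for $a,b \geq 0$, and the last is trivial since the left-hand side vanishes.

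The main subtlety is in the first step: that the action bound of Proposition \ref{prop:action} applies even though Corollary \ref{lem:new:Sq} supplies no direct control on $\xi$-derivatives of $S_q$. This is possible precisely because the stiff truncation $\chi$ confines the non-trivial part of each $S_q$ to a fixed compact $x$-box, so one can use the $L^1_x$-based criterion \eqref{R0'} rather than a Calder\'on--Vaillancourt-type estimate that would require bounded $\xi$-derivatives and thus incur extra $\e^{-\zeta}$ losses per derivative, which would ultimately be fatal at the given degree of stiffness.
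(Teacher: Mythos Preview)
Your proposal is correct and follows essentially the same route as the paper: both arguments invoke the $L^1_x L^\infty_\xi$ action bound \eqref{bd:action:H} from Proposition~\ref{prop:action}, exploit the compact $x$-support coming from the truncation $\chi$ (the paper does this by multiplying by an auxiliary cutoff $\theta_1$, you by splitting off the identity part of $S_0$), feed in the pointwise estimates of Corollary~\ref{lem:new:Sq}, and use $\zeta<h$ to sum over $q$. Your explicit convexity argument for ${\bf e}_{\g^+}(\t;t,x,\xi)\le\exp\!\big({\bm\g}^+(t^{1+\ell}-\t^{1+\ell})\big)$ supplies the detail that the paper simply asserts.
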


\begin{proof} Let $\theta_1$ be a spatial cut-off that is identically equal to one on a neighborhood of $B(x_0,\delta).$ Then 
 $$ \| \op_\e(\Sigma(\t;t)) w \big\|_{L^2(B(x_0,\delta))} \leq \| \op_\e(\theta_1 \Sigma(\t;t)) w \|_{L^2(\R^d)}.$$
 Now by Proposition \ref{prop:action},
 $$ \| \op_\e(\theta_1 \Sigma(\t;t)) w \|_{L^2(\R^d)} \leq \sum_{|\a| \leq d + 1} \sup_{\xi \in \R^d} | \d_x^{\a} \big( \theta_1 \Sigma(\t;t,\cdot,\xi)\big) \big|_{L^1(\R^d)} \, \| w \|_{L^2(\R^d)}.$$
 By Corollary \ref{lem:s2}, there holds for all $0 \leq q \leq q_0,$ for $|\xi - \xi_0| \leq \delta:$ 
 $$ \e^{q h} |\theta_1 \d_x^\a S_q(\t;t,\cdot,\xi)|_{L^1(\R^d)} \lesssim \e^{- \zeta(1 + q)} \sup_{|x| + |\xi - \xi_0| \leq \delta} {\bf e}_{\g^+}(\t;t,x,\xi).$$
The result then follows from $\zeta < h$ and the pointwise bound 
 $${\bf e}_{\g^+}(\t;t,x,\xi) \leq \exp( \,{\bm \g}^+(t^{1 + \ell} - \t^{1 + \ell} )\,),  \quad \mbox{for $\ell \geq 0.$}$$ 
\end{proof}

\subsubsection{Lower bound for $S_0$} We verify that the lower bound \eqref{ass:low} in Assumption \ref{ass:main} is stable by perturbation:

\begin{lem} \label{lem:really:new}  For $\delta$ and $\e$ small enough, the flow $S_0$ satisfies lower bound \eqref{ass:low} from Assumption {\rm \ref{ass:main}}, that is
 $$
  \e^{-\zeta} {\bf e}_{\g^-}(0;T(\e),x,\xi_0) \lesssim \big| \, S_0(0;T(\e),x,\xi_0) \vec e(x) \,\big|\,,
 $$ %
for $|x | \leq \delta$ and $\vec e(x)$ as in Assumption {\rm \ref{ass:main}}.
\end{lem}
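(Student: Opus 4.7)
The plan is to transfer the lower bound from $S$ (given by Assumption \ref{ass:main}) to $S_0$ by treating $S_0$ as a perturbation of $S$ and controlling the discrepancy via Duhamel. Since $\d_t S + i\e^{h-1} A_\star S = 0$ and $\d_t S_0 + \e^{h-1}\chi(i A_\star + \e B)S_0 = 0$, the difference obeys
\begin{equation*}
\d_t(S_0 - S) + i \e^{h-1} A_\star (S_0 - S) = i \e^{h-1}(1-\chi) A_\star S_0 - \e^h \chi B \, S_0,
\end{equation*}
with zero Cauchy data at $t=0$. Variation of constants applied to $\vec e\,(x)$ yields
\begin{equation*}
(S_0 - S)(0;T(\e)) \vec e\,(x) = \int_0^{T(\e)} S(s;T(\e)) \bigl[\, i\e^{h-1}(1-\chi)(s) A_\star(s) - \e^h \chi(s) B(s) \,\bigr] S_0(0;s) \vec e\,(x) \, ds.
\end{equation*}
By the triangle inequality, the proof reduces to showing that this integral is smaller than half the lower bound $\e^{-\zeta} {\bf e}_{\g^-}(0;T(\e))$ supplied for $|S(0;T(\e))\vec e\,(x)|$ by Assumption \ref{ass:main}.

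First I would localize the factor $1-\chi$. For $|x|\leq \delta$ small enough, $\theta_0(x)=1$ and $\chi_0(0)=1$, so that $\chi(s,x,\xi_0) = \psi_0(s - t_\star(\e,x,\xi_0))$; by construction of $\psi_0$, the factor $(1-\chi)(\cdot,x,\xi_0)$ is supported in $[0,t_\star]$. For $\ell \in \{0,1\}$ we have $t_\star \equiv 0$ and this contribution vanishes entirely, so only the $\e B$ piece remains. For $\ell = 1/2$ the support has length $t_\star = O(\delta^2)$ (see \eqref{def:tstar} and the discussion in Lemma \ref{lem:chi}).

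The $\e B$ piece is bounded by combining the upper bound for $S$ from Assumption \ref{ass:main} with the upper bound for $S_0$ from Corollary \ref{lem:s1}: using the multiplicative property \eqref{mult:e} of ${\bf e}_{\g^+}$, the integral is $\lesssim \e^{h-2\zeta}\,{\bf e}_{\g^+}(0;T(\e))$. Since $\g^\pm$ are continuous with $\g^+(0,\xi_0)=\g^-(0,\xi_0)$, and since on the horizon $T(\e)^{1+\ell} = T_\star|\ln\e|$ one has ${\bf e}_{\g^+}/{\bf e}_{\g^-} \sim \e^{-(\g^+ - \g^-)T_\star}$, the effective exponent is $\zeta - (\g^+(x,\xi_0) - \g^-(x,\xi_0)) T_\star$ plus $h - \zeta \geq 0$; this is strictly positive for $\delta$ small (independently of $\e$), hence the $\e B$ piece is $o(\e^{-\zeta} {\bf e}_{\g^-}(0;T(\e)))$. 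The $(1-\chi)A_\star$ piece, needed only for $\ell = 1/2$, is handled by using the block structure \eqref{block:2} for $\e^{h-1}A_\star$ together with the matching block upper bound \eqref{ass:up} for $S$ and $S_0$: the cancellation \eqref{cancel0} produces a uniformly bounded (entry-wise, modulo powers $\e^{\pm\zeta}$) integrand times ${\bf e}_{\g^+}$, and the short length $t_\star = O(\delta^2)$ of the integration interval makes the total contribution $\lesssim \delta^2 \, \e^{-\zeta} {\bf e}_{\g^+}(0;T(\e))$, again $o(\e^{-\zeta}{\bf e}_{\g^-}(0;T(\e)))$ for $\delta$ small. Combining the two estimates with the triangle inequality gives the claimed lower bound for $|S_0(0;T(\e))\vec e\,(x)|$.

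The main obstacle is the $(1-\chi)A_\star$ contribution in the case $\ell = 1/2$: a direct bound would suffer a spurious $\e^{h-1} = \e^{-1/3}$ loss coming from the stiff entries of $\e^{h-1}A_\star$, which would overwhelm the main term. The argument hinges on the matching block structure of the upper bound for $S$, which activates the cancellation \eqref{cancel0} and makes the integrand entry-wise bounded; only then does the short length $t_\star = O(\delta^2)$ of the transition interval yield a genuinely small contribution.
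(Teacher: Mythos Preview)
Your Duhamel-against-$S$ approach is sound in principle, and for the $\e^h \chi B$ contribution it reproduces exactly what the paper does (the paper uses the intermediate flow $S_\chi$ and the representation \eqref{for:low:0}, but since $S_\chi \equiv S$ at $(x,\xi_0)$ the two computations coincide). However, your treatment of the $(1-\chi)A_\star$ contribution has a genuine gap.

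You claim that the short length $t_\star = O(\delta^2)$ of the integration interval makes this contribution $\lesssim \delta^2 \,\e^{-\zeta} {\bf e}_{\g^+}(0;T(\e))$, and that this is $o(\e^{-\zeta}{\bf e}_{\g^-}(0;T(\e)))$ for $\delta$ small. But the ratio is
\[
\delta^2 \, \frac{{\bf e}_{\g^+}(0;T(\e),x,\xi_0)}{{\bf e}_{\g^-}(0;T(\e),x,\xi_0)} \;\sim\; \delta^2 \,\e^{-\delta_0 T_\star},
\]
with $\delta_0 = \max_{|x|\leq \delta}\g^+(x,\xi_0) - \min_{|x|\leq\delta}\g^-(x,\xi_0) > 0$. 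For any fixed $\delta > 0$ this diverges as $\e \to 0$: the polynomial gain $\delta^2$ cannot absorb a negative power of $\e$, however small the exponent. So the smallness claim fails, and the argument as written does not close.

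The fix is much simpler than your cancellation argument: the $(1-\chi)A_\star$ piece \emph{vanishes identically} at $(x,\xi_0)$ for $\delta$ small. Indeed, since $\chi_0(0)=1$ and $\theta_0(x)=1$ for $|x|\leq\delta$, you have $\chi(s,x,\xi_0)=\psi_0(s-t_\star(\e,x,\xi_0))$; and since $t_\star(\e,x,\xi_0)=O(\delta^2)$ while $\psi_0\equiv 1$ on an interval $[-c\delta,\infty)$ with $c>0$ independent of $\delta$, monotonicity of $\psi_0$ gives $\psi_0(s-t_\star)=1$ for all $s\geq 0$ once $\delta$ is small enough (this is the content of Lemma~\ref{lem:tstar}). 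With this observation your Duhamel formula collapses to the $\e^h B$ term alone, and the remainder of your argument goes through and coincides with the paper's: the paper's route through $S_\chi$ amounts to the same thing, since $\chi\equiv 1$ at $(x,\xi_0)$ forces $S_\chi\equiv S$ there.
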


\begin{proof} First observe that $t_\star(\e,0,\xi_0) = 0,$ so that the equation in $S$ and $S_\chi$ coincide over the time interval $[\delta, T(\e)],$ at $(x,\xi) = (0,\xi_0).$ A simple perturbation argument, similar to the arguments developed in detail above, then imply that $S_\chi(0; T(\e), 0, \xi_0)$ satisfies the lower bound \eqref{ass:low}. Next we use representation \eqref{for:low:0} from the proof of Corollary \ref{lem:s1}, with $\a = 0,$ which implies
$$ |S_0 \vec e \,| \geq |S_\chi \vec e \,| - \e^h \int_0^t |S_\chi(\t';t)| |B(\t')| |S_0(\t;\t')|\,d\t'.$$
As argued above, we may use lower bound \eqref{ass:low} for $S_\chi;$ besides, Corollaries \ref{lem:Spsi} and \ref{lem:s1} provide upper bounds for $S_0$ and $S_\chi.$ These yield the lower bound
$$|S_0(0;T(\e),x,\xi_0) \vec e \,| \gtrsim \e^{-\zeta} {\bf e}_{\g^-}(0;T(\e),x,\xi_0) -  \e^{h - \zeta} {\bf e}_{\g^+}(0; T(\e), x, \xi_0).$$
  Now at $(\t;t,x,\xi) = (0;T(\e),x,\xi_0),$ there holds for $\delta$ and $\e$ small enough
$$ {\bf e}_{\g^+} {\bf e}_{\g^-}^{-1} \lesssim \exp\Big( \big( \g^+(x,\xi_0) - \min_{|x| \leq \delta} \g^-(\cdot,\xi_0) \big) \,  T(\e)^{1 + \ell}\Big).$$
The constant 
$$ %
 \delta_0 := \max_{\begin{smallmatrix} |x| \leq \delta \\ |\xi - \xi_0| \leq \delta \end{smallmatrix} } \g^+(\cdot,\xi_0) - \min_{|x| \leq \delta} \g^-(\cdot,\xi_0)
 $$ %
is small for small $\delta,$ by continuity of $\g^\pm$ and the fact that $\g^+(0,\xi_0) = \g^-(0,\xi_0).$ In particular, we may choose $\delta,$ depending on $T_\star$ introduced in \eqref{max-time} and satisfying condition \eqref{cond:T1}, so that
\be \label{cond:delta0} h - \zeta - T_\star \delta_0 > 0.\ee
Thus for $|x| < \delta$ and $\e$ small enough, there holds
$$ 1 - \e^{h - \zeta} {\bf e}_{\g^+} {\bf e}_{\g^-}^{-1}(0;T(\e),x,\xi_0) \geq 1 - \e^{h - \zeta - T_\star \delta_0} \geq 1/2,$$
and the result follows.
\end{proof}

\subsubsection{Bounds on $\d_x^\a \d_\xi^\b S_q$} \label{sec:xibounds} We finally give bounds on $(x,\xi)$-derivatives of $S_0$ and correctors $S_q.$ This ends the verification of Assumption \ref{ass:BS}. 

\begin{lem} \label{lem:crude} There holds bounds, for $0 \leq \t \leq t \leq T(\e),$ for any $(x,\xi):$ 
 \be \label{bd:crude} |\d_x^\a \d_\xi^\b S_q| \lesssim \e^{- \zeta(1 + |\b| + q)} {\bf e}_{\g^+}.\ee
\end{lem}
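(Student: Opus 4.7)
The plan is to reduce the claim to Corollary \ref{cor:f} by deriving, for each $q \in \{0,1,\dots,q_0\}$, a representation of $\partial_x^\alpha \partial_\xi^\beta S_q$ as a finite sum of iterated time integrals of products of type ${\bf S}^B_{0,n}$ from \eqref{def:bfS0B}, with \emph{no} $\xi$-derivatives surviving on $S_0$ factors and with exactly $|\beta| + q$ $\xi$-derivatives landing on the coefficient factors $\chi(i A_\star + \epsilon B)$. Once this is achieved, Corollary \ref{cor:f} delivers the bound $\epsilon^{-\zeta(1+|\beta|+q)} {\bf e}_{\gamma^+}$ directly.

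For $q=0$, I would start from \eqref{eq:flow} and apply $\partial_\xi^\beta$, obtaining by variation of constants
$$
\partial_\xi^\beta S_0(\tau;t) = -\epsilon^{h-1} \sum_{\substack{\beta_1+\beta_2=\beta\\ |\beta_1|>0}} C_{\beta_1,\beta_2} \int_\tau^t S_0(\tau';t)\, \partial_\xi^{\beta_1}(\chi(iA_\star+\epsilon B))(\tau')\, \partial_\xi^{\beta_2} S_0(\tau;\tau')\, d\tau'.
$$
Applying this identity recursively $|\beta|$ times to the rightmost factor exhausts all $\xi$-derivatives on $S_0$ factors, replacing them with $\xi$-derivatives on coefficient factors, with the total count preserved at $|\beta|$. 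One then applies $\partial_x^\alpha$ and distributes the $x$-derivatives by Leibniz; since $x$-derivatives are cheap (they contribute no loss in $\epsilon$, by Lemma \ref{lem:chi} and the structure of $B$), the resulting sum is exactly a finite collection of terms of the form ${\bf S}^B_{0,n}$ in \eqref{def:bfS0B}, to which Corollary \ref{cor:f} applies.

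For $q \geq 1$, I would work from representation \eqref{Sq:rep}. By definition of the Moyal-type operation $\sigma_1 \sharp_n \sigma_2$ recalled just after \eqref{Sq}, each factor $\chi(iA_\star+\epsilon B)\, \sharp_{q_1}\, S_{q_2}$ carries $q_1$ $\xi$-derivatives on $\chi(iA_\star+\epsilon B)$ and $q_1$ $x$-derivatives on $S_{q_2}$. Expanding \eqref{Sq:rep} recursively in $q_2 < q$ reduces everything to products involving only $S_0$ factors (with $x$-derivatives) and coefficient factors whose total $\xi$-derivative count equals $q$. Applying now the external $\partial_x^\alpha \partial_\xi^\beta$, redistributing $\xi$-derivatives via the recursion of the previous paragraph, and expanding $\partial_x^\alpha$ by Leibniz, gives a representation of $\partial_x^\alpha \partial_\xi^\beta S_q$ as a sum of products ${\bf S}^B_{0,n}$ carrying $|\beta|+q$ $\xi$-derivatives on coefficient factors. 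Corollary \ref{cor:f} concludes.

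The main obstacle is purely combinatorial bookkeeping: the Leibniz distribution of $\partial_\xi^\beta$ across products of $S_0$'s and coefficients, coupled with the Moyal-type derivatives in $\sharp_{q_1}$, must be carried out carefully so that (i) no $\xi$-derivative is left stranded on an $S_0$ factor in the final representation, and (ii) the total $\xi$-derivative count on coefficient factors is exactly $|\beta|+q$. This is a routine but tedious induction on $q$ and on $|\beta|$, and once done the bound follows from Corollary \ref{cor:f} without further analysis, since $x$-derivatives incur no loss in $\epsilon$ and only $\xi$-derivatives acting on $\chi$ (via Lemma \ref{lem:chi}) contribute the $\epsilon^{-\zeta}$ factors.
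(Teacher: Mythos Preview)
Your proposal is correct and follows essentially the same approach as the paper: derive a variation-of-constants representation for $\partial_\xi^\beta S_0$ and for $\partial_x^\alpha\partial_\xi^\beta S_q$, recursively unwind until all $\xi$-derivatives sit on coefficient factors $\chi(iA_\star+\epsilon B)$ with total count $|\beta|+q$, and then invoke Corollary~\ref{cor:f}. The only cosmetic difference is that you begin the $q\geq 1$ case from the integral representation~\eqref{Sq:rep} whereas the paper differentiates the ODE~\eqref{Sq} directly, but the two routes coincide after one step.
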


\begin{proof} From representation
 $$ \d_\xi^\b S_0(\t;t) = - \e^{h-1} \sum_{\begin{smallmatrix} \b_1 + \b_2 = \b \\ |\b_1| > 0 \end{smallmatrix} } \int_0^t S_0(\t';t) \d_\xi^{\b_1} (\chi(i A_\star + \e B))(\t') \d_\xi^{\b_2} S_0(\t;\t') \, d\t'$$
 and Corollary \ref{cor:f}, we find by induction bound \eqref{bd:crude} for $q = 0.$ Besides, applying $\d_x^\a \d_\xi^\b$ to the equation in $S_q,$ we find the representation
  $$ \d_x^\a \d_\xi^\b S_{q} = - \e^{h-1} \int_{\t}^t S_{0} \Big(  \d_x^{\a_1} \d_\xi^{\b_1} (\chi( i A_\star + \e B)) \d_x^{\a_2} \d_\xi^{\b_2} S_q  + \d_x^\a \d_\xi^\b \big( \, (\chi(i A_\star + \e B)) \sharp_{q_1} S_{q_2} \, \big)\, \Big),$$ %
  with an implicit summation over $\a_1 + \a_2 = \a,$ $\b_1 + \b_2 = \b,$ $q_1 + q_2 = q$ with $|\a_1| > 0,$ $|\b_1| > 0,$ $q_1 > 0.$ We use recursively the above representation, and find $\d_x^\a \d_\xi^\b S_q$ to be a sum of terms ${\bf S}^B_{0,n},$ with a total number of $\xi$ derivatives equal to $|\b| + q.$ It now suffices to apply Corollary \ref{cor:f}. 
\end{proof}
 
\subsubsection{Conclusion} \label{sec:conclusion} Before moving on to the third and last part of the proof of Theorem \ref{th:main}, we recapitulate our arguments so far. 

 The bound $|\d_x^\a \d_\xi^\b (\chi A_\star)| \lesssim \e^{- |\b|\zeta}$ and Lemma \ref{lem:crude} verify Assumption \ref{ass:BS}. Thus at this point the integral representation \eqref{duhamel} is justified, via Theorem \ref{duh1}. We reproduce here equation \eqref{duhamel}:
 $$ v  = \op_\e(\S(0;t)) ) v(0) + \e^{h} \int_0^t \op_\e(\S(\t;t)) (\Id + \e \op_\e({\bf R}_0)) \big(g(\t) + \e\op_\e({\bf R}_0) v(0) \big)  d\t.$$
This ends the second part of the proof. Next we will prove a lower bound for $v,$ in norm $L^2(B(0,\delta)).$ In this view, we will bound from above the time-integrated term in \eqref{duhamel}, and bound from below the ``free" solution $\op_\e(\S(0;t)) v(0).$ The bound from above for the time-integrated term rests on Corollary \ref{cor:op:norm} bounding the action of $\op_\e(\S),$ and on a bound for the source $g,$ which is the object of the forthcoming Section \ref{sec:bd-source}. The bound from below for the free solution is a consequence of the above Lemma \ref{lem:really:new}.

 \subsection{Bound on the source term} \label{sec:bd-source} 
 
 Our goal in  this Section is to give an upper bound for the source term $g$ from equation \eqref{eq:v}. The source $g$ is defined in \eqref{def:g}, which we reproduce here:
$$ g =  \e^{-1} (\Gamma v - \tilde \Gamma (u_\star(\e^h t))) + \e^{-h} \op_\e(\d_t \tilde \chi) (u_\star (\e^h t))  +  \op_\e(\tilde \chi) (F_\star(\e^h t)) + \e^n \big(\op_\e({\bf R}_0) u_\star\big)(\e^h t).$$
We let 
$$ g_0 := \op_\e(\tilde \chi) (F_\star(\e^h t)) + \e^n \big(\op_\e({\bf R}_0) u_\star\big)(\e^h t),$$
and
$$ g_\chi := \e^{-1} (\Gamma v - \tilde \Gamma (u_\star(\e^h t))) + \e^{-h} \op_\e(\d_t \tilde \chi) (u_\star (\e^h t)),$$
so that $g = g_0 + g_\chi.$ We first consider $g_0.$ 

\begin{lem} \label{lem:source:new} There holds the bound 
 $$ \|\op_\e(\chi^\flat) g_0(t)\|_{L^2} \lesssim  \|\dot u(\e^h t) \|_{W^{1,\infty}(B(0,\delta))}^2 + \e^n \| \dot u(\e^h t) \|_{L^2(B(0,\delta))},$$
 uniformly in $t \in [0, T(\e)],$ for $n$ as large as allowed by the regularity of $\phi,$ where the possibly stiff truncation $\chi^\flat$ is defined just below \eqref{def:chi}, in particular such that $\chi^\flat \prec \tilde \chi,$ with notation introduced in \eqref{def:prec}. 
 \end{lem}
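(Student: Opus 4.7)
The plan is to decompose $g_0$ according to the definitions \eqref{def:g}, \eqref{def:tildeF}, and \eqref{def:Fflat} into three parts: the nonlinear source $g_0^{\rm nl} := \op_\e(\tilde\chi) M^\star \op_\e(Q_\e)(\theta \dot F)(\e^h t)$, the boundary-commutator term $g_0^{\rm bd} := -\sum_j \op_\e(\tilde\chi) M^\star \op_\e(Q_\e)(A_j(\phi_\e) \dot u\,\d_{x_j}\theta)(\e^h t)$, and a collection $g_0^{\rm rem}$ of terms of the form $\e^n \op_\e({\bf R}_0)$ applied to $\theta \dot u$, $u^\flat$, or $u_\star$. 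Each piece I would treat separately.

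For $g_0^{\rm nl}$ I would compose with $\op_\e(\chi^\flat)$ and invoke $L^2$-continuity of each factor (Corollary \ref{cor:chi} for $\op_\e(\chi^\flat)$ and $\op_\e(\tilde\chi)$, hyperbolicity of the real symbol $\mu$ for $M^\star$, and Proposition \ref{prop:action} for $\op_\e(Q_\e)$), reducing matters to bounding $\theta\dot F$ in $L^2(\R^d)$. Formula \eqref{def:dotF} presents $\dot F$ as a pointwise quadratic expression in $(\dot u,\nabla_x\dot u)$ with coefficients $G_0$ and $G_{1j}$ bounded on $\supp\theta$, so that $\|\theta \dot F\|_{L^2} \lesssim \|\dot F\|_{L^\infty(B(0,\delta))} \lesssim \|\dot u\|_{W^{1,\infty}(B(0,\delta))}^2$, giving the first term of the stated bound. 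The remainders $g_0^{\rm rem}$ each involve $\dot u$ localized by $\theta$ (directly, or through $u^\flat=\op_\e(Q_\e)(\theta\dot u)$ and $u_\star=M^\star u^\flat$), so by $L^2$-boundedness of all operators involved their contribution is controlled by $\e^n \|\dot u\|_{L^2(B(0,\delta))}$, matching the second term.

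The main obstacle will be $g_0^{\rm bd}$: a direct $L^2$ estimate only yields a bound by $\|\dot u\|_{L^\infty(B(0,\delta))}$, with no $\e^n$ prefactor, which is far too large. The key observation is a disjoint-support argument. Since $\theta\equiv 1$ on $B(0,1/2)$, the derivative $\d_{x_j}\theta$ is supported in the annulus $\{1/2 \leq |x| \leq \delta\}$, whereas the spatial cut-off $\theta_0^\flat$ entering the definition of $\chi^\flat$ can, and will, be chosen supported strictly inside $B(0,1/2)$. To exploit this I would commute $\op_\e(\chi^\flat)\op_\e(\tilde\chi)$ past $M^\star$ via Egorov's lemma \eqref{egorov}, obtaining $M^\star \op_\e\big((\chi^\flat\tilde\chi)_\star\big) + \e^n\op_\e({\bf R}_{-n})$, where the bicharacteristic flow defining $(\cdot)_\star$ is run over the original time $\e^h t$, whose length $\e^h T(\e) = o(1)$ tends to zero as $\e \to 0$. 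Thus for $\e$ small the spatial support of $(\chi^\flat\tilde\chi)_\star$ stays inside $B(0,1/2)$ and hence remains disjoint from $\supp\d_{x_j}\theta$. A standard non-stationary phase argument then shows that the kernel of $\op_\e\big((\chi^\flat\tilde\chi)_\star\big)\op_\e(Q_\e)$ decays rapidly off the diagonal on the $\e^h$-scale, so that acting on a function supported in the disjoint annulus produces an $L^2$-contribution bounded by $\e^n\|\dot u\|_{L^2(B(0,\delta))}$, as needed.
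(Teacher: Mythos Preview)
Your proposal is correct and follows essentially the same strategy as the paper: the three-way split into nonlinear, boundary-commutator, and $\e^n$-remainder pieces is identical, and the crucial disjoint-support observation for $g_0^{\rm bd}$ (that $\theta_0^\flat \prec \theta$ forces $\theta_0^\flat \d_{x_j}\theta \equiv 0$, after conjugating $\chi^\flat$ through $M^\star$ by Egorov over the short time $\e^h t$) is exactly the paper's mechanism. The only cosmetic difference is in how pseudo-locality is cashed out at the end: the paper stays at the symbol level, expanding the composition $\op_\e(\theta_0 {\bf R}_0)\, \d_{x_j}\theta$ via Proposition~\ref{prop:composition} and observing that every term in the expansion contains a product $\d_x^\a\theta_0\cdot\d_x^\b(\d_{x_j}\theta)\equiv 0$, whereas you invoke off-diagonal kernel decay (non-stationary phase); both yield the same $O(\e^n)$ smallness and are standard equivalent expressions of the same fact.
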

 
\begin{proof} There are three types of terms in $g_0:$ (a) a commutator issued from the spatial cut-off $\theta,$ (b) nonlinear terms, (c) remainders of size $O(\e^n).$ The term $F_\star$ is defined in \eqref{def:tildeF}, in terms of $F^\flat$ defined in \eqref{def:Fflat}, itself defined in terms of $\dot F$ \eqref{def:dotF}. We reproduce here their definitions:
 $$ \begin{aligned}
\dot F & = G_0(\e, t,x, \dot u) \cdot (\dot u, \dot u)+ \sum_{1 \leq j \leq d} G_{1j}(\e, t,x, \dot u) \cdot (\dot u, \d_{x_j} \dot u), \\
F^\flat & = \op_\e(Q_\e)(\theta \dot F)  -  \sum_{1 \leq j \leq d} \op_\e(Q_\e) \big( A_j(\phi_\e) \dot u \d_{x_j} \theta \big) + \e^n {\bf R}_0 (\theta \dot u), \\ 
 F_\star & = M^\star F^\flat + \e^n \op_\e({\bf R}_0) u^\flat.
 \end{aligned}$$

(a) Commutator term: this is the term in $\d_{x_j} \theta$ in $F^\flat.$ 

 This term is rendered small by the left action of $\op_\e(\chi^\flat).$ Indeed, commutators that arise from a localization step depend on derivatives $p',$ where $p$ is the localization symbol. If we further localize with $p^\flat,$ such that $p^\flat$ is identically equal to 1 on the support of $p,$ then from $(1 - p) p^\flat = 0$ we deduce $p^\flat p' \equiv 0,$ and the associated commutator is arbitrarily small. This is made precise below. 

Consider first the term 
 $$ \op_\e(\chi^\flat) M^\star \op_\e(Q_\e) (A_j(\phi_\e) \dot u \d_{x_j} \theta).$$
We start by approximately commuting $\op_\e(\chi^\flat)$ and $M^\star,$ by use of \eqref{egorov} (with $M$ in place of $M^\star$ and conversely): in view of Lemma \ref{lem:chi}, there holds
 \be \label{chi:M} \op_\e(\chi^\flat) M^\star = M^\star \op_\e(\chi^\flat_{(\star)}) + \e^{h - \zeta} \op_\e({\bf R}_{0}),\ee
where $\chi^\flat_{(\star)}$ denotes evaluation of $\chi^\flat$ along the backward characteristics of $\mu_\e.$ Note that we use here definition \eqref{R0'} for ${\bf R}_0.$ In particular, the characteristics depend on time through $\e^h t.$ Expanding $\chi^\flat_{(\star)}$ in powers of $\e^h t,$ for $t \leq T(\e),$ we find terms that are all supported in the support of $\chi^\flat,$ up to a remainder $O(\e^n),$ with $n$ as large as allowed by the regularity of $\phi.$ In conjunction with Proposition \ref{prop:composition}, this implies
$$ M^\star \op_\e(\chi^\flat_{(\star)}) \op_\e(Q_\e) = \op_\e(\theta_{0} {\bf R}_0) + \e^{n} \op_\e( {\bf R}_{0}),$$
where $\theta_0$ is defined in Section \ref{sec:trunc}, in particular such that $(1 - \theta) \theta_0 \equiv 0.$ Now
$$ \op_\e(\theta_0 {\bf R}_0) (A_j(\phi_\e) \dot u \d_{x_j} \theta) = \op_\e(\theta_0 \d_{x_j} \theta {\bf R}_0) (A_j (\phi_\e) \dot u) + \big[ \op_\e(\theta_0 {\bf R}_0),\, \d_{x_j} \theta\big] (A_j(\phi_\e) \dot u),$$ 
and the first term above is identically zero, since $\theta_0 \d_{x_j} \theta \equiv 0.$ For the second term, we use Proposition \ref{prop:composition}: up to $O(\e^n),$ the operator involves product of $\theta_0$ and its derivatives with derivatives of $\d_{x_j} \theta.$ These products are identically zero. Thus 
 $$ \| M^\star \op_\e(\chi^\flat_{(\star)}) \op_\e(Q_\e)(A_j (\phi_\e) \dot u) \|_{L^2} \lesssim \e^n \| \dot u \|_{L^2(B(0,\delta))}.$$
Now Egorov's lemma \eqref{egorov}, as used in \eqref{chi:M}, may be expanded up to arbitrary order, as in \eqref{R:M:bis}. The supports of the symbols that appear in this expansion share the property that we used for $\chi^\flat_{(\star)}.$ We obtain 
 $$ \| \op_\e(\chi^\flat) M^\star \op_\e(Q_\e) (A_j(\phi_\e) \dot u \d_{x_j} \theta) \|_{L^2} \lesssim \e^n \| \dot u\|_{L^2(B(0,\delta))}.$$

(b) The nonlinear terms are {\it local} in $\dot u$ and $\nabla_x \dot u,$ so that
$$ \theta(x) G_0(\e,t,x, \dot u) \cdot (\dot u, \dot u) \equiv \theta(x) G_0(\e, t,x, \theta^\sharp \dot u) \cdot (\theta^\sharp \dot u, \theta^\sharp \dot u),$$
where $\theta \prec \theta^\sharp$ (see notation \eqref{def:prec}), with $\mbox{supp}\,\theta^\sharp \subset B(0,\delta).$ A similar identity holds for $G_1.$ Thus
$$ \begin{aligned} \| \theta G_0(\dot u) \cdot (\dot u, \dot u) \|_{L^2} & \leq \|  \theta G_0(\theta^\sharp \dot u) \cdot (\theta^\sharp \dot u, \theta^\sharp \dot u) \|_{L^2} \\ & \lesssim C\big(|\theta^\sharp \dot u|_{L^\infty}\big) |\theta^\sharp \dot u|_{L^\infty} \|\theta^\sharp \dot u\|_{L^2}.\end{aligned}$$
Since $\theta^\sharp$ has support in $B(0,\delta),$ there holds $\| \theta^\sharp \dot u\|_{L^2} \lesssim \| \dot u\|_{L^2(B(0,\delta))},$ and same in $L^\infty.$ By the a priori bound \eqref{bound:dotu}, there holds in particular $|\dot u|_{L^\infty(B(0,\delta))} \lesssim 1,$ since $K' > 0.$ Thus 
 $$ \| \theta G_0(\dot u) \cdot (\dot u, \dot u) \|_{L^2} \lesssim \| \dot u\|_{L^\infty(B(0,\delta))}^2.$$ 
A similar argument for $G_1$ yields an upper bound that involves $|\dot u|_{W^{1,\infty}(B(0,\delta))}.$ 
We conclude that
 $$ \| \op_\e(\chi^\flat) M^\star \op_\e(Q_\e) (\theta \dot F) \|_{L^2} \lesssim \| \dot u\|_{W^{1,\infty}(B(0,\delta))}^2.$$

(c) The remainders of form $\e^n \op_\e({\bf R}_0) (\theta \dot u)$ and $\e^n \op_\e({\bf R}_0) u^\flat$ in $g$ contribute $\e^n \| \theta \dot u\|_{L^2}$ to the estimate for $\op_\e(\chi^\flat) g,$ by definition of uniform remainders (Section \ref{sec:unif}), and bound \eqref{ap:flat} in $u^\flat.$ Same for the remainder in $u_\star,$ since $\| u_\star \|_{L^2} \lesssim \| u^\flat \|_{L^2},$ by property of $M^\star.$ We use in addition Corollary \ref{cor:chi} for all these remainders.
\end{proof}

\begin{cor} \label{cor:so:new} There holds, for any $P \in S^0,$ for the source term $g_0$ defined just above the statement of Lemma {\rm \ref{lem:source:new}}, the bound 
 \be \label{bd:p:g} \| \op_\e(\chi^\flat) \op_\e(P) g_0 \|_{L^2} \lesssim \e^{2K'} \| P \|_{0,C(K')} ,
 \ee
 uniformly in $t \in [0, T(\e)],$ for some $C(K') > 0,$ and where $\|\cdot \|_{0,r}$ is the norm in $S^0$ defined in \eqref{notation:norm:eps}. The constant $K'$ is defined in \eqref{def:K'} in terms of $K,\a,m$ and $d.$  \end{cor}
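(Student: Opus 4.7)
The plan is to revisit the proof of Lemma \ref{lem:source:new}, inserting the extra factor $\op_\e(P)$ between $\op_\e(\chi^\flat)$ and $g_0$, and track the symbol norm $\| P \|_{0, r}$ at each step. Two preliminary observations are at the heart of the argument. First, by Corollary \ref{cor:chi} and Proposition \ref{prop:action}, the composition $\op_\e(\chi^\flat) \op_\e(P)$ acts $L^2 \to L^2$ uniformly in $\e$ with operator norm $\lesssim \| P \|_{0, C}$ for some $C = C(d)$. Second, by Proposition \ref{prop:composition} together with the stiffness bound on $\chi^\flat$ from Lemma \ref{lem:chi}, one has
$$ \op_\e(\chi^\flat)\op_\e(P) = \op_\e(\chi^\flat\, P) + \e^{h-\zeta}\op_\e(R_1), $$
with $R_1 \in {\bf R}_0$ satisfying $\| R_1 \|_{0,r} \lesssim \| P \|_{0, r + C(r)}$. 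The crucial point is that the principal part $\chi^\flat P$ has $x$-support inside $\supp \chi^\flat$ and $\xi$-support inside $\supp \chi_0(\cdot/\e^\zeta)$.

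With these observations, decompose $g_0$ along the three types of terms appearing in the proof of Lemma \ref{lem:source:new}. For the commutator term involving $M^\star \op_\e(Q_\e)(A_j(\phi_\e) \dot u\, \d_{x_j}\theta)$, replace $\op_\e(\chi^\flat)$ with $\op_\e(\chi^\flat P) + \e^{h-\zeta}\op_\e(R_1)$ and apply Egorov's Lemma exactly as in step (a) of Lemma \ref{lem:source:new}. The propagated symbol $(\chi^\flat P)_{(\star)}$ has $x$-support displaced by at most $O(\e^h T(\e))$, hence still included in $\supp \theta_0$ and disjoint from $\supp \d_{x_j}\theta$, which yields an $O(\e^n)\| P \|_{0, C}\, \| \dot u \|_{L^2(B(0,\delta))}$ bound. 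For the nonlinear term $M^\star \op_\e(Q_\e)(\theta \dot F)$, use the $L^2 \to L^2$ bound on $\op_\e(\chi^\flat)\op_\e(P)$ together with the pointwise nonlinear estimate $\| \theta \dot F \|_{L^2} \lesssim \| \dot u \|_{W^{1,\infty}(B(0,\delta))}^2$ from step (b) of Lemma \ref{lem:source:new}. The $\e^n \op_\e({\bf R}_0)$ remainders are controlled by the same $L^2$-boundedness argument.

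To conclude, apply the a priori bound \eqref{bound:dotu}, which gives $\| \dot u \|_{W^{1,\infty}(B(0,\delta))} \lesssim \e^{K'}$ and hence $\| \dot u \|_{L^2(B(0,\delta))} \lesssim \e^{K'}$. The nonlinear contribution is then $\lesssim \e^{2K'} \| P \|_{0, C}$, while the commutator and remainder contributions are $\lesssim \e^{n + K'} \| P \|_{0, C(K')}$, absorbed into $\e^{2K'} \| P \|_{0, C(K')}$ as soon as $n \geq K'$, which is permitted by the assumed regularity of $\phi$. The only delicate point is the support analysis in the commutator step: a generic $P \in S^0$ need not be spatially localized, but the symbol $\chi^\flat P$ inherits the $x$-support of $\chi^\flat$, so the disjoint-support mechanism of Lemma \ref{lem:source:new} transfers intact, the correction $\e^{h-\zeta}\op_\e(R_1)$ being handled by the same $L^2$-boundedness argument as the remainders in (c).
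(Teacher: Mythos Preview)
Your decomposition $\op_\e(\chi^\flat)\op_\e(P) = \op_\e(\chi^\flat P) + \e^{h-\zeta}\op_\e(R_1)$ with $R_1 \in {\bf R}_0$ uniformly is not justified. By Proposition~\ref{prop:composition} the one-step remainder carries the symbol norm $\|\d_\xi\chi^\flat\|_{0,C(d)}$, and Lemma~\ref{lem:chi} gives $\|\d_\xi\chi^\flat\|_{0,C(d)}\lesssim \e^{-(C(d)+1)\zeta}$, so the remainder after a single step is of size $\e^{h-(C(d)+1)\zeta}$, which is \emph{large} when $\ell=1/2$. In particular $R_1$ is not uniformly in ${\bf R}_0$, and your claim that ``the correction $\e^{h-\zeta}\op_\e(R_1)$ is handled by $L^2$-boundedness as in (c)'' is unsupported: applied to the (a)-piece of $g_0$ this would at best give $\e^{h-(C(d)+1)\zeta}\|\dot u\|_{L^2(B(0,\delta))}$, which is nowhere near $\e^{2K'}$. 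A single-step expansion cannot close the argument; you would need to expand to high order so that the accumulated gain $\e^{n(h-\zeta)}$ beats the fixed loss $\e^{-C(d)\zeta}$, and then argue that each intermediate term $\e^{qh}\op_\e(\chi^\flat\sharp_q P)$ inherits the $x$-support of $\chi^\flat$ so that the disjoint-support mechanism still applies.

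The paper avoids this bookkeeping by a different maneuver: it writes $\op_\e(\chi^\flat)\op_\e(P)=\op_\e(P)\op_\e(\chi^\flat)+[\op_\e(\chi^\flat),\op_\e(P)]$. The first term is controlled directly by Lemma~\ref{lem:source:new} and the $L^2$-bound on $\op_\e(P)$. For the commutator, each term $P_\a=\e^{h|\a|}(\d_x^\a\chi^\flat\,\d_\xi^\a P-\d_\xi^\a\chi^\flat\,\d_x^\a P)$ is supported in $\supp\chi^\flat$, so $P_\a\equiv\tilde\chi P_\a$; one then writes $\op_\e(P_\a)g_0\approx\op_\e(P_\a)\op_\e(\tilde\chi)g_0$ and invokes Lemma~\ref{lem:source:new} again (with $\tilde\chi$ in place of $\chi^\flat$, which is permissible since $\tilde\chi\prec\chi$). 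Iterating this re-insertion up to order $O(K')$ is what produces the dependence on $\|P\|_{0,C(K')}$ in the final bound. Your strategy of pushing the support through $\chi^\flat P$ is morally the same, but the paper's order-swap makes the induction clean and sidesteps the stiff-remainder issue that breaks your one-step version.
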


\begin{proof} By Lemma \ref{lem:source:new} and Proposition \ref{prop:action}, there holds
 $$ \| \op_\e(P)  \op_\e(\chi^\flat) g \|_{L^2} \lesssim \| P \|_{0,C(d)} \big( \|\dot u\|_{L^2(B(0,\delta))}^2 + \e^n \| \dot u \|_{L^2(B(0,\delta))} \big),$$
 and with \eqref{bound:dotu} we obtain the upper bound \eqref{bd:p:g} by taking $n = K'.$  
 There remains to handle the commutator $[\op_\e(\chi^\flat), \op_\e(P)].$ Here we use Proposition \ref{prop:composition}: modulo terms which are $O(\e^n),$ the symbol of the commutator is a sum of terms of the form $P_\a := \e^{h |\a|} (\d_x^\a \chi^\flat \d_\xi^\a P - \d_\xi^\a \chi^\flat \d_x^\a P).$ Sine $\chi^\flat \prec \tilde \chi$ (using notation introduced in \eqref{def:prec}), there holds $P_\a \equiv \tilde \chi P_\a.$ Thus, by Proposition \ref{prop:composition} again
  \be \label{pqa} \op_\e(P_\a) g = \op_\e(P_\a) \op_\e(\tilde \chi) g  + \e^{h(|\a| + 1)} \op_\e(Q_\a) g,
  \ee
 where the leading terms in $Q_\a$ have the same form as $P_\a.$ For the first term in \eqref{pqa} above, we use Corollary \ref{lem:source:new}, as we may since $\tilde \chi \prec \chi.$ We use also the fact that $\| \op_\e(P_\a)\|_{L^2 \to L^2}$ is bounded uniformly in $\e,$ by Lemma \ref{lem:chi} and bound \eqref{bd:action:H} in Proposition \ref{prop:action}. For the leading terms in $Q_\a,$ we use inductively \eqref{pqa}, and arrive at \eqref{bd:p:g}. The 
Taylor expansion in the composition of operators needs to be carried out up to order $O(K'),$ hence a dependence in $\| P \|_{0,C(K')}$ in the upper bound. \end{proof}

\begin{lem} \label{lem:source:psi} There holds the bound, for any $P \in S^0,$ for $g_\chi$ defined just above Lemma {\rm \ref{lem:source:new}}: 
 $$ \| \op_\e(\chi^{\flat} )\op_\e(P) g_\chi \|_{L^2} \lesssim \e^{K' + n} \| P \|_{0,n},$$
 uniformly in $t \in [0, T(\e)],$ for $n$ as large as allowed by regularity of $\phi.$%
\end{lem}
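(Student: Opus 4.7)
The plan is to decompose $g_\chi = g_{\chi,1} + g_{\chi,2} + g_{\chi,3}$ with $g_{\chi,1} := \e^{-1} \Gamma v$, $g_{\chi,2} := - \e^{-1} \tilde \Gamma (u_\star(\e^h t))$, and $g_{\chi,3} := \e^{-h} \op_\e(\d_t \tilde \chi)(u_\star(\e^h t))$, and to exploit the nested support condition $\chi^\flat \prec \tilde \chi \prec \chi$ (with notation from \eqref{def:prec}) in the treatment of each piece. In each case the negative power of $\e$ multiplies either a commutator with a truncation ($\Gamma$ or $\tilde \Gamma$) or the time derivative $\d_t \tilde \chi$, and the key point is that such a factor is annihilated in a strong sense by the presence of $\op_\e(\chi^\flat)$ on the left.

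For $g_{\chi,3}$, the truncation ordering $\psi_0^\flat \prec \tilde \psi_0$ means $\tilde \psi_0 \equiv 1$ on an open neighborhood of the support of $\psi_0^\flat$, so that $\d_t \tilde \chi$ vanishes identically---together with all of its $(x,\xi)$-derivatives---on a neighborhood of the support of $\chi^\flat$. By the symbolic composition formula (Proposition \ref{prop:composition}), every term $\d_\xi^\a \chi^\flat \cdot \d_x^\a \d_t \tilde \chi$ in the expansion of $\op_\e(\chi^\flat) \op_\e(\d_t \tilde \chi)$ then vanishes pointwise, and truncating the expansion at an order $n'$ as large as permitted by the regularity of $\phi$ produces $\op_\e(\chi^\flat) \op_\e(\d_t \tilde \chi) = \e^{n' h} \op_\e({\bf R}_0)$; the stiff $\xi$-derivative losses on $\chi^\flat$ and $\tilde \chi$ quantified in Lemma \ref{lem:chi} are absorbed by the $\e^{h}$-gain from each composition step thanks to $\zeta < h$. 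The same argument applies to $\op_\e(\chi^\flat) \Gamma$ and $\op_\e(\chi^\flat) \tilde \Gamma$, since these are themselves commutators whose symbolic expansions involve only derivatives of $\chi$ and $\tilde \chi$, all of which vanish on a neighborhood of the support of $\chi^\flat$. Combining with the a priori bound \eqref{bound:dotu}---which via the definitions of $v$, $u_\star$, and $u^\flat$ in Section \ref{sec:avatars} together with Corollary \ref{cor:chi} gives $\|v\|_{L^2} + \|u_\star(\e^h t)\|_{L^2} \lesssim \|\dot u(\e^h t)\|_{L^2(B(0,\delta))} \lesssim \e^{K'}$---yields $\|\op_\e(\chi^\flat) g_\chi\|_{L^2} \lesssim \e^{K' + n}$ for any $n$ in the allowed range.

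The factor $\op_\e(P)$ is then handled exactly as in the proof of Corollary \ref{cor:so:new}: writing $\op_\e(\chi^\flat) \op_\e(P) = \op_\e(P)\op_\e(\chi^\flat) + [\op_\e(\chi^\flat),\op_\e(P)]$, the first term is controlled using Proposition \ref{prop:action} for $\op_\e(P)$ together with the estimate on $\op_\e(\chi^\flat) g_\chi$ just obtained. For the commutator, Proposition \ref{prop:composition} expresses its symbol as a sum of terms $P_\a = \e^{h|\a|}(\d_x^\a \chi^\flat \d_\xi^\a P - \d_\xi^\a \chi^\flat \d_x^\a P)$; since $P_\a \equiv \tilde \chi P_\a$ by $\chi^\flat \prec \tilde \chi$, one can reintroduce a $\op_\e(\tilde \chi)$ factor and iterate the reasoning, each iteration costing further derivatives of $P$ and thereby driving the dependence on $\|P\|_{0,n}$. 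The main obstacle is the careful bookkeeping through this iteration: one must ensure that the cumulative $\e^{-|\a|\zeta}$ loss from $\xi$-differentiation of $\chi^\flat$ remains strictly dominated by the $\e^{|\a|h}$ gain from the composition expansion, which is exactly the condition $\zeta < h$ fixed in Section \ref{sec:ass:param}.
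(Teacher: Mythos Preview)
Your proposal is correct and follows essentially the same approach as the paper: exploit the support nesting $\chi^\flat \prec \tilde\chi \prec \chi$ so that derivatives of the outer truncations (and hence the symbols of $\d_t\tilde\chi$, $\Gamma$, $\tilde\Gamma$) vanish on a neighborhood of $\operatorname{supp}\chi^\flat$, making every finite-order term in the composition expansion zero; absorb stiff $\xi$-losses via $\zeta<h$; then handle general $P$ by the commutator iteration of Corollary~\ref{cor:so:new} and close with the a priori bound \eqref{bound:dotu}. Your use of Corollary~\ref{cor:chi} for the uniform $L^2\to L^2$ bound on $\op_\e(\tilde\chi)$ is in fact slightly cleaner than the paper's route, which invokes the general action bound \eqref{bd:action} and incurs a harmless $\e^{-\zeta C(d)}$ loss.
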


\begin{proof} First consider the case $P = \Id.$ Since derivatives of $\tilde \psi_0$ vanish identically on the support of $\psi_0^{\flat},$ there holds, by Proposition \ref{prop:composition}: 
$$ \| \op_\e(\chi^{\flat}) \op_\e(\d_t \tilde \chi) \|_{L^2 \to L^2} \lesssim \e^{n h} \| \d_\xi^n \chi^{\flat} \|_{0,C(d)} \| \d_x^n \d_t \tilde \chi \|_{0,C(d)}.$$
By Lemma \ref{lem:chi}, this implies
$$ \| \op_\e(\chi^{\flat}) \op_\e(\d_t \tilde \chi) \|_{L^2 \to L^2}  \lesssim \e^{n h} \e^{- (n + 2 C(d)) \zeta},$$
and the above is arbitrarily small if $n$ is large enough, since $\zeta < h.$  The same argument holds for the other two terms in $g_\psi.$ In the general case of $P \in S^0,$ we may reason as in the proof of Corollary \ref{cor:so:new}, that is by first spelling out the composition $\op_\e(\psi^{\flat}) \op_\e(P)$ up to a large order, then using the above. Remainders are small by condition $\zeta < h.$ 

The above argument prove the bound  
\be \label{for:P} \| \op_\e(\psi^{\flat} )\op_\e(P) g_\psi \|_{L^2} \lesssim \e^n (\| u_\star(\e^h t)\|_{L^2} + \| v\|_{L^2}).\ee
We finally use bound \eqref{bd:action} in Proposition \ref{prop:action} in order to control $\| v \|_{L^2(\R^d)},$ and obtain, via Lemma \ref{lem:chi}, 
$$\| v \|_{L^2(\R^d)} \lesssim \e^{-\zeta C(d)} \| u_\star (\e ^h t) \|_{L^2(\R^d)}.$$ This loss is irrevelant since in \eqref{for:P} the integer $n$ can be chosen to be very large. We conclude by bound $\| u_\star \|_{L^2} \lesssim \| u^\flat \|_{L^2}$ and bound \eqref{ap:flat} for $u^\flat.$ \end{proof}

 \subsection{Lower bound for the free part of the solution} \label{sec:free}
 
 First we describe the time transition function and the truncation $\psi_0$ for frequencies close to $\xi_0:$
 
\begin{lem} \label{lem:tstar} There holds the bound, for $|x| \leq \delta$ and $|\xi| \leq \delta \e^{\zeta - h},$
$$ 0 \leq t_\star(\e,x,\xi_0 + \e^h \xi) \leq C \delta,$$
for some $C>0$ independent of $\delta.$ In particular, for such $(x,\xi),$ there holds
$$ \psi_0\big(- t_\star(\e,x,\xi_0 + \e^h \xi)\big) \equiv 1, \quad \psi_0\big(T(\e) - t_\star(\e,x,\xi_0 + \e^h \xi)\big) \equiv 1,$$
and same for $\tilde \psi_0$ and $\psi_0^\flat.$  
\end{lem}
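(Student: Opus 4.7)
\smallskip

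The plan is to treat the two cases $\ell\in\{0,1\}$ and $\ell=1/2$ separately. When $\ell\in\{0,1\}$, the definition in \eqref{def:tstar} gives $t_\star\equiv 0$, so both the upper bound and the assertions on $\psi_0$, $\tilde\psi_0$, $\psi_0^\flat$ are immediate (since each of these cut-offs equals $1$ on a neighborhood of $\{t\geq 0\}$, and $T(\e)>0$). The substantive case is $\ell=1/2$, where $h=2/3$ and $\zeta=1/3$, and
\[
t_\star(\e,x,\xi_0+\e^h\xi)=\e^{-h}\,\theta_\star\big(\e^{1-h}x,\,\xi_0+\e^h\xi\big),
\]
with $\theta_\star(0,\xi_0)=0$ and $\nabla_{x,\xi}\theta_\star(0,\xi_0)=0$.

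Under the hypotheses $|x|\leq\delta$ and $|\xi|\leq\delta\e^{\zeta-h}$, the two arguments of $\theta_\star$ satisfy
\[
|\e^{1-h}x|\leq \delta\e^{1-h}=\delta\e^{1/3},\qquad |\e^h\xi|\leq \delta\e^{\zeta}=\delta\e^{1/3}.
\]
Since $\theta_\star$ is smooth with a critical point at $(0,\xi_0)$, a second-order Taylor expansion yields
\[
\theta_\star(\e^{1-h}x,\xi_0+\e^h\xi)=O\big(|\e^{1-h}x|^2+|\e^h\xi|^2\big)=O(\delta^2\e^{2/3})=O(\delta^2\e^{h}).
\]
Multiplying by $\e^{-h}$ gives $0\leq t_\star(\e,x,\xi_0+\e^h\xi)\leq C\delta^2$ with $C$ independent of $\delta$ and $\e$; in particular $t_\star\leq C\delta$ for $\delta\leq 1$, which is the claimed bound. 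The key point of the Taylor step is the precise balancing: $2(1-h)=2\zeta=h$, which is exactly what makes the two powers of $\e$ cancel with the prefactor $\e^{-h}$.

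For the cut-off statements, recall that each of $\psi_0^\flat,\tilde\psi_0,\psi_0$ equals $1$ on some neighborhood $\{t\geq -\eta\}$ of $\{t\geq 0\}$ (with $\eta=\delta/3$ for $\tilde\psi_0$ by assumption, and comparable neighborhoods for $\psi_0$ and $\psi_0^\flat$ in view of $\psi_0^\flat\prec\tilde\psi_0\prec\psi_0$). The bound $t_\star=O(\delta^2)$ of the previous step then gives $-t_\star\geq -C\delta^2\geq -\eta$ for $\delta$ small enough, so $\psi_0(-t_\star)=\tilde\psi_0(-t_\star)=\psi_0^\flat(-t_\star)=1$. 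For the second assertion, since $T(\e)=(T_\star|\ln\e|)^{1/(1+\ell)}\to+\infty$ as $\e\to 0$ while $t_\star\leq C\delta^2$ remains bounded, we have $T(\e)-t_\star\geq 0$ for small $\e$, and again the cut-offs evaluate to $1$.

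The only potential obstacle is bookkeeping of the specific values $h,\zeta$ in the case $\ell=1/2$ and checking that the Taylor remainder behaves as claimed uniformly; this is automatic once one notes the equalities $2(1-h)=2\zeta=h$, so the proof is essentially a Taylor expansion together with the choice of parameters in Section \ref{sec:ass:param}.
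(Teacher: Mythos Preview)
Your proof is correct and follows essentially the same approach as the paper: both reduce to the case $\ell=1/2$ and exploit the critical point of $\theta_\star$ at $(0,\xi_0)$ together with the numerical coincidence $2(1-h)=2\zeta=h$ to get $t_\star=O(\delta^2)$. The paper's version splits the Taylor expansion into two steps (first expanding in $\xi$ via an integral remainder, then bounding $\theta_\star(\e^{1-h}x,\xi_0)$ and $\d_\xi\theta_\star$ separately), whereas you do a single second-order expansion at once; this is a cosmetic difference only. One small remark: your justification that $\psi_0^\flat\equiv 1$ near $\{t\geq 0\}$ should appeal directly to the definitions in Section~\ref{sec:trunc0} rather than to the relation $\psi_0^\flat\prec\tilde\psi_0$, which by itself does not give that.
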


\begin{proof} We may assume $\ell = 1/2$ here. 
There holds
\be \label{taylor:thetastar} \begin{aligned} t_\star(\e,x,\xi_0 + \e^h \xi)  & = \e^{-h} \theta_\star(\e^{1-h} x, \xi_0 + \e^h \xi ) \\ &  = \e^{-h} \theta_\star(\e^{1-h} x, \xi_0) + \int_0^1 \d_\xi \theta_\star(\e^{1-h} x, \xi_0 + \e^h \t \xi) \cdot \xi d\t,\end{aligned}\ee
and, by assumption on $\theta_\star$ \eqref{def:tstar}, for $|x| \leq \delta$ and $|\xi| \leq \delta \e^{- h + \zeta},$ there holds
$$ |\e^{-h} \theta_\star(\e^{1-h} x, \xi_0)| \leq C \e^{-h + 2(1-h)} \delta^2 = C \delta^2,$$
and
$$ |\d_\xi \theta_\star(\e^{1-h} x, \xi_0 + \e^h \t \xi)| \leq C (\e^{1- h} |x| + \e^h |\xi|) \leq C \e^\zeta \delta,$$
for some $C > 0$ which does not depend on $\delta.$ This proves the bound on $t_\star.$ In particular, for such $(x,\xi),$ there holds $- t_\star \geq - \delta/9,$ and $T(\e) - t_\star \geq -\delta/9,$ implying the result for $\psi_0,$ by definition of $\psi_0$ (see for instance Figure 6).
\end{proof}

 Based on the above result for $t_\star$ and $\tilde \psi_0,$ we identify the leading term in the datum for $v$ defined in \eqref{def:vpsi}:
 
 \begin{cor} \label{lem:vk0} There holds
 \begin{equation} \label{dec-vk0} \big\| v(0) - \e^K e^{i x \cdot \xi_0/\e} \tilde \theta_0(x) \vec e(x) \, \big\|_{L^2} \lesssim \e^{K+h - \zeta},\ee
 where $\tilde \theta_0$ is the spatial cut-off introduced in Section {\rm \ref{sec:trunc0}}.
\end{cor}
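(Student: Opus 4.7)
The plan is to trace the initial datum through the chain of transformations in Section \ref{sec:avatars}, evaluated at $t=0$. Since $M^\star(0;0)=\Id$, one has $u_\star(0)=u^\flat(0)=\op_\e(Q_\e(0))(\theta\dot u(0))$, and by \eqref{phie} and \eqref{def:dot-u}, $\dot u(0,\cdot)=\e^K\varphi_0$ with $\varphi_0$ from \eqref{def:psi}. Hence
\[
v(0)=\e^K\op_\e(\tilde\chi(0))\op_\e(Q_\e(0))(\theta\varphi_0),
\]
and the task is to extract the leading form of this expression.

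First I split $\varphi_0=\tfrac12(\Phi+\overline{\Phi})$ with $\Phi:=\op_\e(Q_\e(0)^{-1})W$ and $W(x):=e^{ix\cdot\xi_0/\e^h}\theta(x)\vec e(x)$, and argue that the complex-conjugate half contributes $O(\e^{K+N})$ in $L^2$ for any $N$. The reason is that $\overline{\Phi}$ has Fourier content concentrated near $-\xi_0/\e^h$, whereas $\tilde\chi(0,x,\xi)$ is $\xi$-supported in $|\xi-\xi_0|\lesssim\e^\zeta$, i.e.\ the $\e^h$-semiclassical symbol localizes effective frequencies near $\xi_0/\e^h$. Expanding the symbol of the composition $\op_\e(\tilde\chi(0))\op_\e(Q_\e(0))\cdot\theta\cdot\op_\e(\overline{Q_\e(0)^{-1}}(x,-\cdot))$ to high order in $\e^h$ via Proposition \ref{prop:composition}, each term in the Taylor expansion carries a factor $\tilde\chi$ evaluated at frequencies near $-\xi_0$ and therefore vanishes; the $\e^{-\zeta}$ losses from stiff $\xi$-derivatives of $\tilde\chi$ (Lemma \ref{lem:chi}) are absorbed because $\zeta<h$.

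For the main half, I commute $\theta$ through $\op_\e(Q_\e(0)^{-1})$ modulo an $\e^h\op_\e({\bf R}_{-1})$ error (Proposition \ref{prop:composition}), then use the near-inversion identity \eqref{invert:flat} to reduce $\op_\e(Q_\e(0))\bigl(\theta\,\op_\e(Q_\e(0)^{-1})W\bigr)=\theta^2 W + r$ with $\|r\|_{L^2}=O(\e)$, exploiting that $W$ is $L^2$-bounded uniformly in $\e$. It remains to evaluate $\op_\e(\tilde\chi(0))(e^{i(\cdot)\cdot\xi_0/\e^h}\theta^2\vec e)$: by the standard action of a pseudo-differential operator on a highly-oscillating function, this equals $\tilde\chi(0,x,\xi_0)\,e^{ix\cdot\xi_0/\e^h}\theta^2(x)\vec e(x)$ up to an $L^2$ remainder of order $\e^{h-\zeta}$, one $\xi$-derivative of the stiff symbol costing exactly one factor $\e^{-\zeta}$ by Lemma \ref{lem:chi}. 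Lemma \ref{lem:tstar} (applied at $\xi=0$ in the variable $\xi_0+\e^h\xi$) yields $\tilde\psi_0(-t_\star(\e,x,\xi_0))\equiv 1$ for $|x|\leq\delta$, while $\tilde\chi_0(0)=1$, so $\tilde\chi(0,x,\xi_0)=\tilde\theta_0(x)$; the nesting $\tilde\theta_0\prec\theta$ then collapses $\tilde\theta_0\theta^2$ to $\tilde\theta_0$, delivering the claimed leading term.

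The principal obstacle is the bookkeeping in this last step: one must verify that the PDO expansion in $\e^h$ and the stiffness of $\tilde\chi$ in $\xi$ interact in such a way that each $\xi$-derivative introduced by composition contributes exactly one power $\e^{-\zeta}$, so that the cumulative remainder stays of the advertised order $\e^{h-\zeta}$ rather than degenerating to $\e^{h-C\zeta}$ for some larger $C$. The hypothesis $\zeta<h$ is used crucially throughout, both in making the wrong-frequency half negligible and in closing the leading-term estimate.
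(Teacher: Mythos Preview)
Your argument is essentially the paper's own: the same chain of reductions (identify $v(0)$ via $M^\star(0;0)=\Id$, split into $\pm\xi_0/\e^h$ oscillations, collapse $\op_\e(Q_\e)\op_\e(Q_\e^{-1})$ to the identity modulo $O(\e)$, then expand $\op_\e(\tilde\chi(0))$ on the oscillating amplitude and invoke Lemma~\ref{lem:tstar} plus $\tilde\theta_0\prec\theta$) appears in the paper in the same order. The one presentational difference is the anti-oscillating half: the paper does not argue via vanishing of the composed symbol at shifted frequency, but instead writes the contribution as $\e^K e^{-ix\cdot\xi_0/\e^h}\op_\e(\tilde\chi(0,\cdot,-\xi_0+\cdot)P_\e)\overline{\theta\vec e}$ and observes that the $\xi$-support of the shifted cutoff forces the integral to see only $|\mathcal F(\overline{\theta\vec e})|_{L^1(|\xi|\ge c/\e^h)}$, which is $O(\e^N)$ by Schwartz decay --- a more direct route than your symbolic-expansion sketch, but leading to the same conclusion.
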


\begin{proof} At this point the reader may find useful to jump back to Section \ref{sec:avatars}. Since $M^\star_{|t = 0} = \Id,$  
  the datum for $v$ is
 $$ v(0) = \e^K \op_\e(\tilde \chi(0)) \op_\e(Q_\e(0)) \Big( \theta \, \Re e \, \Big( \op_\e(Q_\e(0)^{-1}) \Big( e^{i (\cdot) \cdot \xi_0/\e^{h}} \theta \vec e \, \Big) \, \Big) \Big).$$
 We may commute $\op_\e(Q_\e)$ and $\theta,$ since this produces an error that is $O(\e^{K + h})$ in $L^2,$ thanks to Proposition \ref{prop:composition}. Then in the datum we handle separately the oscillations in $+\xi_0/\e^{h}$ and the oscillations in $-\xi_0/\e^{h}.$ There holds
$$ \e^K \op_\e(Q_\e(0)) \op_\e(Q_\e(0)^{-1}) \big( e^{i (\cdot)\cdot \xi_0/\e^{h}} \theta \vec e \,\big) = \e^K e^{i x \cdot \xi_0/\e^{h}} \theta \vec e\, + O(\e^{K +1}),$$
where $O(\cdot)$ denotes a control in $L^2.$ 
Thus the oscillation in $\xi_0/\e^{h}$ contributes to $v(0)$ the term 
 $$ \e^K \op_\e(\tilde \chi(0)) \big( e^{i x \cdot\xi_0/\e^h} \theta^2 \vec e\, \big) = \e^K e^{i x \cdot\xi_0/\e^h} \op_\e(\tilde \chi(0,x,\xi_0 + \cdot))  \big(  \theta^2 \vec e\, \big)$$
 modulo terms that are $O(\e^{K+h}).$ 
 By definition of $\tilde \chi$ in Section \ref{sec:trunc0}: 
$$ \tilde \chi(0, x, \xi_0 + \e^h \xi) = \tilde \chi_0(\e^{h - \zeta} \xi) \tilde \theta_0(x) \tilde \psi_0(- t_\star(\e,x,\xi_0 + \e^h \xi).$$
In the above we may use Lemma \ref{lem:tstar}, since $\tilde \chi_0(\e^{h - \zeta} \xi)$ is non-zero only if $|\xi| \leq \delta \e^{\zeta - h}.$ Thus
$$ \tilde \chi(0, x, \xi_0 + \e^h \xi) = \chi_0(\e^{h - \zeta} \xi) \tilde \theta_0(x) = \tilde \theta_0(x) + \e^{h - \zeta} O(\xi).$$
We may expand the above up to an arbitrary power of $\e^{h - \zeta}.$ The remainder is a stiff symbol in $\xi,$ hence we will lose $\e^{- \zeta C(d)}$ in evaluating its operator norm (in accordance with Proposition \ref{prop:action}), but such a loss is irrevelant if the prefactor $\e^{n ( h-  \zeta)}$ is large enough. Also, the Taylor expansion brings out a large $\xi^n$ prefactor, implying that the $L^2$ norm of the remainder depends on the high Sobolev norm $\| \tilde \theta_0 \vec e \, \|_{H^n},$ but $\tilde \theta_0 \in C^\infty_c$ and $\vec e$ is assumed to be smooth, hence this derivative loss is irrevelant as well. We thus obtain 
 $$ \| \op_\e(\tilde \chi(0)) \big( e^{i x \cdot\xi_0/\e^h} \theta^2 \vec e \, \big) - e^{i x \cdot\xi_0/\e^h} \tilde \theta_0 \theta^2 \vec e \, \|_{L^2} \lesssim \e^{h - \zeta}.$$
 By choice of $\theta_0,$ there holds $\theta_0 \prec \theta.$ Thus $\tilde \theta_0 \theta^2 \equiv \tilde \theta_0,$ and we obtained the leading term $\e^K e^{i x \cdot \xi_0/\e^h} \tilde \theta_0(x) \vec e\,(x)$ as claimed in \eqref{dec-vk0}. 
  
  It remains to prove that the oscillation in $- \xi_0/\e^{h}$ has a small contribution to the datum. The leading term in the datum associated with this oscillation has the form
  $$ \e^K  e^{- i x \cdot \xi_0/\e^{h}} \op_\e(\tilde \chi(-\xi_0 + \cdot) P_\e) \tilde \theta, \qquad \tilde \theta := \overline{\theta \vec e \,},$$
  where $P_\e \in S^0,$ uniformly in $\e.$ The key is then that $\op_\e(\chi(- \xi_0 + \cdot) P_\e) \tilde \theta,$ which is smooth and supported in $B(0,\delta)$ (because $\tilde \theta_0$ is supported in $B(0,\delta)$) is pointwise bounded by $\big| {\mathcal F} \tilde \theta \big|_{L^1(|\xi| \geq c/\e^{h})},$ with $c = 2 |\xi_0| - \e^{\zeta} \delta/2 > 0.$ This $L^1$ norm is arbitrarily small, since ${\mathcal F} \tilde \theta$ belongs to the Schwartz class. Spatial derivatives are handled in the same way. 
\end{proof}

\begin{cor}\label{lem:low} For $v$ defined in \eqref{def:vpsi}, there holds for $\e$ and $\delta$ small enough the lower bound 
\begin{equation} \label{low0} 
\big\| \op_\e(\chi^\flat(T(\e))) \op_\e(S_{0}(0;T(\e))) v(0) \big\|_{L^2(B(0, \delta))} \gtrsim \e^{K - \zeta} \exp\big( {\bm \g}^-  T(\e)^{\ell + 1}\big),
\end{equation}
where ${\bm \g}^- := \min_{|x| \leq \delta} \g^-(x,\xi_0)$ and $\chi^\flat$ is introduced in Section {\rm \ref{sec:trunc0}}.%
\end{cor}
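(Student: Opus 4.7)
The plan is to combine the asymptotic simplification of $v(0)$ from Corollary \ref{lem:vk0} with a WKB-type action of the approximate solution operator on oscillating data, and to conclude via the pointwise lower bound of Lemma \ref{lem:really:new}. First, I write $v(0) = v_0 + r_0$ with
$$v_0 := \e^K e^{i x \cdot \xi_0/\e^h}\tilde\theta_0(x)\vec e(x),\qquad \|r_0\|_{L^2}\lesssim \e^{K+h-\zeta}.$$
Using $L^2$-boundedness of $\op_\e(\chi^\flat)$ (Corollary \ref{cor:chi}) together with the operator-norm bound for $\op_\e(S_0(0;T(\e)))$ deduced from Corollary \ref{lem:s1} and Proposition \ref{prop:action}, the $r_0$-contribution is controlled by $\e^{K+h-2\zeta}\exp({\bm\g}^+ T(\e)^{\ell+1}).$

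Next, I compute the action of $\op_\e(\chi^\flat(T(\e)))\op_\e(S_0(0;T(\e)))$ on $v_0.$ Writing $a(x,\xi):=\chi^\flat(T(\e),x,\xi)S_0(0;T(\e),x,\xi),$ the semiclassical action on an oscillatory exponential gives, via Propositions \ref{prop:composition} and \ref{prop:action}, a leading symbol value at $\xi=\xi_0$ plus $\xi$-derivative corrections of order $\e^h$ each. Lemma \ref{lem:tstar} and the definitions from Section \ref{sec:trunc0} yield $\chi^\flat(T(\e),x,\xi_0)=\theta_0^\flat(x),$ and $\theta_0^\flat\prec\tilde\theta_0$ gives $\theta_0^\flat\tilde\theta_0=\theta_0^\flat,$ so the leading term equals
$$\e^K e^{i x\cdot\xi_0/\e^h}\theta_0^\flat(x)S_0(0;T(\e),x,\xi_0)\vec e(x).$$
Each correction term carries at least one $\xi$-derivative of $\chi^\flat$ or $S_0$; by Lemma \ref{lem:chi} and Lemma \ref{lem:crude}, these cost $\e^{-\zeta}$ per derivative but are offset by $\e^h$ per expansion order, yielding an $L^2$-remainder bounded by $\e^{K+h-2\zeta}\exp({\bm\g}^+ T(\e)^{\ell+1}),$ of the same size as the $r_0$-contribution.

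For the lower bound on the main term, the oscillatory factor drops out when computing $L^2(B(0,\delta))$-norms. Lemma \ref{lem:really:new} gives $|S_0(0;T(\e),x,\xi_0)\vec e(x)|\gtrsim \e^{-\zeta}{\bf e}_{\g^-}(0;T(\e),x,\xi_0)$ on $|x|\leq\delta.$ When $\ell\in\{0,1\},$ $t_\star\equiv 0$ and ${\bf e}_{\g^-}(0;T(\e),x,\xi_0)=\exp(\g^-(x,\xi_0)T(\e)^{\ell+1})\geq \exp({\bm\g}^- T(\e)^{\ell+1}),$ and one integrates over the fixed support of $\theta_0^\flat.$ When $\ell=1/2,$ from \eqref{def:tstar} and $\nabla_{x,\xi}\theta_\star(0,\xi_0)=0$ one deduces $t_\star(\e,x,\xi_0)\leq C|x|^2;$ restricting the integral to $B(0,r)$ with $r\sim |\ln\e|^{-\ell/(2(\ell+1))}$ keeps $t_\star T(\e)^\ell$ bounded, so $(T(\e)-t_\star)^{\ell+1}\geq T(\e)^{\ell+1}-O(1)$ and ${\bf e}_{\g^-}\gtrsim \exp({\bm\g}^- T(\e)^{\ell+1})$ on this ball. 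For $\e$ small, $B(0,r)$ sits in the interior of $\{\theta_0^\flat\equiv 1\};$ the resulting volume factor $r^d$ is a power of $|\ln\e|,$ absorbed by $\gtrsim.$

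Finally I compare sizes: the main term is $\gtrsim \e^{K-\zeta}\exp({\bm\g}^- T(\e)^{\ell+1})$ while the total error is $\lesssim \e^{K+h-2\zeta}\exp({\bm\g}^+ T(\e)^{\ell+1}),$ so their ratio equals $\e^{h-\zeta-T_\star \delta_0}$ (with $\delta_0 = {\bm\g}^+-{\bm\g}^-$), which tends to $0$ precisely under condition \eqref{cond:delta0}, so the error is swallowed. The main obstacle is to justify that the ``WKB freezing'' of the symbol $\chi^\flat S_0$ at $\xi=\xi_0$ produces manageable corrections, despite the stiffness in $\xi$ of both $\chi^\flat$ (Lemma \ref{lem:chi}) and $S_0$ (Lemma \ref{lem:crude}); this is exactly where the strict inequality $\zeta<h$ from the parameter table in Section \ref{sec:ass:param} is indispensable. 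A secondary obstacle, in the case $\ell=1/2,$ is the fact that $t_\star$ is not identically zero near $(0,\xi_0),$ which forces the shrinking localization $r\to 0;$ the quadratic vanishing of $\theta_\star$ at $(0,\xi_0)$ is what makes this shrinking only logarithmic, and hence harmless.
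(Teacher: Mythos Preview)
Your proof is correct and follows the same strategy as the paper's: decompose $v(0)$ via Corollary \ref{lem:vk0}, freeze the symbol at $\xi=\xi_0$ to extract the leading term $\e^K e^{ix\cdot\xi_0/\e^h}\theta_0^\flat(x)S_0(0;T(\e),x,\xi_0)\vec e(x)$, control the $\xi$-derivative corrections via Lemmas \ref{lem:chi} and \ref{lem:crude} together with the gap $h>\zeta$, and invoke Lemma \ref{lem:really:new} for the pointwise lower bound. The paper applies $\op_\e(S_0)$ and $\op_\e(\chi^\flat)$ sequentially rather than through the product symbol, but this is only an organizational difference; the error terms have the same size $\e^{K+h-2\zeta}\exp({\bm\g}^+T(\e)^{\ell+1})$ either way.

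The one substantive difference is your handling of $t_\star$ when $\ell=1/2$. The paper simply quotes Lemma \ref{lem:tstar} to get $t_\star(\e,x,\xi_0)\leq C\delta$ uniformly on $|x|\leq\delta$, and declares the resulting lower bound; you instead restrict the integration to a shrinking ball $B(0,r)$ with $r\sim|\ln\e|^{-1/6}$ so that $t_\star T(\e)^{1/2}=O(1)$ and the exponential loss is genuinely bounded, paying only a logarithmic volume factor $r^d$. Your version is cleaner with respect to the $\lesssim$ convention of \eqref{notation:lesssim}: the paper's uniform bound produces a loss $\exp(-C\delta\,|\ln\e|^{1/3})$ in the exponent, which is not literally a power of $|\ln\e|$. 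This is harmless for the endgame in Section \ref{sec:end0}, since any such sub-polynomial-in-$\e$ factor is dominated by $\e^{K-\zeta-{\bm\g}^-T_\star}$, but your localization makes the stated inequality \eqref{low0} hold exactly as written.
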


\begin{proof}
 According to Corollary \ref{lem:vk0}, the datum is 
$$ %
 v(0,x) = \e^K \tilde v(0,x) + O(\e^{K + h - \zeta}), \qquad \tilde v(0,x) = e^{i x \cdot \xi_0/\e^{h}} \tilde \theta_0(x) \vec e(x).
$$ %
  There holds
\be \label{calc:V0}
   \op_\e(S_{{0}}(0;T(\e))) \tilde v(0)(x)  =  e^{i x \cdot\xi_0/\e^{h}} S_{{0}}(0;T(\e),x,\xi_0) \tilde \theta_0 \, \vec e (x)+ \e^{h} V_0, 
\ee
 with
 $$  V_0 := e^{i x \cdot (\xi + \xi_0/\e^h)}
  \sum_{|\a| = 1} \op_\e\left( \int_0^1 \left(\d^\a_\xi S_0\right)(0;T(\e),x,\xi_0 + \t (\cdot)) \, d\t \right) \big(\d_x^\a ( \tilde \theta_0 \vec e\,)\big)(x).
  $$
We now apply $\op_\e(\chi^\flat(T(\e)))$ to \eqref{calc:V0}. The leading term is 
$$ e^{i x \cdot \xi_0/\e^h} \op_\e(\chi^\flat(T(\e),x,\xi_0 + \cdot)) \big( S_{{0}}(0;T(\e),\cdot,\xi_0) \tilde \theta_0 \vec e \, \big).$$
By definition of $\chi^\flat$ in Section \ref{sec:trunc},
 $$ \chi^\flat(T(\e), x, \xi_0 + \e^h \xi) = \chi_0^\flat(\e^{h - \zeta} \xi) \theta_0^\flat(x) \psi_0^\flat\big(T(\e) - t_\star(\e,x,\xi_0 + \e^h \xi)\,\big).$$
We use Lemma \ref{lem:tstar}, as we did in the proof of Corollary \ref{lem:vk0}: there holds
 $$ \chi^\flat(T(\e), x, \xi_0 + \e^h \xi) = \chi_0^\flat(\e^{h - \zeta} \xi) \theta_0^\flat(x) = \theta_0^\flat(x) \left(1 + \e^{h - \zeta} \int_0^1 \d_\xi \chi_0^\flat(\e^{h - \zeta} \t \xi) \cdot \xi \, d\t\right).$$
The leading term now appears as
$$ U_{0} = e^{i x \cdot \xi_0/\e^h} S_{{0}}(0;T(\e),x,\xi_0) \theta^\flat_0(x) \vec e(x) \,,$$
since $\theta_0^\flat \prec \tilde \theta_0.$ And since $\theta_0^\flat \equiv 1$ in a neighborhood of $0,$ we may use Lemma \ref{lem:really:new}, which states that for $\e$ and $\delta$ small enough and $|x| < \delta,$ there holds
  $$ \e^{-\zeta} {\bf e}_{\g^-}(0;T(\e),x,\xi_0) \lesssim  \big| \, S_{{0}}(0;T(\e),x,\xi_0) \vec e\,(x)\big|.$$
 Consider the lower growth function ${\bf e}_{\g^-},$ as defined in \eqref{def:bf-e}, at $(x,\xi) = (x,\xi_0).$ It involves $t_\star(\e,x,\xi_0).$ By Lemma \ref{lem:tstar}, $0 \leq t_\star(\e,x,\xi_0) \leq C \delta.$
Thus there holds the lower bound 
 $$  \e^{-\zeta} \exp \big( {\bm \g^-} T(\e)^{1 + \ell} \big) \lesssim \e^{-\zeta} {\bf e}_{\g^-}(0;T(\e),x,\xi_0),$$
 uniformly in $|x| \leq \delta,$  with ${\bm \g^-}$ defined in the statement of this Corollary. Hence a lower bound for the $L^2(B(0,\delta))$ norm of the leading term of the free solution as desired. 
 
There remains to bound from above the terms we overlooked so far. The first involves the remainder in the datum, which is $O(\e^{K + h - \zeta})$ in $L^2$ norm. With Corollaries \ref{cor:chi} and \ref{cor:op:norm}, the action of $\op_\e(S_0)$ on this remainder is controlled by $\e^{h - \zeta} \e^{K - \zeta} \exp\big({\bm \g}^+ T(\e)^{1 + \ell}\big).$ 

The other terms are $\e^{K + h} \op_\e(\chi^\flat(T(\e))) V_0$ and $\e^{K + h - \zeta} W_0,$ with notation $$W_0 := e^{i x \cdot \xi_0/\e^h} \theta_0^\flat(x) \sum_{|\a| = 1} \op_\e\left(\int_0^1 \d_\xi^\a \chi_0^\flat(\e^{ - \zeta} \t (\cdot)) \, d\t\right) \big( \d_x^\a \big( S_{{0}}(0;T(\e),\cdot,\xi_0) \tilde \theta_0 \vec e \,\big) \big)(x).$$
By Corollary \ref{cor:chi}, Lemma \ref{lem:crude} and Proposition \ref{prop:action}, there holds
 $$ \e^{K + h} \| \op_\e(\chi^\flat(T(\e))) V_{0}\|_{L^2(\R^d)} \lesssim \e^{h - \zeta} \e^{K - \zeta} \exp\big( {\bm \g}^+ T(\e)^{1 + \ell} \big).$$%
For $W_0,$ we observe that $\op_\e(\d_\xi^\a \chi_0^\flat)$ is a Fourier multiplier, bounded in $L^2 \to L^2$ norm by the maximum of its symbol, equal to a constant independent of $\e.$ We may then bound $\d_x^\a S_0$ with Corollary \ref{lem:s1}. Thus $\e^{K + h - \zeta} W_0$ is controlled in $L^2$ just like $\e^{K + h} \op_\e(\chi^\flat(T(\e))) V_0.$ 

Summing up, we obtained a lower bound of form  
 $$ \e^{K - \zeta} \exp\big( {\bm \g}^-  T(\e)^{\ell + 1}\big)\big(1 - \e^{h - \zeta} \exp\big( ({\bm \g}^+ - {\bm \g}^-) T(\e)^{1 + \ell}\big)\,\big).$$
By definition of $T(\e)$ in \eqref{max-time}, 
$$ \e^{h - \zeta} \exp\big( ({\bm \g}^+ - {\bm \g}^-) T(\e)^{1 + \ell}\big) = \e^{h - \zeta - ({\bm \g}^+ - {\bm \g}^-) T_\star}.$$
Given $T_\star,$ since $h - \zeta > 0,$ we may choose $\delta$ small enough so that the difference ${\bm \g}^+ - {\bm \g}^-$ is so small that $h - \zeta - ({\bm \g}^+ - {\bm \g}^-) T_\star$ is strictly positive. The result follows is $\e$ is small enough.
\end{proof}

\subsection{Endgame} \label{sec:end0} We apply $\op_\e(\chi^\flat(T(\e)))$ to the left of the representation formula \eqref{duhamel} for $v$ at $t = T(\e),$ with $\chi^\flat$ defined in Section \ref{sec:trunc}, and prove that the contribution of the initial datum dominates the time-integrated Duhamel term. This eventually provides a contradiction to the assumed a priori bound \eqref{new-big-bound}, and concludes the proof. 
 
Based on \eqref{duhamel}, we find 
$$ \|  \op_\e(\chi^\flat(T(\e))) v(T(\e)) \|_{L^2(B(0,\delta))} \geq {\rm I} - ({\rm II} + {\rm III}).$$
 The leading term is 
 $$ {\rm I} =  \op_\e(\chi^\flat(T(\e))) \op_\e(S_0(0;T(\e))) v(0).$$ 
 This term is bounded from below in $L^2(B(0,\delta))$ norm by Corollary \ref{lem:low}:
 $$ \| {\rm I} \|_{L^2(B(0,\delta))} \gtrsim \e^{K - \zeta} \exp\big( {\bm \g}^-  T(\e)^{\ell + 1}\big).$$
 The error term in the contribution of the datum is 
 $$ {\rm II} := \sum_{1 \leq q \leq q_0} \e^{h q} \op_\e(\chi^\flat(T(\e))) \op_\e(S_q(0;T(\e))) v(0).$$
We control ${\rm II}$ by Corollary \ref{cor:chi} (action of $\op_\e(\chi^\flat)$ in $L^2$), Lemma \ref{lem:crude} (bounds for $S_q$ and their derivatives) and bound \eqref{bd:action:H} from Proposition \ref{prop:action}. This gives  
$$  \| {\rm II} \|_{L^2(B(0,\delta))} \lesssim \e^{K + h - \zeta} \exp\big( {\bm \g}^+  T(\e)^{\ell + 1}\big).$$
 The Duhamel term is 
 $$ {\rm III} = \e^h \int_0^{T(\e)} \op_\e(\chi^\flat(T(\e))) \op_\e(\Sigma(\t;T(\e)))  (\Id + \e \op_\e({\bf R}_0))( g  + \e \op_\e({\bf R}_0) v(0)) \, dt'.$$
We bound ${\rm III}$ with Corollary \ref{cor:so:new} and Lemma \ref{lem:source:psi}, in which we choose $n = K':$      
$$ \| {\rm III} \|_{L^2(B(0,\delta))} \lesssim  \e^{h - \zeta} \exp\big({\bm \g}^+ T(\e)^{1 + \ell}\big) \big( \e^{2K'} + \e^{1 + K} \big).$$
Above, we used the bounds of Corollary \ref{lem:new:Sq} in order to control the norm $\| \Sigma \|_{0,C(K')},$ which appears in the upper bound of Corollary \ref{cor:so:new}. Since $2 K' > K$ by condition \eqref{cond:K}, we obtained 
$$ \| \op_\e(\chi^\flat) v(T(\e)) \|_{L^2(B(0,\delta))} \gtrsim \e^{K - \zeta} \exp\big( {\bm \g}^-  T(\e)^{\ell + 1}\big) - \e^{K + h - \zeta} \exp\big( {\bm \g}^+  T(\e)^{\ell + 1}\big).$$
We may now conclude as in the proof of Corollary \ref{lem:low}. Precisely, rewriting the lower bound as 
$$ \|  \op_\e(\chi^\flat) v(T(\e)) \|_{L^2(B(0,\delta))} \gtrsim \e^{K - \zeta} \exp\big( {\bm \g}^-  T(\e)^{\ell + 1}\big) \big(1  - \e^{h - ({\bm \g}^+ - {\bm \g}^-) T_\star}\big),$$
and choosing $\delta$ small enough so that $h - ({\bm \g}^+ - {\bm \g}^-) T_\star > 0$ (recall that ${\bm \g}^+$ is the local maximum of the rate function $\g$ from Assumption \ref{ass:main} and ${\bm \g}^-$ is the local minimum of the lower rate function $\g^-$), we find for $\e$ small enough the lower bound
$$ \| \op_\e(\chi^\flat) v(T(\e)) \|_{L^2(B(0,\delta))} \geq C |\ln \e|^* \e^{K - \zeta - {\bm \g}^- T_\star},$$
for some $C > 0$ independent of $\e,$ where $|\ln \e|^*$ is some powers of $|\ln \e|.$  By choice of $T_\star$ in \eqref{cond:T1}, there holds $K - \zeta - {\bm \g}^- T_\star < 0$ if $\delta$ is small enough. Hence a lower bound which blows up as $\e \to 0,$ contradicting the a priori bound \eqref{bound:dotu}.

\section{Proof of Theorem \ref{th:0}: initial ellipticity} \label{sec:proof0}

 We are going to verify that, under the assumption of initial ellipticity \eqref{cond:ell}, Assumption \ref{ass:main} holds with parameters
\be \label{param:0}
 \ell = 0, \quad h= 1, \quad \zeta = 0, \quad \mbox{and} \quad \mu \equiv 0, \quad t_\star \equiv 0.
\ee
Then Theorem \ref{th:0} is a consequence of Theorem \ref{th:main}. 

\subsection{Block decomposition} \label{sec:blockdiag} Let $\l_0, \l_1, \dots,\l_p$ be the spectrum of $A$ at $(0,x_0,\xi_0).$ The ellipticity condition \eqref{cond:ell} states that one of the $\l_j$ is non real. By reality of $A,$ complex eigenvalues come in conjugate pairs. In particular, at least one of $\l_j$ has strictly positive imaginary part. We may assume $\Im m \, \l_0 > 0$ and 
\be \label{ineg:l0}  \Im m \, \l_0 > \max_{1 \leq j \leq p} \Im m \, \l_j.\ee
Let $m$ be the algebraic multiplicity of $\l_0$ in the spectrum, and $E_0(t,x,\xi)$ the generalized eigenspace associated with the family 
$\l_{0,1}(t,x,\xi), \dots, \l_{0,m}(t,x,\xi)$ of (possibly non distinct) eigenvalues of $A$ which coalesce at $(0,x_0,\xi_0)$ with value $\l_0,$ that is $\l_{0,j}(0,x_0,\xi_0) = \l_0.$ Let $E_1(t,x,\xi)$ be the direct sum of the other generalized eigenspaces. Let $Q_0$ be the projector onto $E_0$ and parallel to $E_1.$ The $\l_{0,j}$ may not be smooth, but are continuous (see for instance Proposition 1.1 in \cite{Tp}), and $Q_0$ is smooth (see for instance \cite{K}, or Proposition 2.1 in \cite{Tp}), and determines a smooth change of basis $Q(t,x,\xi)$ in which $A$ is block diagonal:
\begin{equation} \label{prop:0} 
 Q A Q^{-1} =: \left(\begin{array}{cc} A_{(0)} & 0 \\ 0 & A_{(1)} \end{array}\right).
 \end{equation}
We focus on the block $A_{(0)}$ associated with eigenvalues $\l_{0,j}.$ The symbolic flow accordingly splits into $S_{(0)},$ $S_{(1)},$ where $S_{(0)}$ solves
\be \label{S000}
 \d_t S_{(0)} + i A_{(0)}(\e t, x_0 + x,\xi) S_{(0)} = 0, \qquad S_{(0)}(\t;\t) = \Id.
\ee
 By \eqref{ineg:l0} and a repeat of the arguments below, the component $S_{(1)}$ of the symbolic flow is seen to grow not as fast as $S_{(0)}$ near $(0,\xi_0).$ 

\subsection{Reduction to upper triangular form at the distinguished point} Let now $P$ be a {\it constant} (independent of $t,x,\xi$) change of basis to upper triangular form of $A_{(0)}(0,x_0,\xi_0),$ and $Q_\mu$ be the diagonal matrix 
$$Q_\mu = \mbox{diag}(1,\mu^{-1},\mu^{-2},\dots,\mu^{1-m}).$$
The parameter $\mu$ will be chosen small enough below. There holds $$ Q_\mu P A_{(0)}(0,x_0,\xi_0) P^{-1} Q_\mu^{-1} = \l_0 \Id + \mu J,$$
where $J$ is upper triangular, bounded in $\mu,$ with zeros on the diagonal. By a Taylor expansion of $A_{(0)}(\e t, x_0 + x,\xi)$ in $(t,x,\xi),$ we observe that there holds
\be \label{A001} i A_{(0)}(\e t,x_0 + x,\xi) = P^{-1} Q_\mu^{-1} \big(  i\l_0 \Id + i \mu J + B(\e,t,x,\xi)\big) P Q_\mu ,
\ee
where the Taylor remainder $B$ has form
$$ B = \e t B_1(\e,t,x,\xi) + (x,\xi - \xi_0) \cdot B_2(\e,t,x,\xi),$$
 with $B_1$ and $B_2$ bounded, uniformly in $\e,$ in domain
  \be \label{domain}
  |x| \leq \delta, \quad |\xi - \xi_0| \leq \delta, \quad t \leq T_\star |\ln \e|.  \ee 
 Let
\be \label{tildeS000} 
\tilde S(\t;t) := \exp\Big( i (t - \t) \l_0 \Big) P Q_\mu S_{(0)}(\t;t).
\ee
Then $\tilde S$ solves 
\be \label{eq:tildeS000}
 \d_t \tilde S + (i \mu J + B) \tilde S = 0, \quad \tilde S(\t;\t) = P Q_\mu.
\ee

\subsection{Bounds for the symbolic flow} 
Consider the Hermitian matrix
 $$ \Re e \, (i \mu J + B) := \frac{1}{2} \ \big( (i \mu J + B) + (i\mu J + B)^*\big).$$ 
 Its eigenvalues $\l$ are semisimple, and vanish at $(\mu,t,x,\xi) = (0,0,0,\xi_0),$ hence satisfy, for $(t,x,\xi)$ in domain \eqref{domain}, the bound (see for instance Corollary 3.4 in \cite{Tp}): 
 $$ |\l| \leq c_0 \big(\mu + \e T_\star |\ln \e| + |x| + |\xi - \xi_0|\big) =: \g_0(\e,\mu,t,x,\xi),$$
 for some $c_0 > 0$ independent of $\e,\mu,t,x,\xi,$ for $(t,x,\xi)$ in \eqref{domain} and $(\e,\mu)$ small enough. Thus 
 \be \label{bd:Hermit}
 - \g_0 \Id \leq \Re e \, (i \mu J + B) \leq \g_0 \Id.
\ee
Let
$$ S_\pm := \exp\big( \pm (t - \t) \g_0(\e,\mu,t,x,\xi)\big) \tilde S.$$
From \eqref{eq:tildeS000} we deduce, for any fixed vector $\vec e \in \C^m,$ %
\be \label{eq:pm} \frac{1}{2} \Re e \, ( \d_t S_\pm \vec e \, , S_\pm \, \vec e )_{\C^m} + \big( \big( \Re e \, ( i \mu J + B) \pm \g_0 \big) S_\pm \vec e \,, S_\pm \vec e \,\big)_{\C^m} = 0.\ee
By \eqref{bd:Hermit}, this implies, for $\vec e$ unitary,  
 \be \label{bounds:tildeS} |P Q_\mu| e^{ - (t - \t) \g_0} \leq |\tilde S(\t;t) \vec e\,| \leq |P Q_\mu| e^{(t - \t) \g_0}  \ee 
Back to $S_{(0)}$ via \eqref{tildeS000}, we now have 
$$ %
  |S_{(0)}(\t;t,x,\xi)| \leq |P Q_\mu| |(P Q_\mu)^{-1}| e^{(t -\t) (\Im m \, \l_0 + \g_0)}.
$$ %
 We now choose $\mu = |\ln \e|^{-1},$ and let 
 \be \label{def:g+:0}
 \g^+ := \Im m \, \l_0 + c_0(|x| + |\xi - \xi_0|).
 \ee
 We obtained, for $\e$ small enough, 
 $$|S_{(0)}(\t;t)| \leq |\ln \e|^{\star} e^{(t - \t) \g^+},$$
 corresponding to the upper bound \eqref{ass:up}. Finally, since $|(P Q_\mu)^{-1} \vec e\,| \geq c_1 \mu^{1-m} |\vec e\,|$ for some $c_1 > 0$ and all $\vec e\,,$ we deduce from \eqref{bounds:tildeS}, the lower bound
 $$ |S_{(0)}(\t;t) \vec e \, | \gtrsim e^{(t - \t) \g^-}, \quad \g^- = \Im m \, \l_0 - c_0 (|x| + |\xi - \xi_0|),$$
 corresponding to the lower bound \eqref{ass:low}.

\section{Proof of Theorem \ref{th:1/2}: non semi-simple defect of hyperbolicity} \label{sec:proof1/2}

It suffices to verify that, under the assumptions of Theorem \ref{th:1/2}, Assumption \ref{ass:main} holds, with parameters
$$ \ell = 1/2, \quad h= 2/3, \quad \zeta = 1/3.$$
Then Theorem \ref{th:1/2} appears as a consequence of Theorem \ref{th:main}. We may assume initial hyperbolicity \eqref{init:hyp}, since otherwise Theorem \ref{th:0} applies. 

\subsection{The branching eigenvalues} \label{sec:branch}

 Let $\o_0 := (x_0,\l_0,\xi_0).$ By assumption, there holds 
 \begin{equation} \label{ass-for-tfi} P(0,\o_0) =0, \qquad \d_\l P(0,\o_0) = 0, \qquad \d_t P(0,\o_0) \neq 0, \qquad \d_\l^2 P(\o_0) \neq 0.
 \end{equation} 
 By the second and fourth conditions in \eqref{ass-for-tfi} and the implicit function theorem, there exists a smooth function $\mu_\star,$ with $\mu_\star(0,x_0,\xi_0) = \l_0,$ such that $\d_\l P = 0$ is equivalent to $\l = \mu_\star(t,x,\xi),$ for $(t,x,\xi)$ close to $(0,x_0,\xi_0).$ 
 
By the first three conditions in \eqref{ass-for-tfi} and the implicit function theorem, there is a smooth $\t_\star,$ with $\t_\star(x_0,\xi_0) = 0,$ such that $P(\mu_\star) = 0$ is locally equivalent to $t = \t_\star(x,\xi).$

  We now use the above implicitly defined functions $\mu_\star$ and $\t_\star$ to describe the spectrum of $A$ near $(t,x,\xi,\l) = (0,\o_0).$  
  
  \begin{lem} \label{lem:sp-1402} In a neighborhood of $(0,\o_0),$ there holds $P = 0$ if and only if 
 \begin{equation} \label{eq:branch} (\l - \mu(x,\xi))^2 = -(t - \t_\star(x,\xi)) e(t,x,\xi,\l),
 \end{equation}
 where $\mu(x,\xi) := \mu_\star(\t_\star(x,\xi),x,\xi)$ and $e$ is smooth and satisfies $e(0,x_0,\xi_0,\l_0) > 0.$ 
\end{lem}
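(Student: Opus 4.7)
The plan is to apply two implicit function arguments to pin down the smooth functions $\mu_\star$ and $\tau_\star$, then to perform a two-step Taylor factorization of $P$ — first in $\lambda$ around $\mu_\star$, then in $t$ around $\tau_\star$ — and finally to absorb the $t$-dependence of $\mu_\star$ into the remainder.

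First I would define $\mu_\star$ and $\tau_\star$ exactly as sketched in the text preceding the Lemma. Because $\partial_\lambda^2 P(0,\omega_0)\neq 0$, the implicit function theorem applied to the equation $\partial_\lambda P = 0$ near $(0,\omega_0)$ produces a smooth function $\mu_\star(t,x,\xi)$ with $\mu_\star(0,x_0,\xi_0)=\lambda_0$. Then, differentiating $t\mapsto P(t,x_0,\xi_0,\mu_\star(t,x_0,\xi_0))$ at $t=0$ gives $\partial_t P(0,\omega_0)$ (the $\partial_\lambda P$ contribution vanishes by definition of $\mu_\star$), which is nonzero by \eqref{cond:coal2}; another application of the implicit function theorem produces a smooth $\tau_\star(x,\xi)$ with $\tau_\star(x_0,\xi_0)=0$ such that $P(\tau_\star,x,\xi,\mu_\star(\tau_\star,x,\xi))=0$. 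Set $\mu(x,\xi):=\mu_\star(\tau_\star(x,\xi),x,\xi)$.

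Next I would Taylor-expand $P$ in $\lambda$ around $a=\mu_\star(t,x,\xi)$. Since $\partial_\lambda P(t,x,\xi,a)\equiv 0$, one obtains the smooth factorization
\begin{equation}\label{step:A}
 P(t,x,\xi,\lambda) = P(t,x,\xi,\mu_\star) + (\lambda-\mu_\star)^2 \, g(t,x,\xi,\lambda),
\end{equation}
with $g(0,x_0,\xi_0,\lambda_0) = \tfrac12\partial_\lambda^2 P(0,\omega_0)\neq 0$, so $g$ is nonvanishing in a neighborhood. By definition of $\tau_\star$, the function $t\mapsto P(t,x,\xi,\mu_\star(t,x,\xi))$ vanishes at $t=\tau_\star(x,\xi)$, so a Taylor factorization in $t$ yields
\begin{equation}\label{step:B}
 P(t,x,\xi,\mu_\star(t,x,\xi)) = (t-\tau_\star(x,\xi)) \, h(t,x,\xi),
\end{equation}
with $h(0,x_0,\xi_0)=\partial_t P(0,\omega_0)$, again by the identity $\partial_\lambda P(0,\omega_0)=0$.

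Then I would write $\mu_\star(t,x,\xi)=\mu(x,\xi)+(t-\tau_\star(x,\xi))\tilde\mu(t,x,\xi)$ for a smooth $\tilde\mu$, expand the square
\begin{equation*}
 (\lambda-\mu_\star)^2 = (\lambda-\mu)^2 - 2(t-\tau_\star)\tilde\mu\,(\lambda-\mu) + (t-\tau_\star)^2\tilde\mu^2,
\end{equation*}
and plug this together with \eqref{step:B} into \eqref{step:A}. Dividing by the nonvanishing $g$ gives
\begin{equation*}
 \frac{P(t,x,\xi,\lambda)}{g(t,x,\xi,\lambda)} = (\lambda-\mu(x,\xi))^2 + (t-\tau_\star(x,\xi))\,e(t,x,\xi,\lambda),
\end{equation*}
where
\begin{equation*}
 e := \frac{h}{g} - 2\tilde\mu\,(\lambda-\mu) + (t-\tau_\star)\tilde\mu^2.
\end{equation*}
At $(0,\omega_0)$ the last two terms vanish and $e(0,x_0,\xi_0,\lambda_0) = 2\partial_t P(0,\omega_0)/\partial_\lambda^2 P(0,\omega_0) > 0$ by condition \eqref{cond:coal2}. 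Since $g$ does not vanish locally, the equation $P=0$ is equivalent to $(\lambda-\mu(x,\xi))^2 = -(t-\tau_\star(x,\xi))\,e(t,x,\xi,\lambda)$, which is \eqref{eq:branch}. The only mildly delicate point is keeping the two implicit function reductions properly nested so that the factorizations \eqref{step:A} and \eqref{step:B} combine cleanly and the sign in \eqref{cond:coal2} surfaces as the positivity of $e$ at the reference point; beyond that the argument is a direct Taylor computation.
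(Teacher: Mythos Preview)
Your proof is correct and follows the same idea as the paper's --- a Taylor factorization of $P$ in $(t,\lambda)$ around the implicitly defined functions $\tau_\star,\mu$ --- but with one organizational difference worth noting. The paper expands directly around $(\tau_\star(x,\xi),\mu(x,\xi))$ in one stroke, writing $P = (t-\tau_\star)e_1 + (\lambda-\mu)^2 e_2$ with integral remainders and setting $e=e_1 e_2^{-1}$. You instead center the $\lambda$-expansion at $\mu_\star(t,x,\xi)$ (where $\partial_\lambda P$ genuinely vanishes for all nearby $t$), obtain the clean factorization $P = P(\cdot,\mu_\star) + (\lambda-\mu_\star)^2 g$, and only then shift from $\mu_\star$ to $\mu$ via $\mu_\star-\mu=(t-\tau_\star)\tilde\mu$, absorbing the resulting cross terms into $e$. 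Your route is slightly longer but arguably more transparent: the paper's one-step expansion, as written, silently drops the first-order term $(\lambda-\mu)\partial_\lambda P(t,x,\xi,\mu)$, which is nonzero for $t\neq\tau_\star$ (it can be absorbed into $e_1$ since it carries a hidden factor $(t-\tau_\star)$, but this is not spelled out). Your two-step expansion avoids that issue entirely.
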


\begin{proof} Given $t$ close to $0$ and $(x,\xi,\l)$ close to $\o_0,$ we expand the characteristic polynomial: 
 \begin{equation} \label{eq:P} \begin{aligned} P(t,x,\xi,\l) & = P(\t_\star(x,\xi),x,\xi, \mu_\star(\t_\star(x,\xi)),x,\xi)) \\ & \quad + (t - \t_\star(x,\xi)) e_1(t,x,\xi) + (\l - \mu(x,\xi))^2 e_2(t,x,\xi,\l) \\ & = (t - \t_\star(x,\xi)) e_1(t,x,\xi) + (\l - \mu(x,\xi))^2 e_2(t,x,\xi,\l),\end{aligned}
  \end{equation}
since $P(\t_\star,\cdot,\mu_\star(\t_\star)) \equiv 0,$ with 
 $$ \begin{aligned} e_1(t,x,\xi) & := \int_0^1 (\d_t P)((1 - \t) \t_\star(x,\xi) + \t t ,x,\mu(x,\xi)) \, d\t, \\ e_2(t,x,\xi,\l) & : = \int_0^1 (1 - \t) (\d_\l^2 P)(t,x,\xi,(1 - \t)\mu(x,\xi) + \t \l) \, d\t.\end{aligned}$$
We let $e := e_1 e_2^{-1}.$ Then $e(0,x_0,\xi_0,\l_0) > 0,$ as a consequence of the definition of $e_1$ and $e_2$ and condition \eqref{cond:coal2}. The result follows from \eqref{eq:P}.
\end{proof}

 Equation \eqref{eq:branch} describes a pair of eigenvalues branching at $t = \t_\star(x,\xi)$ out of the real axis, with imaginary parts growing like $(t - \t_\star)^{1/2}.$ The time curve $t = \t_\star(x,\xi)$ is the boundary between the hyperbolic region $t < \t_\star(x,\xi)$ in which the eigenvalues are real, and the elliptic region $t > \t_\star(x,\xi)$ in which the eigenvalues are not real and where we expect to record an exponential growth for the symbolic flow. In the introduction, Figure 3 pictures the hyperbolic and elliptic zones in the $(t,x,\xi)$ domain near $(0,x_0,\xi_0).$ 
We note that under the assumptions of Theorem {\rm \ref{th:1/2}}, the above defined time transition function $\t_\star$ satisfies 
  \begin{equation} \label{nabla-t} 
 \t_\star \geq 0, \quad \t'_\star(x_0,\xi_0) = 0, \quad \t''_\star(x_0,\xi_0) \geq 0, %
 \end{equation}
 where $\t_\star'(x_0,\xi_0)$ is the differential at $(x_0,\xi_0)$ and $\t''_\star(x_0,\xi_0)$ the Hessian. Indeed, if the first condition in \eqref{nabla-t} were violated, then hyperbolicity would not hold at $t = 0,$ contradicting \eqref{init:hyp}. Thus $0$ is a global mininum for $\t_\star,$ and \eqref{nabla-t} ensues.

\begin{rem} \label{rem:decaying:airy} Note that under assumption $\d_\l^2 P \d_t P < 0,$ eigenvalues stay real for small $t > 0,$ by Lemma {\rm \ref{lem:sp-1402}}.
\end{rem}

\subsection{Change of basis} \label{sec:prep}

By Lemma \ref{lem:sp-1402}, the eigenvalues $\l_\pm(t,x,\xi)$ of $A(t,x,\xi) - \mu(x,\xi) {\Id}$ satisfy
 \be \label{lpm:1} \l_\pm(t,x,\xi)^2 = - \big(t - \t_\star(x,\xi)\big) e\big(t,x,\xi, \mu(x,\xi) + \l_{\pm}(t,x,\xi)\big),\ee
 where $e$ is smooth in all its arguments, and $e(0,x_0,\xi_0, \mu(x_0,\xi_0)) > 0,$ with $\mu(x_0,\xi_0) = \l_0.$ From \eqref{lpm:1} and continuity of $e,$ we deduce the fact that $\l_-$ and $\l_+$ are purely imaginary, hence $\l_+ + \l_- = 0,$ since the matrix $A - \mu {\Id}$ has real coefficients. From \eqref{lpm:1}, we also deduce the bound, for some $C > 0,$ locally around $(0,x_0,\xi_0),$
 $$ |\l_\pm| \leq C |t - t_\star|^{1/2},$$
  which we may plug back in \eqref{lpm:1} and deduce, by regularity of $e,$ 
 \be \label{lpm:2} 
 \l_\pm = \pm i (t - \t_\star)^{1/2} e(t,x,\xi,\mu(x,\xi))^{1/2} + O(t - \t_\star).
 \ee
  We now reduce $A$ to canonical form:
 
 \begin{lem} \label{lem:1402-2}
 There exists a smooth change of basis $Q$ such that locally around $(0,x_0,\xi_0),$ 
 \begin{equation} \label{QAQ} Q(t,x,\xi) \big(A(t,x,\xi) - \mu(x,\xi) \Id\big) Q(t,x,\xi)^{-1} = \left(\begin{array}{cc} A_{(0)} & 0 \\ 0 & A_{(1)} \end{array}\right),\end{equation}
 where $A_{(0)} = \left(\begin{array}{cc} 0 & 1 \\  - ( t - \t_\star)e_0 + O(t - \t_\star)^{3/2}  & 0 \end{array}\right),$ and $A_{(1)} \in \C^{(N-2) \times (N-2)}$ is smooth. In the bottom left entry of $A_{(0)},$ the function $\t_\star$ is the time transition function introduced just above Lemma {\rm \ref{lem:sp-1402}}, and we denote $e_0(t,x,\xi) = e(t,x,\xi, \mu(t,x,\xi)),$ with $e$ as in Lemma {\rm \ref{lem:sp-1402}.}\end{lem}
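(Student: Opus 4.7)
The plan is a two-step reduction: first, smooth block diagonalization of $A - \mu\Id$ so that the branching pair of eigenvalues sits in a $2\times 2$ block, then a smooth companion-form reduction inside that block.

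For the first step I would use the Riesz projector. Under the assumptions of Theorem~\ref{th:1/2}, the eigenvalue $\lambda_0$ of $A(0,x_0,\xi_0)$ has algebraic multiplicity exactly two (from $\partial_\lambda P(0,\omega_0)=0$ and $\partial_\lambda^2 P(0,\omega_0)\ne 0$) while the other eigenvalues are simple. Choose a small loop $\gamma\subset\CC$ around $\lambda_0$ separating it from the rest of the spectrum of $A(0,x_0,\xi_0)$; by continuity, it persists in separating the branching pair from the other eigenvalues of $A(t,x,\xi)$ nearby. Then
\[
\Pi(t,x,\xi) := \frac{1}{2\pi i}\oint_\gamma \big(z\Id - A(t,x,\xi)\big)^{-1}\,dz
\]
is a smooth rank-two projector in a neighborhood of $(0,x_0,\xi_0)$, and the standard construction (cf.~\cite{K}) yields a smooth change of basis $Q_1$ with $Q_1(A-\mu\Id)Q_1^{-1}$ block-diagonal: a $2\times 2$ block $B_{(0)}$ with eigenvalues $\lambda_\pm-\mu$, and a smooth complementary block $A_{(1)}$.

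For the second step, the key observation is that $\lambda_0$ cannot be semi-simple on the 2D block: the square-root branching established in \eqref{lpm:2} is incompatible with $B_{(0)}(0,x_0,\xi_0)=0$, which would force the eigenvalues of $B_{(0)}$ near $\t_\star$ to be at most Lipschitz in $t$. Hence $B_{(0)}(0,x_0,\xi_0)$ is a nonzero nilpotent, and one may pick a constant $v_0\in\CC^2$ with $B_{(0)}(0,x_0,\xi_0)v_0\ne 0$. By continuity $\{B_{(0)}(t,x,\xi)v_0,\,v_0\}$ stays a basis of $\CC^2$ in a neighborhood, yielding a smooth change of basis $Q_2$, and Cayley--Hamilton applied to $B_{(0)}$ in this basis gives
\[
Q_2 B_{(0)} Q_2^{-1} = \begin{pmatrix} \tr B_{(0)} & 1 \\ -\det B_{(0)} & 0 \end{pmatrix}.
\]
Reality of $A-\mu\Id$ forces $\lambda_+-\mu$ and $\lambda_--\mu$ to be complex conjugates, so from \eqref{lpm:2} one computes $\det B_{(0)}=|\lambda_+-\mu|^2=(t-\t_\star)e_0 + O((t-\t_\star)^{3/2})$, matching the claimed bottom-left entry. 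The trace is $O(t-\t_\star)$ and is absorbed either by replacing $\mu$ with the smooth symmetric function $(\lambda_++\lambda_-)/2$ (which agrees with $\mu(x,\xi)$ at $t=\t_\star$) or by a further conjugation by $\Id + O(t-\t_\star)$; setting $Q := Q_2 Q_1$ completes the construction.

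The principal obstacle is the non-semi-simplicity claim: the hypotheses of Theorem~\ref{th:1/2} are spectral, so ruling out a semi-simple double eigenvalue passes through the branching description of Lemma~\ref{lem:sp-1402} rather than being a direct consequence of the hypotheses. Everything else reduces to standard perturbation theory plus routine asymptotic bookkeeping.
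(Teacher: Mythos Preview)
Your two-step plan (Riesz block diagonalization, then companion-form reduction of the $2\times 2$ block) is exactly the paper's, and your argument for non-semisimplicity --- that $B_{(0)}(0,x_0,\xi_0)=0$ would force $O(t)$ eigenvalues, incompatible with the $t^{1/2}$ branching of \eqref{lpm:2} --- is equivalent to the paper's observation that $a_{12}a_{21}(0,x_0,\xi_0)\ne 0$.

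The genuine gap is the handling of the trace. Your second option, ``a further conjugation by $\Id+O(t-\t_\star)$'', cannot succeed: similarity preserves the trace, so a nonzero $\tr B_{(0)}$ is never conjugated away. Your first option, absorbing the half-trace into $\mu$, replaces $\mu(x,\xi)$ by a $t$-dependent symbol and hence does not prove the lemma as literally stated. Here you are in fact more careful than the paper, which asserts $\lambda_++\lambda_-=0$ by claiming the eigenvalues of $A-\mu\Id$ are purely imaginary; this does not follow from \eqref{lpm:1}, and the $2\times 2$ example $A-\mu\Id=\bigl(\begin{smallmatrix} t&1\\ -t&0\end{smallmatrix}\bigr)$, which satisfies \eqref{cond:coal}--\eqref{cond:coal2} with $\t_\star\equiv0$ yet has trace $t$, shows it fails in general. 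The honest version of the lemma either allows $O(t-\t_\star)$ on the diagonal of $A_{(0)}$ or takes $\mu=\mu(t,x,\xi)$ to be the smooth half-trace of $A$ on $\Range\Pi$. Assumption~\ref{ass:main} explicitly permits $t$-dependent $\mu$ (Section~\ref{ass:a}), and after the $\e^{2/3}$ time-rescaling the extra diagonal contribution to $\e^{-1/3}A_{\star(0)}$ is $O(\e^{1/3})$ and is absorbed into the perturbation in the Airy reduction of Section~\ref{sec:airy0}; so your first fix is the right one and the rest of your argument, and the paper's, goes through unchanged.
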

 
 \begin{proof} We may smoothly block diagonalize $A - \mu,$ for instance as described in Section \ref{sec:blockdiag}. The block associated with $\l_0$ is size two, since the multiplicity of $\l_0$ is equal to two \eqref{cond:coal}. Thus for some smooth $\tilde Q,$ there holds 
  $\tilde Q( A - \mu) \tilde Q^{-1} = \left(\begin{array}{cc} B_0 & 0 \\ 0 & A_{(1)} \end{array}\right),$
  where $B_0$ is the $2 \times 2$ matrix $B_0 = \left(\begin{array}{cc} a_{11} & a_{12} \\ a_{21} & a_{22} \end{array}\right),$ with smooth entries $a_{ij}.$ The spectrum of $B_0$ is $\{ \l_-, \l_+\},$ where $\l_\pm$ satisfy \eqref{lpm:2}. Since, as noted above, there holds $\l_- = - \l_+,$ the trace of $B_0$ is zero, that is $a_{22} = - a_{11}.$ Besides, there holds $a_{21} a_{12} \neq 0$ at $(0,x_0,\xi_0).$ Indeed, if $a_{21} a_{12}(0,x_0,\xi_0) = 0,$ the spectrum of $B_0$ would be smooth in time, contradicting \eqref{eq:branch}. Without loss of generality, we assume $a_{21}(0,x_0,\xi_0) \neq 0.$ Then $\dsp{Q_0 = \left(\begin{array}{cc} 0 & - a_{21}^{-1}\\[2pt] 1  & - a_{21}^{-1} a_{11} \end{array}\right)}$ is a smooth change of basis such that 
   $Q_0 B_0 Q_0^{-1} = \left(\begin{array}{cc} 0 & 1 \\ \star & 0 \end{array}\right).$
 The bottom left entry of $B_0$ is equal to the product $\l_- \l_+$ of its eigenvalues. By \eqref{lpm:2}, we find that $\l_- \l_+ = - (t - \t_\star) e_0 + O(t - \t_\star),$ and the result holds with $Q = \left(\begin{array}{cc} Q_0 & 0 \\ 0 & {\rm Id}_{\C^{N-2}} \end{array}\right) \tilde Q.$
  \end{proof}

\subsection{The symbolic flow} \label{sec:flow1/2}

Our goal is to prove the bounds of Assumption \ref{ass:main} for solution $S$ to the ordinary differential equation 
\begin{equation} \label{tilde-fund-in-proof}
  \d_t S(\t;t) + i \e^{-1/3} A_\star(t) S(\t;t)  = 0,
  \qquad S(\t;\t)  \equiv \Id,
  \end{equation}
 where $A_\star$ is defined by \eqref{def:Astar}, which we reproduce here: 
  \be \label{Astar:1/2} A_\star(t) = \big(Q \big(A - \mu \Id\big) Q^{-1}\big)\big(\e^{2/3} t, x_0 + \e^{1/3} x_\star(\e^{2/3} t, x,\xi), \xi_\star(\e^{2/3} t, x, \xi) \, \big).\ee
 Recall that $h = 2/3$ and $\zeta = 1/3$ here.  %
   The change of basis $Q$ is given by Lemma \ref{lem:1402-2}, the real part of the branching eigenvalues $\mu$ is introduced in Lemma \ref{lem:sp-1402}, and $(x_\star,\xi_\star)$ is the bicharacteristic flow, solving \eqref{bichar}, which we reproduce here: 
  \be \label{bichar:airy} \d_t x_\star = - \d_\xi \mu(t, x_0 + \e^{1/3} x_\star, \xi_\star), \quad \d_t \xi_\star = \e^{1/3} \d_x \mu(t, x_0 + \e^{1/3} x_\star, \xi_\star).\ee
The block decomposition of $A$ given by Lemma \ref{lem:1402-2} induces a block decomposition of $A_{\star}.$ We focus on the top left block $A_{(0)\star}$ in $A_\star.$ As per Lemma \ref{lem:1402-2}, its bottom left entry involves the function
 \be \label{t:starstar} \t_{\star\star}(\e,t,x,\xi) := \t_\star\big(x_0 + \e^{1/3} x_\star(\e^{2/3} t, x,\xi), \xi_\star(\e^{2/3} t, x, \xi)\,\big).\ee 
 The bicharacteristic flow \eqref{bichar:airy} satisfies
 \be \label{for:bichar} x_0 + \e^{1/3} x_\star(\e^{2/3} t, x,\xi) = x_0 + \e^{1/3} x + O(\e t), \quad \xi_\star(\e^{2/3} t, x,\xi) = \xi + O(\e),\ee
 uniformly in $(x,\xi)$ with $|x| + |\xi - \xi_0| \leq \delta.$ Here notation $O(\e)$ refers to a uniform bound of form $\lesssim.$ In particular for $t$ bounded from above by a power of $|\ln \e|,$ there holds $O(\e t) = O(\e).$  
 Thus $\t_{\star\star}$ defined in \eqref{t:starstar} satisfies
$$ %
\t_{\star\star} = \theta_\star(\e^{1/3} x,\xi) + O(\e), \quad \theta_\star(x,\xi) := \t_{\star}(x_0 + x, \xi).
$$ %
 By \eqref{nabla-t}, we see that $\theta_\star$ as defined above satisfies the conditions of equation \eqref{def:tstar}. In accordance with \eqref{def:tstar}, we let $t_\star(\e,t,x,\xi) := \e^{-2/3} \theta_\star(\e^{1/3} x, \xi).$ %
 There holds
 \be \label{for:a0star} \e^{-1/3} (\e^{2/3} t - \t_{\star\star}) = \e^{1/3} (t - \e^{-2/3} \theta_\star(\e^{1/3} x, \xi)) + O(\e^{2/3}) = \e^{1/3}(t - t_\star) + O(\e^{2/3}),\ee
 uniformly in $(x,\xi)$ with $|x| + |\xi - \xi_0| \leq \delta.$ 
 
 \begin{lem} \label{lem:astar} The top left block $A_{(0)\star}$ of $A_{\star}$ in the block decomposition of Lemma {\rm \ref{lem:1402-2}} satisfies 
 $$\e^{-1/3} A_{(0)\star} = \left(\begin{array}{cc} 0 & \e^{-1/3} \\ - \e^{1/3} (t - t_\star) f_0 + O(\e^{2/3}) & 0 \end{array}\right),$$
 with notation $f_0(\e,x,\xi) = e_0(0,x_0 + \e^{1/3} x, \xi),$ so that $f_0 > 0$ for $(x,\xi)$ near $(0,\xi_0).$ 
 \end{lem}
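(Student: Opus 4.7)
The plan is to obtain the claimed block structure by direct substitution: the formula from Lemma \ref{lem:1402-2} for $A_{(0)}$ is plugged into the definition \eqref{Astar:1/2} of $A_{\star}$, and the resulting expression is reorganized using the already-derived asymptotics \eqref{for:bichar} for the bicharacteristic flow and \eqref{for:a0star} for the advected transition time. No spectral analysis is needed at this stage, only bookkeeping of the various powers of $\e$.

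First I would treat the easy entries. Since $A_{(0)}$ has vanishing diagonal, so does its pullback $A_{(0)\star}$ along the bicharacteristic flow; and the $(1,2)$-entry of $A_{(0)}$ is identically $1$, so $\e^{-1/3}A_{(0)\star,12} = \e^{-1/3}$ with no further work. The entire content of the lemma is therefore the expansion of $A_{(0)\star,21}$. From Lemma~\ref{lem:1402-2}, for $(s,y,\eta)$ near $(0,x_0,\xi_0)$,
\[
A_{(0),21}(s,y,\eta) \;=\; -(s-\t_\star(y,\eta))\, e_0(s,y,\eta) \;+\; O\!\bigl((s-\t_\star(y,\eta))^{3/2}\bigr).
\]
Evaluating at $(s,y,\eta)=(\e^{2/3}t,\, x_0+\e^{1/3}x_\star(\e^{2/3}t,x,\xi),\, \xi_\star(\e^{2/3}t,x,\xi))$ and recalling \eqref{t:starstar} produces the factor $\e^{2/3}t-\t_{\star\star}$.

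Next I would substitute \eqref{for:a0star}, which rewrites this factor as $\e^{2/3}(t-t_\star)+O(\e)$, and Taylor-expand $e_0$ around $(0,x_0+\e^{1/3}x,\xi)$ using the bicharacteristic bounds \eqref{for:bichar} ($x_0+\e^{1/3}x_\star = x_0+\e^{1/3}x+O(\e t)$, $\xi_\star=\xi+O(\e t)$). This yields
\[
e_0\bigl(\e^{2/3}t,\,x_0+\e^{1/3}x_\star,\,\xi_\star\bigr) \;=\; f_0(\e,x,\xi) + O(\e^{2/3}),
\]
uniformly in the elliptic box. Multiplying the two expansions and collecting terms gives $A_{(0)\star,21}=-\e^{2/3}(t-t_\star)f_0+O(\e)$, after which multiplication by $\e^{-1/3}$ yields the announced formula. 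Strict positivity of $f_0$ near $(0,\xi_0)$ is an immediate continuity consequence of $e(0,x_0,\xi_0,\l_0)>0$ stated in Lemma~\ref{lem:sp-1402}.

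The one point that needs a little vigilance is the uniformity of the remainders over the full elliptic domain $\mathcal D$: the quantity $t-t_\star$ ranges up to $T(\e)\sim |\log\e|^{2/3}$, so crude multiplication of $\e^{2/3}(t-t_\star)$ by the $O(\e^{2/3})$ error on $e_0$ gives $O(\e^{4/3}(t-t_\star))$, and the $3/2$-power Taylor remainder contributes $O(\e(t-t_\star)^{3/2})+O(\e^{3/2})$. All of these terms, once divided by $\e^{1/3}$, are dominated by $\e^{2/3}$ up to powers of $|\log\e|$, and hence are absorbed by the $\lesssim$ notation fixed in \eqref{notation:lesssim}. This logarithmic bookkeeping is the only mildly delicate step; otherwise the lemma is a direct consequence of the structural results already established.
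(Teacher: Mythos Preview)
Your proposal is correct and follows essentially the same route as the paper: direct substitution of the normal form from Lemma~\ref{lem:1402-2} into \eqref{Astar:1/2}, using \eqref{for:bichar} to expand $e_0$ and \eqref{for:a0star} to handle the advected transition time. Your explicit tracking of the logarithmic factors in the $O(\e^{2/3})$ remainder is more detailed than the paper's version, which absorbs them silently into the $\lesssim$ convention; otherwise the arguments coincide.
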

 
 \begin{proof} The bottom left entry of  $\e^{-1/3} A_{(0)\star}$ involves the function $e_0 = e(\mu)$ evaluated in the time-rescaled and advected frame. In view of \eqref{for:bichar}, there holds by regularity of $e$ and $\mu$
 $$ e_0\big(\e^{2/3} t, x_0 + \e^{1/3} x_\star(\e^{2/3} t, x,\xi), \xi_\star(\e^{2/3} t, x, \xi) \big) = e_0(0,x_0 + \e^{1/3} x, \xi) + O(\e^{2/3}).$$
 The bottom left entry of  $\e^{-1/3} A_{(0)\star}$ also involves $t - \t_\star$ and $(t - \t_\star)^{3/2}$ in the time-rescaled and advected frame. In view of \eqref{for:a0star}, these functions contribute to $\e^{-1/3} A_{(0)\star}$ 
 $$ \e^{-1/3}(\e^{2/3}t - \t_{\star\star}) = \e^{1/3}(t - t_\star) + O(\e^{2/3})$$
 and $\e^{-1/3}(\e^{2/3}t - \t_{\star\star})^{3/2} = O(\e^{2/3}).$ We may conclude with Lemma \ref{lem:1402-2}.
 \end{proof}

By Lemma \ref{lem:astar}, the flow $S_{(0)}$ of $i \e^{-1/3} A_{\star (0)}$ solves 
\be \label{def:S_0}
  \d_t S_{(0)}  + i \e^{-1/3} \left(\begin{array}{cc} 0 & \e^{-1/3} \\ - \e^{1/3} (t - t_\star) f_0 & 0 \end{array}\right) S_{(0)} = \e^{2/3} C S_{(0)}, \quad S_{(0)}(\t;\t)  = \Id,
  \end{equation} 
  where $C:= C(\e,t,x,\xi) = \left(\begin{array}{cc} 0 & 0 \\ c & 0 \end{array}\right),$ with $|\d_x^\a \d_\xi^\b c| \lesssim 1$ for $0 \leq t \leq T(\e)$ and $|x| + |\xi - \xi_0| \leq \delta.$ The coefficient $f_0 = f(\e,x,\xi)$ satisfies $f_0 > 0$ for $|x| + |\xi - \xi_0| \leq \delta.$ 
\subsection{Reduction to a perturbed Airy equation} \label{sec:airy0}

Let 
\be \label{def:D:matrix}
 D(x,\xi) :=\left(\begin{array}{cc} -i \e^{1/3} f_0(x,\xi)^{1/3}   & 0 \\ 0 & 1 \end{array}\right),
\ee %
so that $D$ is well-defined and invertible on $|x| < \delta,$ $|\xi - \xi_0| \leq \delta,$ and 
\be \label{def:Z}
 Z(\t;t) := D S_{(0)}\big( f_0^{-1/3} \t + t_\star\, ; f_0^{-1/3} t + t_\star\,\big),
\ee
where $f_0,$ $t_\star,$ $D,$ $S_{(0)}$ and $Z$ all depend on $(x,\xi).$ For future use, we note that
\be \label{computation:D}
 D(x,\xi) \left(\begin{array}{cc} z_{11} & z_{12} \\ z_{21} & z_{22} \end{array}\right) D(x,\xi)^{-1} = \left(\begin{array}{cc} z_{11} & - i (\e f_0)^{1/3} z_{12} \\ i (\e f_0)^{-1/3} z_{21} & z_{22} \end{array}\right)
\ee
and
\be \label{computation:D-1}
 D(x,\xi)^{-1} \left(\begin{array}{cc} z_{11} & z_{12} \\ z_{21} & z_{22} \end{array}\right) D(x,\xi) = \left(\begin{array}{cc} z_{11} & i (\e f_0)^{-1/3} z_{12} \\ - i (\e f_0)^{1/3} z_{21} & z_{22} \end{array}\right).
 \ee
\begin{lem} \label{lem:1402-3} On $|x| \leq \delta,$ $|\xi - \xi_0| \leq \delta,$ the map $Z$ satisfies the perturbed Airy equation

 \begin{equation} \label{airy-app}
  Z' +  \left(\begin{array}{cc} 0 & 1 \\ t & 0 \end{array}\right) Z = \e^{1/3} \tilde C Z, \qquad Z(\t;\t) = D,
   \end{equation}
  where $\tilde C := (D C D^{-1})(f_0^{-1/3} t + t_\star),$ with $C$ as in \eqref{def:S_0}.  
\end{lem}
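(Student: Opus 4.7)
The proof is a direct chain-rule computation whose sole ingredient is the diagonal conjugation identity \eqref{computation:D}. Set $s := f_0^{-1/3}\t + t_\star$ and $\sigma := f_0^{-1/3}\tau + t_\star$ (so that $\sigma \leftrightarrow \tau$ and $s \leftrightarrow t$). Then $Z(\tau;t) = D\,S_{(0)}(\sigma;s)$ depends on $t$ only through $s$, and
\[
 \partial_t Z \;=\; f_0^{-1/3}\,D\,(\partial_s S_{(0)})(\sigma;s).
\]
Substituting the ODE \eqref{def:S_0} for $\partial_s S_{(0)}$ at time $s$, and using $(s - t_\star) f_0 = f_0^{2/3}\,t$ to turn the bottom-left entry of the principal matrix into $-i f_0^{2/3}\,t$, one inserts $D^{-1}D = \Id$ between the coefficient matrices and $S_{(0)}$ to replace $DS_{(0)}$ by $Z$. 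This yields
\[
 \partial_t Z \;=\; - f_0^{-1/3}\,D\,\begin{pmatrix} 0 & i\e^{-2/3} \\ -i f_0^{2/3} t & 0 \end{pmatrix}\,D^{-1}\,Z \;+\; f_0^{-1/3}\,\e^{2/3}\,DCD^{-1}\,Z.
\]

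The crucial step is then applying \eqref{computation:D} to the middle matrix $M(t) := \bigl(\begin{smallmatrix} 0 & i\e^{-2/3} \\ -i f_0^{2/3} t & 0 \end{smallmatrix}\bigr)$. With $z_{12} = i\e^{-2/3}$ and $z_{21} = -i f_0^{2/3}\,t$, the identity gives off-diagonal entries $-i(\e f_0)^{1/3}\cdot i\e^{-2/3} = \e^{-1/3} f_0^{1/3}$ and $i(\e f_0)^{-1/3}\cdot (-i)\,f_0^{2/3}\,t = \e^{-1/3} f_0^{1/3}\,t$, so that $DM(t)D^{-1} = \e^{-1/3} f_0^{1/3}\bigl(\begin{smallmatrix} 0 & 1 \\ t & 0 \end{smallmatrix}\bigr)$. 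The common $f_0^{1/3}$ cancels exactly against the $f_0^{-1/3}$ from the chain rule, producing the announced Airy principal matrix. The point is that the specific choice $D = \diag(-i\e^{1/3} f_0^{1/3},\,1)$ in \eqref{def:D:matrix} is engineered precisely to balance the asymmetric scales $\e^{-2/3}$ (from the upper-right entry of $M$) and $f_0^{2/3}$ (from the coefficient of $t$ in the lower-left) into the symmetric Airy form. The source term $f_0^{-1/3}\,\e^{2/3}\,DCD^{-1}$ is likewise handled by \eqref{computation:D}: since $C = \bigl(\begin{smallmatrix} 0 & 0 \\ c & 0\end{smallmatrix}\bigr)$, the conjugation by $D$ contributes a factor $(\e f_0)^{-1/3}$ in the bottom-left entry, which combines with the $\e^{2/3}$ prefactor to produce exactly the $\e^{1/3}\,\tilde C\,Z$ remainder claimed in \eqref{airy-app}. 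Finally, the initial condition is immediate: at $t = \tau$, $s = \sigma$, so $Z(\tau;\tau) = D\,S_{(0)}(\sigma;\sigma) = D$.

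The computation is entirely mechanical and carries no estimate or functional-analytic content. The only point requiring attention is the simultaneous bookkeeping of $\e$- and $f_0$-powers through the conjugation by $D$ and through the time-rescaling $t \mapsto f_0^{-1/3}t + t_\star$. This bookkeeping is exactly the content of the identity \eqref{computation:D}, which was set up in the previous paragraphs precisely so that the reduction to perturbed Airy form is a one-line verification once one has noticed that the rescaling by $f_0^{-1/3}$ linearises the lower-left entry of the principal matrix in the new time variable.
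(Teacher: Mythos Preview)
Your approach is exactly the paper's: a direct chain-rule substitution together with the conjugation identity \eqref{computation:D}, which the paper condenses into the single displayed identity $\tfrac{i\e^{-1/3}}{f_0^{1/3}}\,D\,A_{\star(0)}\bigl(f_0^{-1/3}t+t_\star\bigr)\,D^{-1} = \bigl(\begin{smallmatrix} 0 & 1 \\ t & 0\end{smallmatrix}\bigr) - \e^{1/3}\tilde C$. One caveat on your $\e$-bookkeeping: taking the coefficient in \eqref{def:S_0} at face value, your own computed $f_0^{-1/3}DM(t)D^{-1}$ equals $\e^{-1/3}$ times the Airy matrix rather than the Airy matrix itself --- the stray $\e^{-1/3}$ comes from an apparent misprint in \eqref{def:S_0} (compare Lemma~\ref{lem:astar} with the flow equation \eqref{tilde-fund-in-proof}), and disappears if one works from $A_{\star(0)}$ directly as the paper's proof does.
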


\begin{proof} It suffices to use \eqref{computation:D} and observe that, by Lemma \ref{lem:astar}, 
 $$ \frac{i \e^{-1/3}}{f_0(x,\xi)^{1/3}} D(x,\xi) A_{\star (0)}\left(\frac{t}{f_0(x,\xi)^{1/3}} + t_\star\,, \, x, \, \xi\right) D(x,\xi)^{-1} = \left(\begin{array}{cc} 0 & 1 \\ t & 0 \end{array}\right) - \e^{1/3} \tilde C.$$
 \end{proof} 
From the above, we will deduce lower and upper bounds for $S_{(0)},$ by comparison with the vector Airy function ${\bf Z},$ defined as the solution of 
 \begin{equation} \label{bfZ} {\bf Z}' + \left(\begin{array}{cc} 0 & 1  \\  t &  0\end{array}\right) {\bf Z} = 0, \qquad {\bf Z}(\t;\t) = \Id.
 \end{equation}

\subsection{Bounds for the Airy function} \label{sec:airy}

We will use \eqref{airy-app} to show that the symbolic flow grows in time like the Airy function, for which the following is known (see for instance \cite{H1}, chapter 7.6). 

  \begin{lem}[Airy equation] \label{lem:ai}
  Let ${\rm Ai}$ be the inverse Fourier transform of 
 $e^{i \xi^3/3}$,
 and $j= e^{2 i \pi/3}.$ The functions ${\rm Ai},$ ${\rm Ai}(j \cdot)$ form a basis of solutions of the ordinary differential equation
 $ y'' = t y,$ %
  and there holds
  \begin{align} \label{ai-bd} {\rm Ai}(t) &=  \frac{1}{2\sqrt{\pi}}e^{-(2/3) t^{3/2}} t^{-1/4} (1 + O(t^{-3/2})), \quad t\rightarrow+\infty, 
  \\
  \label{ai-bd-} {\rm Ai}(-t) &= \frac{1}{\sqrt{\pi}}t^{-1/4} \Bigl(\sin\bigl(\frac23 t^{3/2}+\frac\pi4\bigr) + O(t^{-3/2})\Bigr), \quad t\rightarrow+\infty,\\
  \label{ai-bd2} {\rm Ai}( j t) &= \frac{1}{2\sqrt{\pi}}e^{-i \pi/6} e^{(2/3) t^{3/2}} t^{-1/4} (1 + O(t^{-3/2})),\quad t\rightarrow+\infty,\\
  \label{ai-bd--}  {\rm Ai}(- j t) &= \frac{1}{2\sqrt{\pi}} e^{i \pi/6} e^{(2/3) i t^{3/2}} t^{-1/4} (1 + O(t^{-3/2})),\quad t\rightarrow+\infty.
  \end{align}
   \end{lem}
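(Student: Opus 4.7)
The plan is to start from the Fourier representation $\text{Ai}(t) = (2\pi)^{-1} \int_{\mathbb R} e^{i(t\xi + \xi^3/3)}\,d\xi,$ which makes sense as an oscillatory integral (the amplitude is identically one and the phase $\phi(\xi) = t\xi + \xi^3/3$ has $\phi'(\xi) \to \pm \infty$ as $\xi \to \pm \infty$). The ODE $y'' = ty$ is obtained by noting that $-i \partial_\xi e^{i(t\xi + \xi^3/3)} = (t + \xi^2) e^{i(t\xi + \xi^3/3)},$ so that integrating by parts once in the Fourier integral for $t\,\text{Ai}(t)$ produces exactly $-\int \xi^2 e^{i(t\xi+\xi^3/3)}\,d\xi/(2\pi) = \text{Ai}''(t).$ That $\text{Ai}(j\,\cdot)$ satisfies the same equation is immediate from $j^3 = 1$: differentiating $g(t) := \text{Ai}(jt)$ twice yields $g''(t) = j^2 \text{Ai}''(jt) = j^3 t\, g(t) = t g(t).$

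The linear independence of $\text{Ai}$ and $\text{Ai}(j\,\cdot)$ as solutions of a second-order linear ODE can be obtained either by computing the (constant) Wronskian at $t = 0$ using standard values of $\text{Ai}(0),\text{Ai}'(0),$ or, more in line with the rest of the paper, by observing that the asymptotic expansions to be derived in the next step show $\text{Ai}(t)$ decays as $t \to +\infty$ while $\text{Ai}(jt)$ grows, so they cannot be proportional.

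The heart of the proof is to extract the asymptotic formulas \eqref{ai-bd}--\eqref{ai-bd--}. I would apply the method of steepest descent to the contour integral $(2\pi)^{-1}\int_\Gamma e^{i(t\xi + \xi^3/3)}\,d\xi$ with $\Gamma$ a suitable deformation of $\mathbb R.$ For $t > 0$ large, the saddle points of $\phi(\xi) = t\xi + \xi^3/3$ solve $t + \xi^2 = 0,$ i.e.\ $\xi = \pm i\sqrt{t};$ one deforms $\mathbb R$ into a contour through $\xi = i\sqrt{t}$ along the direction of steepest descent, and the Gaussian integral there yields the leading factor $(2\sqrt{\pi})^{-1} t^{-1/4} e^{-(2/3)t^{3/2}}$ of \eqref{ai-bd}, with the $O(t^{-3/2})$ correction coming from the next term in the Taylor expansion of $\phi$ about the saddle (only integer powers of $t^{-3/2}$ appear because only odd Taylor coefficients of $\phi$ around the saddle vanish by parity). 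For $t < 0$ large in absolute value, $\phi$ has two real saddle points $\pm \sqrt{|t|},$ and one obtains the oscillatory formula \eqref{ai-bd-} by summing their contributions. The expansions \eqref{ai-bd2} and \eqref{ai-bd--} for $\text{Ai}(\pm jt)$ are then deduced by substituting $jt$ or $-jt$ into the complex integral representation and choosing, via Cauchy's theorem, the saddle whose steepest-descent direction keeps the phase real-negative.

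The main obstacle is the steepest-descent bookkeeping: one must justify the contour deformation (controlled because $\re (i\xi^3/3) \to -\infty$ in three asymptotic sectors of $\mathbb C$), pick the correct saddle branch (which is where the phase factor $e^{-i\pi/6}$ in \eqref{ai-bd2} and $e^{i\pi/6}$ in \eqref{ai-bd--} comes from, and hence why signs and rotations matter), and control the error uniformly. Because this is a classical computation carried out in detail in Chapter 7.6 of \cite{H1}, I would in practice simply invoke that reference rather than redoing the stationary-phase analysis; the role of the lemma in the sequel is only to provide the sharp large-time bounds on $\text{Ai}$ and $\text{Ai}(j\,\cdot)$ needed to convert \eqref{airy-app} into the lower and upper bounds of Assumption \ref{ass:main} for $S_{(0)}.$
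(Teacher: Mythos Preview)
Your proposal is correct and matches the paper's approach: the paper does not prove this lemma at all but simply states it as a known result with the parenthetical ``see for instance \cite{H1}, chapter 7.6,'' exactly as you suggest doing in your final paragraph. Your sketch of the Fourier representation, the verification of the ODE, and the steepest-descent mechanism is accurate and more detailed than what the paper provides, but ultimately both you and the authors defer to H\"ormander for the asymptotics.
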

 From the above Lemma, we deduce uniform bounds for the time derivative ${\rm Ai}':$ 
$$e^{(2/3) t^{3/2}} |{\rm Ai}'(t)| + e^{-(2/3) t^{3/2}} |{\rm Ai}'(jt)| + |{\rm Ai}'(-t)| + |{\rm Ai}'(-j t)| \leq C (1 + t)^{1/4}.$$
  for some $C > 0,$ for all $t \geq 0.$ 
 By Lemma \ref{lem:ai}, the solution to \eqref{bfZ} is 
 $$ {\bf Z}(\t;t) = \frac{1}{W(\t)} \left(\begin{array}{cc} - j {\rm Ai}'(j \t) {\rm Ai}(t) + {\rm Ai}'(\t) {\rm Ai}(j t) &  - {\rm Ai}(j \t) {\rm Ai}(t) + {\rm Ai}(\t) {\rm Ai}(j  t) \\ j {\rm Ai}'(j \t) {\rm Ai}'(t) - j {\rm Ai}'(\t) {\rm Ai}'(j t) & {\rm Ai}(j \t) {\rm Ai}'(t) - j {\rm Ai}(\t) {\rm Ai}'(j t)\end{array}\right),$$
 where $W$ is the Wronskian, satisfying 
$$W(\t) := {\rm Ai}(j \t) {\rm Ai}'(\t) - j {\rm Ai}'(j \t) {\rm Ai}(\t) \equiv \frac{1}{4\pi}(-\sqrt 3+i).$$
The bounds for Ai and $\mbox{Ai}'$ imply the upper bound, for $0 \leq \t \leq t:$ 
\begin{equation} \label{bd:Airy}
 |{\bf Z}(\t;t)| \leq C (1 + |\t|)^{1/4}(1 + |t|)^{1/4} e_{\rm Ai}(\t;t),
\end{equation}
and the lower bound
\begin{equation} \label{low:Airy}
 \left| \left(\begin{array}{cc} 1 & 0 \end{array}\right) {\bf Z}(0;t) \left(\begin{array}{c} 0 \\ 1 \end{array}\right)\right| \geq c \, e_{\rm Ai}(0;t),
 \end{equation}
for some $c > 0$ independent of $\t,t,$ where the growth function $e_{\rm Ai}$ is defined by  
\begin{equation} \label{def:growth}
 {e}_{\rm Ai}(\t;t) = \exp\Big( \frac{2}{3} \big(t_+^{3/2} - \t_+^{3/2}\big)\Big), \qquad x_+ := \max(x,0).
 \end{equation}
 We note that $e_{\rm Ai}$ is multiplicative:
\begin{equation} \label{prop:e}
 {e}_{\rm Ai}(\t;t') {e}_{\rm Ai}(t';t) = {e}_{\rm Ai}(\t;t), \qquad \mbox{for all $\t,t',t'.$}
 \end{equation}

\begin{rem} \label{rem:decaying:airy2} If we had assumed $\d_\l^2 P \d_t P < 0,$ then we would have had to consider the Airy condition for negative times. Lemma {\rm \ref{lem:ai}} would then have yielded polynomial bounds for the symbolic flow.     
\end{rem}

\subsection{Bounds for the symbolic flow} \label{sec:verif-hyp}

Let 
\be \label{def:Theta} \Theta(t,x,\xi) := f_0(x,\xi)^{1/3} (\t - t_\star(\e,x,\xi)).\ee %
Our goal is to verify the bounds of Assumption \ref{ass:main} for $S_{(0)}$ in the elliptic domain ${\mathcal D}$ defined in \eqref{def:mathcalD}. We reproduce here the definition of ${\mathcal D}:$ 
$$ {\mathcal D} := \big\{ (\t;t,x,\xi), \quad t_\star(\e,x,\xi) \leq \t \leq t \leq T(\e),  \quad |x| \leq \delta, \quad |\xi - \xi_0| \leq \delta \e^{1/3} \big\}.$$

\begin{lem} \label{lem:Zflat} There holds the bounds in domain ${\mathcal D}:$  
$$ %
\big| D^{-1} {\bf Z}(\Theta(\t);\Theta(t)) D\big| \lesssim \left(\begin{array}{cc} 1 & \e^{-1/3} \\ \e^{1/3} & 1 \end{array}\right) e_{\rm Ai}(\Theta(\t);\Theta(t)),
$$ %
 with $e_{\rm Ai}$ defined in \eqref{def:growth}. 
\end{lem}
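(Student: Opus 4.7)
The plan is to combine the pointwise Airy bound \eqref{bd:Airy} with the algebraic identity \eqref{computation:D-1}, and then verify that the polynomial prefactors produced by \eqref{bd:Airy} are harmless under the $\lesssim$ convention \eqref{notation:lesssim}.

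First I would control the rescaled arguments $\Theta(\tau), \Theta(t)$ in the elliptic domain $\mathcal D$. Since $t_\star \le \tau \le t \le T(\e),$ by definition \eqref{def:Theta} and the positivity and boundedness of $f_0$ for $|x| + |\xi - \xi_0| \le \delta$, one has
\[
 0 \;\le\; \Theta(\tau,x,\xi) \;\le\; \Theta(t,x,\xi) \;\lesssim\; T(\e),
\]
and $T(\e) \lesssim |\log \e|^{1/(1+\ell)}$ by \eqref{max-time}. In particular, the polynomial prefactors $(1+\Theta(\tau))^{1/4}(1+\Theta(t))^{1/4}$ are bounded by a power of $|\log\e|$, so they fall within the $\lesssim$ notation \eqref{notation:lesssim}.

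Second, I apply \eqref{bd:Airy} entrywise to $\mathbf Z(\Theta(\tau);\Theta(t))$; combined with the previous step this yields, in $\mathcal D$,
\[
 \bigl|\mathbf Z(\Theta(\tau);\Theta(t))\bigr| \;\lesssim\; e_{\rm Ai}\bigl(\Theta(\tau);\Theta(t)\bigr)\,\begin{pmatrix} 1 & 1 \\ 1 & 1 \end{pmatrix},
\]
the inequality being understood entrywise.

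Third, I use the explicit conjugation formula \eqref{computation:D-1}: writing $\mathbf Z = (z_{ij})_{1\le i,j\le 2}$,
\[
 D^{-1} \mathbf Z(\Theta(\tau);\Theta(t)) D \;=\; \begin{pmatrix} z_{11} & i(\e f_0)^{-1/3} z_{12} \\ -i(\e f_0)^{1/3} z_{21} & z_{22} \end{pmatrix}.
\]
Since $f_0$ is bounded above and below on $|x| + |\xi - \xi_0| \le \delta$, the factors $(\e f_0)^{\pm 1/3}$ are equivalent to $\e^{\pm 1/3}$, and combining with the entrywise bound from the second step gives exactly
\[
 \bigl| D^{-1} \mathbf Z(\Theta(\tau);\Theta(t)) D \bigr| \;\lesssim\; \begin{pmatrix} 1 & \e^{-1/3} \\ \e^{1/3} & 1 \end{pmatrix} e_{\rm Ai}\bigl(\Theta(\tau);\Theta(t)\bigr),
\]
which is the claim. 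This lemma is essentially algebraic once \eqref{bd:Airy} is available; the only small point to check is that the polynomial-in-$|\log\e|$ losses incurred from the subexponential Airy prefactors are indeed absorbed by the $\lesssim$ convention, but this is immediate from the logarithmic-in-$\e$ size of $T(\e)$.
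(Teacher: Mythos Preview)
Your proof is correct and follows essentially the same approach as the paper's own proof, which also uses the range $0 \le \Theta(\tau) \le \Theta(t) \lesssim \Theta(T(\e))$ in $\mathcal D$, the Airy bound \eqref{bd:Airy}, and the conjugation identity \eqref{computation:D-1}. Your version is more explicit about why the polynomial prefactors $(1+\Theta)^{1/4}$ are absorbed into the $\lesssim$ convention, which the paper leaves implicit.
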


 Above, $\lesssim$ means {\it entry-wise} inequality ``modulo constants", as defined in \eqref{notation:lesssim}.
\begin{proof} There holds $0 \leq \Theta(\t) \leq \Theta(t) \lesssim \Theta(T(\e))$ in domain ${\mathcal D}.$ Bound \eqref{bd:Airy} states that there holds $|{\bf Z}(\Theta)| \lesssim e_{\rm Ai}(\Theta).$ Then \eqref{computation:D-1} implies the result.
\end{proof}

From Lemma \ref{lem:Zflat} we now derive bounds for $S_{(0)}.$ Given that $S_{(0)}$ is expressed in terms of $Z$ \eqref{def:Z} and that $Z$ is a perturbation of ${\bf Z},$ we find ourselves in a situation very much like the one encountered in Section \ref{sec:bd-sol-op}. Accordingly, the proof of the following Corollary borrows from Section \ref{sec:bd-sol-op}, in particular from the proof of Corollary \ref{lem:s1} and Corollary \ref{lem:Spsi}. 

\begin{cor} \label{cor:bd:S00} 
The flow $S_{(0)}$ of the top left block $A_{\star (0)}$ in $A_\star,$ solution of \eqref{def:S_0}, satisfies the bounds:
 \be \label{bd:S00}
  | S_{(0)}| \lesssim \left(\begin{array}{cc} 1 & \e^{-1/3} \\ \e^{1/3} & 1 \end{array}\right) e_{\rm Ai}(\Theta), 
 \ee
\end{cor}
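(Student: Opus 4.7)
The strategy mirrors the passage from Corollary \ref{lem:Spsi} to Corollary \ref{lem:s1}: treat $S_{(0)}$ as a perturbation of the flow $S_{(0)}^{\rm free}$ of the idealized Airy operator, i.e.\ the solution of \eqref{def:S_0} with the $\e^{2/3} C$ source dropped. Via the substitution \eqref{def:Z} and Lemma \ref{lem:1402-3}, we have the exact identification $S_{(0)}^{\rm free}(\tau';t') = D(x,\xi)^{-1} \mathbf{Z}(\Theta(\tau'); \Theta(t')) D(x,\xi)$, so Lemma \ref{lem:Zflat} already yields the desired bound for $S_{(0)}^{\rm free}$.

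The plan is then to apply Duhamel's formula to \eqref{def:S_0},
\begin{equation*}
S_{(0)}(\tau';t') = S_{(0)}^{\rm free}(\tau';t') + \e^{2/3} \int_{\tau'}^{t'} S_{(0)}^{\rm free}(s';t') \, C(s') \, S_{(0)}(\tau';s') \, ds',
\end{equation*}
and then to encode the block-weighted target bound by introducing the weighting $\underline{M} := \left(\begin{smallmatrix} m_{11} & \e^{1/3} m_{12} \\ \e^{-1/3} m_{21} & m_{22} \end{smallmatrix}\right)$, which is multiplicative on products exactly as in identity \eqref{cancel:underline} (specialized to $\zeta = 1/3$). Apply $\underline{\cdot}$ to both sides; Lemma \ref{lem:Zflat} gives $|\underline{S_{(0)}^{\rm free}(s';t')}| \lesssim e_{\rm Ai}(\Theta(s');\Theta(t'))$ entry-wise, and the special structure $C = \left(\begin{smallmatrix} 0 & 0 \\ c & 0 \end{smallmatrix}\right)$ from \eqref{def:S_0}, with $|c| \lesssim 1$, gives $|\underline C| \lesssim \e^{-1/3}$. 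Thus one obtains the integral inequality
\begin{equation*}
|\underline{S_{(0)}(\tau';t')}| \lesssim e_{\rm Ai}(\Theta(\tau');\Theta(t')) + \e^{1/3} \int_{\tau'}^{t'} e_{\rm Ai}(\Theta(s');\Theta(t')) \, |\underline{S_{(0)}(\tau';s')}| \, ds'.
\end{equation*}

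Factoring out the expected growth by letting $F(\tau';t') := e_{\rm Ai}(\Theta(\tau');\Theta(t'))^{-1} \underline{S_{(0)}(\tau';t')}$ and exploiting the multiplicativity \eqref{prop:e} of $e_{\rm Ai}$, the inequality reduces to $|F(\tau';t')| \lesssim 1 + \e^{1/3} \int_{\tau'}^{t'} |F(\tau';s')| \, ds'$. Gronwall then gives $|F(\tau';t')| \lesssim \exp\bigl(\e^{1/3}(t'-\tau')\bigr)$. Since $t'-\tau' \leq T(\e) = (T_\star |\ln \e|)^{2/3}$ and $\e^{1/3}|\ln\e|^{2/3} \to 0$, the exponential is uniformly bounded, and unpacking $\underline{\cdot}$ entry-wise yields \eqref{bd:S00}.

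The delicate point to track will be the combined bookkeeping of the weighting $\underline{\cdot}$ with the Airy growth function $e_{\rm Ai}(\Theta(\cdot);\Theta(\cdot))$: one must verify that the $\e^{2/3}$ prefactor from the Duhamel remainder, after absorbing the $\e^{-1/3}$ loss from $|\underline C|$, produces a net smallness $\e^{1/3}$ that beats the logarithmic time horizon $|\ln\e|^{2/3}$ in Gronwall. The argument relies crucially on the fact that $C$ has no diagonal entry (which would have produced an $\e^{-2/3}$ loss in $|\underline C|$ and destroyed the balance); this cancellation is exactly the block-structure feature that made \eqref{block:2} an effective hypothesis in the general framework of Section~\ref{sec:bd-sol-op}.
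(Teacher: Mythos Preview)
Your proof is correct and follows essentially the same route as the paper's: Duhamel representation of $S_{(0)}$ as a perturbation of the exact Airy flow $D^{-1}{\bf Z}(\Theta(\cdot);\Theta(\cdot))D$, then passage to the conjugated variable $\underline{\,\cdot\,}$ (which is multiplicative by \eqref{cancel:underline}), factoring out $e_{\rm Ai}(\Theta)$ via \eqref{prop:e}, and a Gronwall estimate over the logarithmic time window. The paper performs the same steps after first passing through the auxiliary variable $Z$ of \eqref{def:Z}, but this is cosmetic.

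One small inaccuracy in your closing commentary: a \emph{diagonal} entry in $C$ would give $|\underline C|\lesssim 1$, not $\e^{-2/3}$, since $\underline{\,\cdot\,}$ leaves the diagonal unchanged. The $(2,1)$ entry is in fact the \emph{worst} case, producing the $\e^{-1/3}$ loss you correctly track; any other nonzero entry of $C$ would only improve the balance. This does not affect the proof itself.
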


\begin{proof} By Lemma \ref{lem:1402-3} and definition of ${\bf Z}$ \eqref{bfZ}, there holds
$$ Z(\t;t) = {\bf Z}(\t;t) D + \e^{1/3} \int_\t^t {\bf Z}(t';t) \tilde C(t') Z(\t;t') \, dt'.$$
By definition of $Z$ \eqref{def:Z}, there holds $S_{(0)} = D^{-1} Z(\Theta).$ Thus
$$ S_{(0)}(\t;t)  = D^{-1} {\bf Z}(\Theta(\t); \Theta(t)) D + \e^{2/3} \int_{\Theta(\t)}^{\Theta(t)} D^{-1} {\bf Z}(t';\Theta(t)) \tilde C(t') Z(\Theta(\t);t') \, dt'.$$ 
Since $\tilde C$ is defined in Lemma \ref{lem:1402-3} to be equal to $(D^{-1} C D)(f_0^{-1/3} t + t_\star),$ we obtain
\be \label{rep:S:Z} \begin{aligned} S_{(0)}(\t;t) &  = D^{-1} {\bf Z}(\Theta(\t);\Theta(t)) D \\ & + \e^{2/3} \int_{\Theta(\t)}^{\Theta(t)} D^{-1} {\bf Z}(t';\Theta(t)) D C(f_0^{-1/3} t' + t_\star) S_{(0)}(\t;f_0^{-1/3} t' + t_\star) \, dt'. \end{aligned}
\ee
The change of variable $t' = \Theta(\t'),$ corresponding to $\t' = f_0^{-1/3}(t' + t_\star)$ transforms the above integral into
$$ \e^{2/3} f_0^{1/3} \int_\t^t D^{-1} {\bf Z}(\Theta(\t'); \Theta(t)) D C(\t') S_{(0)}(\t; \t') \, d\t'.$$ We now factor out the expected growth in view of applying Gronwall's lemma, as we did before in the proof of Corollary \ref{lem:s1}: we let 
$$S_{(0)}^\flat := e_{\rm Ai}(\Theta)^{-1} S_{(0)}, \quad \mbox{and} \quad {\bf Z}^\flat(\t';t) := e_{\rm Ai}(\Theta)^{-1} D^{-1} {\bf Z}(\Theta(\t'); \Theta(t)) D.$$
By the multiplicative property \eqref{prop:e} of the growth function $e_{\rm Ai},$ we find
 $$ S_{(0)}^\flat(\t;t)  =  {\bf Z}^\flat(\t;t) + \e^{2/3} f_0 \int_\t^t {\bf Z}^\flat(\t';t) C(\t') S_{(0)}^\flat(\t';t) \, d\t'.$$
We now rescale the top right and bottom left entries, as we consider the equation in $\underline{S}_{(0)}^\flat,$ with notation introduced just above \eqref{cancel:underline} in the proof of Corollary \ref{lem:Spsi}. In view of \eqref{cancel:underline}, there holds
\be \label{rep:S0:flat} \underline{S}_{(0)}^\flat(\t;t)  =  \underline {\bf Z}^\flat(\t;t) + \e^{1/3} f_0 \int_\t^t \underline {\bf Z}^\flat(\t';t) \big( \e^{1/3} \underline{C}(\t')\big) \underline{S}_{(0)}^\flat(\t';t) \, d\t'.\ee
There holds $\e^{1/3} \underline{C}(t) = O(t),$ and $t$ is bounded by some power of $|\ln \e|$ in ${\mathcal D}.$ Lemma \ref{lem:Zflat} implies that $|\underline {\bf Z}^\flat | \lesssim 1.$ Hence Gronwall's lemma implies the bound
$|\underline{S}_{(0)}^\flat(\t;t)| \lesssim 1,$
which corresponds precisely to \eqref{bd:S00}.
\end{proof}

\begin{lem} \label{lem:low:1/2} There holds the lower bound
 $$ \left| \left(\begin{array}{cc} 1 & 0 \end{array}\right) S_{(0)} \left(\begin{array}{c} 0 \\ 1 \end{array}\right)\right|  \geq c_0 \,\e^{-1/3} e_{\rm Ai}(\Theta),$$
 for some universal constant $c_0 > 0.$ 
\end{lem}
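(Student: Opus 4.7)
The strategy is to extract from the integral representation \eqref{rep:S:Z} used in the proof of Corollary \ref{cor:bd:S00} a lower bound on the $(1,2)$ entry, by showing that the leading Airy contribution $D^{-1}\mathbf{Z}(\Theta(\tau);\Theta(t))D$ dominates the $O(\e^{2/3})$ Duhamel correction. The key algebraic observation is that, by the conjugation formula \eqref{computation:D-1}, the $(1,2)$ entry of $D^{-1}\mathbf{Z}D$ equals $i(\e f_0)^{-1/3}\mathbf{Z}_{1,2}$, so a nontrivial lower bound on the scalar Airy quantity $\mathbf{Z}_{1,2}$ automatically generates the $\e^{-1/3}$ factor in the target estimate.

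First I would isolate the leading term. At the relevant base point where $\Theta(\tau) = 0$ (that is, $\tau = t_\star$, consistent with the way the bound is invoked in Lemma \ref{lem:really:new}), the lower Airy estimate \eqref{low:Airy} gives
\begin{equation*}
 \bigl| \mathbf{Z}(0;\Theta(t))_{1,2} \bigr| \geq c \, e_{\rm Ai}(0;\Theta(t)).
\end{equation*}
Since $f_0 > 0$ is bounded above and below on $|x|+|\xi-\xi_0|\leq \delta$, this transfers into
\begin{equation*}
 \bigl|\bigl(D^{-1} \mathbf{Z}(\Theta(\tau);\Theta(t)) D\bigr)_{1,2}\bigr| \gtrsim \e^{-1/3}\, e_{\rm Ai}(\Theta(\tau);\Theta(t)).
\end{equation*}

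Next I would control the $(1,2)$ entry of the remainder in \eqref{rep:S:Z}, namely
\begin{equation*}
 R(\tau;t) := \e^{2/3} \int_{\Theta(\tau)}^{\Theta(t)} D^{-1}\mathbf{Z}(t';\Theta(t)) D\, C(f_0^{-1/3} t' + t_\star)\, S_{(0)}(\tau;f_0^{-1/3} t' + t_\star)\, dt'.
\end{equation*}
Using Lemma \ref{lem:Zflat} to estimate $D^{-1}\mathbf{Z} D$, the structure of $C$ (nonzero only in the bottom-left slot, uniformly bounded), and the upper bound of Corollary \ref{cor:bd:S00} for $S_{(0)}$, the $(1,2)$ block of the product $(D^{-1}\mathbf{Z} D) C S_{(0)}$ picks up at worst the factor $\e^{-1/3}\cdot 1\cdot 1 = \e^{-1/3}$ from the off-diagonal scaling, times $e_{\rm Ai}(t';\Theta(t))\, e_{\rm Ai}(\Theta(\tau);f_0^{-1/3}t'+t_\star)$. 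By multiplicativity \eqref{prop:e} this collapses to $e_{\rm Ai}(\Theta(\tau);\Theta(t))$, so after the time integral, which contributes $|\Theta(t)-\Theta(\tau)| \lesssim |\ln\e|^*$, we obtain $|R_{1,2}|\lesssim \e^{2/3}\cdot \e^{-1/3} e_{\rm Ai} = \e^{1/3} e_{\rm Ai}$ modulo $|\ln\e|^*$, which is an $\e^{2/3}$-factor (up to logs) smaller than the leading term. Hence
\begin{equation*}
 \bigl|(S_{(0)})_{1,2}\bigr| \geq \e^{-1/3} e_{\rm Ai}(\Theta)\bigl(c - C\e^{2/3}|\ln\e|^*\bigr) \geq c_0 \e^{-1/3} e_{\rm Ai}(\Theta)
\end{equation*}
for $\e$ small enough.

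The main obstacle is bookkeeping the $\e$-powers induced by the rescaling matrix $D$ at the entry level: the $(1,2)$ and $(2,1)$ slots carry opposite $\e^{1/3}$-weights, so the cancellation patterns from \eqref{computation:D}--\eqref{computation:D-1} must be tracked carefully in every factor of the Duhamel iteration; this is the exact analogue, at the entry level, of the block-cancellation argument already used in the proof of Corollary \ref{cor:bd:S00}, but it must preserve a non-trivial complex phase so as not to collapse the lower bound.
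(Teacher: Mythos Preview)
Your approach is correct and is essentially the paper's own proof: use the integral representation \eqref{rep:S:Z} (the paper passes to the rescaled version \eqref{rep:S0:flat}), extract the $(1,2)$ entry of the leading Airy block via \eqref{low:Airy} and the conjugation formula \eqref{computation:D-1}, and then kill the Duhamel remainder with the upper bounds of Lemma \ref{lem:Zflat} and Corollary \ref{cor:bd:S00}. The paper packages the $\e$-bookkeeping into the ``underline'' normalization $\underline{M}$ (so that $\underline{\mathbf Z}^\flat$, $\underline{S}_{(0)}^\flat$ and $\e^{1/3}\underline{C}$ are all $O(1)$ and the integral manifestly carries an $\e^{1/3}$ prefactor), which is exactly the entry-level cancellation you describe in your last paragraph.

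One arithmetic slip: in the product $(D^{-1}\mathbf{Z}D)\,C\,S_{(0)}$, since $C$ is supported only in the $(2,1)$ slot, the $(1,2)$ entry equals $(D^{-1}\mathbf{Z}D)_{1,2}\,C_{2,1}\,(S_{(0)})_{1,2}$, which picks up $\e^{-1/3}\cdot 1\cdot \e^{-1/3}=\e^{-2/3}$, not $\e^{-1/3}$. The remainder is therefore $|R_{1,2}|\lesssim \e^{2/3}\cdot |\ln\e|^* \cdot \e^{-2/3} e_{\rm Ai}\lesssim e_{\rm Ai}$ rather than $\e^{1/3}e_{\rm Ai}$; this is still an $\e^{1/3}$-factor (up to logs) below the leading $\e^{-1/3}e_{\rm Ai}$, so your conclusion stands unchanged.
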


\begin{proof} Consider representation \eqref{rep:S0:flat}. We focus on the top right entry. The lower bound \eqref{low:Airy} for the vector Airy function states that the top right entry of ${\bf Z}$ is bounded from below by $e_{\rm Ai}.$ By \eqref{computation:D-1}, this implies
 \be \label{low:DZ} \left| \left(\begin{array}{cc} 1 & 0 \end{array}\right) D^{-1} {\bf Z} D \left(\begin{array}{c} 0 \\ 1 \end{array}\right) \right| \geq c_0 \e^{-1/3} e_{\rm Ai},\ee
 for some $c_0 > 0$ independent of $\t,t.$ Borrowing notation from the proof of Corollary \ref{cor:bd:S00}, this means that the top right entry of $\underline{\bf Z}^\flat$ is bounded away from zero, uniformly in time. We know from Corollary \ref{cor:bd:S00} that $|\underline{S}_{(0)}^\flat| \lesssim 1$ and $|\underline {\bf Z}^\flat| \lesssim 1.$ Thus from \eqref{rep:S0:flat} and \eqref{low:DZ} we deduce the result, since $t \lesssim 1$ in ${\mathcal D}.$ 
\end{proof}

We observe that there holds, for $e_{\rm Ai}$ defined in \eqref{def:growth} and $\Theta$ defined in \eqref{def:Theta}:
$$ e_{\rm Ai}(\Theta) \equiv {\bf e}_{\g}, \quad \mbox{with $\dsp{\g(x,\xi) := \frac{2}{3} f_0(x,\xi)^{1/2}}, \,\, t_\star = \e^{-2/3} \t_\star(x_0 + \e^{1/3} x, \xi),$}$$
where $\t_\star$ is given the implicit function theorem in Section \ref{sec:branch}.  
Hence Corollary \ref{cor:bd:S00} and Lemma \ref{lem:low:1/2} verify the bounds of Assumption \ref{ass:main} for $S_{(0)},$ with $\g^+ = \g^- = \g,$ and with $\vec e$ being equal to the constant vector $\left(\begin{array}{c} 0 \\ 1 \end{array}\right).$

In order to complete the verification of Assumption \ref{ass:main}, and thus conclude the proof of Theorem \ref{th:1/2}, it only remains to show that the other components of the symbolic flow do not grow faster than $S_{(0)}.$ This follows directly from the simplicity hypothesis in Theorem \ref{th:1/2}. Indeed, by the simplicity hypothesis, we may smoothly diagonalize the other component $A_{\star (1)}$ of $A_\star$ near $(0,x_0,\xi_0)$ (use for instance Corollary 2.2 in \cite{Tp}). The eigenvalues of $A_{\star (1)}$ are real near $(0,x_0,\xi_0).$ The equation for the symbolic flow of $A_{(\star (1)}$ splits into scalar differential equations, with purely imaginary coefficients. Thus the symbolic flow of $A_{\star (1)}$ is bounded.

\section{Proof of Theorem \ref{th:1}: smooth defect of hyperbolicity} \label{sec:proof1}

 It suffices to verify that, under the assumptions of Theorem \ref{th:1}, Assumption \ref{ass:main} holds with parameters
\be \label{param:1}
 \ell = 1, \quad h =  1/2, \quad \zeta = 0, \quad \mu = \Re e \, \l_\pm, \quad t_\star \equiv 0,
\ee
where $\l_\pm$ are the bifurcating eigenvalues, as given by Proposition \ref{prop:evalues}.

\subsection{Block decomposition} As in the proof of Theorem \ref{th:0}, we may smoothly block diagonalize $A$ by a change of basis $Q(t,x,\xi),$ for small $t$ and $(x,\xi)$ close to $(x_0,\xi_0).$ Then identity \eqref{prop:0} holds, and we focus on block $A_{(0)},$ of size two, such that 
 $$ \mbox{sp}\, A_{(0)}(0,x_0,\xi_0) = \{ \l_0 \}, \qquad \l_0 \in \R,$$
 where $(x_0,\xi_0,\l_0)$ are the coordinates of $\o_0 \in \G$ which intervenes in Hypothesis \ref{hyp}. By Hypothesis \eqref{hyp} and Proposition \ref{prop:evalues}, the eigenvalues $\l_\pm$ of $A_{(0)}$ branch out of the real axis at $t =0,$ for all $(x,\xi)$ in a neighborhood of $(x_0,\xi_0).$ We define $\mu$ to be the real part of these eigenvalues. The corresponding equation for the symbolic flow is 
\be \label{eq:S0:1} \d_t S_{(0)} + \e^{-1/2} (A_{(0)} - \mu)\big(\e^{1/2} t, x_0 + \e^{1/2} x_{\star}(\e^{1/2} t,x,\xi), \xi_{\star}(\e^{1/2} t,x,\xi) \big) S_{(0)} = 0,\ee
where $(x_\star,\xi_\star)$ are the bicharacteristics of $\mu.$  

\subsection{Time regularity and cancellation} 

 By Proposition \ref{prop:evalues}, the eigenvalues $\l_\pm$ are differentiable in time, at $t =0$ and for all $(x,\xi)$ near $(x_0,\xi_0).$ Indeed, Hypothesis \ref{hyp} implies that conditions \eqref{non-R} are satisfied in a whole neighborhood of $(x_0,\xi_0).$ We may thus write
\be \label{tilde:lpm} \begin{aligned}
 \l_\pm\big(\e^{1/2} t, x_0 + \e^{1/2} x_\star, \xi_\star\big) - \mu(0,x_0 + \e^{1/2} x_\star,\xi_\star) 
   = i \e^{1/2} t \tilde \l_\pm(\e,t,x,\xi) + o(\e^{1/2}),\end{aligned}\ee
uniformly in $t = O(|\ln \e|)$ and $(x,\xi)$ near $(x_0,\xi_0),$ where $(x_\star, \xi_\star)$ is evaluated at $(\e^{1/2} t, x, \xi),$ and where 
\be \label{tilde:lpm:2} \tilde \l_\pm(\e,0,x,\xi) = \d_t \Im m \, \l_\pm(0,x_0 + \e^{1/2} x_\star(0,x,\xi),\xi_\star(0,x,\xi)) \in \R.\ee
Consider the $2 \times 2$ matrix $A_{(0)}(0,x,\xi).$ It has one semi-simple eigenvalue $\mu(0,x,\xi)$ (the assumption of semisimplicity is part of Hypothesis \eqref{hyp}). Thus 
$$ A_{(0)}(0,x,\xi) = \mu(0,x,\xi) \Id.$$
In particular, by regularity of the entries of $A,$ 
\be \label{cancel:1} \e^{-1/2} A_{(0)}(\e^{1/2} t, x_0 + \e^{1/2} x_\star, \xi_\star) = t \tilde A_{(0)}(\e,0,x,\xi) + \e^{1/2} t^2 B(\e,t,x,\xi),\ee
where $B$ is uniformly bounded for $\e$ close to 0, $t = O(|\ln \e|^*)$ and $(x,\xi)$ close to $(x_0,\xi_0).$ Thus equation \eqref{eq:S0:1} takes the form
\be \label{eq:S0:new} \d_t S_{(0)} + t \tilde A_{(0)}(\e, 0, x,\xi) S_{(0)} = \e^{1/2} t^2 B(\e, t,x,\xi) S_{(0)}.\ee
The key cancellation that takes place in \eqref{cancel:1} transformed equation in $S_{(0)}$ into an autonomous equation with a small, linear, time-dependent perturbation. The eigenvalues of $\tilde A_{(0)}$ are $\tilde \l_\pm(\e,0,x,\xi)$ from \eqref{tilde:lpm}-\eqref{tilde:lpm:2}. These eigenvalues are distinct by Proposition \ref{prop:evalues}.

\subsection{Bounds for the symbolic flow.} 

The solution $S$ to 
$$ \d_t S + i t \tilde A_{(0)}(\e,0, x,\xi) S = 0, \qquad S(\t;\t) = \Id$$
is
$$ S(\t;t) = \exp \left( - i \tilde A_{(0)}(\e,0,x,\xi) (t^2 - \t^2)/2 \right).$$
The eigenvalues of $\tilde A_{(0)},$ being distinct, are smooth in $(\e,x,\xi)$ (see for instance Corollary 2.2 in \cite{Tp}). In particular, there holds
$$ \tilde \l_\pm(\e,0,x,\xi) = \tilde \l_\pm(0,0,x,\xi) + O(\e) = \Im m \, \d_t \l_\pm(0,x_0, \xi_\star(0,x,\xi))$$
locally uniformly in $(x,\xi).$ Let $\l_+$ be the eigenvalue with positive imaginary part, and 
\be \label{def:g:1}
 \g(x,\xi) := \frac{1}{2} \tilde \l_\pm(0,0,x,\xi) = \frac{1}{2} \Im m \, \d_t \l_+(0, x_0, \xi_\star(0,x,\xi)).
 \ee
 Then,
\be \label{bd:S:1}
 |S(\t;t,x,\xi)| \lesssim \exp\big( \g(x,\xi) (t^2 - \t^2)\big),
\ee
and, since $\tilde A_{(0)}$ is smoothly diagonalizable, for some smoothly varying vector $\vec e(x,\xi)$ there holds
\be \label{low:S:1}
|S(\t;t,x,\xi) \vec e(x,\xi)\,| \gtrsim \exp\big( \g(x,\xi) (t^2 - \t^2)\big).
\ee
Perturbation arguments already encountered in Section \ref{sec:bd-sol-op} (specifically, in the proof of Corollary \ref{lem:s1}) show that the bounds \eqref{bd:S:1}-\eqref{low:S:1} for $S$ yield similar bounds for the symbolic flow $S_{(0)}$ solution to \eqref{eq:S0:new}.
These bounds verify the upper and lower bound \eqref{ass:up} and \eqref{ass:low} from Assumption \ref{ass:main}. 

For the other components of the flow, we use the simplicity assumption in Theorem \ref{th:1}, as we did in the last paragraph of Section \ref{sec:verif-hyp} in the proof of Theorem \ref{th:1/2}. 

{
\section{Examples} \label{sec:examples}

\subsection{One-dimensional Burgers systems} \label{sec:Burgers} 
 The $2\times 2,$ one-dimensional Burgers system
\begin{equation} \label{2b}
\D_t \begin{pmatrix} u_1\\u_2\end{pmatrix} + \begin{pmatrix} u_1 &-b(u)^2 u_2\\u_2&u_1\end{pmatrix}\D_x\begin{pmatrix} u_1\\u_2\end{pmatrix} = F(u_1, u_2),
\end{equation}
where {$F$ and $b$ are} smooth and real-valued, has a complex structure if $b$ is constant. In the case $b \equiv 1,$ $F \equiv (0,1),$ a strong instability result for the Cauchy-Kovalevskaya solution issued from $(u_1^0,0),$ where $u_1^0$ is analytic and real-valued, was proved in \cite{LMX}.

 We assume $b > 0,$ and the existence of a local smooth solution $\phi = (\phi_1,\phi_2).$ 
 The principal symbol is
 $$ A(t,x,\xi) = \xi \left(\begin{array}{cc} \phi_1 & - b(\phi)^2 \phi_2 \\ \phi_2 & \phi_1 \end{array}\right).$$
  Without loss of generality, we let $\xi = 1.$ 
 The eigenvalues and eigenvectors are \begin{equation} \label{eeb}
 \l_\pm  = \phi_1 \pm i \phi_2 b(\phi), \qquad  e_\pm = \frac{1}{(1 + b(\phi)^2)^{1/2}} \left(\begin{array}{c} \pm i b(\phi) \\ 1 \end{array}\right).
 \end{equation}
 The characteristic polynomial is 
 $$ P = (\l - \phi_1)^2 + b(\phi)^2 \phi_2^2.$$

\medskip

{\it Initial ellipticity.} If $\phi_2(0,x_0) \neq 0$ for some $x_0 \in \R,$ then the principal symbol is elliptic at $t = 0,$ and Theorem \ref{th:0} appplies.

\medskip

{\it Smooth defect of hyperbolicity.} Consider the case $\phi_2(0,x) \equiv 0.$ We cannot observe a defect of hyperbolicity as in Theorem \ref{th:1/2}, since the eigenvalues are smooth in time. Via Proposition \ref{prop:evalues}, we see that Theorem \ref{th:1} holds as soon as 
 \be \label{f2} F_2(\phi(0,x_0)) \neq 0, \quad \mbox{for some $x_0 \in \R.$}\ee

\medskip

In the case $b(u) = b(u_2),$ then \eqref{2b} is a system of conservation laws
$$ \d_t u_1 + \d_x f_1(u)  = F_1(u), \quad \d_t u_2 + \d_x f_2(u)  = F_2(u),
$$ with fluxes
 $$f_1(u) = \frac{1}{2} u_1^2 - \int_0^{u_2} y b(y)^2 \, dy, \qquad f_2(u) = u_1 u_2.$$
 If, for instance, $F(u) = (0,u_1^2)$ and $b(u_2) = 1 + u_2^2,$ then the system is ill-posed for all data. 

\subsection{Two-dimensional Burgers systems} \label{sec:burgers:2}  Consider the family of $2 \times 2$ systems in $\R^2:$ 
  \begin{equation} \label{2d-burgers} 
  \d_t u + \left(\begin{array}{cc} u_1 \d_{x_1} & - b(u)^2 u_2 (\d_{x_2} + \d_{x_1}) \\ u_2 (\d_{x_1} + \d_{x_2}) & u_1 \d_{x_1} \end{array}\right) u = F(u).
  \end{equation}
   We assume $b > 0,$ and the existence of a local smooth solution $\phi = (\phi_1,\phi_2).$ The principal symbol is
   $$ A = \left(\begin{array}{cc} \xi_1 \phi_1 & - (\xi_1 + \xi_2) b(\phi)^2 \phi_2 \\ (\xi_1 + \xi_2) \phi_2 & \xi_1 \phi_1\end{array}\right). $$
The eigenvalues and eigenvectors are
$$ \l_\pm = \xi_1 \phi_1 \pm i (\xi_1 + \xi_2) \phi_2 b(\phi), \qquad e_\pm = \frac{1}{(1 + b(\phi)^2)^{1/2}} \left(\begin{array}{c} \pm i b(\phi) \\ 1 \end{array}\right).$$

\medskip

{\it Initial ellipticity.} If $\phi_2(0,x_0) \neq 0$ for some $x_0 \in \R^2,$ then the principal symbol is initially elliptic at any $(\xi_1, \xi_2) \in \SS^1$ such that $\xi_1 + \xi_2 \neq 0.$ 

\medskip

{\it Smooth defect of hyperbolicity.} Consider the case $\phi_2(0,x) \equiv 0.$ By Proposition \ref{prop:evalues}, the assumptions of Theorem \ref{th:1} are satisfied under condition \eqref{f2}.

\subsection{Van der Waals gas dynamics} \label{sec:VdW}

The compressible Euler equations in one space dimension, in Lagrangian coordinates are 
  \begin{equation} \label{VdW}
   \left\{\begin{aligned} \d_t u_1 + \d_x u_2 & = 0, \\ \d_t u_2 + \d_x p(u_1) & = 0.
   \end{aligned}\right.
   \end{equation}
We assume that the smooth pressure law $p$ satisfies the Van der Waals condition
 \begin{equation}
  \label{vdw-pressure} p'(u_1) \leq 0, \qquad \mbox{for some $u_1 \in \R,$}
 \end{equation}
and assume existence of a smooth solution $\phi = (\phi_1, \phi_2).$ The principal symbol at $\xi = 1$ is
 $$ A = \left(\begin{array}{cc} 0 & 1 \\ p'(\phi_1) & 0 \end{array}\right).$$
 The eigenvalues are
 $$ \l_\pm = (p'(\phi_1))^{1/2}.$$

\medskip

{\it Initial ellipticity.} If $p'(\phi_1(0,x_0)) < 0$ for some $x_0 \in \R,$ then Theorem \ref{th:0} applies.

\medskip

{\it Non-semi-simple defect of hyperbolicity.} If $p'(\phi_1(0,x)) \geq 0$ for all $x$ (initial hyperbolicity) and $p'(\phi_1(0,x_0)) = 0$ for some $x_0$ (coalescence of two eigenvalues), if
 $$ p''(\phi_1(0,x_0)) \d_x \phi_2(0,x_0) > 0,$$
then condition \eqref{cond:coal2} holds, and Theorem \ref{th:1/2} applies. 

\subsection{Klein-Gordon-Zakharov systems} \label{sec:KG}

Consider the family of systems in one space dimension
\begin{equation} \label{kgwbis} \left\{ \begin{aligned} \d_t \left(\begin{array}{c} u \\ v \end{array}\right) + \d_x \left(\begin{array}{c} v \\ u \end{array}\right) + \left(\begin{array}{cc} \a & 0 \\ 0 & 0 \end{array}\right) \d_x \left(\begin{array}{c} n \\ m \end{array}\right) & = & (n + 1) \left(\begin{array}{c} v \\ - u \end{array}\right), \\ \d_t \left(\begin{array}{c} n \\ m \end{array}\right) +  c \d_x \left(\begin{array}{c} m \\  n \end{array}\right)  + \left(\begin{array}{cc} \a & 0 \\ 0 & 0 \end{array}\right) \d_x \left(\begin{array}{c} u \\ v \end{array}\right) & = &  \d_x  \left(\begin{array}{c} 0 \\ u^2 + v^2 \end{array}\right),\end{aligned}\right.\end{equation}
 indexed by $\a \in \R,$ $c \in \R \setminus \{-1,1\}.$ We assume existence of a smooth solution $\phi = (u,v,n,m).$ The principal symbol at $\xi = 1$ is %
  \begin{equation} \label{a-kgz} A  = \left(\begin{array}{cccc} 0 & 1 & \a & 0 \\ 1 & 0 & 0 & 0 \\ \a & 0 & 0 & c \\ - 2 u & -2 v & c & 0 \end{array}\right).\end{equation}

\medskip

{\it The case $\a = 0.$} The principal symbol is block diagonal, and there are four distinct eigenvalues $\{ \pm 1, \pm c\}.$ This implies that \eqref{kgwbis} is strictly hyperbolic, hence locally well-posed in $H^s,$ for $s > 3/2$ (see for instance Theorem 7.3.3, \cite{M3}). It was observed in \cite{CEGT} that for $c \notin \{ - 1, 1\}$ and $\a = 0,$ system \eqref{kgwbis} is conjugated to a {\it semi-linear} system, which implies a sharper existence result:

 \begin{prop}[{\rm\cite{CEGT}, Section 2.2}] \label{prop:ex-KGZ} If $c \notin \{-1, 1\}$ and $\a = 0,$ the system \eqref{kgwbis} is locally well-posed in $H^s(\R),$ for $s > 1/2.$
 \end{prop}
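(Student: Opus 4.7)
The strategy is a Poincaré--Dulac-style normal form argument that removes the derivative-nonlinearity $\partial_x(u^2+v^2)$ from the $(n,m)$-subsystem via a quadratic change of unknowns, reducing the problem to a semilinear first-order system for which $H^s$ well-posedness is standard in one space dimension.

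First I would seek a change of variables of the form
\begin{equation*}
 \tilde n = n + P(u,v), \qquad \tilde m = m + Q(u,v),
\end{equation*}
with $P, Q$ homogeneous quadratic polynomials, designed so that the quasi-linear contributions to $\partial_t \tilde n + c\partial_x \tilde m$ and $\partial_t \tilde m + c\partial_x \tilde n$ cancel. Using $\alpha = 0$ and the original equations to substitute for $\partial_t u, \partial_t v$, matching the coefficients of $\partial_x u$ and $\partial_x v$ leads to the linear algebraic system
\begin{equation*}
 P_u = c\,Q_v, \quad P_v = c\,Q_u, \quad c P_u = Q_v - 2u, \quad c P_v = Q_u - 2v.
\end{equation*}
Solvability requires $1-c^2 \ne 0$ -- this is precisely where the hypothesis $c \ne \pm 1$ enters, as a non-resonance between the Klein--Gordon and wave components -- and yields the explicit choice
\begin{equation*}
 P(u,v) = \frac{c(u^2+v^2)}{1-c^2}, \qquad Q(u,v) = \frac{2uv}{1-c^2}.
\end{equation*}
A direct verification (using the identities $u\partial_t u + v\partial_t v = -\partial_x(uv)$ and $v\partial_t u + u\partial_t v = -\tfrac{1}{2}\partial_x(u^2+v^2) + (n+1)(v^2-u^2)$, which follow from the $(u,v)$-equations) then shows that the transformed system reads
\begin{equation*}
 \begin{aligned}
  \partial_t u + \partial_x v &= (n+1)v, & \partial_t v + \partial_x u &= -(n+1)u, \\
  \partial_t \tilde n + c\partial_x \tilde m &= 0, & \partial_t \tilde m + c\partial_x \tilde n &= \frac{2(n+1)(v^2-u^2)}{1-c^2},
 \end{aligned}
\end{equation*}
with $n = \tilde n - c(u^2+v^2)/(1-c^2)$. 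Every right-hand side is now a polynomial of degree $\le 3$ in $(u,v,\tilde n,\tilde m)$: the system is semilinear.

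Second, I would carry out the well-posedness analysis for the semilinear system. The linear part is constant-coefficient and strictly hyperbolic with real distinct characteristic speeds $\pm 1, \pm c$; a standard energy identity (the quadratic forms $u^2+v^2$ and $\tilde n^2+\tilde m^2$ are preserved by the linear flows up to $x$-divergences) shows it generates a $C_0$-group of $L^2$-isometries, which extends to a unitary group on $H^s$ since $\langle D\rangle^s$ commutes with the constant-coefficient generator. In one space dimension $H^s(\mathbb R)$ is a Banach algebra for $s > 1/2$, so the polynomial nonlinearity is locally Lipschitz from $H^s$ to $H^s$. A classical Picard iteration in $C([0,T], H^s)$ then yields local existence, uniqueness and continuous dependence on the data for the semilinear system in $(u,v,\tilde n, \tilde m)$. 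Finally, the change of variables and its inverse $n = \tilde n - P(u,v)$, $m = \tilde m - Q(u,v)$ are polynomial, hence locally Lipschitz on $H^s$, so well-posedness transfers back to the original unknowns $(u,v,n,m)$.

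I do not anticipate any serious obstacle: the entire substance of the argument is the algebraic determination of $(P,Q)$, which is a routine resonance elimination that fails precisely when $c = \pm 1$; once that is done, the rest is off-the-shelf semilinear hyperbolic theory in the 1D Sobolev algebra regime.
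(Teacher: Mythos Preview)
Your argument is correct and follows essentially the same route as the paper: a quadratic normal-form change of unknowns that eliminates the derivative nonlinearity $\partial_x(u^2+v^2)$, reducing the system to a semilinear one for which standard $H^s$ theory with $s>1/2$ applies. The only cosmetic difference is that the paper first passes to characteristic coordinates $(\tilde u,\tilde v)=(u+v,u-v)$ and $(n+m,n-m)$ before adding the quadratic corrections, whereas you work directly in the original variables and leave the symmetric linear part undiagonalized; your choice $P=\tfrac{c(u^2+v^2)}{1-c^2}$, $Q=\tfrac{2uv}{1-c^2}$ is correct and yields an equally valid semilinear form.
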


 \begin{proof} The change of variables
 $$ \begin{aligned} (\tilde u, \tilde v) & = (u + v, u - v), \\ (\tilde n,\tilde m) & = \left(n + m - \frac{1}{1 - c} \tilde u^2 - \frac{1}{1 + c} \tilde v^2 \, , \, n - m - \frac{1}{1 + c} \tilde u^2 - \frac{1}{1 - c} \tilde v^2\right),\end{aligned}$$
 transforms \eqref{kgwbis} into the system in $\tilde U := (\tilde u, \tilde v, \tilde n, \tilde m):$
 \begin{equation} \label{kgw2}
 \d_t \tilde U + \left(\begin{array}{cccc} 0 & 1 & 0 & 0 \\ 1 & 0 & 0 & 0 \\ 0 & 0 & 0 & c \\ 0 & 0 & c & 0 \end{array}\right) \d_x \tilde U = (n + 1) \left(\begin{array}{c} - \tilde v \\ \tilde u \\ -2 (1 - c)^{-1} \tilde u \tilde v  \\ -2 (1 + c)^{-1} \tilde u \tilde v \end{array}\right).
 \end{equation}
 System \eqref{kgw2}, being symmetric hyperbolic and semi-linear, is locally well-posed in $H^s(\R),$ for $s > 1/2.$
\end{proof}

\medskip

{\it The case $\a \neq 0.$} By Proposition \ref{prop:ex-KGZ}, system \eqref{kgwbis} takes the form of a symmetric perturbation of a well-posed system. The characteristic polynomial of the principal symbol \eqref{a-kgz} at $\xi = 1$ is
\begin{equation} \label{eik} P(t,x,\l) = (\l^2 - c^2)(\l^2 - 1) - \a^2 \l^2 + 2 \a c( v + u \l).
 \end{equation}
Consider an initial datum for $(u(0), v(0), n(0), m(0))$ such that, for some $x_0 \in \R,$ 
\begin{equation} \label{cond-KGZ} u(0,x_0) = 0, \qquad v(0,x_0) = - \frac{c}{2 \a}, \qquad \a c \d_x u(0,x_0) > 0.
  \end{equation}
The first two conditions in \eqref{cond-KGZ} imply that at $\o_0 = (x_0,1,0)$ there holds
$$ P(0,\o_0) = \d_\l P(0,\o_0) = 0.$$ 
The third condition in \eqref{cond-KGZ} implies 
$$(\p_{t}P\p_{\lambda}^{2}P)(0,\o_0) =\bigl(2\alpha c\p_{t}v(0,x_0)\bigr)(-1-c^{2}-\alpha^{2})=
\bigl(2\alpha c\p_{x}u(0,x_0)\bigr)(1+c^{2}+\alpha^{2})>0,
$$
 so that the third condition in \eqref{cond-KGZ} implies condition \eqref{cond:coal2}. Theorem \ref{th:1/2} thus asserts instability of the Cauchy problem for \eqref{kgwbis} in the vicinity of any smooth solution $\phi$ satisfying \eqref{cond-KGZ} at $t = 0.$

 In particular, for any given $\a_0 > 0,$ we can find initial data, depending on $\a_0,$ such that \eqref{kgwbis} with $\a = 0$ is well-posed whereas \eqref{kgwbis} with $\a = \a_0$ is ill-posed. Such initial data are $O(1/\a_0)$ in $L^\infty(\R).$
\begin{appendix}
 
\section{Proof of Proposition \ref{prop:evalues}} \label{app:eigenvalue}

The principal symbol can be block diagonalized, with a $2 \times 2$ block $A_0$ with double 
real eigenvalue $\l_0$ at $(0,x,\xi),$ and an $(N-2) \times (N-2)$ block which does not admit $\l_0$ as an eigenvalue at $(0,x,\xi).$ Throughout this proof $(x,\xi)$ are fixed and omitted in the arguments. The characteristic polynomial of $A$ factorizes into $P = P_0 P_1,$ where $P_1(0,\o_0) \neq 0$ and $P_{0},P_{1}$ have real coefficients.
We may concentrate on $P_0:$ 
$$ P_0(\l) = \l^2 - \l \tr A_0 + \det A_0.$$
The eigenvalues $\l_\pm$ of $A_0$ at $(t,x,\xi)$ are %
\be \label{l+-} \l_\pm(t) = \frac{1}{2} \tr A_0(t) \pm \frac{1}{2} \Delta(t)^{1/2}, \qquad \Delta(t) := (\tr A_0)^2 - 4 \det A_0.\ee
By assumption, these eigenvalues coalesce at $t = 0,$ so that $\Delta(0) = 0.$ The goal is then to prove equivalence \eqref{non-R}. 
 
If the left proposition in \eqref{non-R} holds, then $\Delta(t) = - \a t^2 + O(t^3),$ with $\a > 0.$ Thus $\d_t \Delta(0) = 0;$ on the other hand, 
 $$ \begin{aligned}
 \d_t \Delta(0) & = 2 \tr A_0(0) \d_t \tr A_0(0) - 4\d_t \det A_0(0) =4\lambda_{0} \d_t \tr A_0(0)
- 4\d_t \det A_0(0)
 \\ & =-4(\p_{t}P_{0})(0).
\end{aligned}$$
Besides, $\d_t^2 \Delta(0) < 0;$ on the other hand, 
$$ %
 \d_t^2 \Delta(0)  = 2 (\d_t \tr A_0(0))^2 + 2 \tr A_0(0) \d_t^2 \tr A_0(0) - 4\d_t^2 \det A_0(0),$$
 implying, since $\tr A_0(0) = 2 \l_0,$ 
 $$ \begin{aligned}
 \d_t^2 \Delta(0) &  =2 (\d_t \tr A_0(0))^2
+4\lambda_{0} \d_t^2 \tr A_0(0) - 4\d_t^2 \det A_0(0)
\\ &  =2(\p_{t}\p_{\lambda} P_{0})^{2}-2\p_{\lambda}^{2}P_{0}\p_{t}^{2}P_{0}(0),\end{aligned}$$
which gives  indeed $(\d^2_{t\l} P)^2 < \d_t^2 P \d_\l^2 P$ at $t = 0.$ 

The converse implication is proved in the same way: the right proposition in \eqref{non-R} implies $\d_t \Delta(0) = 0,$ $\d_t^2 \Delta(0) < 0,$ as shown above, and this implies that the eigenvalues in \eqref{l+-} are differentiable and leave the real axis at $t = 0.$

 \section{Symbols and operators} \label{sec:symbols}

 Pseudo-differential operators in $\e^h$-semi-classical quantization are defined by %
\be \label{def:app:semicl}
\op_\e(a) u := (2\pi)^{-d} \int_{\R^{d}} e^{i x \cdot \xi} a(x, \e^{h} \xi) \hat u(\xi) \,d\xi, \qquad 0 < \e, \quad 0 < h.
\ee
Here $h = 1/(1 + \ell),$ as in Assumption \ref{ass:main}. Above $a$ is a classical symbol of order $m$: $a \in S^m,$ for some $m \in \R,$ that is a smooth map in $(x,\xi),$ with values in a finite-dimensional space, such that 
\be \label{notation:norm:eps}
  \| a \|_{m,r} := \sup_{\begin{smallmatrix} |\a| \leq r, |\b| \leq r \\ (x,\xi) \in \R^{2d} \end{smallmatrix}} \langle \xi \rangle^{|\b| - m} |\d_x^\a \d_\xi^\b a(x,\xi)| < \infty, 
  \ee
The family $\| \cdot \|_{\e,s}$ of $\e$-dependent norms is defined by 
 \begin{equation} \label{hse} \| u \|_{\e,s} :=  \big\| \langle \e^h \xi \rangle^{s/2} \hat u(\xi) \big\|_{L^2(\R^d_\xi)}, \quad s \in \R, \quad \langle \cdot \rangle := (1 + |\cdot|^2)^{1/2}.
 \ee
 Introducing dilations $(d_\e)$ such that $(d_\e u)(x) = \e^{h d/2} u(\e^{h}
 x),$ we observe that there holds
 \be \label{dilation}
 \|d_\e u\|_{H^s} = \| u \|_{\e,s}, \quad \op_\e(a) = d_\e^{-1} \op(\tilde a) d_\e, \quad \tilde a(x,\xi) := a(\e^{h} x, \xi).
 \ee 
\begin{prop} \label{prop:action} Given $m \in \R,$ $a \in S^m,$ there holds the bound
 \be \label{bd:action} \| \op_\e(a) u \|_{L^2} \lesssim \| a \|_{m,C(d)} \| u \|_{\e,-m},\ee
 for all $u \in H^{-m},$ for some $C(d) > 0$ depending only on $d.$ If $m = 0,$ there holds the bound 
 \be \label{bd:action:H} \| \op_\e(a) u \|_{L^2} \lesssim \sum_{0 \leq |\a| \leq d+1}  \sup_{\xi \in \R^d} | \d_x^{\a} a(\cdot,\xi) |_{L^1(\R^d_x)} \| u \|_{L^2}.\ee
\end{prop}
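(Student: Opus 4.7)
The plan is to establish the two bounds separately, using the dilation identity \eqref{dilation} to reduce the first claim to classical pseudo-differential $L^{2}$ continuity, and to prove the second claim by a direct Fourier/Schur argument that bypasses any calculus.

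\textbf{Step 1: Reduction for the general bound \eqref{bd:action}.} Using the identity $\op_\e(a) = d_\e^{-1} \op(\tilde a) d_\e$ with $\tilde a(x,\xi) = a(\e^{h} x,\xi),$ and the fact that $d_\e$ is an isometry of $L^{2}$ with $\|d_\e v\|_{H^{s}} = \|v\|_{\e,s},$ we obtain
$$ \|\op_\e(a) u\|_{L^{2}} = \|\op(\tilde a)(d_\e u)\|_{L^{2}}. $$
Now I would invoke the classical boundedness statement for (non-semi-classical) pseudo-differential operators of order $m$ (Calder\'on--Vaillancourt / Theorem 2.5.1 in H\"ormander, or any standard reference): for some integer $C(d)$ depending only on $d,$ one has $\|\op(b) v\|_{L^{2}} \lesssim \|b\|_{m,C(d)} \|v\|_{H^{-m}}$ for every $b \in S^{m}.$ Applying this with $b = \tilde a$ and $v = d_\e u$ yields
$$ \|\op_\e(a) u\|_{L^{2}} \lesssim \|\tilde a\|_{m,C(d)} \, \|u\|_{\e,-m}. $$
The last step is to observe that $\d_x^\a \tilde a(x,\xi) = \e^{h|\a|}(\d_x^\a a)(\e^{h}x,\xi),$ hence for $0 < \e \le 1$ we have $\|\tilde a\|_{m,C(d)} \leq \|a\|_{m,C(d)},$ uniformly in $\e.$ This gives \eqref{bd:action}.

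\textbf{Step 2: Direct argument for the $m=0$ bound \eqref{bd:action:H}.} Taking the Fourier transform of $\op_\e(a) u$ in $x$ and interchanging integrals produces
$$ \widehat{\op_\e(a) u}(\eta) = \int \hat a_{x}(\eta - \xi,\, \e^{h}\xi)\, \hat u(\xi)\, d\xi, $$
where $\hat a_{x}(\zeta,\xi)$ denotes the Fourier transform of $a(\cdot,\xi)$ at frequency $\zeta.$ Integrating by parts in $x$ gives, for $|\a| \leq d+1,$
$$ (1+|\zeta|)^{d+1} |\hat a_{x}(\zeta,\xi)| \lesssim \sum_{|\a| \leq d+1} |\d_x^{\a} a(\cdot,\xi)|_{L^{1}(\R^{d}_{x})}. $$
Setting $F(\zeta) := \sup_{\xi' \in \R^d} |\hat a_{x}(\zeta,\xi')|,$ the pointwise bound $|\hat a_{x}(\eta-\xi,\e^{h}\xi)| \leq F(\eta - \xi)$ and Young's convolution inequality yield
$$ \|\op_\e(a) u\|_{L^{2}} = \|\widehat{\op_\e(a) u}\|_{L^{2}} \leq \|F\|_{L^{1}} \|\hat u\|_{L^{2}} \lesssim \sum_{|\a| \leq d+1} \sup_{\xi \in \R^d} |\d_x^{\a} a(\cdot,\xi)|_{L^{1}} \, \|u\|_{L^{2}}, $$
which is exactly \eqref{bd:action:H}. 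Note that the prefactor is uniform in $\e$ because the dependence on $\e$ appears only through the (harmless) second argument $\e^{h}\xi$ of $\hat a_{x},$ which is absorbed by the supremum in $\xi'.$

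\textbf{Main difficulty.} Step 2 is entirely self-contained and presents no obstacle. The only nontrivial ingredient is in Step 1, namely the classical $L^{2}$ continuity of symbols in $S^{m}$ with only finitely many derivatives controlled; this is a standard Calder\'on--Vaillancourt-type result that would be cited rather than proved. The key observation that makes the semi-classical statement follow cleanly is the contraction property $\|\tilde a\|_{m,C(d)} \leq \|a\|_{m,C(d)}$ for $\e^{h} \leq 1,$ which ensures the bound is uniform in the semi-classical parameter.
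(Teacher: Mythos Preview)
Your proposal is correct and follows essentially the same route as the paper: reduce \eqref{bd:action} to the classical (non-semi-classical) $L^2$ continuity statement via the dilation identity \eqref{dilation}, and control the seminorms of $\tilde a$ by those of $a$ uniformly in $\e.$ The only cosmetic difference is that the paper factors $\op(\tilde a)$ explicitly as $\op_1(\langle\xi\rangle^{-m}\tilde a)\langle D\rangle^{m}$ and invokes an order-zero Calder\'on--Vaillancourt result (Theorem 1.1.4 in \cite{Le}), whereas you appeal directly to an order-$m$ continuity statement; these are of course equivalent. For \eqref{bd:action:H} the paper simply cites Theorem 18.8.1 in \cite{H1}, while your Step 2 supplies the standard Fourier/Young argument that underlies that theorem; your version is self-contained and perfectly valid.
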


\begin{proof} By use of dilations \eqref{dilation}, we observe that $\op_\e(a) u = \op_1(\langle \xi \rangle^{-m} \tilde a) \langle D \rangle^m d_\e u.$ Bound \eqref{bd:action} with any $C(d) > [d/2] + 1$ then follows for instance from Theorem 1.1.4 and its proof from \cite{Le}. Bound \eqref{bd:action:H} is proved in Theorem 18.8.1 from volume 3 of \cite{H1}. 
\end{proof}

\begin{prop} \label{prop:composition} Given $a_1 \in S^{m_1},$ $a_2 \in S^{m_2},$ $n \in \N,$ 
 \be \label{compo:e}
 \op_\e(a_1) \op_\e(a_2) = \sum_{0 \leq q \leq n} \e^{ hq }\op_\e(a_1 \sharp_q a_2) + \e^{h (n+1)} \op_\e(R_{n+1}(a_1,a_2)),
 \ee
 where
\be \label{def:sharp} a_1 \sharp_q a_2 = \sum_{|\a| = q} \frac{(-i)^{|\a|}}{\a!} \d_\xi^\a a_1 \d_x^\a a_2,\ee
 and $R_{n+1}(a_1,a_2) \in S^{m_1 + m_2 - (n+1)}$ satisfies 
 \be \label{composition:e}
 \| \op_\e(R_{n+1}(a_1,a_2)) u\|_{L^2} \lesssim \| \d_\xi^n a_1 \|_{m_1,C(d)} \| \d_x^n a_2 \|_{m_2,C(d)} \| u \|_{\e, m_1 + m_2 - n- 1},
 \ee
 with $C(d) > 0$ depending only on $d,$ for all $u \in H^{m_1 + m_2 - n-  1}.$ 
\end{prop}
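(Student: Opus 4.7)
The strategy is to reduce the statement to the standard ($\e = 1$) composition formula for pseudo-differential operators, exploiting the dilation identity \eqref{dilation}. Set $\tilde a_i(x,\xi) := a_i(\e^h x, \xi)$, so that $\op_\e(a_i) = d_\e^{-1} \op(\tilde a_i) d_\e$ and, since $d_\e d_\e^{-1} = \Id$,
\[
\op_\e(a_1)\op_\e(a_2) = d_\e^{-1}\, \op(\tilde a_1) \op(\tilde a_2)\, d_\e.
\]
Now apply the classical Kohn--Nirenberg composition formula with integral Taylor remainder of order $n+1$ (see for instance Theorem 1.1.5 and Section 4.1 of \cite{Le}): this yields
\[
\op(\tilde a_1)\op(\tilde a_2) = \sum_{0\leq q\leq n} \op(\tilde a_1 \sharp_q \tilde a_2) + \op(\tilde R_{n+1}),
\]
with the same formula \eqref{def:sharp} for $\sharp_q$ and an explicit integral representation for $\tilde R_{n+1}$ involving $\partial_\xi^\alpha \tilde a_1$ and $\partial_x^\alpha \tilde a_2$ for $|\alpha|=n+1$.

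The key bookkeeping is in the scaling. Because $\partial_\xi^\alpha \tilde a_1 = \widetilde{\partial_\xi^\alpha a_1}$ whereas $\partial_x^\alpha \tilde a_2 = \e^{h|\alpha|}\, \widetilde{\partial_x^\alpha a_2}$, each order-$q$ term in the expansion of $\tilde a_1 \sharp_q \tilde a_2$ factors as $\e^{hq}$ times the dilate of $a_1 \sharp_q a_2$, that is $\tilde a_1 \sharp_q \tilde a_2 = \e^{hq}\, \widetilde{a_1 \sharp_q a_2}$. Similarly, the Taylor remainder, after $n+1$ $x$-differentiations of $\tilde a_2$, takes the form $\tilde R_{n+1} = \e^{h(n+1)}\, \tilde r_{n+1}$, where $r_{n+1} \in S^{m_1+m_2-(n+1)}$ is the operator-valued integral of products of $\partial_\xi^{n+1} a_1$ and $\partial_x^{n+1} a_2$, evaluated on a twisted phase with parameter in $[0,1]$. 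Conjugating back via $d_\e$ and using $d_\e^{-1} \op(\widetilde b)\, d_\e = \op_\e(b)$, we recover \eqref{compo:e} with $R_{n+1} := r_{n+1}$.

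For the remainder estimate \eqref{composition:e}, we use that $d_\e$ is an isometry on $L^2$ and that $\|d_\e u\|_{H^s} = \|u\|_{\e,s}$ by \eqref{dilation}:
\[
\|\op_\e(R_{n+1}) u\|_{L^2} = \|\op(\tilde r_{n+1})\, d_\e u\|_{L^2}.
\]
By Proposition \ref{prop:action} applied at $\e = 1$, the right-hand side is bounded by $\|\tilde r_{n+1}\|_{m_1+m_2-n-1,\,C(d)} \|d_\e u\|_{H^{m_1+m_2-n-1}} = \|\tilde r_{n+1}\|_{m_1+m_2-n-1,\,C(d)} \|u\|_{\e,\,m_1+m_2-n-1}$. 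Finally, since $x$-dilation preserves the $S^m$ class and its seminorms (the $\e^{h(n+1)}$ scaling having already been extracted), the seminorms of $\tilde r_{n+1}$ are controlled by a finite product of seminorms of $\partial_\xi^n a_1$ and $\partial_x^n a_2$; one checks that $\|\partial_\xi^n a_1\|_{m_1,\,C(d)}\, \|\partial_x^n a_2\|_{m_2,\,C(d)}$ suffices, since the norm $\|\cdot\|_{m_i,\,C(d)}$ already absorbs one further derivative in each variable needed to reach $|\alpha|=n+1$.

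The only real care required is in the last step, which is pure bookkeeping: one must check that the seminorm of the integral remainder $\tilde r_{n+1}$ at order $m_1+m_2-(n+1)$ is indeed controlled by a bilinear expression in seminorms of $\partial_\xi^n a_1$ and $\partial_x^n a_2$ of some fixed order $C(d)$ depending only on $d$. This is standard symbolic calculus; no genuine difficulty arises because the dilation $x \mapsto \e^h x$ acts symplectically on symbols (preserving orders) and all the $\e$-dependence has been collected into the clean prefactors $\e^{hq}$ and $\e^{h(n+1)}$.
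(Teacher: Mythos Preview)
Your proof is correct and follows essentially the same approach as the paper, which simply cites the classical composition formula (Theorem 1.1.20, Lemma 4.1.2 and Remark 4.1.4 of \cite{Le}) together with the dilation identity \eqref{dilation}. You have supplied the bookkeeping details that the paper leaves implicit, but the strategy---reduce to the non-semiclassical case via $d_\e$, track the $\e^{hq}$ factors coming from $\partial_x^\alpha \tilde a_2 = \e^{h|\alpha|}\widetilde{\partial_x^\alpha a_2}$, and bound the remainder via the standard symbolic calculus---is identical.
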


\begin{proof} Based for instance on Theorem 1.1.20, Lemma 4.1.2 and Remark 4.1.4 of \cite{Le}, and the use of dilations \eqref{dilation}.
\end{proof}

Specializing to symbols with a slow $x$-dependence, we obtain:
\begin{prop} \label{prop:composition:slow:x} 
Given $a_1 \in S^{m_1},$ $a_2 \in S^{m_2},$ if $a_2$ depends on $x$ through $\e^{1-h} x,$ there holds 
 \be \label{compo:e:slow}
 \big\| \big( \op_\e(a_1) \op_\e(a_2) - \op_\e(a_1 a_2) \big) u \|_{\e,s} \lesssim \e \| a_1 \|_{m_1,C(d)} \| a_2 \|_{m_2,C(d)} \| u \|_{\e,s + m_1 + m_2 - 1},
 \ee
\end{prop}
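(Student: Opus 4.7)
The plan is to apply Proposition~\ref{prop:composition} with $n=1$ and then exploit the slow $x$-dependence of $a_2$ to upgrade the resulting $\e^h$-size remainder to an $\e$-size remainder. The key observation is simply that each $x$-derivative of $a_2$ produces a factor $\e^{1-h},$ which exactly compensates for each $\e^h$ loss in the composition expansion.

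Writing $a_2(x,\xi) = b_2(\e^{1-h}x,\xi)$ with $b_2 \in S^{m_2},$ I would first record the elementary bound
$$\|\d_x^\a a_2\|_{m_2,r} \lesssim \e^{(1-h)|\a|} \|b_2\|_{m_2,r+|\a|},$$
uniformly in $\e,$ for every multi-index $\a.$ Applying Proposition~\ref{prop:composition} with $n=1$ then yields
$$\op_\e(a_1)\op_\e(a_2) - \op_\e(a_1 a_2) = \e^h \op_\e(a_1 \sharp_1 a_2) + \e^{2h} \op_\e(R_2(a_1,a_2)),$$
with $a_1 \sharp_1 a_2 = -i \sum_j \d_{\xi_j} a_1 \, \d_{x_j} a_2 \in S^{m_1+m_2-1}.$

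For the leading correction, the single $\d_x$-derivative falling on $a_2$ supplies a factor $\e^{1-h},$ so $\op_\e(a_1\sharp_1 a_2)$ has operator norm $\lesssim \e^{1-h}\|a_1\|_{m_1,C(d)}\|a_2\|_{m_2,C(d)+1}$ from $\|\cdot\|_{\e,s+m_1+m_2-1}$ to $\|\cdot\|_{\e,s};$ multiplied by the prefactor $\e^h$ this yields a contribution of size $\e.$ For the remainder $\e^{2h}\op_\e(R_2(a_1,a_2)),$ the estimate \eqref{composition:e} involves $\|\d_x a_2\|_{m_2,C(d)} = O(\e^{1-h}),$ producing a total size $\e^{2h}\cdot\e^{1-h} = \e^{1+h} \leq \e.$ Summing the two contributions delivers \eqref{compo:e:slow}.

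The main technical point, and the only real obstacle, is passing from the $L^2\to L^2$ estimates of Propositions~\ref{prop:action}--\ref{prop:composition} to the shifted $\e$-Sobolev estimate \eqref{compo:e:slow}. This is routine via the dilation identity \eqref{dilation}: conjugating $\op_\e$ to the standard quantization $\op_1$ acting on the rescaled symbols $\tilde a_i(x,\xi) := a_i(\e^h x,\xi)$ transforms the inequality into the statement $\|Tv\|_{H^s} \lesssim \e\,\|v\|_{H^{s+m_1+m_2-1}}$ with $T$ a classical pseudo-differential operator of order $m_1+m_2-1$ whose symbol has size $\e$ (the $\e$-size being transferred, under dilation, from the factor $\e^{1-h}$ on each $x$-derivative of $a_2$ to a factor $\e$ on each $x$-derivative of $\tilde a_2$). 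The desired estimate then follows from classical Sobolev continuity for pseudo-differential operators of known order.
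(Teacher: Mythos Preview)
Your argument is correct and is exactly the specialization the paper has in mind: the paper states the proposition without proof, as an immediate consequence of Proposition~\ref{prop:composition}, and your computation $\e^h\cdot\e^{1-h}=\e$ for the first-order term (and $\e^{2h}\cdot\e^{1-h}=\e^{1+h}\leq\e$ for the remainder) is precisely the mechanism. The passage to $\e$-Sobolev norms via the dilation identity \eqref{dilation} is the standard device and is handled correctly.
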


\section{On extending locally defined symbols} \label{sec:extension:symbols}

Our assumptions are local in $(x,\xi)$ around $(x_0,\xi_0).$ Accordingly the symbols $Q$ and $\mu$ that appear in Assumption \ref{ass:main} are defined (after a change of spatial frame) only locally around $(0,\xi_0).$ We explain here how to extend the locally defined family of invertible matrices $Q(x,\xi)$ into an element of $S^0$ with an inverse (in the sense of matrices) which belongs to $S^0.$ 

The spectrum of $Q(0,\xi_0)$ is a finite subset of $\C.$ In particular, we can find $\a \in \R $ such that the spectrum of $Q(0,\xi_0)$ is included in $\C \setminus e^{i \a} \R_-.$ By continuity of the spectrum, for all $(x,\xi)$ close enough to $(0,\xi_0),$ the spectrum of $Q(x,\xi)$ does not intersect the half-line $e^{i \a} \R_-.$ Let $\delta > 0$ such that this property holds true over $B_\delta = B(0,\delta) \times B(\xi_0,\delta).$ We may then define the logarithm of matrix $e^{-i\a} Q$ in $B_\delta$ by 
$$ \mbox{Log}\,(e^{-i\a} Q) = \int_0^1 (e^{-i\a} Q - \Id) \big( (1 - t) \Id + t e^{-i \a} Q \big)^{-1} \, dt,$$
and the notation $\mbox{Log}$ is justified by the identity
\be \label{explog} \exp \mbox{Log}\,(e^{-i\a} Q) = e^{-i\a} Q, \quad \mbox{in $B_\delta.$}\ee
Let $\s(x,\xi)$ be a smooth cut-off in $C^\infty_c(\R^d \times \R^d),$ such that $0 \leq \s(x,\xi) \leq 1,$ with $\s \equiv 1$ on a neighborhood of $(x_0,\xi_0),$ and such that the support of $\s$ is included in $B_{\delta/2}.$ Let
$$ R(x,\xi) = \s(x,\xi) \mbox{Log}(e^{-i\a} Q(x,\xi)) + (1 - \s(x,\xi)) \Id, \quad \mbox{in $B_\delta.$}$$
We may extend smoothly $R$ by $R \equiv \Id$ on the complement of $B_\delta$ in $\R^{2d}.$ Then for all $(x,\xi) \in \R^{2d},$ the matrix 
$$ \tilde Q(x,\xi) = \exp R(x,\xi)$$
is smooth and invertible. There holds
$$ \inf_{\R^{2d}} \det \tilde Q > 0.$$
Indeed, the infimum over the closed ball $\bar B_\delta$ is positive, by compactness and continuity, and the determinant is constant equal to $e^N$ outside $\bar B_\delta.$ Thus the norms $|\tilde Q(x,\xi)|$ and $|\tilde Q(x,\xi)^{-1}|$ are globally bounded over $\R^{2d}.$ Since $\tilde Q$ is constant outside a compact, this implies $\tilde Q \in S^0,$ and $\tilde Q^{-1} \in S^0.$ Finally, by \eqref{explog} and definition of the cut-off $\s,$ there holds
$$ \tilde Q(x,\xi) = e^{-i\a} Q(x,\xi), \quad \mbox{for $(x,\xi)$ close to $(0,\xi_0).$}$$
Thus $e^{i\a} \tilde Q$ is an appropriate extension of $Q.$

 \section{An integral representation formula} \label{sec:Duhamel}
 
 We adapt to the present context an integral representation formula introduced in \cite{T3}. Consider the initial value problem, posed in time interval $[0, T(\e)],$ with the limiting time $T(\e) := \big( T_\star |\log \e|\big)^{1/(1 + \ell)},$ for some $T_\star > 0,$ some $ \ell \geq 0:$ 
 \begin{equation} \label{buff01} 
  \d_t u + \op_\e({\mathcal A}) u  = g, \qquad u(0) = u_0,
  \end{equation}
 where $\cA=  \cA(\e ,t)$ belongs to $S^0$ for all $\e > 0$ and all $t \leq T(\e).$ Recall that $\op_\e(\cdot)$ denotes $\e^h$-semiclassical quantization of operators, as defined in \eqref{def:app:semicl}. The parameter $h$ belongs to $(0,1].$ The datum $u_0$ belongs to $L^2,$ and the source $g$ is given in $C^0([0,T(\e)],L^2(\R^d)).$ Denote $S_0$ the flow of $-\cA,$ defined for $0 \leq \t \leq t \leq T(\e)$ by  
 \begin{equation} \label{resolvent0}\d_t S_0(\t;t) + \cA S_0(\t;t) = 0, \qquad S_0(\t;\t) = \mbox{Id}.\end{equation}
 For some $q_0 \in \N^*,$ denote $\{ S_q\}_{1 \leq q \leq q_0}$ the solution to the triangular system of linear ordinary differential equations
 \begin{equation} \label{resolventk}
  \d_t S_q + \cA S_q + \sum_{\begin{smallmatrix} q_1 + q_2 = q \\ 0 < q_1 \end{smallmatrix} } \cA  \sharp_{q_1} S_{q_2} = 0, \qquad S_q(\t;\t) = 0,
  \end{equation}
 with notation $\sharp_q$ introduced in \eqref{def:sharp}. 

 \begin{ass} \label{ass:BS} The symbol $\cA$ is compactly supported in $x,$ uniformly in $\e,t,\xi,$ and there holds bounds
  $$ \langle \xi \rangle^{|\b|} |\d_x^\a \d_\xi^\b \cA(\e,t,x,\xi)| \lesssim \e^{- |\b| \zeta},$$
  and
  $$ |\d_x^\a \d_\xi^\b S_q(\e,\t;t,x,\xi)| \lesssim \e^{- \zeta(1 + |\b| + q)} \exp\big(\g t^{\ell + 1}\big).$$
  for some $0 \leq \zeta < h,$ some $\g > 0,$ for all $x,\xi,$ all $t \leq T(\e).$  
 \end{ass}

Denote $\dsp{\Sigma := \sum_{0 \leq q \leq q_0} \e^{qh} S_q.}$ Then $\op_\e(\Sigma)$ is an approximate solution operator for \eqref{buff01}:
 \begin{lem} \label{lem:duh-remainder} Under Assumption {\rm \ref{ass:BS}}, if $q_0$ is large enough, depending on $\zeta,$ $h,$ $\g$ and $T_\star,$ there holds the bound  
  \begin{equation} \label{tildeS1-duh} \d_t \op_\e(\Sigma) + \op_\e(\cA) \op_\e(\Sigma) = \rho,\end{equation}
  where $\rho$ satisfies for $0 \leq \t \leq t \leq T(\e),$
for all $u \in L^2(\R^d),$  
  \begin{equation} \label{tilde-remainder1} \|  \rho(\t;t) u \|_{L^2} \lesssim \e  \| u\|_{L^2}.\end{equation}
  \end{lem}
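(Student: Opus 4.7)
\smallskip

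The plan is to compute $\d_t \op_\e(\Sigma) + \op_\e(\cA) \op_\e(\Sigma)$ directly and identify a cancellation coming precisely from the definitions \eqref{resolvent0}--\eqref{resolventk} of the $S_q$. On the one hand, differentiating in time,
\[
  \d_t \op_\e(\Sigma) \;=\; \sum_{0 \leq q \leq q_0} \e^{qh} \op_\e(\d_t S_q) \;=\; -\sum_{0 \leq q \leq q_0} \e^{qh} \sum_{q_1 + q_2 = q} \op_\e(\cA \sharp_{q_1} S_{q_2}),
\]
since $\d_t S_q = -\sum_{q_1+q_2=q}\cA \sharp_{q_1} S_{q_2}$ for all $0 \leq q \leq q_0$ (the $q_1 = 0$ term in the sum is $\cA S_q = \cA \sharp_0 S_q$). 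On the other hand, Proposition \ref{prop:composition} applied to $\op_\e(\cA)\op_\e(S_q)$ and expanded to order $q_0 - q$ gives
\[
  \op_\e(\cA) \op_\e(\Sigma) \;=\; \sum_{\substack{0 \leq q \leq q_0 \\ 0 \leq n \leq q_0 - q}} \e^{(q+n)h}\op_\e(\cA \sharp_n S_q) \;+\; \sum_{0 \leq q \leq q_0} \e^{qh} \e^{h(q_0 - q + 1)}\op_\e\big(R_{q_0 - q + 1}(\cA, S_q)\big).
\]
Adding the two and re-indexing, all terms indexed by pairs $(q_1,q_2)$ with $q_1 + q_2 \leq q_0$ cancel exactly, and what remains, namely the remainder $\rho$, is the sum of composition residues $\e^{h(q_0+1)}\op_\e(R_{q_0 - q + 1}(\cA, S_q))$ for $0 \leq q \leq q_0$.

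\smallskip

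The second step is to estimate these residues. By \eqref{composition:e} in Proposition \ref{prop:composition},
\[
  \| \op_\e(R_{q_0 - q + 1}(\cA, S_q)) u \|_{L^2} \;\lesssim\; \| \d_\xi^{q_0 - q}\cA \|_{0,C(d)} \, \| \d_x^{q_0 - q} S_q \|_{0,C(d)} \, \| u \|_{\e, -(q_0 - q + 1)},
\]
and since $\cA$ has compact $x$-support and $S_q$ is bounded in $\xi$ (being a finite composition/integral of time-exponentials of zeroth-order symbols), bound \eqref{bd:action:H} upgrades the right-hand side to a pure $L^2$ estimate. Combining this with Assumption \ref{ass:BS}, the pointwise seminorms cost at most $\e^{-\zeta(1 + (q_0 - q) + C(d) + q)} \exp(\g t^{\ell+1})$. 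At $t \leq T(\e)$, we have $\exp(\g t^{\ell+1}) \leq \e^{-\g T_\star}$ by definition \eqref{max-time} of $T(\e)$. Therefore each residue is bounded by $\e^{h(q_0 + 1) - \zeta(q_0 + C(d) + 1) - \g T_\star}$ times $\|u\|_{L^2}$, up to logarithmic factors.

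\smallskip

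The last step is to choose $q_0$. Because $h > \zeta$, the exponent $h(q_0+1) - \zeta(q_0 + C(d) + 1) - \g T_\star$ grows linearly in $q_0$; hence for $q_0$ large enough, depending on $\zeta$, $h$, $\g$ and $T_\star$, this exponent exceeds $1$ and \eqref{tilde-remainder1} follows. The main obstacle is purely bookkeeping: keeping track simultaneously of (i) the number of $\xi$-derivatives applied to $\cA$ in the Taylor residue, which feeds into the stiff loss $\e^{-\zeta \cdot |\beta|}$, and (ii) the exponential time amplification $e^{\g t^{\ell+1}}$ which, at the logarithmic time $T(\e)$, translates into a negative power $\e^{-\g T_\star}$ of $\e$. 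Both losses must be absorbed by the single positive exponent $h(q_0+1)$ coming from the expansion, which is the reason the condition $\zeta < h$ of Assumption \ref{ass:BS} is essential and not just technical.
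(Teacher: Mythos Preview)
Your proof is correct and follows essentially the same approach as the paper. The one notable difference is bookkeeping: the paper expands each product $\op_\e(\cA)\op_\e(S_q)$ to the \emph{fixed} order $q_0$, which produces, besides the composition remainders $R_{q_0+1}(\cA,S_q)$, an additional family of ``overflow'' terms $\e^{(q_1+q_2)h}\op_\e(\cA\sharp_{q_1}S_{q_2})$ with $q_0+1\le q_1+q_2\le 2q_0$ that must be estimated separately; your choice of expanding to the \emph{variable} order $q_0-q$ makes the cancellation exact and leaves only the remainders $R_{q_0-q+1}(\cA,S_q)$, which is slightly cleaner. The final exponent count and the use of $\zeta<h$ to absorb both the stiff loss and the time amplification $\e^{-\gamma T_\star}$ are identical to the paper's.
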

  
  \begin{proof}  
By Proposition \ref{prop:composition},
$$ \op_\e(\cA) \op_\e(S_q) = \op_\e(\cA S_q) + \sum_{1 \leq q' \leq q_0} \e^{q' h} \op_\e(\cA \sharp_{q'} S_q) +  \e^{(q_0 +1) h} \op_\e(R_{q_0 + 1}(\cA, S_q)),
 $$ 
 and, summing over $0 \leq q \leq q_0:$ 
$$ \op_\e(\cA) \op_\e(\Sigma) = \op_\e(\cA \Sigma) + \sum_{\begin{smallmatrix} 0 \leq q_2 \leq q_0 \\ 1 \leq q_1 \leq q_0\end{smallmatrix}} \e^{(q_1 + q_2) h} \op_\e(\cA \sharp_{q_1} S_{q_2}) + \e^{(q_0 + 1) h} R,$$
\vs

where $\dsp{ R: = \sum_{0 \leq q \leq q_0} \e^{q h} \op_\e(R_{q_0+ 1}(\cA, S_q)).}$ Besides, by definition of the correctors \eqref{resolventk}, 
$$ - \d_t \op_\e(\Sigma) = \op_\e(\cA \Sigma) + \sum_{\begin{smallmatrix} 1 \leq q_1 + q_2 \leq q_0 \\ 0 < q_1 \end{smallmatrix}} \e^{(q_1 + q_2)h} \op_\e(\cA \sharp_{q_1} S_{q_2}).$$
Comparing with the above, we find that \eqref{tildeS1-duh} holds, with 
$$ - \rho := \sum_{\begin{smallmatrix} q_0 +1 \leq q_1 + q_2 \leq 2 q_0 \\ 1 \leq q_1 \leq q_0 \\ 0 \leq q_2 \leq q_0 \end{smallmatrix}} \e^{(q_1 + q_2) h} \op_\e(\cA \sharp_{q_1} S_{q_2}) + \e^{(q_0 + 1)h} R.$$
Since $\cA$ is compactly supported in $x,$ so are the correctors $S_q,$ for $q \geq 1,$ and the derivatives of $S_0.$ Thus under Assumption \ref{ass:BS}, there holds
  $$ \big|\d_x^\a \big( \cA \sharp_{q_1} S_{q_2} \big)\big|_{L^1(\R^d_x)}  \lesssim \e^{-\zeta(1 + q_1 + q_2)} \exp\big(\g t^{1 + \ell}\big),$$
  uniformly in $\xi.$ Hence, by Proposition \ref{prop:action}, the bound 
  $$ \big\| \op_\e\big( \cA \sharp_{q_1} S_{q_2} \big)\big\|_{L^2 \to L^2(\R^d)} \lesssim \e^{-\zeta(1 + q_1 + q_2)} \exp\big(\g t^{1 + \ell}\big).$$
Thus a control of the $L^2 \to L^2$ norm of the first term in $\rho$  by
 $$  \sum_{\begin{smallmatrix} q_0 +1 \leq q_1 + q_2 \leq 2 q_0 \\ 1 \leq q_1 \leq q_0 \\ 0 \leq q_2 \leq q_0 \end{smallmatrix}} \e^{(q_1 + q_2) h}  \e^{-\zeta(1 + q_1 + q_2)} \exp\big(\g t^{1 + \ell}\big) \lesssim \e^{(h - \zeta) (q_0 + 1) - \zeta - \g T_\star},$$
 over the interval $[0, T(\e)],$ implying the desired bound as soon as
 $$ (h- \zeta)(q_0 + 1) \geq 1 + \zeta + \g T_\star.$$
 Besides, by Proposition \ref{prop:composition}, there holds 
$$ \big\| \op_\e\big(R_{q_0 + 1}(\cA, S_q)\big) \big\|_{L^2 \to L^2} \lesssim \| \d_\xi^{q_0 + 1} \cA \|_{0,C(d)} \| \d_x^{q_0 + 1} S_q\|_{0,C(d)},$$
and by Assumption \ref{ass:BS}:
$$ \| \d_\xi^{q_0 + 1} \cA \|_{C(d)} \lesssim \e^{- (q_0 + 1 + C(d)) \zeta}, \quad \| \d_x^{q_0 + 1} S_q\|_{C(d)} \lesssim \e^{- \zeta(1 + q + C(d))} \exp\big( \g t^{\ell + 1} \big).$$
This gives a control of the $L^2 \to L^2$ norm of the second term in $\rho$ by 
$$ \e^{(q_0 + 1) h} \sum_{0 \leq q \leq q_0} \e^{q h} \e^{- (q_0 + 1 + C(d)) \zeta - (1 + q + C(d)) \zeta}  \exp\big( \g t^{\ell + 1} \big).$$ 
We conclude that \eqref{tilde-remainder1} holds if $q_0$ satisfies
$$ (h- \zeta)(q_0 + 1) \geq 1 + \g T_\star + 2 C(d) \zeta,$$
which can be achieved since in Assumption \ref{ass:BS} we postulated $\zeta < h.$ 
 \end{proof}

  \begin{theo} \label{duh1} Under Assumption {\rm \ref{ass:BS}}, the initial value problem \eqref{buff01} has a unique solution
  $u \in C^0([0,T(\e)], L^2(\R^d)),$ given by
  \begin{equation} \label{buff0.01} u = \op_\e(\Sigma(0;t)) u_0 + \int_0^t \op_\e(\Sigma(t';t))(\Id + \e R_1(t')) (g(t') + \e R_2(t') u_0) \, dt',
  \end{equation}
  where $R_1$ and $R_2$ are bounded: for all $v \in L^2,$ 
  \begin{equation} \label{bd-R121} \begin{aligned} \| R_1(t) v \|_{L^2} + \| R_2(t) v \|_{L^2}  \lesssim \| v \|_{L^2},
  \end{aligned}\end{equation}
  uniformly in $\e$ and $t \in [0,T(\e)].$ 
 \end{theo}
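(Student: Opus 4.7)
The plan is to prove existence and uniqueness of $u \in C^0([0,T(\e)], L^2)$ by standard ODE theory, and then obtain the Duhamel-type integral representation by inserting the approximate propagator $\op_\e(\Sigma)$ into an ansatz and solving a Volterra equation, whose inversion is a convergent Neumann series due to the $O(\e)$ bound on $\rho$ from Lemma \ref{lem:duh-remainder}.

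For existence and uniqueness, I use that $\cA \in S^0$ with uniformly compact $x$-support, so $\op_\e(\cA(t))$ is bounded on $L^2$ uniformly in $t \in [0,T(\e)]$ by Proposition \ref{prop:action}. The Cauchy problem \eqref{buff01} is then a bounded non-autonomous linear ODE in the Banach space $L^2$, uniquely solvable in $C^0([0,T(\e)], L^2)$ by Picard iteration. For the representation, set $U(s,t) := \op_\e(\Sigma(s;t))$ and try the ansatz
\begin{equation}
u(t) = U(0,t) u_0 + \int_0^t U(s,t) F(s)\,ds,
\end{equation}
with $F \in C^0([0,T(\e)], L^2)$ to be determined. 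Differentiating and applying Lemma \ref{lem:duh-remainder}, which states $\d_t U(s,t) + \op_\e(\cA(t)) U(s,t) = \rho(s,t)$ with $\|\rho(s,t)\|_{L^2 \to L^2} \lesssim \e$, produces
\begin{equation}
\d_t u + \op_\e(\cA) u = F(t) + \rho(0,t) u_0 + \int_0^t \rho(s,t) F(s)\,ds.
\end{equation}
Matching with $g$ yields the linear Volterra equation $(\Id + K)F = g - \rho(0,\cdot) u_0$, where $(K\Psi)(t) := \int_0^t \rho(s,t)\Psi(s)\,ds$.

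Since $T(\e)$ grows only as $|\log\e|^{1/(1+\ell)}$, the norm of $K$ on $C^0([0,T(\e)], L^2)$ is controlled by $T(\e)\sup_{s,t}\|\rho(s,t)\|_{L^2 \to L^2} \lesssim \e\,T(\e) \to 0$. Hence $(\Id + K)^{-1} = \sum_{n \geq 0}(-K)^n$ is a convergent Neumann series, and $F := (\Id + K)^{-1}(g - \rho(0,\cdot)u_0) \in C^0([0,T(\e)], L^2)$ is the unique solution; the resulting $u$ is then a solution to \eqref{buff01}, and must coincide with the one from the first step. To extract the factored structure, write $\tilde K := \e^{-1}K$ and $\tilde \rho_0(t) := \e^{-1}\rho(0,t)$, both of which are uniformly bounded by Lemma \ref{lem:duh-remainder}. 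Expanding
\begin{equation}
F = (\Id + \e \tilde K)^{-1}(g - \e \tilde \rho_0 u_0) = g + \e R_1 g + \e R_2 u_0,
\end{equation}
with $R_1 := -\tilde K(\Id + \e\tilde K)^{-1}$ and $R_2 := -(\Id + \e\tilde K)^{-1}\tilde \rho_0$, both uniformly bounded in the $\lesssim$ sense. Finally, one regroups $g + \e R_1 g + \e R_2 u_0 = (\Id + \e R_1)(g + \e R_2 u_0) - \e^2 R_1 R_2 u_0$ and absorbs the $O(\e^2)$ term by redefining $R_2$, which remains bounded because the $\lesssim$ convention absorbs powers of $|\log\e|$.

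The main obstacle is the final step: interpreting $R_1(t'), R_2(t')$ as $L^2 \to L^2$ bounded operators at each $t'$, given that they inherit from $(\Id+K)^{-1}$ an intrinsically non-local dependence in time on the data. The uniform bound $\|R_i(t')v\|_{L^2} \lesssim \|v\|_{L^2}$ survives because each occurrence of $K$ in the Neumann expansion only contributes a factor $\lesssim \e T(\e) \lesssim \e|\log\e|^*$, the $|\log\e|^*$ factor being silently absorbed by the $\lesssim$ convention \eqref{notation:lesssim}; more rigorously one should check that the $L^2$-operator-norm bounds on $R_1, R_2$ are uniform in $\e$ and $t'$ after this absorption, which is where the constraint on the degree $q_0$ in Lemma \ref{lem:duh-remainder} (and thus the $\e$-smallness rather than mere boundedness of $\rho$) becomes essential.
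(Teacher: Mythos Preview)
Your proof is correct and follows essentially the same route as the paper: the same ansatz, the same Volterra equation $(\Id + K)F = g - \rho(0;\cdot)u_0$, and the same Neumann-series inversion using the $O(\e)$ bound on $\rho$ from Lemma~\ref{lem:duh-remainder}. The paper likewise obtains uniqueness from Cauchy--Lipschitz, since $\cA \in S^0$ makes $\op_\e(\cA)$ bounded on $L^2$.

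One simplification: your final regrouping step, where you rewrite $g + \e R_1 g + \e R_2 u_0$ as $(\Id + \e R_1)(g + \e R_2 u_0)$ minus an $O(\e^2)$ correction to be reabsorbed, is unnecessary. The paper avoids it by defining $R_1$ and $R_2$ directly so that the factored form holds exactly: set $R_1 := \e^{-1}\bigl((\Id + K)^{-1} - \Id\bigr)$ (which coincides with your $-\tilde K(\Id + \e\tilde K)^{-1}$) and $R_2 := -\e^{-1}\rho(0;\cdot)$; then $(\Id + \e R_1)(g + \e R_2 u_0) = (\Id + K)^{-1}(g - \rho(0;\cdot)u_0) = F$ on the nose. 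Your concern about the time-nonlocality of $R_1$ is a fair observation about the notation in the statement of the theorem, but it is not an obstacle: what is actually used downstream is only the bound $\sup_{t}\|(R_1 v)(t)\|_{L^2} \lesssim \sup_{t}\|v(t)\|_{L^2}$, which your Neumann-series estimate already gives.
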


\begin{proof} Let $h \in L^\infty([0,T(\e)],L^2(\R^d)).$ By Lemma \ref{lem:duh-remainder}, the map $u$ defined by
  \begin{equation} \label{def-u-duh1} u := \op_\e(\Sigma(0;t)) u_0 + \int_0^t \op_\e( \Sigma(t';t)) h(t') \, dt'
  \end{equation}
   solves \eqref{buff01} if and only if, for all $t,$ there holds
\begin{equation} \label{cond-g1}
  \big((\Id + \rho_0) h\big)(t)  = g - \rho(0;t) u_0,
   \end{equation}
 where $\rho_0$ is the linear integral operator 
  $$\rho_0: \qquad v \in C^0([0, T(\e)],L^2) \to \Big( t \to \int_0^t  \rho(\t;t) v(\t) \, d\t \Big) \in C^0([0, T(\e)], L^2).$$ 
By \eqref{tilde-remainder1}, there holds 
$$ \sup_{0 \leq t \leq T(\e)} \| (\rho_0 v)(t) \|_{L^2} \lesssim \e \sup_{ 0\leq t \leq T(\e)} \| v(t) \|_{L^2}.$$
Thus $\Id + \rho_0$ is invertible in the Banach algebra of linear bounded operators acting on $C^0([0, T(\e)], L^2(\R^d)).$ This provides a solution $h$ to \eqref{cond-g1}, and we obtain representation \eqref{buff0.01} 
with $R_1 := \e^{-1}((\Id + \rho_0)^{-1} - \Id)$ and $R_2 = - \rho(0;\cdot).$ Bound \eqref{bd-R121} is a consequence of \eqref{tilde-remainder1}. Uniqueness follows from Cauchy-Lipschitz, since $\cA \in S^0.$ 
\end{proof}

\end{appendix}

{\footnotesize }
\end{document}